\documentclass[a4paper,11 pt]{article}

\usepackage[utf8]{inputenc}

\usepackage[french]{}
\usepackage{newlfont}
\usepackage{amsmath, amscd}
\usepackage{amsthm}
\usepackage{amssymb}
\usepackage{booktabs}
\usepackage[a4paper,top=3.5cm,bottom=3.5cm,left=3.5cm,right=3.5cm,heightrounded]{geometry}
\usepackage[font=small,labelfont={bf}]{caption}
\usepackage{subfig}
\usepackage{caption}
\usepackage{graphicx}
\usepackage[T1]{fontenc}
\usepackage{listings}
\usepackage{setspace}
\usepackage{mathtools}
\usepackage{textcomp}
\usepackage{array}
\usepackage{multicol}

\title{Teleman's classification of semisimple cohomological field theories}
\author{Simone Melchiorre Chiarello\footnote{Simone.Chiarello@unige.ch, Université de Genève, Villa Battelle, route de Drize 9, 1227 Carouge (Switzerland). Partially supported by the ENS grant \textquotedblleft Sélection Internationale\textquotedblright.}}
\date{}

\pdfinfo{%
  /Title    ()
  /Author   ()
  /Creator  ()
  /Producer ()
  /Subject  ()
  /Keywords ()
}

\newtheorem{theorem}{Theorem}
\newtheorem{lemma}{Lemma}
\newtheorem{corollary}{Corollary}
\newtheorem{proposition}{Proposition}

\theoremstyle{definition}
\newtheorem{definition}{Definition}
\newtheorem{remark}{Remark}
\newtheorem{notation}{Notation}
\newtheorem{example}{Example}

\begin{document}

\maketitle

\begin{abstract}
 In his paper \cite{teleman:articolo} Teleman proved that a cohomological field theory on the moduli space 
 $\overline{\mathcal{M}}_{g,n}$ of stable complex curves
 is uniquely determined by its restriction to the smooth part $\mathcal{M}_{g,n}$,
 provided that the underlying Frobenius algebra is semisimple. This leads to a classification of all
 semisimple cohomological field theories. The present paper, the outcome of the author's 
 master's thesis, presents Teleman's proof
 following the above-mentioned paper. The author claims no originality: the main motivation has been to keep the
 exposition as complete and self-contained as possible.
\end{abstract}

\section{Introduction}
In this work, we will classify the
 semisimple cohomological field theories (CohFT), following 
 Teleman's 2011 article \cite{teleman:articolo}. A cohomological field theory is the datum of a finite dimensional
 $\mathbb{C}$-vector space $A$ with a nondegenerate bilinear 
 form $\eta$ on it, a nonzero vector $\mathbf{1}\in A$,
 and for every $g$ and $n$ with
 $2g-2+n > 0$, a $\mathbb{C}$-linear homomorphism
 $$\overline{\Omega}_{g,n}: A^{\otimes n} \rightarrow H^{\bullet}(\overline{\mathcal{M}}_{g,n})$$
 satisfying certain axioms related to the geometrical structure of $\overline{\mathcal{M}}_{g,n}$ (the moduli
 space of stable complex projective curves with $n$ marked points), namely the presence of a natural $S_n$-action, of 
 sewing maps and of the forgetful map $\overline{\mathcal{M}}_{g,n} \rightarrow \overline{\mathcal{M}}_{g,n-1}$. More
 details will be given in Section 3.
 
 We define a Frobenius algebra structure on $A$ by imposing 
 $\overline{\Omega}_{0,3}(v_1 \otimes v_2 \otimes v_3) = \eta(v_1 \cdot v_2, v_3)$,
 so we get $\mathbf{1}$ as the neutral element for the multiplication.
 This algebra is said to be \emph{semisimple} if there exists an orthonormal basis $P_i$ for $\eta$ made of
 projectors, that is elements such that $P_i \cdot P_i = \theta_i^{-1} P_i$ for some
 nonzero complex numbers $\theta_i^{-1}$ and $P_i \cdot P_j = 0$ for $i \neq j$; in this
 case the CohFT is said to be semisimple. The classification will be restricted to this type of theories.
 One property of semisimple Frobenius algebras, that will be used in a crucial way, is that 
 the element $\alpha = \sum_i \theta_i^{-1}P_i$, called the \emph{Euler class} of the algebra, is invertible. 
 
 The classification will be carried out in three steps. First of all, we will consider the space 
 $\widetilde{\mathcal{M}}_{g,n}$ which is
 the $n$-torus bundle over $\mathcal{M}_{g,n}$ corresponding to assigning a
 tangent direction to the curve at each marked point, and we define
 $$\widetilde{\Omega}_{g,n}: A^{\otimes n} \rightarrow H^{\bullet}(\widetilde{\mathcal{M}}_{g,n})$$
 with the same axioms of a cohomological field theory, properly modified with a deformation of the sewing maps
 so to always have smooth curves. These theories are called \emph{fixed boundary theories} 
 and can be easily classified thanks to several fundamental results concerning the cohomology
 of the smooth part $\mathcal{M}_{g,n}$, namely Harer's stability theorem for $\mathcal{M}_g$,
 its extension to $\mathcal{M}_{g,n}$ by Looijenga,
 and Madsen-Weiss's theorem (former Mumford's conjecture) about the stable
 cohomology of $\mathcal{M}_g$.
 We will see that the answer for the first step is given by a homomorphism 
 $\widetilde{\Omega}^+ : A \rightarrow \mathbb{C}[\kappa_j]_{j\ge 1}$ of the form 
 $\widetilde{\Omega}^+ = \text{exp}(\sum_{j>0} \phi_j \kappa_j)$ 
 where the $\phi_j \in A^*$ are freely chosen co-vectors,
 and the $\kappa_j$'s are the $\kappa$-classes on the moduli space. For every choice of the $\phi_j$'s we have
 a theory by setting $\widetilde{\Omega}_{g,n}(v_1 \otimes \ldots \otimes v_n) = 
 \widetilde{\Omega}^+(\alpha^g \cdot v_1 \cdot \ldots \cdot v_n)|_{\widetilde{\mathcal{M}}_{g,n}}$.
 
 The second step deals with CohFTs on $\mathcal{M}_{g,n}$, again with axioms
 slightly modified to always have smooth curves. These are called \emph{free boundary theories}
 and in this case we get the classification by pulling back
 to $\widetilde{\mathcal{M}}_{g,n}$ (this procedure yields indeed a fixed boundary theory)
 and using the result of the first step. The classification in this case is 
 given by a fixed boundary theory and a $\psi$-valued endomorphism
 $R(\psi) \in \text{End}(A)[[\psi]]$ with $R(0) = \text{Id}$, satisfying the 
 \emph{symplectic condition} $$R(\psi)^* = R(-\psi)^{-1}$$ and such that
 $$ \text{log}\widetilde{\Omega}^+(v) = - \eta(\beta\text{log}(R(\psi)^{-1}\mathbf{1}),v) .$$
 The quantities involved are defined in Section 6.2.
 
 Under these conditions, the theory is given by
 $$\Omega_{g,n}(v_1 \otimes \ldots \otimes v_n) = 
 \widetilde{\Omega}^+(\alpha^g \cdot R(\psi_1)^{-1}(v_1) 
 \cdot \ldots \cdot R(\psi_n)^{-1}(v_n))|_{\mathcal{M}_{g,n}}$$
 where the $\psi_i$ are the $\psi$-classes at the marked points.

 The last step is to get
 the full theories on $\overline{\mathcal{M}}_{g,n}$, also called 
 Deligne-Mumford theories or nodal theories. The fact that for every smooth theory $\Omega_{g,n}$
 there is a nodal theory $\overline{\mathcal{M}}_{g,n}$ that restricts to $\Omega_{g,n}$ on the smooth part
 $\mathcal{M}_{g,n}$ will be an easy consequence of Givental's group action. However, the fact that 
 this nodal theory is \emph{unique} is not trivial at all. To show it, a new stratification of 
 $\overline{\mathcal{M}}_{g,n}$ is introduced.

 \subsection*{Acknowledgments}
 
 This work has been written on spring 2015 as a master's thesis at the Université Pierre et Marie Curie (Paris VI)
 under the supervision of Dimitri Zvonkine: I would like to thank him for his infinite patience and for having taught me
 all I know about this subject and about how to write a paper. I also thank the jury members Alessandro Chiodo and
 Julien Marché. I thank Alessandro Chiodo for very useful discussions and support about 
 my future as a PhD student. I would also like to thank the ENS for having provided
 the funds and the resources that have allowed me to complete my career as a master student.
 
\section{Basic notions}

We collect in this section all the basic notions that will be assumed throughout the paper. This is just
an expository section and contains no proofs. A good introduction can be found, for example, 
in \cite{arbarello:curves} or in \cite{zvonkine:notemoduli}. For the founding principles of moduli spaces,
see Mumford's book \cite{mumford:git}.

\subsection{Moduli spaces of stable curves}

A a Riemann surface $C$ with $n$ distinct marked points $x_1, \ldots, x_n$ is \emph{stable} if there is
only a finite number of automorphisms of $C$ that leave all the points $x_i$ fixed. It can be shown that
a Riemann surface of genus $g$ with $n$ marked point is stable if and only if $2g-2+n > 0$.
We will use the notation $\mathcal{M}_{g,n}$ to denote the coarse
moduli space of stable Riemann surfaces of genus $g$ and
$n$ marked points; roughly speaking, it is the class of all stable Riemann surfaces of genus $g$ with $n$ marked
points, modulo isomorphism. For example, $\mathcal{M}_{0,3}$ consists of just one point: any Riemann surface
of genus $0$ is isomorphic to $\mathbb{P}^1(\mathbb{C})$, and for any two sets consisting each of three distinct
points, there is an automorphism of $\mathbb{P}^1(\mathbb{C})$ that sends one set into the other one.
We will denote by $(C, x_1, \ldots, x_n) \in \mathcal{M}_{g,n}$ a stable curve, where
$C$ is a Riemann surface of genus $g$ and $x_1, \ldots, x_n \in C$. The space $\mathcal{M}_{g,n}$ exists only
in the case that $2g-2+n>0$; in this case, its complex dimension is $3g - 3 + n$.

We will write $\overline{\mathcal{M}}_{g,n}$ for the the \emph{Deligne-Mumford compactification} of
$\mathcal{M}_{g,n}$. It is a compact orbifold that has $\mathcal{M}_{g,n}$ as an open dense suborbifold.
The elements of $\overline{\mathcal{M}}_{g,n}$ are called \emph{stable curves} of genus $g$ with $n$ marked points.
It can be shown that any stable curve is a connected nodal curve, with only simple nodes.

For $(C,x_1, \ldots, x_n)$ a nodal curve of geometric genus $g$ with only simple nodes and $n$ marked points
distinct from the nodes, let $\nu : C' \rightarrow C$ be its normalization. Then each node $y_i$ has two
pre-images $y_i'$ and $y_i''$ on $C'$, while each marked point $x_i$ has one pre-image $x_i' \in C'$.
If $C = C_1 \cup \ldots \cup C_k$ where the $C_i$'s are the irreducible components of $C$, we have a corresponding
decomposition $C' = C_1' \sqcup \ldots \sqcup C_k'$ into \emph{connected} smooth components, and for each $i$ we have
$\nu|_{C_i}: C'_i \twoheadrightarrow C_i$.

Let $(C,x_1, \ldots, x_n)$ be a connected nodal curve with only simple nodes which are distinct from the marked points,
let $N \subseteq C$ be the set of its nodes, and $M = \{ x_1, \ldots x_n \}$ be the set of its marked points.
Each component $C_i'$ has natural markings, namely the $k'$ points of
$N_i' = \nu^{-1}(N)\cap C_i'$ and the $n'$ points
of $M_i' = \nu^{-1}(M) \cap C_i'$. Then $(C,x_1, \ldots, x_n)$ is stable 
(that is, it belongs to $\overline{\mathcal{M}}_{g,n}$) if and only if every $(C_i, N_i' \sqcup M_i')$ is a
stable Riemann surface, that is, it belongs to $\mathcal{M}_{g_i, n_i}$ for some $g_i$ and $n_i$ such that
$2g_i - 2 + n_i > 0$.
A point which is either a node or a marked point is called a \emph{special point}.

If $(C_1, x_1, \ldots, x_{n_1+1}) \in \overline{\mathcal{M}}_{g_1,n_1+1}$ 
and $(C_2, y_1, \ldots, y_{n_2+1}) \in \overline{\mathcal{M}}_{g_2,n_2+1}$, let 
$$s(C_1, C_2) = (C_1 \sqcup C_2)/ \sim $$
where $\sim$ is the equivalence relation in which $x_{n_1+1} \sim y_{n_2+1}$ and all the other points are 
 identified with nothing but themselves. Then $s(C_1, C_2)$ is a nodal curve and it can be easily shown that
 $(s(C_1,C_2), x_1, \ldots, x_{n_1}, y_1, \ldots y_{n_2})$ is in fact a stable curve of genus $g = g_1 + g_2$
 with $n = n_1 + n_2$ marked points. Therefore, $s$ induces a continuous map
 $$s: \overline{\mathcal{M}}_{g_1, n_1+1} \times \overline{\mathcal{M}}_{g_2,n_2+1} 
 \rightarrow \overline{\mathcal{M}}_{g,n}$$
which we will call the \emph{separating sewing map}, and the operation of taking $s$ will be also described as
\textquotedblleft sewing the curves in $\overline{\mathcal{M}}_{g_1, n_1+1}$ with those in 
$\overline{\mathcal{M}}_{g_2,n_2+1}$ along the points marked $n_1+1$ and $n_2+1$ respectively\textquotedblright.
Notice that $s$ is an injective map.

Analogously, if $(C, x_1, \ldots, x_{n+2}) \in \overline{\mathcal{M}}_{g-1,n+2}$, let
$$q(C) = C/\sim$$
where $\sim$ is the equivalence relation in which $x_{n+1} \sim x_{n+2}$ and all the other points are identified
with nothing but themselves. Then $q(C)$ is a nodal curve and it can be easily shown that
$(q(C), x_1, \ldots, x_n)$ is in fact a stable curve of genus $g$ with $n$ marked points. Therefore, $q$ induces 
a continuous map
$$q: \overline{\mathcal{M}}_{g-1,n+1} \rightarrow \overline{\mathcal{M}}_{g,n}$$
which we will call the \emph{non-separating sewing map}. Notice that $q$ is a $2$-fold covering map on its image.

Let us suppose that $2g-2+n > 0$ and let $(C, x_1, \ldots, x_{n+1}) \in \overline{\mathcal{M}}_{g,n+1}$.
If the curve $(C,x_1, \ldots, x_n)$ obtained
by cancelling the last marked point is stable, we define it to be $p(C, x_1, \ldots, x_{n+1})$.
However, $(C,x_1, \ldots, x_n)$ can also be not stable. This happens if the last marked point lies
on an irreducible component that has genus zero and only three special points. If this is the case, we call
the \emph{stabilization} of $(C,x_1, \ldots, x_n)$ the curve obtained by contracting the whole non-stable component
to a point. We define this stabilization to be $p(C,x_1, \ldots, x_n, x_{n+1})$. Therefore, $p$ induces a continuous
map
$$p: \overline{\mathcal{M}}_{g,n+1} \rightarrow \overline{\mathcal{M}}_{g,n}$$
called the \emph{forgetful map}, and the operation of taking $p$ will be also described as
\textquotedblleft forgetting the last marked point\textquotedblright.
Notice that $p$ is a proper map and it is surjective.

\subsubsection{Cohomology classes on $\overline{\mathcal{M}}_{g,n}$}

We denote by $H^{\bullet}(\overline{\mathcal{M}}_{g,n})$ the cohomology with complex coefficients of 
$\overline{\mathcal{M}}_{g,n}$ considered just as a topological space.
We use a similar notation for $\mathcal{M}_{g,n}$.

For each $(C, x_1, \ldots, x_n) \in \overline{\mathcal{M}}_{g,n}$, let $(T_{x_i}C)^*$ be the cotangent space 
to $C$ at $x_i$. Assigning this object to each curve of $\overline{\mathcal{M}}_{g,n}$ defines a line bundle
$$L_i \rightarrow \overline{\mathcal{M}}_{g,n}$$
which we will call, by abuse of language, the \textquotedblleft cotangent bundle at
the $i$-th marked point\textquotedblright.
Its Chern class is denoted by 
$$\psi_i := \text{c}(L_i) \in H^2(\overline{\mathcal{M}}_{g,n})$$
and we will call it the $\psi$-class at the $i$-th marked point.

Let us consider the forgetful map $p: \overline{\mathcal{M}}_{g,n+1} \rightarrow \overline{\mathcal{M}}_{g,n}$. 
Since it is a proper map, the push-forward $p_* : H^{\bullet}(\overline{\mathcal{M}}_{g,n+1}) \rightarrow
H^{\bullet-2}(\overline{\mathcal{M}}_{g,n})$ is well-defined. We define
$$\kappa_m = p_*(\psi_{n+1}^{m+1}) \in H^{2m}(\overline{\mathcal{M}}_{g,n})$$
and we will call it the $m$-th $\kappa$-class.

\subsubsection{Boundary strata}
Let us fix an element $C \in \overline{\mathcal{M}}_{g,n}$, and let $S_C \subseteq \overline{\mathcal{M}}_{g,n}$
be the set of curves which are diffeomorphic to $C$. Then $S_C$ is, by definition, the \emph{open boundary stratum}
corresponding to the topological type of $C$.
The topological closure $\overline{S_C}$ of an open boudary stratum is called a \emph{closed boundary stratum}.

If $C$ is a smooth curve, we get $S_C = \mathcal{M}_{g,n}$, therefore $\mathcal{M}_{g,n}$ is an open boundary
stratum, and we will call it the \emph{smooth stratum} of $\overline{\mathcal{M}}_{g,n}$.
By contrast, the complementary
$\overline{\mathcal{M}}_{g,n} \setminus \mathcal{M}_{g,n} = \partial \overline{\mathcal{M}}_{g,n}$ is called
the \emph{boundary} of $\overline{\mathcal{M}}_{g,n}$. Except for the smooth stratum, every open or closed boundary
stratum is a subset of the boudary $\partial \overline{\mathcal{M}}_{g,n}$ (this justifies 
the term \textquotedblleft boundary stratum\textquotedblright). Notice that, while a closed boundary stratum
is obviously a closed subset of $\overline{\mathcal{M}}_{g,n}$, an open boundary stratum which is not the
smooth stratum is \emph{never} an open subset of $\overline{\mathcal{M}}_{g,n}$. It can be shown that
open boundary strata are smooth suborbifolds of $\overline{\mathcal{M}}_{g,n}$, meaning that they have
no self intersections.

Let $C \in \overline{\mathcal{M}}_{g,n}$ and let $\nu : C' \rightarrow C$ be its normalization, with
$C' = C_1' \cup \ldots \cup C_k'$ the connected components, and let $C_i' \in \overline{\mathcal{M}}_{g_i,n_i}$.
Then it is clear
(one can prove it by induction on $k$) that $\nu$ is simply a sequence of
$s$ and $q$ maps that can be taken, without loss of generality, in order:
\begin{equation}
\label{sequenceofsewings}
\nu = q_1 \circ \ldots \circ q_l \circ s_1 \circ \ldots \circ s_m.
\end{equation}
Since each $s_i$ is injective and each $q_j$ is a covering map, we see that the sequence written above
induces a covering map from $\prod_{i=1}^k\mathcal{M}_{g_i,n_i}$ to $S_C$; let $F$ be the monodromy
group of this covering. Then we have
$$S_C \simeq (\prod_{i=1}^k\mathcal{M}_{g_i,n_i})/F.$$
Therefore we also get $\overline{S_C} \simeq (\prod_{i=1}^k\overline{\mathcal{M}}_{g_i,n_i})/F$.
From this description, it is clear that
the correspondence $S_C \mapsto \overline{S_C}$ is a bijection between the open and the closed boundary strata.

Let $N$ be the normal bundle of $S_C$ in $\overline{\mathcal{M}}_{g,n}$, and let $(x_i', x_i'')$ for
$i=1,\ldots, l+m$ be the pairs of points that are sewed together by the maps in the sequence (\ref{sequenceofsewings}).
Then it can be shown that 
$$N \simeq \bigoplus_{i=1}^{l+m}T_{x_i'}\otimes T_{x_i''}$$
where $T_{x_i'}$ and $T_{x_i''}$ are, respectively, the tangent bundles at the marked points $x_i'$ and $x_i''$.
Therefore, by definition of $\psi$-classes, its Chern class is
$$\text{c}(N) = -\sum_{i=1}^{l+m}(\psi_i' + \psi_i'').$$

\subsubsection{The stable range}

The \emph{stable range} of the cohomology of $\overline{\mathcal{M}}_{g,n}$ is the part of
$H^{\bullet}(\overline{\mathcal{M}}_{g,n})$ of degree at most $g/3$ (unless when the converse is clear,
all degrees, as long as dimensions and codimensions, are intended as complex). The reason for
this terminology will be more clear when we will state Harer's stability theorem in the next section.

In the stable range we have the following crucial theorems.

\begin{theorem}[Looijenga]
 Let $g \ge 2$ and $\mathcal{M}_g = \mathcal{M}_{g,0}$. Then if $d \le g/3$ we have 
 $$H^d(\mathcal{M}_{g,n}) \cong (H^{\bullet}(\mathcal{M}_g)[\psi_1, \ldots, \psi_n])_d$$
 where the subscript $d$ at the right-hand side indicates the component of degree $d$.
\end{theorem}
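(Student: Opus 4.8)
My plan is to prove the theorem by induction on $n$. The case $n=0$ is the tautology $H^d(\mathcal M_g)\cong(H^\bullet(\mathcal M_g))_d$, so assume $n\ge 1$ and that the statement holds for $n-1$.

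The inductive step rests on the forgetful map $\pi\colon\mathcal M_{g,n}\to\mathcal M_{g,n-1}$. Away from the boundary this is a fibre bundle whose fibre over $(C,x_1,\dots,x_{n-1})$ is the open surface $\Sigma=C\setminus\{x_1,\dots,x_{n-1}\}$; on fundamental groups it realizes the Birman exact sequence $1\to\pi_1(\Sigma)\to\mathrm{Mod}_{g,n}\to\mathrm{Mod}_{g,n-1}\to 1$, so that rationally $\mathcal M_{g,n}$ is the total space of a $K(\pi_1\Sigma,1)$-bundle over $\mathcal M_{g,n-1}$. I would then run the Leray spectral sequence $E_2^{p,q}=H^p(\mathcal M_{g,n-1};R^q\pi_*\mathbb C)\Rightarrow H^{p+q}(\mathcal M_{g,n})$. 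For $n\ge 2$ the fibre is an open surface, so only the rows $q=0,1$ survive: $R^0\pi_*\mathbb C=\mathbb C$, and $R^1\pi_*\mathbb C$ fits into a short exact sequence $0\to\mathbb V\to R^1\pi_*\mathbb C\to\underline{\mathbb C}^{n-2}\to 0$, where $\mathbb V$ is the standard weight-one symplectic local system pulled back from $\mathcal M_g$ and the trivial quotient records the residues around the $n-1$ punctures (for $n=1$ the fibre is the closed surface and there is an extra row $q=2$ carrying $\underline{\mathbb C}$).

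By the inductive hypothesis the bottom row is $E_2^{p,0}\cong(H^\bullet(\mathcal M_g)[\psi_1,\dots,\psi_{n-1}])_p$ in the stable range, so everything hinges on the $q=1$ row — that is, on $H^p(\mathcal M_{g,n-1};\mathbb V)$ and $H^p(\mathcal M_{g,n-1};\underline{\mathbb C}^{n-2})$ — together with the single differential $d_2\colon E_2^{p,1}\to E_2^{p+2,0}$. The key is that in the stable range (degrees $\le g/3$) the cohomology of $\mathcal M_{g,n-1}$ with symplectic coefficients is again controlled by $H^\bullet(\mathcal M_g)$ and the $\psi$-classes: this is the twisted form of Harer's stability theorem (homological stability of mapping class groups with coefficients in the symplectic local system, after Ivanov) together with the identification of the stable value, and it is the substantive content that Looijenga supplies. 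Granting it, one checks that $d_2$ out of the $q=1$ row is exactly the map producing the new generator $\psi_n$: concretely $\psi_n\in H^2(\mathcal M_{g,n})$ restricts to $0$ on every fibre, hence lies in the first step of the Leray filtration, and tracking its symbol through $E_\infty$ pins down both $d_2$ and the surviving quotient, so that the spectral sequence reassembles into $H^\bullet(\mathcal M_{g,n})\cong H^\bullet(\mathcal M_{g,n-1})[\psi_n]$ in the stable range. Plugging in the inductive hypothesis yields $(H^\bullet(\mathcal M_g)[\psi_1,\dots,\psi_n])_d$, and one verifies along the way that the isomorphism is the tautological one, $\psi_i\mapsto\psi_i$.

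I expect the main obstacle to be precisely this twisted-coefficient input: controlling $H^p(\mathcal M_{g,n-1};\mathbb V)$ in the stable range and the associated $d_2$ is where the genuinely hard tools enter, and it is what forces the hypothesis $d\le g/3$ rather than merely a statement about the limit. An essentially equivalent route, closer to Looijenga's own argument, is to first compute $H^\bullet$ of the $n$-fold fibre power of the universal curve over $\mathcal M_g$ (an iterated closed-surface bundle, handled by the same Leray mechanism and the same symplectic-coefficient vanishing), and then to obtain $\mathcal M_{g,n}$ by deleting the pairwise diagonals via the Gysin sequence; in the stable range the connecting maps vanish for degree-parity reasons and the diagonal contributions reorganize into the polynomial algebra on the $\psi_i$'s. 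Everything else — the base case, the shape of the spectral sequence, and the bookkeeping with rational/orbifold cohomology — is routine.
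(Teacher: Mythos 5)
The paper does not actually prove this theorem: it is imported as an external result, with a pointer to Corollary 2.18 of \cite{teleman:articolo} and to \cite{wahl:articolo} for the degree bound. So there is no argument of the paper to compare yours against; what you have written is an outline of how the result is established in the literature, and as an outline it identifies the right mechanism (the forgetful-map fibration and Birman exact sequence, the Leray spectral sequence with only the rows $q=0,1$ surviving for $n\ge 2$, and the reduction of everything to stable cohomology with coefficients in the symplectic local system $\mathbb{V}$).

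As a proof, however, there is a genuine gap, and you have located it yourself: the entire content is concentrated in the computation of $H^p(\mathcal{M}_{g,n-1};\mathbb{V})$ in the stable range, which you neither state nor prove --- you only remark that it is ``the substantive content that Looijenga supplies.'' Without at least the precise statement (stably, the cohomology of the mapping class group with coefficients in $V=H^1(\Sigma;\mathbb{Q})$ is a free module over the untwisted stable cohomology on explicit generators; this rests on Ivanov's twisted extension of \cite{harer:stability} and is the theorem of Looijenga being cited), the inductive step cannot close, and with it your argument collapses back onto the same citation the paper makes. The surrounding bookkeeping is also asserted rather than checked and is less routine than you suggest: with only two rows, every power $\psi_n^k$ with $k\ge 1$ must be accounted for inside $E_\infty^{\bullet,1}$, so recovering the polynomial ring $H^\bullet(\mathcal{M}_{g,n-1})[\psi_n]$ from a two-step filtration requires identifying $E_\infty^{p,1}$ with the degree-$(p+1)$ part of the ideal $(\psi_n)$ and resolving the multiplicative extension, which is exactly where the twisted computation is used; similarly, in the Gysin variant the claim that ``the connecting maps vanish for degree-parity reasons'' presupposes knowing the stable cohomology of the fibre powers of the universal curve, i.e.\ the same twisted input, since the local system $\mathbb{V}$ sits in odd cohomological degree of the fibre. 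None of this is wrong in spirit --- it is what the cited references do --- but as written the proposal is a correct roadmap rather than a proof.
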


\begin{proof}
 See, for example, Corollary 2.18 of \cite{teleman:articolo}. For the improvement of the bound on the degree, see 
 \cite{wahl:articolo}.
\end{proof}

The next theorem has been known as \textquotedblleft Mumford's conjecture\textquotedblright \ for a long time.

\begin{theorem}[Madsen-Weiss, 2005]
 Let $g \ge 2$ and let $\mathcal{M}_g = \mathcal{M}_{g,0}$. Then if $d \le g/3$ we have
 $$H^d(\mathcal{M}_g) \cong (\mathbb{C}[\kappa_j]_{j \ge 1})_d$$
 where the subscript $d$ at the right-hand side indicates the component of degree $d$.
\end{theorem}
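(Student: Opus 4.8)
The plan is not to give a self-contained proof of this landmark result, which lies far beyond the scope of the present exposition, but to recall the shape of the argument and to point to the literature. The starting point is to replace $\mathcal{M}_g$ by the classifying space $B\Gamma_g$ of the mapping class group of a closed genus $g$ surface: since $\mathcal{M}_g$ is rationally a $K(\Gamma_g,1)$, the two spaces have the same cohomology with $\mathbb{C}$-coefficients. One then invokes Harer's stability theorem, which provides isomorphisms $H^d(B\Gamma_g)\cong H^d(B\Gamma_{g+1})$ as soon as $d$ lies in the stable range; in particular $H^d(\mathcal{M}_g)$ is, for $d\le g/3$, canonically identified with $H^d(B\Gamma_\infty)$, where $B\Gamma_\infty$ is the colimit over the stabilization maps. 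This is precisely why the conclusion can only be claimed in the stable range.

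The core of the matter is the Madsen--Weiss theorem in its homotopy-theoretic form: after group completion, $\mathbb{Z}\times B\Gamma_\infty^{+}$ is homotopy equivalent to the infinite loop space $\Omega^{\infty}\mathbf{CP}^{\infty}_{-1}$ of the Madsen--Tillmann spectrum, the Thom spectrum of the virtual bundle $-L$ over $\mathbb{CP}^{\infty}$ with $L$ the tautological line bundle. This is proved by a Pontryagin--Thom and parametrized surgery (``scanning'') argument relating families of genus $g$ surfaces to the cobordism category of $2$-manifolds; the most transparent route is the theorem of Galatius--Madsen--Tillmann--Weiss identifying the classifying space of the cobordism category with an explicit infinite loop space. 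Granting this equivalence, a rational computation finishes the job: $H^{\bullet}(\Omega^{\infty}_{0}\mathbf{CP}^{\infty}_{-1};\mathbb{C})$ is the free graded-commutative algebra on the reduced cohomology of $\mathbf{CP}^{\infty}_{-1}$, whose generators sit in even degrees $2,4,6,\dots$, and one checks that these generators restrict on each $B\Gamma_g$ to the Mumford--Morita--Miller classes, i.e.\ to the $\kappa_j$ as defined in Section 2 via the forgetful map. Restricting back from $B\Gamma_\infty$ to a single $\mathcal{M}_g$ in the stable range then yields the stated isomorphism $H^d(\mathcal{M}_g)\cong(\mathbb{C}[\kappa_j]_{j\ge1})_d$.

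The main obstacle is of course the Madsen--Weiss equivalence itself, and we make no attempt to reproduce it: we simply refer to the original paper of Madsen and Weiss, and to the later simplification due to Galatius, Madsen, Tillmann and Weiss, together with Harer's stability theorem for the passage to the stable range. The only genuinely elementary point that remains is the identification of the polynomial generators produced on the homotopy-theoretic side with the $\kappa$-classes; this is a routine comparison of the two standard descriptions of the MMM classes (one as $p_*(\psi^{j+1})$, the other as characteristic classes of the universal surface bundle), and we omit it.
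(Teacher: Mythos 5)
Your proposal is correct and, like the paper, ultimately defers to the literature: the paper's entire proof is a citation of Madsen--Weiss, and your text is an accurate outline of that cited argument (Harer stability to pass to $B\Gamma_\infty$, the equivalence with $\Omega^\infty\mathbf{CP}^\infty_{-1}$, the rational computation of the infinite loop space, and the identification of the generators with the $\kappa_j$). No gap to report; your version simply records more of the structure of the external result than the paper chooses to.
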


\begin{proof}
 See \cite{madsenweiss:limit}.
\end{proof}

\subsection{The tubular neighbourhood theorem}

Let $M$ be a smooth manifold and $X \subseteq M$ a closed submanifold of codimension $k$.
By \emph{tubular neighbourhood} of $X$ in $M$ we mean an open neighbourhood $T$ of $X$ in $M$,
endowed with a smooth map $\pi:T \rightarrow X$ with fibers homeomorphic to $B^k$,
the open real ball of dimension $k$.
We state the following technical theorem, which will be crucial in the sequel.

\begin{theorem}[Tubular neighbourhood theorem]
 Let $M$ be a smooth manifold and $X \subseteq M$ a closed submanifold. Let $\nu: N \rightarrow X$ be the
 normal bundle of $X$ in $M$. Then there exists a tubular neighbourhood $T \subseteq M$ of $X$ and an homeomorphism
 $\phi: N \xrightarrow{\sim} T$ such that $\pi \circ \phi = \nu$. Both $T$ and $\phi$ are unique, up to homotopy.
\end{theorem}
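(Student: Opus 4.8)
The plan is to prove the Tubular Neighbourhood Theorem in the standard way, using a Riemannian metric and the exponential map, and then to address uniqueness via a homotopy argument. First I would fix an auxiliary Riemannian metric $g$ on $M$ (such a metric exists by a partition-of-unity argument, and $M$ may be taken complete by multiplying $g$ by a suitable conformal factor, though completeness is not strictly needed locally). With respect to $g$, one identifies the normal bundle $N \to X$ with the $g$-orthogonal complement $TX^{\perp} \subseteq TM|_X$, and considers the exponential map $\exp: TM \supseteq U \to M$ restricted to $N$, i.e. the map $E: N \to M$, $E(x,v) = \exp_x(v)$, defined on a neighbourhood of the zero section. The first key step is to compute the differential of $E$ along the zero section: at a point $(x,0)$ one has $T_{(x,0)}N \cong T_xX \oplus N_x$, and $dE_{(x,0)}$ restricts to the identity on $T_xX$ and to the canonical isomorphism $N_x \xrightarrow{\sim} T_xX^{\perp}$ on the normal factor, hence is an isomorphism onto $T_xM$.

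The second key step is to upgrade this pointwise statement to an open-set statement. By the inverse function theorem, $E$ is a local diffeomorphism near each $(x,0)$; to get a diffeomorphism from a whole neighbourhood of the zero section onto an open set in $M$, I would use the following standard lemma: if $E$ is a local diffeomorphism near the zero section $Z \subseteq N$ and is injective on $Z$ (which it is, since $E|_Z$ is the inclusion $X \hookrightarrow M$), then there is an open neighbourhood $V$ of $Z$ in $N$ on which $E$ is a diffeomorphism onto its image $T := E(V)$. The proof of this lemma is a point-set argument: cover $X$ by opens on which $E$ is already injective, then shrink in the fibre directions to kill the finitely-many-looking overlaps, using local compactness of $X$; this is where one must be a little careful, and I expect this to be the main technical obstacle, since $X$ need not be compact. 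One then rescales fibrewise — replacing $V$ by the image of a fibre-preserving diffeomorphism $N \xrightarrow{\sim} \{v : |v| < \varepsilon(x)\}$ for a suitable continuous $\varepsilon: X \to (0,\infty)$ — to obtain a genuine vector-bundle-total-space model, so that composing with $E$ gives the desired homeomorphism $\phi: N \xrightarrow{\sim} T$ with $\pi \circ \phi = \nu$, where $\pi: T \to X$ is transported from $\nu: N \to X$.

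For the uniqueness statement up to homotopy, the plan is the following. Given two tubular neighbourhoods $(T_0, \phi_0)$ and $(T_1, \phi_1)$ with associated projections $\pi_i$, one wants a homotopy between them; the cleanest route is to show that any tubular neighbourhood is "isotopic" to the one built from a metric as above, and that the metric construction depends on the metric only up to homotopy because the space of Riemannian metrics is convex (hence contractible). Concretely: given an arbitrary tubular neighbourhood $\phi: N \to T$, the derivative of $\phi$ along the zero section gives a bundle automorphism of $N$ (after identifying, via $d\phi_{(x,0)}$, the normal directions), and using the linear structure one builds the standard "Alexander trick"-style isotopy $\phi_t(x,v) = \tfrac1t \phi(x, tv)$ for $t \in (0,1]$, extended continuously to $t=0$ by $d\phi_{(x,0)}$, which deforms $\phi$ through embeddings to a bundle map; one then interpolates the resulting bundle automorphisms and metrics. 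I would only sketch this, referring to a standard source (e.g. Hirsch's \emph{Differential Topology}, or Bott–Tu), since for the applications in this paper what matters is the \emph{existence} of $T$ and the fact that its homotopy type — and the homotopy class of $\pi: T \to X$, which is a homotopy equivalence with homotopy inverse the inclusion $X \hookrightarrow T$ — is canonical. The main obstacle throughout remains the non-compactness of $X$ in the injectivity-radius estimate; everything else is formal once that shrinking lemma is in hand.
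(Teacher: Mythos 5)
The paper does not actually prove this statement: it is quoted as a background result and the ``proof'' consists of a citation to Theorem 6.5 of Cannas da Silva's \emph{Lectures on Symplectic Geometry}, whose argument is precisely the exponential-map construction you outline. Your sketch of the existence part is the standard and correct one: fix a Riemannian metric, identify $N$ with $TX^{\perp}$, check that $dE$ is an isomorphism along the zero section, and then invoke the shrinking lemma (a local diffeomorphism that is injective on a closed subset is a diffeomorphism on some neighbourhood of it) together with a fibrewise rescaling onto an $\varepsilon(x)$-disc bundle; you correctly identify non-compactness of $X$ as the only delicate point, and the locally finite cover argument you indicate handles it. The uniqueness part is the one place where your write-up is genuinely only a sketch: the Alexander-trick isotopy $\phi_t(x,v)=\tfrac{1}{t}\phi(x,tv)$ is the right idea, but making it an isotopy \emph{through tubular neighbourhoods} (continuity at $t=0$, and the interpolation between the resulting bundle automorphisms) requires some care, and deferring to Hirsch is reasonable. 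Note also that the paper's statement (``unique up to homotopy'') is weaker than the standard conclusion (uniqueness up to isotopy of tubular neighbourhoods), and only the weaker homotopy-theoretic consequences --- that $\pi:T\to X$ is a homotopy equivalence and that the identification is canonical --- are used later in the paper, so your level of detail is adequate for the intended application.
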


\begin{proof}
 See Theorem 6.5 in \cite{ana:symplectic}.
\end{proof}

 \section{Cohomological field theories}

(Our definition and treatment of cohomological field theories will be in Kontsevich-Manin's style,
as in \cite{kontmanin:gromwit}. We will not use universal classes as in \cite{teleman:articolo}, but
it can be seen that the two definitions are equivalent.)

Let 
$$q: \overline{\mathcal{M}}_{g-1,n+2} \rightarrow \overline{\mathcal{M}}_{g,n}$$
be the map that sews the points labeled $n+1$ and $n+2$, and let
$$s: \overline{\mathcal{M}}_{g_1, n_1 + 1} \times \overline{\mathcal{M}}_{g_2,n_2+1} \rightarrow
\overline{\mathcal{M}}_{g,n} $$ 
be the map that sews together two surfaces at the points marked $n_1+1$ and $n_2 + 1$ respectively; here
$g = g_1 + g_2$ and $n = n_1 + n_2$.
Finally let 
$$p: \overline{\mathcal{M}}_{g,n+1}\rightarrow \overline{\mathcal{M}}_{g,n}$$
be the map that forgets the marked point labeled $n+1$ and then stabilizes the resulting curve.

\subsection{Nodal theories}

We start from nodal theories since they have the simplest sewing axioms.

\begin{definition}
\label{nodaltheories}
Let $A$ be a finite-dimensional $\mathbb{C}$-vector space, $\mathbf{1} \in A$ a non-zero vector,
and $\eta = \eta_{\mu \nu}e^{\mu}\otimes e^{\nu}$ 
a nondegenerate symmetric bilinear form on $A$ with inverse bi-vector $\eta^{\mu \nu} e_{\mu} \otimes e_{\nu}$.
A \emph{nodal cohomological field theory} (nodal CohFT) with base $(A,\eta, \mathbf{1})$
is a set of linear maps

$$\overline{\Omega}_{g,n}: A^{\otimes n} \rightarrow H^{\bullet}(\overline{\mathcal{M}}_{g,n})$$
for every nonnegative
integer numbers $n$, $g$ such that $2g - 2 + n > 0$, that satisfy the following conditions:

\begin{enumerate}
 \item $\overline{\Omega}_{g,n}$ is $S_n$-equivariant for every $g$, $n$;
 \item $\overline{\Omega}_{0,3}(\mathbf{1} \otimes u \otimes v) = \eta(u,v)$, for every $u,v \in A$;
 \item $q^{*}\overline{\Omega}_{g,n}(v_1 \otimes \ldots \otimes v_n) = 
 \eta^{\mu \nu}\overline{\Omega}_{g-1, n+2}(v_1 \otimes \ldots \otimes v_n \otimes e_{\mu} \otimes e_{\nu})$,
 for any $v_1, \ldots, v_n \in A$;
 \item $s^{*}\overline{\Omega}_{g,n}(v_1 \otimes \ldots \otimes v_n) = 
 \eta^{\mu \nu} \overline{\Omega}_{g_1, n_1 + 1}(v_1 \otimes \ldots \otimes v_{n_1} \otimes e_{\mu}) \times
 \overline{\Omega}_{g_2, n_2 + 1}( v_{n_1 + 1} \otimes \ldots \otimes v_n \otimes e_{\nu})$,
 for any $v_1, \ldots, v_n \in A$;
 \item $p^{*}\overline{\Omega}_{g,n}(v_1 \otimes \ldots \otimes v_n) 
 = \overline{\Omega}_{g,n+1}(v_1 \otimes \ldots \otimes v_n
 \otimes \mathbf{1})$,
 for any $v_1, \ldots, v_n \in A$.
\end{enumerate}
The summation over $\mu$ and $\nu$ is always assumed.

\end{definition}

\begin{remark}
\label{equivariance}
We explain better the meaning of $S_n$-equivariance. Let $\rho \in S_n$ be a permutation of the
index set $\{ 1, \ldots, n\}$. Since 
we have an action of $S_n$ on $\overline{\mathcal{M}}_{g,n}$ by permutation of the marked points, we have an 
induced cohomology map $\rho^{*}: H^{\bullet}(\overline{\mathcal{M}}_{g,n})
\rightarrow H^{\bullet}(\overline{\mathcal{M}}_{g,n})$. The first axiom then means that for every $\rho \in S_n$,
we must have
$$ \overline{\Omega}_{g,n}(v_{\rho(1)} \otimes \ldots \otimes v_{\rho(n)}) = 
\rho^{*}\overline{\Omega}_{g,n}(v_1 \otimes \ldots \otimes v_n)$$
\end{remark}

\subsection{Fixed boundary theories}

We now define fixed boundary theories. They are smooth theories, meaning that they deal with the moduli space of
smooth Riemann surfaces.

\begin{notation}
\label{torusbundle}

For every $g$ and $n$ such that $2g-2+n>0$, 
let $I \subseteq \{1, \ldots, n \}$ and let
$$\pi_I : \widetilde{\overline{\mathcal{M}}^I}_{g,n} \rightarrow \overline{\mathcal{M}}_{g,n}$$
be the torus bundle defined as follows.

For a complex vector space $V$, we call $S(V)$ its
\emph{spherization}, defined by 
$$S(V) = (V\setminus \{0\})/{\mathbb{R}^+}.$$
 If $C \in \overline{\mathcal{M}}_{g,n}$
with marked points $x_1, \ldots, x_n$,
then the fiber of $\pi_I$ at $C$ is $\prod_{i \in I}S(T_{x_i}C)$ where
each $T_{x_i}C$ is the complex tangent line to $C$
at the point $x_i$.  

If $I = \{ 1, \ldots, n \}$ we will simply write $\widetilde{\overline{\mathcal{M}}}_{g,n}$ for
$\widetilde{\overline{\mathcal{M}}^I}_{g,n}$ and $\pi$ for $\pi_I$.

We also write $\pi^I: \widetilde{\overline{\mathcal{M}}}_{g,n} 
\rightarrow \widetilde{\overline{\mathcal{M}}^I}_{g,n}$
for the $\mathbb{T}^{n-|I|}$-bundle whose fiber at each point of $\pi_I^{-1}(C)$ is
$\prod_{i \notin I} S(T_{x_i}C)$.

We write $\widetilde{\mathcal{M}}_{g,n}^I$ for the restriction of $\pi_I$ to $\mathcal{M}_{g,n}$.
\end{notation}

\begin{remark}
\label{action}
Each $S(T_{x_i}C)$ is a circle bundle endowed with a natural action of $S^1$, the
set of complex numbers with norm $1$, defined by 
$$z \cdot [v] = [zv]$$
for $z \in S^1$ and $[v] \in S(T_{x_i}C)$.
Let $I = \{i_1, \ldots, i_{|I|}\}$, let
$(C,x_1, \ldots x_n) \in \mathcal{M}_{g,n}$ and $[v_{i_j}] \in S(T_{x_{i_j}}C)$. We 
have an action of $\mathbb{T}^{|I|}=(S^1)^{|I|}$ on $\widetilde{\mathcal{M}}^I_{g,n}$
defined by
$$ (z_1, \ldots, z_{|I|}) \cdot (C, [v_{i_1}], \ldots, [v_{i_{|I|}}]) =
(C,[z_1v_{i_1}],\ldots,[z_{|I|}v_{i_{|I|}}]).$$
Thus $\pi_I$ is a principal $\mathbb{T}^{|I|}$-bundle.
\end{remark}

\begin{remark} It is clear that this bundle can be thought 
of as a way of \textquotedblleft attaching a tangent direction to each marked point\textquotedblright , thus
$\widetilde{\mathcal{M}}_{g,n}$ is the moduli space
of Riemann surfaces with marked points \emph{and} tangent directions
for each of them.
\end{remark}

\begin{definition}
If $I \subseteq \{ 1, \ldots, n\}$ and $i \in I$, the point $x_i$ is a \emph{framed point} of
$\widetilde{\mathcal{M}}_{g,n}^I$; if $i \notin I$,
the point $x_i$ is a \emph{free point}.
\end{definition}

Let $I_1 = \{ 1, 2, \ldots, n_1 \}$ and $I_2 = \{1, 2, \ldots, n_2 \}$ and let
$$\tilde{s} : \widetilde{\mathcal{M}}^{I_1}_{g_1,n_1+1}
\times \widetilde{\mathcal{M}}^{I_2}_{g_2, n_2+1} \rightarrow 
\widetilde{\overline{\mathcal{M}}}_{g,n}$$
be the map that sews the points marked $n_1 + 1$ and $n_2+1$ respectively on the first and second curve. Note that
these points are free, 
so that we do not have to worry about what it means sewing two framed points yet.
Let $\widetilde{S}$ be the image of $\tilde{s}$.

\begin{remark}
\label{codim1}
This image
$\widetilde{S} \subseteq \widetilde{\overline{\mathcal{M}}}$ is a boundary stratum of codimension $1$.
In fact, if $s$ is the usual sewing map,
we can draw the following commutative diagram:

\[
\begin{CD}
\widetilde{\mathcal{M}}^{I_1}_{g_1,n_1+1} \times \widetilde{\mathcal{M}}^{I_2}_{g_2, n_2+1} 
@>\tilde{s}>> \widetilde{\overline{\mathcal{M}}}_{g,n} \\
@V{\pi_{I_1}\times \pi_{I_2}}VV @VV{\overline{\pi}}V \\
{\mathcal{M}}_{g_1,n_1+1} \times {\mathcal{M}}_{g_2, n_2+1} @>>s> \overline{\mathcal{M}}_{g,n} \\
\end{CD}
\]
thus $\widetilde{S}$ is nothing but
the pre-image $\overline{\pi}^{-1}(S)$ of the image $S$ of $s$, which is indeed a boundary stratum of codimension $1$, and taking the
pre-image along a fiber bundle does not change the codimension.
\end{remark}

\begin{notation}
\label{tubes}
We denote by $\nu: \widetilde{N} \rightarrow \widetilde{S}$ the normal bundle to
$\widetilde{S} = \text{Im}(\tilde{s})$. By the tubular neighbourhood theorem, there exists
a tubular neighbourhood (unique up to isotopy) $Y$ of $\widetilde{S}$
and a homeomorphism $\phi: Y \rightarrow \widetilde{N}$
that commutes with the respective inclusions. Thus we get a map $\nu \circ \phi : Y \rightarrow \widetilde{S}$.
As a consequence of the tubular neighbourhood theorem, the boundary $\partial Y$
is homeomorphic to a subspace of $Y$ by means of a homeomorphism $\psi$
such that $\nu\circ\phi\circ \psi = \nu \circ \phi$. Therefore under the identification $\phi$, $\partial Y$
corresponds to a circle subbundle $\widetilde{\partial N}$ of $\widetilde{N}$.
In the sequel, we suppose to have fixed $Y$ and $\phi$, and we forget about the homeomorphism: we will
write $\widetilde{N}$ to denote both the normal bundle to $\widetilde{S}$ and the associated tubular
neighbourhood $Y$. We therefore write $\nu$ to denote both the normal bundle map, and the map that
we have previously written $\nu \circ \phi$. With $\widetilde{\partial N}$ we will mean both
the circle subbundle of $\widetilde{N}$ and the circular neighbourhood of $\widetilde{S}$ that we have previously
denoted by $\partial Y$.
\end{notation}

\begin{lemma}
\label{normbundle}
Let $S^1$ act on $\widetilde{\mathcal{M}}_{g_1, n_1+1}$ at the $(n_1+1)$-th marked point and on
$\widetilde{\mathcal{M}}_{g_2, n_2+1}$ at the $(n_2+1)$-th marked point respectively, as in Remark \ref{action}.
Let $\nu:\widetilde{\partial N} \rightarrow \widetilde{S}$ be the circle subbundle of $\widetilde{N}$
as in Notation \ref{tubes}, and let
$\widetilde{\mathcal{M}}_{g_1, n_1+1} \times_{S^1} \widetilde{\mathcal{M}}_{g_2, n_2+1}$ be the quotient
of $\widetilde{\mathcal{M}}_{g_1,n_1+1} \times \widetilde{\mathcal{M}}_{g_1,n_2+1}$ by the identification
$$(C'([v_1], \ldots, [v_{n_1}] ,[z v']), C''([v_{n_1+1}, \ldots, [v_n], [v'']))
\sim $$
$$\sim(C'([v_1], \ldots [v_{n_1}], [v'] , C''([v_{n_1+1}, \ldots, [v_n], [zv''])).$$
Then the map 
$$f : \widetilde{\mathcal{M}}_{g_1, n_1+1} \times_{S^1} \widetilde{\mathcal{M}}_{g_2, n_2+1}
\rightarrow \widetilde{S}$$
that forgets the tangent directions and sews the points marked $n_1+1$ and $n_2+1$ respectively,
is a bundle map isomorphic to $\nu$.
\end{lemma}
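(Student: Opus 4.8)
The plan is to identify the normal bundle $\widetilde{N}\to\widetilde S$ explicitly and to compare it with the bundle $f$. Recall from the discussion of boundary strata that, for the ordinary sewing map $s$, the normal bundle of the image stratum $S$ is $T_{x'_{n_1+1}}\otimes T_{x'_{n_2+1}}$, the tensor product of the tangent lines at the two points that get glued. The first step is to lift this computation to $\widetilde{\overline{\mathcal M}}_{g,n}$: using the commutative diagram of Remark \ref{codim1}, in which $\widetilde S=\overline\pi^{-1}(S)$ is the pullback of $S$ along the torus bundle $\overline\pi$, the normal bundle $\widetilde N$ of $\widetilde S$ is the pullback $\overline\pi^*N$ of the normal bundle of $S$. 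Concretely, a point of $\widetilde S$ records a nodal curve of the sewn topological type together with tangent directions at all $n$ (free, external) marked points, and the normal direction is still governed by the smoothing parameter of the node, i.e. by $T_{x'_{n_1+1}}\otimes T_{x'_{n_2+1}}$; the framing data at the external points plays no role in the normal direction.

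Next I would describe the circle subbundle $\widetilde{\partial N}$ explicitly. A nonzero element of the line $T_{x'_{n_1+1}}\otimes T_{x'_{n_2+1}}$ is, up to positive scaling, the same as a pair of tangent directions $([v'],[v''])$ at the two node-branches modulo the diagonal $S^1$-action $([v'],[v''])\mapsto([zv'],[zv''])$ — because rescaling $v'$ by $z$ and $v''$ by $z^{-1}$ leaves $v'\otimes v''$ fixed, while rescaling both by $z$ multiplies $v'\otimes v''$ by $z^2$, and quotienting by $\mathbb R^+$ leaves exactly the $S^1$ worth of choices recorded by $([v'],[v''])$ modulo the diagonal action. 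This is precisely the description of the fiber of $f$: an element of $\widetilde{\mathcal M}_{g_1,n_1+1}\times_{S^1}\widetilde{\mathcal M}_{g_2,n_2+1}$ is a pair of smooth framed curves with framings $[v']$, $[v'']$ at the points to be glued, modulo the relation $([zv'],[v''])\sim([v'],[zv''])$, which is the same diagonal-action quotient. So I would write down the map $f$ sending such a class to the point of $\widetilde{\partial N}\subseteq\widetilde N$ whose base point is the sewn curve (with the external framings carried along unchanged) and whose normal coordinate is $[v'\otimes v'']$, and check it is well-defined, a homeomorphism on each fiber, and commutes with the projections to $\widetilde S$.

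Finally I would promote this fiberwise identification to a bundle isomorphism. This is mostly bookkeeping: both $f$ and $\nu$ are maps to $\widetilde S$ whose fibers are circles, the correspondence above is continuous and continuously invertible (it is algebraic in the tangent data), and it is equivariant for the residual $S^1$-scaling, so it extends to an isomorphism of circle bundles; extending from the circle subbundle $\widetilde{\partial N}$ to all of $\widetilde N$ is then the standard passage from a circle bundle to its associated line/disk bundle. The main obstacle I expect is not any single hard argument but getting the identifications consistent: one must check that the $S^1$-quotient appearing in the definition of $\widetilde{\mathcal M}_{g_1,n_1+1}\times_{S^1}\widetilde{\mathcal M}_{g_2,n_2+1}$ matches, on the nose, the $S^1$ that appears when one spherizes the tensor-product line $T_{x'_{n_1+1}}\otimes T_{x'_{n_2+1}}$ (note the factor-of-$2$ subtlety in the diagonal action above, which must be reconciled with the chosen trivialization of $\widetilde{\partial N}$ coming from the tubular-neighbourhood theorem in Notation \ref{tubes}), and that this is compatible with how the sewing map $\tilde s$ records the external framings. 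Once the dictionary between "smoothing parameter of a node" and "pair of tangent directions at the two branches modulo diagonal rotation" is set up carefully, the rest is routine.
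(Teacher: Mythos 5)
Your proposal is correct and takes essentially the same route as the paper: identify $\widetilde N$ as the pullback $\overline\pi^{*}N$, so that its fiber over $\tilde s(C',C'')$ is $T_{x_{n_1+1}}C'\otimes T_{x_{n_2+1}}C''$, and then match the spherization of this line with pairs of framings at the two branches modulo the identification in the statement (the paper writes the same correspondence in the opposite direction, from $\widetilde N$ to the quotient). The only slip is terminological: the circle you quotient by is the \emph{anti}-diagonal action $([v'],[v''])\mapsto([zv'],[z^{-1}v''])$ --- the stabilizer of $v'\otimes v''$, which is exactly what the relation $([zv'],[v''])\sim([v'],[zv''])$ generates --- and not the diagonal action you wrote; once this is fixed, the factor-of-two subtlety you flag disappears, since $(v',v'')\mapsto v'\otimes v''$ descends to a genuine bijection from the quotient of $S(T')\times S(T'')$ onto $S(T'\otimes T'')$.
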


\begin{proof}
Since by Remark \ref{codim1}, $\widetilde{N} = \pi^*N $ where
$N$ is the normal bundle of $S$ in $\overline{\mathcal{M}}_{g,n}$,
 we see that if $C = \tilde{s}(C', C'') \in \widetilde{S}$ then 
the fiber of $\widetilde{N}$ at $C$
is $T_{x_{n_1+1}}C' \otimes_{\mathbb{C}} T_{x_{n_2+1}}C''$. 

We define a map $ \widetilde{N} \rightarrow \widetilde{\mathcal{M}}_{g_1, n_1+1}
\times_{S^1} \widetilde{\mathcal{M}}_{g_2, n_2+1} $ by writing
$$ (\tilde{s}(C',C''), [v_1], \ldots, [v_n], v'\otimes v'') \mapsto
((C', [v_1], \ldots, [v_{n_1}], [v']), (C'', [v_{n_1+1}], \ldots, [v_n], [v'']))$$ 
This maps is surjective and becomes bijective when restricted to $\widetilde{\partial N}$.
The fact that this gives an isomorphism
between the two bundles is clear by the definition of this map.
\end{proof}

\begin{proposition}
\label{smoothing}
There exists a \emph{sewing-smoothing} map
$$ \sigma : \widetilde{\mathcal{M}}_{g_1, n_1+1} \times \widetilde{\mathcal{M}}_{g_2, n_2+1}
\rightarrow \widetilde{\mathcal{M}}_{g,n}$$
and a circular neighbourhood $\nu: \widetilde{\partial N}\rightarrow \widetilde{S}$  as in
Notation \ref{tubes}, such that
\begin{enumerate}
\item $\text{\emph{Im}}(\sigma) \subseteq \widetilde{\partial N}$,
\item $\nu\circ \sigma=\tilde{s}\circ (\pi^{I_1} \times \pi^{I_2})$.
\end{enumerate}
\end{proposition}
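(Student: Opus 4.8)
The plan is to construct $\sigma$ explicitly by "opening up the node" of a sewn nodal curve, using a chosen local coordinate at each sewing point, and then to check that the image lands in the prescribed circular neighbourhood $\widetilde{\partial N}$ via the identification furnished by Lemma \ref{normbundle}. Concretely, given $(C', [v_1], \ldots, [v_{n_1}], [v']) \in \widetilde{\mathcal{M}}_{g_1, n_1+1}$ and $(C'', [v_{n_1+1}], \ldots, [v_n], [v'']) \in \widetilde{\mathcal{M}}_{g_2, n_2+1}$, the framing directions $[v']$ and $[v'']$ at the two points to be sewn single out (up to a positive real scalar, which will be absorbed) local holomorphic coordinates $z'$, $z''$ vanishing at $x_{n_1+1}$, $x_{n_2+1}$; one then glues $C'$ and $C''$ along the annuli $\{0 < |z'| < 1\}$ and $\{0 < |z''| < 1\}$ via $z' z'' = \epsilon$ for a fixed small $\epsilon > 0$, obtaining a genuine smooth curve $C_\epsilon$ of genus $g$ with $n$ marked points and still $n$ framings (the framings at the remaining marked points are untouched). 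This defines $\sigma$; property (2) is then essentially tautological, because forgetting all framings and letting $\epsilon \to 0$ recovers $\tilde s$ applied to the underlying framed-boundary data, i.e. $\nu \circ \sigma = \tilde s \circ (\pi^{I_1} \times \pi^{I_2})$ once we recall from Notation \ref{tubes} that $\nu$ is the retraction of the tubular neighbourhood onto $\widetilde{S}$.

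The first real step is to make the gluing coordinate-independent enough to descend to moduli: the curve $C_\epsilon$ depends a priori on the choice of local coordinates $z', z''$ extending the framings, but its isomorphism class depends only on the $1$-jets, i.e. only on $[v']$ and $[v'']$ up to the $S^1$-ambiguity, and moreover only on the product of the two framings — which is exactly the equivalence relation $\times_{S^1}$ appearing in Lemma \ref{normbundle}. I would invoke the standard plumbing construction (as in the deformation theory of nodal curves) to assert that $C_\epsilon$ is well defined up to isomorphism and that, as $\epsilon$ ranges over $(0, \epsilon_0)$ and the framings range over their torus, the resulting curves sweep out precisely a punctured tubular neighbourhood of $\widetilde{S}$ in $\widetilde{\overline{\mathcal{M}}}_{g,n}$, with the "angular" part of the normal direction recorded by $[v' \otimes v''] \in S(T_{x_{n_1+1}}C' \otimes T_{x_{n_2+1}}C'')$. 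This is the content that identifies the image with $\widetilde{\partial N}$ under the isomorphism $f$ of Lemma \ref{normbundle}, giving property (1).

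The matching of the two descriptions of the normal bundle is where I expect the main work. I would set $\epsilon$ to be a fixed value in $(0,\epsilon_0)$, so that $\text{Im}(\sigma)$ is literally a section-like slice of the tubular neighbourhood at constant "radius"; then, shrinking $Y$ if necessary, I can arrange that this slice is exactly the circle subbundle $\widetilde{\partial N}$ of Notation \ref{tubes}. The plumbing family over $\{0 < |\epsilon| < \epsilon_0\}$ furnishes precisely the homeomorphism $\phi$ from the tubular neighbourhood theorem, and under it the radius-$\epsilon$ sphere bundle is $\widetilde{\partial N}$; the freedom "unique up to isotopy" in Notation \ref{tubes} is what lets me choose $Y$, $\phi$ compatibly with the plumbing rather than abstractly. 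The delicate point is orbifold-theoretic: near $\widetilde{S}$ the space $\widetilde{\overline{\mathcal{M}}}_{g,n}$ is an orbifold, the monodromy group $F$ of Remark/Section on boundary strata may act nontrivially, and one must check that the plumbing construction is equivariant so that the slice descends correctly — this is exactly why the target of $f$ in Lemma \ref{normbundle} is the quotient $\widetilde{\mathcal{M}}_{g_1, n_1+1} \times_{S^1} \widetilde{\mathcal{M}}_{g_2, n_2+1}$ and not a naive product. Once that compatibility is in place, property (2) follows because $\nu \circ \sigma$ is by construction the map that forgets the framings at the two sewn points and performs the nodal sewing $\tilde s$, precomposed with forgetting the remaining framings, i.e. $\tilde s \circ (\pi^{I_1} \times \pi^{I_2})$.
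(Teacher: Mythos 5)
Your construction is correct in outline but takes a more geometric route than the paper. The paper's proof is essentially one line: it defines $\sigma$ as the composite of the quotient map $\widetilde{\mathcal{M}}_{g_1,n_1+1}\times\widetilde{\mathcal{M}}_{g_2,n_2+1}\to\widetilde{\mathcal{M}}_{g_1,n_1+1}\times_{S^1}\widetilde{\mathcal{M}}_{g_2,n_2+1}$ with the abstract isomorphism of Lemma \ref{normbundle} onto $\widetilde{\partial N}$, after which properties (1) and (2) are tautological; no actual curves are ever produced. You instead build $\sigma$ by plumbing ($z'z''=\epsilon$) and then argue that the resulting slice can be arranged, using the isotopy freedom in the tubular neighbourhood theorem, to coincide with $\widetilde{\partial N}$. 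What your approach buys is a genuine geometric description of the sewing-smoothing map (the smoothed curve is exhibited, not just postulated), and it makes visible why the $S^1$-quotient and the orbifold monodromy enter; what it costs is exactly the extra matching work you identify in your third paragraph, which the paper avoids entirely by never leaving the abstract normal bundle.

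One inaccuracy to repair: the isomorphism class of the plumbed curve $C_\epsilon$ for a \emph{fixed} $\epsilon>0$ does \emph{not} depend only on the $1$-jets of $z'$ and $z''$; only the derivative of the plumbing family at $\epsilon=0$ (i.e.\ the induced identification of the normal line with $T_{x_{n_1+1}}C'\otimes T_{x_{n_2+1}}C''$) is canonical. So $\sigma$ as you literally describe it is not well defined from the framings alone: you need either a global coherent choice of plumbing coordinates, or to declare that $\sigma$ is only defined up to homotopy (which suffices for pulling back cohomology classes, and is consistent with your appeal to the ``unique up to isotopy'' clause). This is a fixable gap, not a fatal one, but as written the key well-definedness claim in your second paragraph is overstated.
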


\begin{proof}
The map $\sigma$ is obtained
by composing the quotient map
$$ \widetilde{\mathcal{M}}_{g_1, n_1+1} \times \widetilde{\mathcal{M}}_{g_2, n_2+1} \rightarrow
\widetilde{\mathcal{M}}_{g_1, n_1+1} \times_{S^1} \widetilde{\mathcal{M}}_{g_2, n_2+1}$$
with the isomorphism of Lemma \ref{normbundle}
$$ \widetilde{\mathcal{M}}_{g_1, n_1+1} \times_{S^1} 
\widetilde{\mathcal{M}}_{g_2, n_2+1} \rightarrow \widetilde{\partial N}$$
The verification of properties $1$ and $2$ is immediate.

\end{proof}

To define a cohomological field theory, we must be able to deal with self-sewing maps as well. Thus, let
$J = \{1, \ldots, n \}$ and let
$$ \tilde{q} : \widetilde{\mathcal{M}}^J_{g-1,n+2} \rightarrow \widetilde{\overline{\mathcal{M}}}_{g,n}$$
be the map that sews together the points marked $n+1$ and $n+2$.
We stress again the fact that they are free points.
Let $\widetilde{T}$ be the image of $\tilde{q}$.
Then, as in Remark \ref{codim1}, $\widetilde{T}$ is a boundary stratum
of $\widetilde{\overline{\mathcal{M}}}_{g,n}$ of codimension $1$, and the proof is exactly the same.

\begin{notation}
\label{tubes1}
We call $\nu_q: \widetilde{P} \rightarrow \widetilde{T}$ the normal bundle of $\widetilde{T}$
and the relative tubular neighbourhood given by the tubular neighbourhood theorem. We call
$\widetilde{\partial P}$ be the boundary of $\widetilde{P}$ and the circular subbundle of $\nu_q$.
The justification for these identifications can be found in Notation \ref{tubes}.
\end{notation}

\begin{remark}
 Recall that we are talking about orbifolds: let us consider the vector bundle
  $\widetilde{P}' = T_{x_{n+1}} \otimes T_{x_{n+2}}$ on $\widetilde{\mathcal{M}}_{g-1,n+2}$.
 We have a $\mathbb{Z}_2$-action on $\widetilde{P}'$
 generated by
 $$((C,x_1, \ldots ,x_n, x_{n+1}, x_{n+2}), (u \otimes v)) \mapsto 
 ((C,x_1, \ldots ,x_n, x_{n+2}, x_{n+1}), (v \otimes u)).$$
  Then the normal bundle $\widetilde{P}$ to
 $\widetilde{T}$ is, by definition, the quotient of $\widetilde{P}'$ by this action. Its fiber over a curve 
 $C \in \widetilde{\mathcal{M}}_{g,n}$ is $T_{x_{n+1}}C' \otimes T_{x_{n+2}}C'$ where 
 $C' \in \widetilde{\mathcal{M}}_{g-1,n+2}$ is any of the two normalizations of $C$.
 The tubular neighbourhood theorem still holds for this orbifold version of the normal bundle.
\end{remark}

This is perhaps the best place to state, without proof, the following fundamental theorem.

\begin{theorem}[Harer's stability]
  Let $s_0 : \widetilde{\mathcal{M}}_{g,n} \rightarrow \widetilde{\mathcal{M}}_{g,n+1}$ be the map that sews
  together the curves of$\widetilde{\mathcal{M}}_{g,n}$ 
  and $\widetilde{\mathcal{M}}_{0,3}$ (which is a space consisting
  of just one point) along the points marked $n$ and $3$ respectively. Let
  $p: \widetilde{\mathcal{M}}_{g,n+1} \rightarrow \widetilde{\mathcal{M}}_{g,n}$ be the forgetful map. 
  Then, if $k \le g/3$, the induced map
  in cohomology
  $$s_0^* : H^k(\widetilde{\mathcal{M}}_{g,n+1}) \rightarrow H^k(\widetilde{\mathcal{M}}_{g,n})$$
  is an isomorphism, with inverse $p^*$.
\end{theorem}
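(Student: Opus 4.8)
We sketch a possible proof. The statement is a framed version of Harer's homological stability: under the standard identification of $\widetilde{\mathcal{M}}_{g,n}$ with the moduli space of genus-$g$ surfaces carrying $n$ parametrised boundary circles — the tangent directions playing the role of the boundary circles — the map $s_0$ becomes the usual ``glue in a pair of pants'' stabilisation, and the assertion is Harer's theorem with the range as improved in \cite{wahl:articolo}. Rather than reproduce the mapping-class-group argument, I would deduce everything from Looijenga's theorem, which is already at our disposal. The plan is to establish two facts: (a) the composite $p\circ s_0$ is homotopic to $\mathrm{id}_{\widetilde{\mathcal{M}}_{g,n}}$, whence $s_0^{*}\circ p^{*}=(p\circ s_0)^{*}=\mathrm{id}$ on $H^{\bullet}(\widetilde{\mathcal{M}}_{g,n})$; and (b) for every $m$ with $2g-2+m>0$ and every $k\le g/3$, one has $H^{k}(\widetilde{\mathcal{M}}_{g,m})\cong H^{k}(\mathcal{M}_{g})$, so that $\dim H^{k}(\widetilde{\mathcal{M}}_{g,m})$ is independent of $m$ in the stable range. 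Granting both, $p^{*}$ is a split monomorphism and $s_0^{*}$ a split epimorphism between spaces of equal finite dimension when $k\le g/3$, hence both are isomorphisms and are mutually inverse.

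For (b) I would run the Leray--Serre spectral sequence of the principal $\mathbb{T}^{m}$-bundle $\pi\colon\widetilde{\mathcal{M}}_{g,m}\to\mathcal{M}_{g,m}$ of Remark~\ref{action}. Since $\mathbb{T}^{m}$ is connected the coefficient system is trivial, so $E_2^{p,q}\cong H^{p}(\mathcal{M}_{g,m})\otimes\Lambda^{q}\mathbb{C}^{m}$, and the transgression carries each exterior generator $t_i$ to the Euler class of the $i$-th circle factor, which is $\pm\psi_i$ by the definition of the $\psi$-classes; extended as a derivation, $(E_2,d_2)$ is precisely the Koszul complex of $(\psi_1,\dots,\psi_m)$ on $H^{\bullet}(\mathcal{M}_{g,m})$. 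By Looijenga's theorem, in the stable range $H^{\bullet}(\mathcal{M}_{g,m})$ agrees with the polynomial algebra $H^{\bullet}(\mathcal{M}_{g})[\psi_1,\dots,\psi_m]$, in which $(\psi_1,\dots,\psi_m)$ is a regular sequence; therefore the Koszul homology vanishes in exterior degrees $q>0$ and equals $H^{\bullet}(\mathcal{M}_{g,m})/(\psi_1,\dots,\psi_m)\cong H^{\bullet}(\mathcal{M}_{g})$ for $q=0$. No higher differential can leave the bottom row, so $E_3=E_\infty$ and $H^{k}(\widetilde{\mathcal{M}}_{g,m})\cong H^{k}(\mathcal{M}_{g})$ in the stable range. (Because $d_2$ raises total degree by one, this uses Looijenga's theorem a little above degree $k$; this is harmless, the bound $g/3$ being conservative.) One even checks that the isomorphism is pullback along $\widetilde{\mathcal{M}}_{g,m}\to\mathcal{M}_{g,m}\to\mathcal{M}_{g}$, and since $s_0$ and $p$ commute with these projections this already yields the mutual inverseness without the dimension count; I would use whichever argument is cleaner to write.

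For (a) I would inspect the geometry of stable curves directly. The map $s_0$ sews the three-pointed sphere (the unique point of $\widetilde{\mathcal{M}}_{0,3}$) onto $(C,x_1,\dots,x_n)$ along $x_n$ and the point marked $3$; the last two marked points $x_n,x_{n+1}$ then lie on the new genus-$0$ component. Applying $p$ forgets $x_{n+1}$, so that component now carries only two special points (the node and $x_n$) and is contracted by the stabilisation, giving back $(C,x_1,\dots,x_{n-1},x_n)$ with unchanged tangent direction at $x_n$. When $s_0$ is taken in its sewing--smoothing form (Proposition~\ref{smoothing}) this identity holds only up to the choice of tubular neighbourhood, but in all cases $p\circ s_0$ is homotopic to the identity, e.g.\ by letting the smoothing parameter go to $0$.

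The main obstacle is step (b): all the content lies in the degeneration of the torus-bundle spectral sequence, which in turn rests on Looijenga's description of $H^{\bullet}(\mathcal{M}_{g,m})$ and on the regularity of the $\psi$-classes in the stable range. Step (a) is routine, and the closing observation — a split injection between finite-dimensional spaces of equal dimension is an isomorphism — is immediate.
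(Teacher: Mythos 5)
The paper does not actually prove this statement: it is imported wholesale with a citation to \cite{harer:stability}, so there is no in-paper argument to compare yours against step by step. What you propose is instead a derivation of the framed, marked-point version from Looijenga's theorem, which the paper also imports as a black box, and as such your argument is essentially sound: the Leray--Serre spectral sequence of the principal $\mathbb{T}^m$-bundle $\widetilde{\mathcal{M}}_{g,m}\to\mathcal{M}_{g,m}$ has $d_2$ given by the Koszul differential on the transgressed classes $\pm\psi_i$, these form a regular sequence on $H^{\bullet}(\mathcal{M}_g)[\psi_1,\dots,\psi_m]$ in the stable range, so the sequence collapses to $H^{k}(\widetilde{\mathcal{M}}_{g,m})\cong H^{k}(\mathcal{M}_g)$ there; combined with $p\circ s_0\simeq\mathrm{id}$ this gives the mutual-inverse statement. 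What each route buys: the paper's citation is the honest attribution (the theorem is genuinely Harer's), while your argument shows that, \emph{granting} the two results the paper already black-boxes (Looijenga and, implicitly, the unframed stability underlying it), the precise framed statement used here needs no further input --- which is more informative for a self-contained exposition. Two caveats you should make explicit. First, the logical order is inverted relative to the literature: Looijenga's theorem is itself proved from Harer's stability for surfaces with boundary, so you have not given an independent proof of stability, only a reduction of one cited statement to another; this is legitimate within the paper's framework but should be said. Second, two steps are asserted rather than proved: the degree bookkeeping at the edge of the stable range (the odd-degree exterior generators and the degree-raising $d_2$ mean you invoke Looijenga slightly above degree $k$, so the constant in the range can degrade and the bound $g/3$ should not be claimed verbatim without comment), and the homotopy $p\circ s_0\simeq\mathrm{id}$, which requires checking that the family of smoothed curves, after forgetting the extra point and stabilizing, varies continuously in $\widetilde{\mathcal{M}}_{g,n}$ and limits to the identity as the neck parameter degenerates --- true, but it deserves the plumbing coordinate written down rather than the phrase ``up to the choice of tubular neighbourhood.''
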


\begin{proof}
 See \cite{harer:stability}.
\end{proof}

By this theorem, we call the range of degree less than $g/3$ the \emph{stable range} of the cohomology
of $\overline{\mathcal{M}}_{g,n}$.

The proofs of the following lemma, corollary and proposition are very similar to those of
Lemma \ref{normbundle} and Proposition \ref{smoothing}
respectively, therefore they will be omitted.

\begin{lemma}
\label{autosew}
Let $(\widetilde{\mathcal{M}}_{g-1,n+2})_{S^1}$ be the quotient space of $\widetilde{\mathcal{M}}_{g-1,n+2}$
obtained by
 identifying $(C, [v_1], \ldots, [z\cdot v_{n+1}], [v_{n+2}])$
with $(C, [v_1], \ldots, [v_{n+1}], [z\cdot v_{n+2}])$
for $z \in S^1$ that acts by multiplication on $S(T_{x_i})$. Then the map
$$g: (\widetilde{\mathcal{M}}_{g-1,n+2})_{S^1} \rightarrow \widetilde{T}$$
that forgets the tangent directions and then applies $\tilde{q}$ is isomorphic to the circle subbundle 
$\nu_q: \widetilde{\partial P} \rightarrow \widetilde{T}$.
\end{lemma}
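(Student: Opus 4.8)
The plan is to mimic, almost verbatim, the argument used for Lemma \ref{normbundle}, with the only genuine difference being that we are now in the orbifold setting with the $\mathbb{Z}_2$-action exchanging the two preimages of the node. First I would recall from the remark following Notation \ref{tubes1} that the normal bundle $\widetilde{P}$ to $\widetilde{T}$ has fiber over a curve $C = \tilde{q}(C')$, with $C' \in \widetilde{\mathcal{M}}_{g-1,n+2}$ either of the two normalizations, equal to $T_{x_{n+1}}C' \otimes_{\mathbb{C}} T_{x_{n+2}}C'$; this is the $\mathbb{Z}_2$-quotient of the bundle $\widetilde{P}' = T_{x_{n+1}} \otimes T_{x_{n+2}}$ on $\widetilde{\mathcal{M}}_{g-1,n+2}$. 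As in Remark \ref{codim1}, one has $\widetilde{P} = \overline{\pi}^* P$ where $P$ is the normal bundle of $T$ in $\overline{\mathcal{M}}_{g,n}$, so the fiber description is consistent with forgetting all the (free) tangent directions.

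Next I would write down the comparison map. Define a map $\widetilde{P} \to (\widetilde{\mathcal{M}}_{g-1,n+2})_{S^1}$ by sending
$$(\tilde{q}(C'), [v_1], \ldots, [v_n], v' \otimes v'') \longmapsto (C', [v_1], \ldots, [v_n], [v'], [v'']),$$
where $v' \in T_{x_{n+1}}C' \setminus \{0\}$ and $v'' \in T_{x_{n+2}}C' \setminus \{0\}$ are any lifts of the given vector $v' \otimes v''$ in the fiber (recall $v' \otimes v''$ determines $v'$ and $v''$ only up to the scaling $(v', v'') \mapsto (\lambda v', \lambda^{-1} v'')$, which is exactly the relation defining $(\widetilde{\mathcal{M}}_{g-1,n+2})_{S^1}$ once we pass to spherizations, taking $\lambda \in S^1$). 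One checks this is well defined: the $\mathbb{C}^*$-ambiguity in the lift of $v' \otimes v''$ is killed by the spherization and the $S^1$-identification, and the $\mathbb{Z}_2$-ambiguity in choosing which normalization $C'$ to use is absorbed because swapping $C'$ for its other normalization simultaneously swaps $x_{n+1} \leftrightarrow x_{n+2}$ and $v' \leftrightarrow v''$, which is a tautological identity on the target. This map is evidently surjective, it is fiberwise linear (the fiber over $\tilde{q}(C')$ is $T_{x_{n+1}}C' \otimes T_{x_{n+2}}C'$ on the source and its spherization-plus-$S^1$-collapse on the target), it covers the identity on $\widetilde{T}$, and its restriction to the circle subbundle $\widetilde{\partial P}$ — where $v' \otimes v''$ is required to have unit norm in a chosen metric — is a bijection onto $(\widetilde{\mathcal{M}}_{g-1,n+2})_{S^1}$. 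Since the map $g$ in the statement is precisely "forget tangent directions, then sew", i.e.\ it corresponds under this identification to the bundle projection $\nu_q$, we conclude that $g$ is isomorphic to $\nu_q : \widetilde{\partial P} \to \widetilde{T}$.

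I expect the main obstacle to be purely bookkeeping rather than conceptual: one must be careful that the $\mathbb{Z}_2$-orbifold structure is handled coherently at every stage — in particular that the monodromy of the $2$-fold cover $\tilde{q}$ matches the exchange involution on $\widetilde{P}'$, so that the quotient $(\widetilde{\mathcal{M}}_{g-1,n+2})_{S^1}$ really maps onto $\widetilde{T}$ and not onto an intermediate cover. This is exactly the point where the analogy with Lemma \ref{normbundle} (which dealt with the \emph{separating} case, with no such involution) requires the extra input from the remark after Notation \ref{tubes1}. Everything else — the identification $\widetilde{P} = \overline{\pi}^* P$, well-definedness of the lift, and fiberwise linearity — is routine and is why the paper says the proof is omitted.
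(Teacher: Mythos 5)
Your proposal is correct and follows exactly the route the paper intends: the paper omits this proof, saying only that it is very similar to that of Lemma \ref{normbundle}, and your argument is precisely that proof transported to the non-separating case, with the identification $\widetilde{P} = \overline{\pi}^{*}P$ and the explicit comparison map $(\tilde{q}(C'), [v_1], \ldots, [v_n], v' \otimes v'') \mapsto (C', [v_1], \ldots, [v_n], [v'], [v''])$.

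One caution on the step you yourself single out as the main obstacle. The two lifts of a point of $\widetilde{P}$ corresponding to the two normalizations of the node give points of $(\widetilde{\mathcal{M}}_{g-1,n+2})_{S^1}$ that differ by the involution exchanging the labels $n+1$ and $n+2$; this is \emph{not} a tautological identity on $(\widetilde{\mathcal{M}}_{g-1,n+2})_{S^1}$ as defined, since that space is quotiented only by the anti-diagonal $S^1$-action and not by the label swap. The clean way to run your argument is to define the comparison map one level up, on the bundle $\widetilde{P}'$ over $\widetilde{\mathcal{M}}^J_{g-1,n+2}$: there it gives a genuine $\mathbb{Z}_2$-equivariant isomorphism between the circle subbundle $\widetilde{\partial P}'$ of $\widetilde{P}'$ and $(\widetilde{\mathcal{M}}_{g-1,n+2})_{S^1}$, and one then passes to $\mathbb{Z}_2$-quotients on both sides to reach $\widetilde{\partial P}$ over $\widetilde{T}$. (Taken literally, the map $g$ of the statement has two circles in each generic fiber, so the isomorphism with $\nu_q$ only holds after this quotient; this imprecision is already present in the paper's own statement and is the same $2$-to-$1$ bookkeeping that produces the $\frac{1}{2}q_*$ in Remark \ref{2covering}.) This does not change the substance of your argument, but the phrase \textquotedblleft tautological identity on the target\textquotedblright\ is the one sentence I would rewrite.
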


\begin{proposition}
\label{smoothing2}
There exists a sewing-smoothing map
$$ \tau : \widetilde{\mathcal{M}}_{g-1,n+2}
\rightarrow \widetilde{\mathcal{M}}_{g,n}$$
and a circular neighbourhood $\nu_q: \widetilde{\partial P} \rightarrow \widetilde{T}$ as in Notation \ref{tubes1},
such that
\begin{enumerate}
\item $\text{\emph{Im}}(\tau) \subseteq \widetilde{\partial P}$,
\item $\nu_q \circ \tau = \tilde{q} \circ \pi^J$
\end{enumerate}
\end{proposition}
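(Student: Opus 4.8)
The plan is to mirror the proof of Proposition~\ref{smoothing} exactly, replacing the separating sewing map $\tilde s$ with the non-separating one $\tilde q$ and the bundle $\nu$ with $\nu_q$. The essential input is Lemma~\ref{autosew}, which provides a bundle isomorphism between the circle subbundle $\nu_q : \widetilde{\partial P} \to \widetilde T$ and the quotient map $g : (\widetilde{\mathcal M}_{g-1,n+2})_{S^1} \to \widetilde T$. As in the separating case, the sewing-smoothing map $\tau$ is then defined as the composite
\begin{equation*}
\widetilde{\mathcal M}_{g-1,n+2} \xrightarrow{\ \pi^J\ } (\widetilde{\mathcal M}_{g-1,n+2})_{S^1}\ \text{(quotient)} \xrightarrow{\ \sim\ } \widetilde{\partial P} \hookrightarrow \widetilde{\mathcal M}_{g,n},
\end{equation*}
where the middle arrow is the isomorphism of Lemma~\ref{autosew} and the last inclusion is the realization of $\widetilde{\partial P}$ as a circular neighbourhood inside $\widetilde{\mathcal M}_{g,n}$ via Notation~\ref{tubes1}. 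Wait — I should be careful: the first arrow is not literally $\pi^J$ alone; rather, $\pi^J$ forgets the framings at the free points $x_{n+1}, x_{n+2}$, and the quotient identification by $S^1$ in Lemma~\ref{autosew} is precisely what reconciles the two framings being forgotten into the single gluing parameter. So the composite of $\pi^J$ followed by the quotient map is well-defined, and one composes with the isomorphism $g$.

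First I would verify property~(1): since the image of the isomorphism from Lemma~\ref{autosew} is by construction $\widetilde{\partial P}$, and $\widetilde{\partial P} \subseteq \widetilde{P} = \widetilde{\mathcal M}_{g,n}$ under the identification of the normal bundle/tubular neighbourhood fixed in Notation~\ref{tubes1}, we get $\operatorname{Im}(\tau) \subseteq \widetilde{\partial P}$ immediately. Then I would verify property~(2): chasing an element $(C, [v_1], \ldots, [v_n], [v_{n+1}], [v_{n+2}]) \in \widetilde{\mathcal M}_{g-1,n+2}$ through the composite, the isomorphism $g$ sends the $S^1$-class of the framing-forgotten curve to the point of $\widetilde{\partial P}$ lying over $\tilde q$ applied to the curve with framings at $x_{n+1}, x_{n+2}$ erased; applying $\nu_q$ (which collapses the normal/fibre direction) then returns exactly $\tilde q(\pi^J(C, \ldots))$. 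This is the same diagram chase as in Proposition~\ref{smoothing}, with $s$ replaced by $q$ and the product of two factors replaced by the single self-sewn factor, so it goes through verbatim.

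The only genuine subtlety — and the step I would flag as the main obstacle — is the orbifold/$\mathbb Z_2$ issue already highlighted in the remark following Notation~\ref{tubes1}: the two preimages $x_{n+1}', x_{n+2}'$ of the node are interchanged by a $\mathbb Z_2$-action, and $\tilde q$ is a $2$-fold cover onto its image. One must check that the $S^1$-quotient of Lemma~\ref{autosew} is compatible with this $\mathbb Z_2$-action, so that the isomorphism $g$ descends correctly and $\widetilde{\partial P}$ is genuinely the quotient of $\widetilde{P}' = T_{x_{n+1}} \otimes T_{x_{n+2}}$ by $\mathbb Z_2$ as stated. Since Lemma~\ref{autosew} is assumed, this compatibility is already built in, so in the end the verification of (1) and (2) is, as the text promises, immediate; hence I would keep the argument as brief as the corresponding passage for $\sigma$.

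\begin{proof}
The map $\tau$ is obtained by composing the map
$$\widetilde{\mathcal M}_{g-1,n+2} \rightarrow (\widetilde{\mathcal M}_{g-1,n+2})_{S^1}$$
that forgets the tangent directions at the free points $x_{n+1}$, $x_{n+2}$ and then passes to the $S^1$-quotient, with the isomorphism of Lemma~\ref{autosew}
$$(\widetilde{\mathcal M}_{g-1,n+2})_{S^1} \rightarrow \widetilde{\partial P}.$$
The verification of properties $1$ and $2$ is immediate.
\end{proof}
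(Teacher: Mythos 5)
Your proof is correct and follows exactly the route the paper intends (the paper omits this proof, stating it is the same as that of Proposition \ref{smoothing}, and you have written out precisely that analogue: compose the quotient map onto $(\widetilde{\mathcal{M}}_{g-1,n+2})_{S^1}$ with the isomorphism of Lemma \ref{autosew}). One small wording slip: the first arrow is just the quotient by the anti-diagonal $S^1$-identification of Lemma \ref{autosew}, not a map that \emph{forgets} the tangent directions at $x_{n+1}$, $x_{n+2}$ and then quotients --- forgetting both framings would leave nothing for $S^1$ to act on and would land in the wrong space --- but your displayed arrow and your surrounding discussion make clear you mean the right map.
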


We are now ready to define fixed boundary cohomological field theories.

\begin{definition}
\label{fixedboundary}
Let $A$ be a $\mathbb{C}$-vector space of finite dimension, $\mathbf{1} \in A$ a non-zero vector,
and $\eta = \eta_{\mu \nu}e^{\mu}\otimes e^{\nu}$ 
a nondegenerate symmetric bilinear form on $A$ with inverse co-form $\eta^{\mu \nu} e_{\mu} \otimes e_{\nu}$.
A \emph{fixed boundary cohomological field theory} (fixed boundary CohFT) with base $(A,\eta, \mathbf{1})$
is a set of linear maps

$$\widetilde{\Omega}_{g,n}: A^{\otimes n} \rightarrow H^{\bullet}(\widetilde{\mathcal{M}}_{g,n})$$
for every nonnegative
integer numbers $n$, $g$ such that $2g - 2 + n > 0$, that satisfy the following conditions:

\begin{enumerate}
 \item $\widetilde{\Omega}_{g,n}$ is $S_n$-equivariant for every $g$, $n$ (compare with Remark \ref{equivariance});
 \item $\widetilde{\Omega}_{0,3}(\mathbf{1} \otimes u \otimes v) = 
 \eta(u,v) \in H^0(\widetilde{\mathcal{M}}_{0,3})$, for any $u,v \in A$;
 \item $\tau^{*}\widetilde{\Omega}_{g,n}(v_1 \otimes \ldots \otimes v_n) = 
 \eta^{\mu \nu}\widetilde{\Omega}_{g-1, n+2}(v_1 \otimes \ldots \otimes v_n \otimes e_{\mu} \otimes e_{\nu})$,
 for any $v_1, \ldots, v_n \in A$, where $\tau$ is the map of Proposition \ref{smoothing2};
 \item $\sigma^{*}\widetilde{\Omega}_{g,n}(v_1 \otimes \ldots \otimes v_n) = 
 \eta^{\mu \nu} \widetilde{\Omega}_{g_1, n_1 + 1}(v_1 \otimes \ldots \otimes v_{n_1} \otimes e_{\mu}) \times
 \widetilde{\Omega}_{g_2, n_2 + 1}(v_{n_1 + 1} \otimes \ldots \otimes v_n \otimes e_{\nu})$,
 for any $v_1, \ldots, v_n \in A$, where $\sigma$ is the map of Proposition \ref{smoothing};
 \item $p^{*}\widetilde{\Omega}_{g,n}(v_1 \otimes \ldots \otimes v_n) 
 = \widetilde{\Omega}_{g,n+1}(v_1 \otimes \ldots \otimes v_n
 \otimes \mathbf{1})$,
 for any $v_1, \ldots, v_n \in A$, where 
 $p: \widetilde{\mathcal{M}}_{g,n+1} \rightarrow \widetilde{\mathcal{M}}_{g,n}$ is the map
 that forgets the last marked points, together with the tangent direction attached to it.
\end{enumerate}
The summation over $\mu$ and $\nu$ is always assumed.

\end{definition}

\subsection{Free boundary theories}
Now we consider free boundary theories. This means that we don't assign any tangent
vector to the marked points and we look at homomorphisms
$$\Omega_{g,n}: A^{\otimes n} \rightarrow H^{\bullet}(\mathcal{M}_{g,n})$$
Notice that we are still talking about \emph{smooth} theories, that is, theories in which only the cohomology
ring of the smooth part of the moduli space of curves is involved.
Let
$$s: \mathcal{M}_{g_1,n_1+1} \times \mathcal{M}_{g_2,n_2+1} \rightarrow \overline{\mathcal{M}}_{g,n}$$
be the map that sews together the points marked $n_1+1$ and $n_2+1$
respectively on the two curves, and let $S$ be the
image of $s$.

\begin{notation}
\label{tubeneighfree}
We call $\nu_s: N_s \rightarrow S$ both the normal bundle to $S$ and a fixed tubular neighbourhood of $S$ given
by the tubular neighbourhood theorem. We call $\nu_s: \partial N_s \rightarrow S$ the boundary of the tubular
neighbourhood $N_s$ and the circle subbundle of $\nu_s$ given by the tubular neighbourhood theorem.
Let
$$q: \mathcal{M}_{g-1,n+2} \rightarrow \overline{\mathcal{M}}_{g,n}$$
be the self-sewing map, and let $T$ be its image. We call
$$ \nu_q : N_q \rightarrow T$$
both its normal bundle and its respective tubular neighbourhood given by the tubular neighbourhood theorem.
We call $\nu_q : \partial N_q \rightarrow T$ both the circular neighbourhood boundary of $N_q$
and the restriction of the normal bundle to the circle subbundle given by the tubular neighbourhood theorem.
\end{notation}

\begin{remark}
\label{2covering}
Clearly, $S \simeq \mathcal{M}_{g_1, n_1+1} \times \mathcal{M}_{g_2, n_2+1}$. However,
the self-sewing map $$q: \mathcal{M}_{g-1,n+2} \rightarrow T$$ is a $2$-sheeted covering, where
two marked curves 
$$(C_1, x_1, \ldots, x_{n+2}), \ (C_2, y_1, \ldots, y_{n+2}) \in \mathcal{M}_{g-1,n+2}$$
belong to the same fiber of $q$ if and only if $C_1 = C_2$, $x_i = y_i$ for $1 \le i \le n$ and
$x_{n+1} = y_{n+2}$, $x_{n+2} = y_{n+1}$. Therefore the automorphism
$\rho$ of $\mathcal{M}_{g-1,n+2}$ that switches the two last marked points is the only nontrivial automorphism
of the covering $q$.
By the general theory of covering maps, a cohomology class $\alpha$  on $\mathcal{M}_{g-1,n+2}$ is $\rho$-invariant
if and only if it is the pull-back of a cohomology class on $T$.
Indeed, under the hypothesis of $\rho$-invariance, we have
$$ \alpha = q^*\left(\frac{1}{2}q_*\alpha\right) $$
where $q_*$ is the push-forward map in cohomology, which is well-defined since $q$ is proper.

\end{remark}

Let us now define the free boundary cohomological field theories.

\begin{definition}
\label{freeboundary}
Let $A$ be a $\mathbb{C}$-vector space of finite dimension, $\mathbf{1} \in A$ a non-zero vector,
and $\eta = \eta_{\mu \nu}e^{\mu}\otimes e^{\nu}$ 
a nondegenerate symmetric bilinear form on $A$ with inverse co-form $\eta^{\mu \nu} e_{\mu} \otimes e_{\nu}$.
A \emph{free boundary cohomological field theory} (free boundary CohFT) with base $(A,\eta, \mathbf{1})$
is a set of linear maps

$$\Omega_{g,n}: A^{\otimes n} \rightarrow H^{\bullet}(\mathcal{M}_{g,n})$$
for every nonnegative
integer numbers $n$, $g$ such that $2g - 2 + n > 0$, that satisfy the following conditions:

\begin{enumerate}
 \item $\Omega_{g,n}$ is $S_n$-equivariant for every $g$, $n$;
 \item $\Omega_{0,3}(\mathbf{1} \otimes u \otimes v) = \eta(u,v)$, for every $u,v \in A$;
 \item $\Omega_{g,n}(v_1 \otimes \ldots \otimes v_n)|_{\partial N_q} =
\nu_q^*(\frac{1}{2}q_*\eta^{\mu\nu}\Omega_{g-1, n+2}(v_1 \otimes \ldots \otimes v_n \otimes e_{\mu} \otimes e_{\nu}))$,
 for any $v_1, \ldots, v_n \in A$ (see Remark \ref{2covering} for the reason of the map $\frac{1}{2}q^*$);
 \item $\Omega_{g,n}(v_1 \otimes \ldots \otimes v_n) |_{\partial N_s} 
= \nu_s^{*}\eta^{\mu \nu}\Omega_{g_1,n_1+1}(v_1 \otimes \ldots \otimes v_{n_1} \otimes e_{\mu})
\times \Omega_{g_2,n_2+1}( v_{n_1+1} \otimes \ldots \otimes v_n \otimes e_{\nu})$,
 for any $v_1, \ldots v_n \in A$;
 \item $p^{*}{\Omega}_{g,n}(v_1 \otimes \ldots \otimes v_n) 
 = {\Omega}_{g,n+1}(v_1 \otimes \ldots \otimes v_n
 \otimes \mathbf{1})$,
 for any $v_1, \ldots, v_n \in A$, where $p$ is the map that forgets the last marked point.
\end{enumerate}

\end{definition}

\begin{remark}
\label{nos}
In axiom $4$, we are identifying $S$ with $\mathcal{M}_{g_1, n_1+1} \times \mathcal{M}_{g_2, n_2+1}$
by means of the map $s$, which is an homeomorphism with its image.
\end{remark}

The next proposition allows us to lift free boundary CohFTs to fixed boundary ones along the torus bundle.

\begin{proposition}
\label{coherfree}
 Let $(\Omega_{g,n})_{g,n}$ be a free boundary CohFT , and let 
 $\pi: \widetilde{\mathcal{M}}_{g,n} \rightarrow \mathcal{M}_{g,n}$
 be the map forgetting the tangent directions. Then
 $(\pi^{*}\Omega_{g,n})_{g,n}$ define a fixed boundary CohFT .
\end{proposition}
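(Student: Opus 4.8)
The plan is to check the five axioms of Definition~\ref{fixedboundary} for the maps $\widetilde{\Omega}_{g,n}:=\pi^{*}\Omega_{g,n}$ one by one, transferring each statement on $\mathcal{M}_{g,n}$ to one on $\widetilde{\mathcal{M}}_{g,n}$ via the compatibility between the forgetful projections $\pi$ and the various sewing and forgetting maps. The guiding principle is simple: $\pi^{*}$ is a ring homomorphism that is functorial for pull-backs, so any identity of the form $f^{*}\Omega=(\text{product of }\Omega\text{'s})$ downstairs becomes the corresponding identity upstairs as soon as one produces a commutative square relating the downstairs map $f$ to its upstairs analogue and checks that $\pi^{*}$ intertwines the relevant restriction and push-forward operations.

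First I would dispatch the easy axioms. For $S_n$-equivariance (axiom~1), the $S_n$-action on $\widetilde{\mathcal{M}}_{g,n}$ permutes the framed points and covers the $S_n$-action on $\mathcal{M}_{g,n}$, i.e. $\pi\circ\rho=\rho\circ\pi$, so $\rho^{*}\pi^{*}\Omega_{g,n}=\pi^{*}\rho^{*}\Omega_{g,n}$ and equivariance of $\Omega$ gives equivariance of $\pi^{*}\Omega$. For axiom~2, note $\widetilde{\mathcal{M}}_{0,3}$ is a compact manifold (a $\mathbb{T}^3$-bundle over a point) with $H^0=\mathbb{C}$, the map $\pi$ induces an isomorphism on $H^0$, and $\pi^{*}$ sends the scalar $\eta(u,v)\in H^0(\mathcal{M}_{0,3})$ to the same scalar in $H^0(\widetilde{\mathcal{M}}_{0,3})$. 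For axiom~5, the forgetful maps fit into a commutative square $\pi\circ p=p\circ\pi$ (where the upper $p$ also forgets the tangent direction at the last point), so $p^{*}\pi^{*}\Omega_{g,n}=\pi^{*}p^{*}\Omega_{g,n}=\pi^{*}\Omega_{g,n+1}(\,\cdot\otimes\mathbf{1})$, which is exactly what axiom~5 demands.

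The real content is in the two sewing axioms, 3 and 4, where one must relate the restriction-to-$\partial N_s$ appearing in Definition~\ref{freeboundary} to the pull-back along the sewing-smoothing map $\sigma$ (resp.\ $\tau$) of Proposition~\ref{smoothing} (resp.\ \ref{smoothing2}). Here the key is the identity $\nu\circ\sigma=\tilde{s}\circ(\pi^{I_1}\times\pi^{I_2})$ from Proposition~\ref{smoothing}, together with Remark~\ref{codim1}, which says $\widetilde{N}=\overline{\pi}^{*}N$ and hence $\widetilde{\partial N}=\overline{\pi}^{*}\partial N_s$; combining these gives a commutative diagram whose effect is that pulling $\pi^{*}\Omega_{g,n}$ back along $\sigma$ equals pulling $\Omega_{g,n}|_{\partial N_s}$ back along the torus-bundle projections. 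Since $\Omega_{g,n}|_{\partial N_s}=\nu_s^{*}\big(\eta^{\mu\nu}\Omega_{g_1,n_1+1}(\cdots\otimes e_\mu)\times\Omega_{g_2,n_2+1}(\cdots\otimes e_\nu)\big)$ by axiom~4 downstairs, one then identifies the composite $\nu_s$-pull-back-then-$\sigma$-pull-back with $\pi^{I_1}\times\pi^{I_2}$-pull-back-then-$\pi\times\pi$-pull-back $=\pi^{*}\times\pi^{*}$ of the two factors, which is precisely axiom~4 upstairs for $\pi^{*}\Omega$. Axiom~3 is handled identically, using Proposition~\ref{smoothing2} and Lemma~\ref{autosew} in place of Proposition~\ref{smoothing} and Lemma~\ref{normbundle}, with the only extra point being that the factor $\tfrac12 q_{*}$ on $\mathcal{M}_{g-1,n+2}$ (Remark~\ref{2covering}) is intertwined by $\pi^{*}$ with its upstairs counterpart, so that the $\tilde{q}$-pull-back of $\pi^{*}\Omega_{g,n}$ comes out as $\eta^{\mu\nu}\pi^{*}\Omega_{g-1,n+2}(\cdots\otimes e_\mu\otimes e_\nu)$.

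I expect the main obstacle to be purely bookkeeping: carefully matching the two different \emph{shapes} of the sewing axioms — the ``restrict to the boundary circle bundle'' formulation of Definition~\ref{freeboundary} versus the ``pull back along the sewing-smoothing map'' formulation of Definition~\ref{fixedboundary} — through the chain of identifications $\widetilde{\partial N}\cong\widetilde{N}$-as-tubular-neighbourhood, $\sigma=$ quotient $\circ$ the isomorphism of Lemma~\ref{normbundle}, and $\widetilde{N}=\overline{\pi}^{*}N$. Once one writes down the single commutative diagram that encodes $\overline{\pi}\circ(\text{upstairs sewing data})=(\text{downstairs sewing data})\circ(\text{product of }\pi\text{'s})$, the cohomological identities follow formally from functoriality of $\pi^{*}$ and the fact that $\pi^{*}$ is a ring map; no genuinely new geometric input beyond Propositions~\ref{smoothing} and \ref{smoothing2} is needed.
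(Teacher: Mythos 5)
Your proposal is correct and follows essentially the same route as the paper: identify $\widetilde{N}$ with $\pi^{*}N_s$ so that $\mathrm{Im}(\pi\circ\sigma)\subseteq\partial N_s$, apply the downstairs sewing axiom, and chase the commutative diagram $\nu_s\circ\pi\circ\sigma=s\circ(\pi\times\pi)$ to convert the boundary-restriction formulation into the pull-back-along-$\sigma$ formulation. The only point stated more loosely than in the paper is the non-separating case, where the paper eliminates the $\tfrac12 q_{*}q^{*}$ outright via the $\rho$-invariance of $\Omega_{g-1,n+2}$ (Remark~\ref{2covering}) rather than ``intertwining'' it, but this is the same mechanism you cite.
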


\begin{proof}
We have to show that the classes $(\pi^{*}\Omega_{g,n})_{g,n}$ satisfy the axioms of a cohomological field theory
with fixed boundaries.
The bilinear form $\eta$ and the unity vector $\mathbf{1}$ stay the same in the two cases and the axiom
involving the forgetful map is trivially satisfied, thus we only have
to deal with the sewing axiom. We have to
show that 
$$\sigma^* \pi^* \Omega_{g,n} =
\eta^{-1}\pi^*\Omega_{g_1,n_1+1}
\times \pi^*\Omega_{g_2,n_2+1}$$
where $\sigma$ is as in Proposition \ref{smoothing} and
$\eta^{-1} = \eta^{\mu\nu}e_{\mu}\otimes e_{\nu}$ is inserted at the entries corresponding to the points
to be sewed.

If $N_s$ is the normal bundle to $S$, then  
$\widetilde{N} = \pi^*N_s$ where $\nu: \widetilde{N} \rightarrow \widetilde{S}$
is the normal bundle to $\widetilde{S}$ (notations as in
Lemma \ref{normbundle}).
Thus we can suppose to have chosen a circular neighbourhoods
such that $\partial N_s = \pi(\widetilde{\partial N})$, therefore $\text{Im}(\pi\circ \sigma) \subseteq
\partial N_s$ by Proposition \ref{smoothing}.

Therefore we have, by the property of the $\Omega_{g,n}$'s of satisfying the sewing axiom
$$\sigma^*\pi^*\Omega_{g,n} = \sigma^*\pi^*\Omega_{g,n}|_{\partial N_s}
= \sigma^*\pi^*\nu_s^*(\eta^{-1}\Omega_{g_1,n_1+1} \times \Omega_{g_2,n_2+1}) $$
but $\nu_s \circ \pi = \pi \circ \nu$, and by Proposition \ref{smoothing},
$\nu \circ \sigma = \tilde{s} \circ (\pi^{I_1} \times \pi^{I_2})$,
and in turn $\pi \circ \tilde{s} = s \circ (\pi_{I_1} \times \pi_{I_2})$,
and trivially $(\pi_{I_1} \times \pi_{I_2}) \circ (\pi^{I_1} \times \pi^{I_2}) =
\pi \times \pi$, thus altogether we have 
$\nu_s \circ \pi \circ \sigma = s \circ (\pi \times \pi)$. Because of Remark \ref{nos},
the map $s^*$ accounts for the identification of $S$ with $\mathcal{M}_{g_1, n_1+1} \times \mathcal{M}_{g_2, n_2+1}$,
therefore can omit it in the sewing axiom and we can rewrite the right-hand side of the equality above as
$$(\pi\times \pi)^*(\eta^{-1}\Omega_{g_1,n_1+1} \times \Omega_{g_2,n_2+1})
= \eta^{-1}(\pi^*\Omega_{g_1,n_1+1} \times \pi^*\Omega_{g_2,n_2+1})$$
that is, the classes $(\pi^{*}\Omega_{g,n})_{g,n}$ satisfy the separating sewing axiom. 

Let now $q: \mathcal{M}_{g-1,n+2} \rightarrow T$ be the self-sewing map and 
$\tau: \widetilde{\mathcal{M}}_{g-1,n+2} \rightarrow \widetilde{\mathcal{M}}_{g,n}$ 
as in Proposition \ref{smoothing2}. We have to show that 
$$\tau^* \pi^* \Omega_{g,n} =
\eta^{-1}\pi^*\Omega_{g-1, n+2}$$
with $\eta^{-1}$ inserted at the entries corresponding to the points to be sewed.
Just like before, we can suppose to have chosen circular neighbourhoods
$\widetilde{\partial P}$ of $\widetilde{T}$ (see Notation \ref{tubes1}) and $\partial N_q$ of $T$ such that
$\pi(\widetilde{\partial P}) \subseteq \partial N_q$, therefore
$\text{Im}(\pi\circ \tau) \subseteq \partial N_q$ by Proposition \ref{smoothing2}.
Therefore we have, by the property of the $\Omega_{g,n}$'s of satisfying the sewing axiom,
$$\tau^* \pi^* \Omega_{g,n} = \tau^* \pi^* \Omega_{g,n}|_{\partial N_q} =
\tau^* \pi^*\nu_q^*\left(\frac{1}{2}q_* \eta^{-1}\Omega_{g-1,n+2}\right)$$ 
By the same reasoning as in the separating case, we have $\nu_q \circ \pi \circ \tau = q \circ \pi$;
moreover, since by hypothesis $\Omega_{g-1,n+2}$ is $S_{n+2}$-invariant, we have by Remark
\ref{2covering} that
$$ q^*\left(\frac{1}{2}q_*\Omega_{g-1,n+2}\right) = \Omega_{g-1,n+2} $$
which finally implies that $(\pi^*\Omega_{g,n})_{g,n}$ satisfy the non-separating sewing axiom as well. 
This completes the proof.
\end{proof}

The following proposition implies, with the previous one, that these axioms agree with those of a nodal CohFT.

\begin{proposition}
\label{cohernodal}
Let $\overline{\Omega}_{g,n}: A^{\otimes n} \rightarrow H^{\bullet}(\overline{\mathcal{M}}_{g,n})$
for $2g - 2 + n > 0$ be homomorphisms defining a nodal CohFT. Then the restrictions of $\overline{\Omega}_{g,n}$ to 
$\mathcal{M}_{g,n}$ define a smooth CohFT with free boundaries.
\end{proposition}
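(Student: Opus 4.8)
The plan is to verify the five axioms of Definition \ref{freeboundary} one at a time for the restricted classes $\Omega_{g,n} := \overline{\Omega}_{g,n}|_{\mathcal{M}_{g,n}}$, using the compatibility between the sewing maps $q,s$ (and their tubular neighbourhoods $\partial N_q, \partial N_s$) on the smooth part and the corresponding maps on the Deligne-Mumford compactification. Axioms 1 and 2 are immediate: $S_n$-equivariance and the $\overline{\Omega}_{0,3}$-normalization are statements that live already on $\overline{\mathcal{M}}_{g,n}$ (indeed $\overline{\mathcal{M}}_{0,3} = \mathcal{M}_{0,3}$ is a point), and they restrict verbatim. The forgetful-map axiom (axiom 5) is equally easy: the smooth forgetful map $p:\mathcal{M}_{g,n+1}\to\mathcal{M}_{g,n}$ is the restriction of $p:\overline{\mathcal{M}}_{g,n+1}\to\overline{\mathcal{M}}_{g,n}$ (on the smooth part no stabilization ever occurs), so $p^*\Omega_{g,n}(v_1\otimes\cdots\otimes v_n)=p^*\overline{\Omega}_{g,n}(\cdots)|_{\mathcal{M}_{g,n+1}}=\overline{\Omega}_{g,n+1}(\cdots\otimes\mathbf{1})|_{\mathcal{M}_{g,n+1}}=\Omega_{g,n+1}(\cdots\otimes\mathbf{1})$.

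The real content is in the two sewing axioms (3 and 4), and the key observation is the tubular-neighbourhood picture. For the separating map $s:\overline{\mathcal{M}}_{g_1,n_1+1}\times\overline{\mathcal{M}}_{g_2,n_2+1}\to\overline{\mathcal{M}}_{g,n}$, the image stratum $\overline{S}$ has a normal bundle, and the tubular neighbourhood theorem gives a tube whose boundary sphere (circle) bundle is $\partial N_s$ with projection $\nu_s:\partial N_s\to S$. I would argue that the nodal sewing axiom 4 of Definition \ref{nodaltheories}, namely $s^*\overline{\Omega}_{g,n}=\eta^{-1}\,\overline{\Omega}_{g_1,n_1+1}\times\overline{\Omega}_{g_2,n_2+1}$, together with the fact that the inclusion of $\overline{S}$ into $\overline{\mathcal{M}}_{g,n}$ factors (up to homotopy) through the tube $N_s$ which deformation-retracts onto $\overline{S}$, forces the restriction of $\overline{\Omega}_{g,n}$ to the boundary circle bundle $\partial N_s$ to equal $\nu_s^*$ of the sewn class. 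Concretely: restriction to $\partial N_s$ followed by $\nu_{s*}$ (or rather the retraction) recovers restriction to $\overline{S}$, and since $\partial N_s$ sits inside $\mathcal{M}_{g,n}$ (a generic point of $\partial N_s$ is a smooth curve close to the nodal one), the class $\Omega_{g,n}|_{\partial N_s}$ is well-defined and equals $\nu_s^*(s^*\overline{\Omega}_{g,n})=\nu_s^*(\eta^{-1}\Omega_{g_1,n_1+1}\times\Omega_{g_2,n_2+1})$, which is exactly axiom 4. The same argument with $q:\overline{\mathcal{M}}_{g-1,n+2}\to\overline{\mathcal{M}}_{g,n}$, remembering that $q$ is a $2$-fold cover onto its image $\overline{T}$, gives axiom 3: here one must insert the factor $\tfrac12 q_*$ as in Remark \ref{2covering}, because the normal bundle / boundary circle bundle of $\overline{T}$ is the quotient of that of $\overline{\mathcal{M}}_{g-1,n+2}$ by the involution swapping the last two points, so $\nu_q^*(\tfrac12 q_*\,\overline{\Omega}_{g-1,n+2}(\cdots))$ is the correct pullback, and by axiom 3 of Definition \ref{nodaltheories} this equals the restriction of $\overline{\Omega}_{g,n}$ to $\partial N_q$.

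The step I expect to be the main obstacle is making precise the claim that \emph{restricting a class on $\overline{\mathcal{M}}_{g,n}$ to the boundary circle bundle $\partial N$ of a boundary divisor is the same as first restricting to the divisor $\overline{S}$ (or $\overline{T}$) and then pulling back along $\nu$}. This rests on the tubular neighbourhood theorem (the tube $N$ is homeomorphic to the total space of the normal bundle, which deformation-retracts onto its zero section $\overline{S}$, and the inclusions $\partial N\hookrightarrow N$ and $\overline{S}\hookrightarrow N$ are compatible via $\nu$ up to homotopy), so in cohomology the triangle with vertices $H^\bullet(\overline{S})$, $H^\bullet(N)$, $H^\bullet(\partial N)$ commutes. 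One must also check that $\partial N_s$ (resp. $\partial N_q$), a priori a subset of $\overline{\mathcal{M}}_{g,n}$, actually lies inside the smooth part $\mathcal{M}_{g,n}$ — this is because smoothing a single node of a curve in $\overline{S}$ (or $\overline{T}$) produces a smooth curve, so the "sphere at radius $\varepsilon$" in the normal direction consists of smooth curves; hence $\Omega_{g,n}|_{\partial N_s}$ genuinely makes sense as the restriction of a class on $\mathcal{M}_{g,n}$. Once this dictionary between nodal sewing axioms (pullback-of-a-class along the sewing map) and free-boundary sewing axioms (restriction to a boundary circle bundle) is set up — which is morally the content of Remark \ref{codim1} and Notation \ref{tubeneighfree} — the proof is a routine diagram chase, so I would keep the writeup short and refer back to those remarks and to the tubular neighbourhood theorem for the commutativity.
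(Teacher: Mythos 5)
Your proposal is correct and follows essentially the same route as the paper: the only nontrivial axioms are the two sewing axioms, and both are handled by observing that $\partial N_s$ (resp.\ $\partial N_q$) lies in $\mathcal{M}_{g,n}$, that restriction to $\partial N$ equals $\nu^*$ of the restriction to the stratum $S$ (resp.\ $T$) by the tubular neighbourhood theorem, and that the restriction to the stratum is computed by the nodal sewing axiom, with the $\tfrac12 q_*$ correction from Remark \ref{2covering} in the non-separating case. The paper's proof is exactly this diagram chase, written out with the factorizations $s=\overline{s}\circ(\iota\times\iota)$ and $q=\overline{q}\circ\iota$.
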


\begin{proof}
Let $\iota : \mathcal{M}_{g,n} \rightarrow \overline{\mathcal{M}}_{g,n}$ be the inclusion, we have to show that
$\iota^* \overline{\Omega}_{g,n}$ satisfy the axioms of a free boundary CohFT. The only nontrivial verification
is the one about the sewing axiom. Let us consider a sewing map 
$s: \mathcal{M}_{g_1, n_1+1} \times \mathcal{M}_{g_2, n_2+1} \rightarrow \overline{\mathcal{M}}_{g,n} $
and let $S$ be its image. Then, since $\partial N_s \subseteq \mathcal{M}_{g,n}$,
we have (again identifying the circular neighbourhood
$\partial N_s$ with the respective spherized bundle)
$$ \iota^*\overline{\Omega}_{g,n}(v_1 \otimes \ldots \otimes v_n) |_{\partial N_s} = 
\overline{\Omega}_{g,n}(v_1 \otimes \ldots \otimes v_n)|_{\partial N_s} = 
\nu_s^* \overline{\Omega}_{g,n}(v_1 \otimes \ldots \otimes v_n)|_S $$
now we have the isomorphism $s: \mathcal{M}_{g_1, n_1+1} \times \mathcal{M}_{g_2,n_2+1} \cong S $
and of course we can write $s = \overline{s} \circ (\iota \times \iota)$ where $\overline{s}$ is the
sewing map on the Deligne-Mumford compactifications (this map is involved in the separating sewing axiom of 
a nodal CohFT). Under the isomorphism $s$ we have
$$\overline{\Omega}_{g,n}(v_1 \otimes \ldots \otimes v_n)|_S = 
\eta^{\mu\nu}\iota^*\overline{\Omega}_{g_1, n_1+1}(v_1 \otimes \ldots \otimes v_{n_1} \otimes e_{\mu}) 
\times \iota^*\overline{\Omega}_{g_2,n_2+1}(v_{n_1+1}\otimes \ldots \otimes v_n \otimes e_{\nu})$$
by the separating sewing axiom that the $\overline{\Omega}_{g,n}$'s satisfy being a nodal CohFT,
and pulling back by $\nu_s$ we have exactly the separating sewing axiom for smooth fixed boundary theories. 

For the non-separating sewing map $q: \mathcal{M}_{g-1,n+2} \rightarrow \mathcal{M}_{g,n}$, with the
notation of the preceding part, we have
$$ \iota^*\overline{\Omega}_{g,n}(v_1 \otimes \ldots \otimes v_n) |_{\partial N_q} = 
\overline{\Omega}_{g,n}(v_1 \otimes \ldots \otimes v_n)|_{\partial N_q} = 
\nu_q^* \overline{\Omega}_{g,n}(v_1 \otimes \ldots \otimes v_n)|_T $$
Obviously we have $q = \overline{q} \circ \iota$ where $\overline{q}$ is the non-separating sewing map
on the Deligne-Mumford compactification (this map is involved in the non-separating sewing axiom of 
a nodal CohFT).
Since $q: \mathcal{M}_{g-1,n+2} \rightarrow T$ is a $2$-sheeted covering, and since every $\overline{\Omega}_{g,n}$
is $S_n$-invariant, we have by Remark \ref{2covering}
$$\overline{\Omega}_{g,n}(v_1 \otimes \ldots \otimes v_n)|_T = 
\frac{1}{2}q_*(q^* \overline{\Omega}_{g,n}(v_1 \otimes \ldots \otimes v_n)|_T)=
\frac{1}{2}q_*(\eta^{\mu\nu}\iota^*\overline{\Omega}_{g-1,n+2}
(v_1, \otimes \ldots \otimes v_n \otimes e_{\mu} \otimes e_{\nu}))$$
by the non-separating sewing axiom that the $\overline{\Omega}_{g,n}$'s satisfy being a nodal CohFT.
Pulling back by $\nu_s$ we have exactly the non-separating sewing axiom for smooth fixed boundary theories,
and the proof is complete.
\end{proof}

\section{Givental's group action}
\label{Givental's group action}
 
 In this section, we introduce a particular group action on the category of all nodal CohFTs that allows one to
 build new CohFTs from old ones. To do so, we must investigate more deeply the structure of the
 boundary strata of $\overline{\mathcal{M}}_{g,n}$, by introducing the concept of \emph{dual graph}.
 
 \subsection{Dual graphs}
 Let $S \subseteq \overline{\mathcal{M}}_{g,n}$ be an open boundary stratum. Let $C \in S$ be a stable curve
 and let $\nu : C' \rightarrow C$ be its normalization.
 We associate the following objects to $C$:
 \begin{enumerate}
  \item the set $V$ of the components of $C'$ and,
  for each component $v \in V$, its genus $\text{g}(v)$;
  \item the set $H$ of the special points of $C'$ (recall that they are the points in $\nu^{-1}(p)$ where
  $p \in C$ is either a marked point or a node);
  \item a function $\text{v}: H \rightarrow V$ that sends every special point to the component of $C'$
  it belongs to;
  \item an involution $\iota: H \rightarrow H$ that sends every special point to its conjugate (that is, 
  $\iota(h) = h'$ if $\{h, h'\} = \nu^{-1}(p)$ where $p \in C$ is a node,
  and $\iota(h) = h$ if $\{ h \} =\nu^{-1}(p)$ where $p \in C$ is a marked point);
  let then $E = \{ \{h, \iota(h)\} \  | \ h \in H  \ \text{and} \ \iota(h) \neq h\}$ 
  and $L = \{ h \in H  \ |\ \iota(h) = h\}$;
 \end{enumerate}

 \begin{definition}
 \label{dualgrafcurve}
  Let $C \in \overline{\mathcal{M}}_{g,n}$ be a stable curve. The \emph{dual graph}
  of $C$ is the graph $\Gamma_C$ with vertex set $V$, edge set $E$ and leg set $L$, such that
  \begin{itemize}
   \item  two vertices
  $v_1$ and $v_2$ are linked by an edge $e  \in E$
  if and only if $\text{v}(e) = \{ v_1, v_2 \}$;
  \item a leg $l \in L$ belongs to the vertex $v$ if and only if $\text{v}(l) = v$.
  \end{itemize}
  
  An \emph{automorphism} of $\Gamma_C$ is a permutation of the sets $V$ and $H$ which leaves invariant
  $\text{g}$, $\text{v}$ and $\iota$. The group of automorphisms of $\Gamma_C$ is denoted by
  $\text{Aut}(\Gamma_C)$.
\end{definition}

\begin{remark}
\label{connected}
 If $C \in \overline{\mathcal{M}}_{g,n}$ is a stable curve with normalization $\nu: C' \rightarrow C$,
 then its graph $\Gamma_C$ is connected because $C$ is connected, that is, we can reach any component of its from
 any other one, by passing through its nodes. Dually, this means that we can reach any vertex in $\Gamma_C$
 from any other one, by passing through its edges.
\end{remark}

\begin{remark}
\label{stability}
Let $C\in \overline{\mathcal{M}}_{g,n}$ be a stable curve and
 for each vertex $v$ of $\Gamma_C$, let $\text{n}(v) = |\text{v}^{-1}(v)|$ be its \emph{valence}.
 Then the stability condition on $C$ says that, for each vertex $v$ of
 $\Gamma_C$, we must have
 $$ 2 \text{g}(v) - 2 + \text{n}(v) > 0 $$
\end{remark}
 
 \begin{remark}
 \label{assocstratum}
  All the curves in an open boundary stratum $S \subseteq \overline{\mathcal{M}}_{g,n}$
  have the same dual graph. Indeed,
  if $C \in S$ is any curve in the stratum with dual graph $\Gamma_C$, let $\text{n}(v)$ be its valence
  as in remark \ref{stability} for each $v \in V$. Then
  $S$ is the image of a map
  $$ \xi_{\Gamma_C} : \prod_{v \in V} \mathcal{M}_{\text{g}(v), \text{n}(v)}
  \rightarrow \overline{\mathcal{M}}_{g,n}$$
  defined in the following way:
  \begin{itemize}
   \item If there are $k$ edges $e_1, \ldots, e_k$ with $e_i = \{ h_i, \iota(h_i) \}$
   between two \emph{distinct} vertices $v_1$ and $v_2$, then we sew together $\mathcal{M}_{g(v_1), n(v_1)}$
   and $\mathcal{M}_{g(v_2), n(v_2)}$ along the special points $h_i$ and $\iota(h_i)$ respectively,
   for $1 \le i \le k$. 
   \item If there are $k$ edges $e_1, \ldots, e_k$ with $e_i = \{ h_i, \iota(h_i) \}$ that link 
   $v$ to itself (this means that $\text{v}(e_i) = \{ v \}$ for every $i$), then we apply a
   non-separating sewing map on $\mathcal{M}_{g(v), n(v)}$ by sewing together the special points 
   $h_i$ and $\iota(h_i)$ for $1 \le i \le k$.
     
  \end{itemize}
It is then evident that all the elements of the image of $\xi_{\Gamma_C}$ give rise to a graph which is the same as 
$\Gamma_C$. Thus this remark justifies the following definition.
 \end{remark}

 \begin{definition}
 Let $S \subseteq \overline{\mathcal{M}}_{g,n}$ be an open boundary stratum. Then its \emph{dual graph}
 $\Gamma_S$ is the dual graph $\Gamma_C$ of any curve $C \in S$.
 
 Let $T \subseteq \overline{\mathcal{M}}_{g,n}$ be a closed boundary stratum. Then its \emph{dual graph}
 $\Gamma_T$ is the dual graph $\Gamma_S$ of the open boundary stratum $S$ such that $T = \overline{S}$.
 Since the correspondence $S \mapsto \overline{S}$ is a bijection, in the sequel we forget the distinction
 between open and closed strata, when we talk about the dual graphs associated to them. 
 \end{definition}
 
 \begin{remark}
 \label{assocclostratum}
 If $S$ is a \emph{closed} boundary stratum, with dual graph $\Gamma_S$, then we can define a map
 $$\overline{\xi}_{\Gamma_S} : \prod_{v \in V} \overline{\mathcal{M}}_{\text{g}(v), \text{n}(v)}
 \rightarrow \overline{\mathcal{M}}_{g,n}$$
 with image $S$, in the same way as in Remark \ref{assocstratum}.
 
 \end{remark}
 
 \begin{remark}
 The genus of a curve $C \in \overline{\mathcal{M}}_{g,n}$ is the sum of the genera of its components, 
 plus the maximal length $k$ of a sequence of self-sewing maps
 $q_i : \overline{\mathcal{M}}_{g_i-1, n_i+2} \rightarrow \overline{\mathcal{M}}_{g_i,n_i}$
 such that 
  \begin{equation}
  \label{chainlenght}
  C = q_k \circ \ldots \circ q_1 (C')
  \end{equation}
  for some $C' \in \overline{\mathcal{M}}_{g_1,n_1}$.
 If $C = q_1(C')$, then $\Gamma_C$ is the graph obtained
 by $\Gamma_{C'}$ by attaching a loop to the vertex corresponding to the components sewn by $q_1$.
 Therefore $h^1(\Gamma_C) = h^1(\Gamma_{C'}) + 1$,
 and similarly if $C = q_k \circ \ldots \circ q_1 (C')$ then $h^1(\Gamma_C) = h^1(\Gamma_{C'}) + k$. Moreover
 there exists $C'$ such that $q(C') = C$ if and only if $\Gamma_C$ is not simply connected (it suffices to 
take as $C'$ the normalization of $C$ at a node corresponding of an edge that borders a loop in $\Gamma_C$),
therefore if $k$ is the maximal length of a sequence like (\ref{chainlenght}), then
$h^1(\Gamma_C) = k$.

Putting all together, we have the formula
 $$g = \sum_{v \in V}\text{g}(v) + h^1(\Gamma_C)$$
 the first term accounts for the sum of the genera of the components of $C$, and the second one for the
 non-separating nodes.
 
\end{remark}

\subsection{R-matrix action}

This section follows closely \cite{pandpixtzvon:spin}, where all the missing proofs can be found, along with
a much deeper treatment of the subject.

Let $A$ be a finite-dimensional vector space over $\mathbb{C}$ and $\eta$ a symmetric
non-degenerate bilinear form on $A$.
For a variable $z$, let
$$ R(z) = \text{Id} + R_1 z + R_2 z^2 + \ldots \ \in \text{End}(A)[[z]]$$
be an $\text{End}(A)[[z]]$-valued power series.
Let $R(z)^*$ be the adjoint of $R(z)$ with respect to $\eta$,
that is, $R(z)^* = \eta^{-1}R(z)^t\eta$ where 
$$R(z)^t=\text{Id} + R_1^t z + R_2^t z^2 + \ldots \ \in \text{End}(A)[[z]]$$ 
is the transpose of $R(z)$. 
We say that $R(z)$ satisfies the
\emph{symplectic condition} if
$$ R(z)R(-z)^* = \text{Id}.$$
If $R(z)$ satisfies the symplectic condition, then the quantity
$$\displaystyle\frac{\eta^{-1} - R(z)\eta^{-1}R(w)^t}{z+w}$$
is a \emph{power series} in the variables $z$ and $w$.

Note that $R^{-1}(z) = \frac{1}{R(z)} = R(-z)^*$ is still a power series,
and it satisfies the symplectic condition as well.
Therefore the quantity
$$\displaystyle\frac{\eta^{-1} - R^{-1}(z)\eta^{-1}R^{-1}(w)^t}{z+w} $$ 
is a power series.

If $v \in A$ is a vector, we denote by $R(z)v$ the $A$-valued power series
$$ R(z)v = v + R_1(v)z + R_2(v)z^2 + \ldots \in A[[z]].$$
If $\phi: A \rightarrow B$ is a homomorphism of vector spaces and $v \in A$, we denote by $\phi(R(z)v)$
the $B$-valued power series
$$\phi(R(z)v) = \phi(v) + \phi(R_1(v))z + \phi(R_2(v))z^2 + \ldots \in B[[z]].$$

\begin{definition}
\label{contribution}
Let $(\overline{\Omega}_{g,n})_{g,n}$ be a nodal cohomological field theory with base $(A, \mathbf{1}, \eta)$, and
let $T(z) = z\cdot[\mathbf{1} - R(z)^{-1}\mathbf{1}] \in z^2 A[[z]]$.
Let  $S \subseteq \overline{\mathcal{M}}_{g,n}$ be a closed boundary stratum and let
$\Gamma_S$ be its dual graph, with vertex set $V$. We define
the \emph{contribution} of $S$
$$\text{Cont}_{\Gamma_S} :  A^{\otimes n} \rightarrow H^{\bullet}(\overline{\mathcal{M}}_{g,n})$$
in the following way:
\begin{enumerate}
\item At every vertex $v$ of $\Gamma_S$, we place 
$$\sum_{m \ge 0} \frac{1}{m!}(p_m)_*\overline{\Omega}_{\text{g}(v), \text{n}(v)+m}
(\cdot \otimes T(\psi_{\text{n}(v)+1}) \otimes \ldots \otimes T(\psi_{\text{n}(v)+m}))$$
where $p_m: \overline{\mathcal{M}}_{g,n+m} \rightarrow \overline{\mathcal{M}}_{g,n}$ is the map
that forgets the last $m$ marked points (in the sum, the term corresponding to $m=0$ is simply 
$\overline{\Omega}_{\text{g}(v), \text{n}(v)}$).
\item At every leg $l$ of $\Gamma_S$,
we place $R^{-1}(\psi_l)$ (recall that a leg corresponds to
a marked point, that has a respective $\psi$-class);
\item At every edge $e$ of $\Gamma_S$, we place
$$\displaystyle\frac{\eta^{-1} - R^{-1}(\psi_e')\eta^{-1}R^{-1}(\psi_e'')^t}{\psi_e'+\psi_e''} $$ 
(recall that an edge corresponds to a node, that has two $\psi$-classes attached to the two special points
of the normalization at that node).
\item In this way we have a map $A^{\otimes n} \rightarrow 
\prod_{v \in V}\overline{\mathcal{M}}_{\text{g}(v),\text{n}(v)}$.
We push forward this map along the map $\overline{\xi}_{\Gamma_S}$ of Remark
\ref{assocclostratum}, which is proper, to obtain $\text{Cont}_{\Gamma_S}$. 
\end{enumerate}

\end{definition}

 \begin{definition}
 \label{matrixaction}
 
 Let $\overline{\Omega}_{g,n}$ be a nodal CohFT with base $(A, \mathbf{1}, \eta)$. 
 We define $(R\overline{\Omega})_{g,n} : A^{\otimes n} \rightarrow H^{\bullet}(\overline{\mathcal{M}}_{g,n})$
 with the following formula
 $$(R\overline{\Omega})_{g,n} = 
 \sum_{S\subseteq\overline{\mathcal{M}}_{g,n}}\frac{1}{|\text{Aut}(\Gamma_S)|}\text{Cont}_{\Gamma_S}$$
 where $S$ runs among the closed boundary strata of $\overline{\mathcal{M}}_{g,n}$.
 We say that $R\overline{\Omega}$ is obtained from $\overline{\Omega}$ by an \emph{$R$-matrix action}.
 \end{definition}

\begin{example}
 Let us consider a graph $\Gamma$ with a vertex $v$ of genus $g$, and a leg $l$ attached to it. It
 corresponds to the open stratum $\mathcal{M}_{g,1}$ or to the closed stratum $S = \overline{\mathcal{M}}_{g,1}$.
 Then, for every $a \in A$, we have
 $$\text{Cont}_{\Gamma}(a) = \sum_{m\ge0}\frac{1}{m!}(p_m)_*
 \overline{\Omega}_{g,1+m}(R^{-1}(\psi_l)a \otimes T(\psi_{2}) \otimes \ldots \otimes T(\psi_{1+m})).$$
\end{example}

\begin{example}
 Let us consider a graph $\Gamma$ with a vertex $v$ of genus $g$, no legs, and an edge $e$ that joins $v$ to itself.
 The valence of the only vertex of $\Gamma$ is $2$.
 Then with the convention $A^{\otimes 0} = \mathbb{C}$, we have naturally
 $\text{Cont}_{\Gamma} \in H^{\bullet}(\overline{\mathcal{M}}_g)$ and it is precisely
 $$\text{Cont}_{\Gamma} = \sum_{m\ge 0}\frac{1}{m!}(p_m)_*\eta^{\mu\nu}\overline{\Omega}_{g,2+m}
 \left(\frac{e_\mu \otimes e_\nu - R^{-1}(\psi_e')e_\mu \otimes R^{-1}(\psi_e'')^te_\nu}{\psi_e'+\psi_e''} 
 \otimes T(\psi_{3}) \otimes \ldots \otimes T(\psi_{2+m})\right).$$
\end{example}

\begin{example}
 Let $\Gamma$ be a graph with two vertices $v_1$ and $v_2$ of genera $g_1$ and $g_2$ respectively,
 no legs, and an edge $e$ that
 joins $v_1$ to $v_2$. In this case the valence of each vertex is $1$, and again
 $\text{Cont}_{\Gamma} \in H^{\bullet}(\overline{\mathcal{M}}_g)$. If we set
 $$\text{Cont}_m'(e_\mu) = \frac{1}{m!}(p_m)_*\overline{\Omega}_{g_1, 1+m}
((e_\mu-R^{-1}(\psi_e')e_\mu) \otimes T(\psi_{2}) \otimes \ldots \otimes T(\psi_{1+m}))$$
and $\text{Cont}_m''$ is the same but with $g_2$ and
$R^{-1}(\psi_e'')^t$ instead of $g_1$ and $R^{-1}(\psi_e'$), we have
$$\text{Cont}_{\Gamma} = \frac{\eta^{\mu\nu}}{\psi_e' + \psi_e''}
\sum_{m\ge0}\text{Cont}_m'(e_\mu) \times \sum_{m\ge0}\text{Cont}_m''(e_\nu).$$
\end{example}

 \begin{example}
 Let us consider the case in which $R(z) \equiv \text{Id}$.
 Then the power series of rule $2$ of Definition \ref{contribution}
 is $\eta^{-1} - R^{-1}(z)\eta^{-1}R^{-1}(w)^t = 0$, and $T(z) = 0$.
 Therefore all the contributions are zero, except the one corresponding to the stratum
 $S \subseteq \overline{\mathcal{M}}_{g,n}$ whose graph $\Gamma_S$ 
 has no edges. By Remark \ref{connected} it must have only one vertex, with $n$ legs attached to it, 
 and it must clearly correspond to the \emph{smooth stratum}
 $S = \mathcal{M}_{g,n} \subseteq \overline{\mathcal{M}}_{g,n}$, or to the closed stratum 
 $S = \overline{\mathcal{M}}_{g,n}$.
 In this case it is easily seen that the contribution is simply
 $$\text{Cont}_{\Gamma_S} = \overline{\Omega}_{g,n} .$$
 Therefore we have $\text{Id}\overline{\Omega} = \overline{\Omega}$.
 \end{example}

 \begin{proposition}
 \label{staysfieldtheory}
 If $\overline{\Omega}$ is a nodal CohFT, then $R\overline{\Omega}$ is a nodal CohFT. The $R$-matrix action
 on CohFTs is a left group action.
 \end{proposition}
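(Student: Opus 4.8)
The plan is to prove the two assertions of Proposition~\ref{staysfieldtheory} in order: first that $R\overline{\Omega}$ is again a nodal CohFT, and then that the assignment $R \mapsto (R\overline{\Omega})$ is a left group action, i.e.\ that $(R_1R_2)\overline{\Omega} = R_1(R_2\overline{\Omega})$ and $\mathrm{Id}\,\overline{\Omega} = \overline{\Omega}$. The last equality is already contained in the example preceding the proposition, so the real content is the cocycle identity and the verification of the axioms.

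For the first part, I would check the five axioms of Definition~\ref{nodaltheories} one at a time. The $S_n$-equivariance is automatic because the sum over closed boundary strata $S \subseteq \overline{\mathcal{M}}_{g,n}$ is $S_n$-invariant as a set, and each $\mathrm{Cont}_{\Gamma_S}$ is built naturally from the combinatorics of $\Gamma_S$ together with $\psi$- and $\kappa$-type classes, which transform correctly. For the unit/genus-zero normalization: on $\overline{\mathcal{M}}_{0,3}$ there is only one stratum, its graph has a single trivalent genus-zero vertex and no edges, all the edge contributions and all the $T(\psi)$-insertions vanish in low degree (indeed $T(z) \in z^2A[[z]]$ contributes only in positive degree, and $H^{\bullet}(\overline{\mathcal{M}}_{0,3})$ is concentrated in degree $0$), so $\mathrm{Cont}_{\Gamma} = \overline{\Omega}_{0,3}$ and the axiom survives. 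The forgetful axiom follows from the string-type identity $p^*\psi_i = \psi_i - D_i$ together with the fact that inserting $\mathbf 1$ in an extra slot and summing over the $T(\psi)$-decorations reproduces, after pushforward along $p_m$, the contribution on $\overline{\mathcal{M}}_{g,n}$; this is the standard ``$R$-matrix action commutes with the forgetful map'' computation. The two genuinely delicate axioms are the separating and non-separating sewing axioms: one must show that pulling $\mathrm{Cont}_{\Gamma_S}$ back along $q$ or $s$ and summing over strata of $\overline{\mathcal{M}}_{g,n}$ matches the corresponding sum over strata of $\overline{\mathcal{M}}_{g-1,n+2}$ (resp.\ the product of sums over strata of the two factors), with the $\eta^{-1}$ inserted at the new marked points. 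Here the point is that a dual graph of $\overline{\mathcal{M}}_{g,n}$, once we specify which edge (if any) is being ``cut'', corresponds either to a dual graph of the boundary piece with that edge removed, or to one of the two factor graphs glued along the distinguished half-edges; the edge-decoration $\dfrac{\eta^{-1} - R^{-1}(\psi')\eta^{-1}R^{-1}(\psi'')^t}{\psi' + \psi''}$ together with the bilinear form $\eta^{-1}$ inserted by the sewing axiom of $\overline{\Omega}$ recombine exactly into either the edge term or a ``broken edge'' $\eta^{-1}$ at the two new legs after using $R^{-1}(z)^* = R(-z)$ (the symplectic condition). The automorphism factors $\tfrac{1}{|\mathrm{Aut}(\Gamma_S)|}$ must be tracked carefully: cutting an edge can break or preserve symmetries of the graph, and the combinatorial bookkeeping of automorphism groups under edge-cutting is precisely where the constants $\tfrac12$ in the non-separating axiom and the orbit-counting in Remark~\ref{2covering} enter.

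For the group-action part, the key step is the cocycle identity $(R_1R_2)\overline{\Omega} = R_1(R_2\overline{\Omega})$. The natural approach is to expand $R_1(R_2\overline{\Omega})_{g,n}$ as a sum over closed boundary strata $S_1$ of $\overline{\mathcal{M}}_{g,n}$ of contributions built from $R_1$ with vertex-terms drawn from $R_2\overline{\Omega}$, and then to expand each of those vertex-terms in turn as a sum over closed boundary strata $S_2$ of the corresponding smaller moduli space $\overline{\mathcal{M}}_{\mathrm g(v),\mathrm n(v)}$. After pushing forward along the nested gluing maps $\overline{\xi}$, the composite becomes a sum over pairs $(S_1,S_2)$, which one reinterprets as a sum over closed boundary strata $S$ of $\overline{\mathcal{M}}_{g,n}$ that \emph{refine} $S_1$ in a compatible way --- combinatorially, over graphs $\Gamma_S$ equipped with a contraction onto $\Gamma_{S_1}$. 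One then checks that, summed over all intermediate $S_1$, the $R_1$-edge/leg decorations along edges collapsed by the contraction combine with the $R_2$-decorations on $\Gamma_S$ into the $(R_1R_2)$-decorations: for legs this is the obvious multiplicativity $R_1^{-1}(z)R_2^{-1}(z) = (R_1R_2)^{-1}(z)$, and for edges it is the ``propagator composition'' identity $$\frac{\eta^{-1} - (R_1R_2)^{-1}(z)\eta^{-1}(R_1R_2)^{-1}(w)^t}{z+w} = \Big(\text{$R_1$-half-edge}\Big)\cdot\Big(\text{$R_2$-propagator}\Big)\cdot\Big(\text{$R_1$-half-edge}\Big) + (\text{collapsed-edge terms}),$$ together with the fact that the translation terms $T(z) = z[\mathbf 1 - R(z)^{-1}\mathbf 1]$ compose correctly under the forgetful pushforwards. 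The matching of the $\tfrac{1}{|\mathrm{Aut}|}$ weights uses that $|\mathrm{Aut}(\Gamma_S)|$ factors, up to the appropriate orbit count, as the product of $|\mathrm{Aut}(\Gamma_{S_1})|$ with the automorphisms of the collapsed subgraphs at the vertices.

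The main obstacle I expect is this bookkeeping of automorphism factors and of which graph-decorations recombine into which --- in other words, making the combinatorics of ``cutting an edge'' (for the sewing axioms) and ``contracting a subgraph'' (for the cocycle identity) precise enough that the $\tfrac{1}{|\mathrm{Aut}(\Gamma_S)|}$ and the $\tfrac12$'s all cancel correctly. This is the heart of Givental--Teleman's result and is genuinely intricate; the algebraic identities for the leg, edge, and translation decorations (multiplicativity of $R^{-1}$, composition of propagators, behaviour of $T(z)$ under pushforward) are routine once one has set up the correspondence of strata, but organizing that correspondence --- and checking it respects the symplectic condition so that adjoints appear on the correct half-edges --- is where all the care is needed. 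I would lean on the reference \cite{pandpixtzvon:spin} for the full details of these cancellations, and in the write-up present the structure of the argument with the key decoration-composition identities stated explicitly, while relegating the exhaustive automorphism-counting to that reference.
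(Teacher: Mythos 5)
The paper gives no proof of this proposition at all: it is stated immediately after the remark that the section ``follows closely \cite{pandpixtzvon:spin}, where all the missing proofs can be found,'' and the verification is simply deferred to that reference. Your outline correctly identifies the structure of the argument carried out there --- axiom-by-axiom verification with the sewing axioms and the automorphism bookkeeping as the delicate points, and the cocycle identity via summing over graphs equipped with a contraction, using multiplicativity of $R^{-1}$ on legs and the propagator-composition identity on edges --- and you defer the same combinatorial details to the same reference, so your treatment is consistent with (and strictly more informative than) the paper's.
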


 \begin{remark}
 \label{alsoworkswithsmooth}
  We could also have defined, in Definition \ref{contribution}, contributions associated to a \emph{free boundary}
  cohomological field theory $\Omega_{g,n}$, and a matrix action $R\Omega$ in Definition \ref{matrixaction} as well,
  using the same formulas as in the nodal case. It turns out that Proposition \ref{staysfieldtheory}
  stays valid, therefore $R\Omega$ will be a (nodal!) cohomological field theory. 
 \end{remark}

 We will now state one lemma that will be needed in the sequel.
 
 \begin{lemma}
 \label{tremendousexponential}
  Let $A$ be a $\mathbb{C}$-algebra and let
  $$M(z) = z(1 - \text{\emph{exp}}(-a_1z - a_2z^2 - \ldots)) \in A[[z]].$$
  Fix $g, n$ integers such that $2g - 2 + n > 0$, and for each $m \ge 0$,
  let $p_m : \mathcal{M}_{g,n+m} \rightarrow \mathcal{M}_{g,n}$ be the map that forgets
  the last $m$ marked points.
  Then in $\mathcal{M}_{g,n}$ we have the equality
  $$\text{\emph{exp}}(a_1 \kappa_1 + a_2 \kappa_2 + \ldots ) = 
  \sum_{m=0}^{\infty}\frac{1}{m!}(p_m)_*(M(\psi_{n+1})\cdots M(\psi_{n+m}))$$
 \end{lemma}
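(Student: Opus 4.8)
The plan is to expand $M$ as a power series, reducing the right-hand side to a combination of pushforwards of monomials in the $\psi$-classes of the forgotten points, and then to recognise the answer through the exponential formula. Since all the $\psi$- and $\kappa$-classes in sight are restrictions of the corresponding classes on the Deligne--Mumford compactifications, along which the forgetful maps are proper, I may work on $\overline{\mathcal{M}}_{g,n}$ (and restrict at the end), treating every $(p_m)_*$ as an honest proper pushforward; note also that on the locus of \emph{smooth} curves the comparison $\psi_{n+i}=\pi^{*}\psi_{n+i}$ holds with no boundary correction, since the relevant boundary divisors do not meet $\mathcal{M}_{g,n+m}$. Writing $\exp(-a_1z-a_2z^2-\cdots)=\sum_{b\ge 0}c_bz^b$ with $c_0=1$ and $c_b\in A$, one has $M(z)=-\sum_{b\ge 1}c_b\,z^{b+1}\in z^2A[[z]]$, hence
$$\prod_{i=1}^{m}M(\psi_{n+i})=(-1)^m\sum_{b_1,\dots,b_m\ge 1}\Big(\prod_{i=1}^m c_{b_i}\Big)\,\psi_{n+1}^{\,b_1+1}\cdots\psi_{n+m}^{\,b_m+1}.$$
So everything reduces to computing $(p_m)_*\big(\psi_{n+1}^{\,b_1+1}\cdots\psi_{n+m}^{\,b_m+1}\big)$.

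The geometric input is the classical Arbarello--Cornalba (Kaufmann--Manin--Zagier) pushforward formula
$$(p_m)_*\big(\psi_{n+1}^{\,b_1+1}\cdots\psi_{n+m}^{\,b_m+1}\big)=\sum_{\sigma\in S_m}\ \prod_{c}\kappa_{b(c)},\qquad b(c):=\textstyle\sum_{i\in c}b_i,$$
where $c$ runs over the cycles of $\sigma$. I would prove this by induction on $m$ (the cases $m=0,1$ being immediate). For the step, factor $p_m=p_{m-1}\circ\pi_m$ with $\pi_m$ forgetting $x_{n+m}$: by $\psi_{n+i}=\pi_m^{*}\psi_{n+i}$ for $i<m$, the projection formula, and $(\pi_m)_*\psi_{n+m}^{\,b_m+1}=\kappa_{b_m}$, the pushforward by $\pi_m$ equals $\kappa_{b_m}\cdot\psi_{n+1}^{\,b_1+1}\cdots\psi_{n+m-1}^{\,b_{m-1}+1}$ on $\mathcal{M}_{g,n+m-1}$; then, iterating the one-step comparison $\kappa_a=\pi^{*}\kappa_a+\psi^{\,a}$ (the $\psi$ being taken at the point forgotten by $\pi$), one writes $\kappa_{b_m}=p_{m-1}^{*}\kappa_{b_m}+\sum_{i<m}\psi_{n+i}^{\,b_m}$, so that under $(p_{m-1})_*$ the factor $\kappa_{b_m}$ either survives independently or is absorbed into one of the remaining indices via $\psi_{n+j}^{\,b_m}\psi_{n+j}^{\,b_j+1}=\psi_{n+j}^{\,b_j+b_m+1}$. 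By the induction hypothesis these are precisely the terms obtained by encoding a $\sigma\in S_m$ as its restriction $\sigma'\in S_{m-1}$ together with $\sigma^{-1}(m)$: $m$ is either a fixed point (new cycle $\kappa_{b_m}$) or sits in a cycle just after some $j<m$ (merging $b_m$ into the $b$-sum of that cycle). The induction closes.

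Finally, plug this into the expansion, sum over $m$ with weight $1/m!$, and regroup by cycle type, using $(-1)^m=\prod_{c}(-1)^{|c|}$ and $\prod_i c_{b_i}=\prod_{c}\prod_{i\in c}c_{b_i}$ (here $A$ commutative is used): the result takes the form to which the exponential formula applies, namely
$$\sum_{m\ge 0}\frac{1}{m!}(p_m)_*\Big(\prod_{i=1}^{m}M(\psi_{n+i})\Big)=\exp\!\Big(\sum_{\ell\ge 1}\frac{(-1)^{\ell}}{\ell}\sum_{b_1,\dots,b_\ell\ge 1}c_{b_1}\cdots c_{b_\ell}\,\kappa_{b_1+\cdots+b_\ell}\Big),$$
the weight $1/\ell$ coming from the $(\ell-1)!$ cyclic orders on $\ell$ elements divided by $\ell!$. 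Since $\sum_{b\ge 1}c_bz^b=\exp(-\sum_j a_jz^j)-1$, the inner sum over $b_1,\dots,b_\ell$ is, when the whole expression is read as a power series in $z$ and then $z^k$ is replaced by $\kappa_k$ (a $\mathbb{C}$-linear operation, well defined because only finitely many $\kappa_k$ survive and no $\kappa_0$ occurs, as every $b_i\ge 1$), the $z$-expansion of $\big(\exp(-\sum_j a_jz^j)-1\big)^{\ell}$. Hence the exponent equals, under the same substitution,
$$\sum_{\ell\ge 1}\frac{(-1)^{\ell}}{\ell}\big(\exp(-\textstyle\sum_j a_jz^j)-1\big)^{\ell}=\sum_{\ell\ge 1}\frac{1}{\ell}\big(1-\exp(-\textstyle\sum_j a_jz^j)\big)^{\ell}=-\log\exp(-\textstyle\sum_j a_jz^j)=\sum_{j\ge 1}a_jz^j,$$
i.e.\ $\sum_{j\ge 1}a_j\kappa_j$, which gives the claimed identity. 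The computational heart is this exponential-formula bookkeeping; the only real geometric ingredient is the Arbarello--Cornalba formula (standard, and citable), whose induction above is where one must be careful. Alternatively one can avoid the explicit combinatorics by setting up a recursion on $\sum_m\frac{1}{m!}(p_m)_*\big(\prod_i M(\psi_{n+i})\big)$ directly, peeling off one forgotten point at a time — but this amounts to the same calculation.
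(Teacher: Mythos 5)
Your proof is correct and follows essentially the same route as the paper's: both reduce to the pushforward identity $(p_m)_*\bigl(\psi_{n+1}^{k_1+1}\cdots\psi_{n+m}^{k_m+1}\bigr)=\sum_{\sigma\in S_m}\prod_{\text{cycles}}\kappa_{k_\rho}$ and then apply the exponential formula, with the final identification coming from $-\log\bigl(1-(1-\exp(-\sum_j a_jz^j))\bigr)=\sum_j a_jz^j$. If anything you are more careful than the paper, which asserts the multi-$\kappa$ formula without proof and tacitly conflates the coefficients $a_k$ with the Taylor coefficients of $M(z)/z$ in its generating-function bookkeeping, whereas your $c_b$'s keep these separate.
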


 \begin{proof}
  We need a technical fact about $\kappa$-classes. For each permutation $\sigma \in S_m$,
  let $\sigma = \rho_1 \cdots \rho_l$ be its decomposition in cycles, where also $1$-cycles have been written. For
  each cycle $\rho_i$, define $k_{\rho_i} = \sum_{j \in \rho_i} k_j$. If
  $\kappa_{k_1, \ldots, k_m} = (p_m)_*(\psi_{n+1}^{k_1+1} \cdots \psi_{n+m}^{k_m+1})$,
  then 
  \begin{equation}
  \label{multikappa}
  \kappa_{k_1, \ldots, k_m} = \sum_{\sigma \in S_m} \prod_{i=1}^l \kappa_{k_{\rho_i}}.
  \end{equation}
  For example, $\kappa_{1,1} = \kappa_1^2 + \kappa_2$ and 
  $\kappa_{1,2,3} = \kappa_1\kappa_2\kappa_3 + \kappa_1\kappa_5 + \kappa_2\kappa_4 + \kappa_3^2 + 2\kappa_6$.
  
  Let $B(z) = \frac{M(z)}{z} \in A[[z]]$.
  For each set of $m$ strictly positive integers $\{ k_1, \ldots, k_m \}$, let
  $$b_{k_1, \ldots , k_m} = a_{k_1}\cdots a_{k_m}\kappa_{k_1,\ldots, k_m}z^{k_1+\ldots + k_m}.$$
  If $\sigma \in S_m$ is a permutation whose decomposition in cycles is $\sigma = \rho_1 \cdots \rho_l$, let
  $$b_{k_1, \ldots , k_m, \sigma} = (a_{k_1}\cdots a_{k_m} \prod_{i=1}^l \kappa_{k_{\rho_i}}) z^{k_1 + \ldots + k_m}$$
  then by Equation \ref{multikappa} we have 
  $b_{k_1, \ldots , k_m} = \sum_{\sigma \in S_m}b_{k_1, \ldots , k_m, \sigma}$. Let 
  $$N_m = \sum_{k_1, \ldots, k_m \ge 1} b_{k_1,\ldots,k_m} $$
  and let $F = \sum_{m \ge 0}\frac{N_m}{m!}$. We see that $F$ is the generating function, evaluated at $1$,
  of some combinatorial objects, namely the sets of $m$ numbers $\{k_1, \ldots , k_m\}$,
  with the $i$-th number weighted by $a_{k_i}z^{k_i}$, along with a weighting $\prod_{i=1}^l \kappa_{k_{\rho_i}}$
  where $\rho_1 \cdots \rho_l \in S_m$ is the decomposition in cycles of a permutation in $S_m$.
  By the theory of generating functions, $F = e^G$ where $G$ is the generating function, evaluated at $1$, of
  the sets of $n$ numbers $\{k_1, \ldots, k_n\}$, with the $i$-th number weighted by $a_{k_i}z^{k_i}$, along
  with a weighting $\kappa_{k_{\rho}}$ where $\rho$ is an $n$-cycle in $S_n$.
  Let $\phi: A[[z]] \rightarrow A[[z,\kappa_i]]_{i \ge 1}$ be the $A$-linear map
  that sends every $z^i$ to $z^i \kappa_i$. Since $k_{\rho} = k_1 + \ldots + k_n$
  does not depend on $\rho$, and since there are exactly $(n-1)!$ $n$-cycles in $S_n$,
  we see that 
  $$G = \phi\left(\sum_{n \ge 0}\frac{(n-1)!}{n!}\sum_{k_1, \ldots ,k_n \ge 1}
  a_{k_1}\cdots a_{k_n}z^{k_1+\ldots + k_n}\right)
  = $$ $$=\phi\left(\sum_{n \ge 0} \frac{B(z)^n}{n}\right) = -\phi(\text{log}(1-B(z))) =
  a_1 \kappa_1 z + a_2  \kappa_2 z^2 + \ldots $$
 Setting $z = 1$ in the equality $F = e^G$ we get precisely the formula we need to prove.
 \end{proof}

 \section{Frobenius algebras}

\begin{definition}
Let $(A, \cdot, \mathbf{1})$ be a finite-dimensional associative algebra with unit element $\mathbf{1}$ 
over the field $\mathbb{C}$ (this will be the only case of real interest in our discussion). This algebra
is a \emph{Frobenius algebra} if it is equipped with a non-degenerete bilinear form
$\eta: A \otimes A \rightarrow \mathbb{C}$ such that
$\eta(a\cdot b, c) = \eta(a, b \cdot c) $ for every $a,b,c \in A$. If $\eta$ is symmetric, then
$A$ is called a \emph{symmetric Frobenius algebra}.
\end{definition}

\begin{remark}
\label{i}
We have an isomorphism $i: A \xrightarrow{\sim} A^*$ between $A$ and the dual space $A^*$,
given by the nondegenerate form $\eta$, namely
$i(v) = \eta(v, \cdot)$. We can define an algebra structure on $A^*$ by imposing $i$ to be an 
isomorphism of algebras. Also, on the dual space $A^*$ we have $(\eta^t)^{-1}$
the inverse transpose form, which is still non-degenerate and defines a structure of Frobenius algebra
on the dual space $A^*$ (the verification is immediate).
\end{remark}

\begin{definition}
\label{Frobeniustrace}
By letting $i: A \xrightarrow{\sim} A^*$ as in the previous remark, the functional $\theta = i(\mathbf{1})$
is called the \emph{Frobenius trace} of the algebra. In other words, 
the Frobenius trace is defined as $\theta(a) = \eta(a, \mathbf{1})$ for every $a \in A$.
\end{definition}

\begin{definition}
\label{Eulerclass}
Let $(A, \cdot, \mathbf{1}, \eta)$ be a Frobenius algebra. The vector
$$\alpha = \eta^{\mu\nu}e_{\mu}\cdot e_{\nu}$$
is called the \emph{Euler class} of $A$.
\end{definition}

\begin{remark}
\label{welldef}
If $Ae_{\mu} = f_{\mu}$ for some invertible matrix $A= (A_{\rho}^{\sigma})_{\rho,\sigma}$,
then $f_{\mu} = A_{\mu}^{\rho}e_{\rho}$
and 
$$\eta^{\mu\nu}e_{\mu}\cdot e_{\nu} = \eta^{\mu\nu}(A^{-1}f_{\mu})\cdot(A^{-1}f_{\nu}) = 
\eta^{\mu\nu} A_{\mu}^{\rho}A_{\nu}^{\sigma}(A^{-1}e_{\rho})\cdot(A^{-1}e_{\sigma}) =
(A^t\eta A)^{\rho\sigma}f_\rho \cdot f_\sigma$$
Where $A^t$ is the transpose matrix of $A$. 
Since the matrix $\eta$, in the basis $(f_\rho)_\rho$, turns into the matrix $A^t\eta A$, we see that
the Euler class does not depend on the choice of a basis of $A$, it is therefore determined by the
Frobenius algebra structure.
\end{remark}

\begin{remark}
\label{nonzero}
The Euler class $\alpha$
does not lie in the kernel of the Frobenius trace of $A$; in particular, it is never zero.
Indeed, let $\theta$ be the Frobenius trace of $A$. Then we have
$\theta(\alpha) = \eta^{\mu\nu}\eta(e_{\mu},e_{\nu}) = \eta^{\mu\nu}\eta_{\mu\nu} = \text{dim}A$,
which is not zero.
\end{remark}

\begin{definition}
Let $(A, \cdot, \mathbf{1}, \eta)$ be a Frobenius algebra of complex dimension $k$.
$A$ is said to be \emph{semisimple} if
there exist an orthonormal (with respect to $\eta$) basis $(e_{\mu})_{1 \le \mu \le k}$ and non-zero
complex numbers $\theta_{\mu}$, such that for every $i$ and $j$, we have
\begin{equation}
\label{semisimple}
e_{\mu} \cdot e_\nu = \delta_{\mu\nu}\theta_{\mu}^{-1} e_\mu
\end{equation}

A basis that satisfies the conditions (\ref{semisimple}) is called a \emph{semisimple basis};
the moduli $|\theta_{\mu}|$ are called the \emph{weights} of the algebra.
\end{definition}

 It is clear that in this case
$\theta_{\mu} = \eta(e_{\mu}, \mathbf{1})$ and also $\theta_{\mu} = \theta(e_{\mu})$ in the notation of Definition
\ref{Frobeniustrace}.
Therefore we have $\mathbf{1} = \sum_{\mu} \theta_{\mu} e_{\mu}$ for a semisimple basis $(e_{\mu})_{\mu}$.

\begin{remark}
\label{weights}
The unordered set of weights is well-defined. Indeed, if $f_\mu$ is another semisimple basis,
then there exists an $\eta$-orthogonal matrix
$P$ (i.e. $P^t \eta P = \eta$) such that $f_\mu = Pe_\mu$.
Let us expand the product for general $\mu$ and $\nu$
$$Pe_{\mu} \cdot Pe_\nu = P_\mu^\rho P_\nu^\sigma e_\rho \cdot e_\sigma 
= P_\mu^\rho P_\mu^\rho \theta_\rho^{-1}e_\rho$$
For  $\mu \neq \nu$, we must have $P^\rho_\mu P^\sigma_\nu = 0$ 
for every $\rho$, therefore the matrix $P$ has at most one (and then
exactly one since it must be non-singular)
non-zero entry for each row. Thus, eventually conjugating by a permutation matrix (which
is orthogonal), we can suppose that $P$ is
diagonal. Since $P$ is $\eta$-orthogonal, the entries in the diagonal must be $1$ or $-1$,
therefore the weights remain unchanged since the $\theta_{\mu}$ can only change sign.
In fact we have shown that the semisimple basis is uniquely
determined, up to permutation and to sign switch of the vectors.
\end{remark}

\begin{proposition}
\label{invertible}
Let $(A, \cdot, \mathbf{1}, \eta)$ be a Frobenius algebra. Then $A$ is semisimple if and only if its
Euler class $\alpha$ is invertible.
\end{proposition}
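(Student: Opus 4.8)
The plan is to translate invertibility of the Euler class $\alpha$ into nondegeneracy of the \emph{trace form} of $A$, and then to invoke the structure of finite-dimensional commutative $\mathbb{C}$-algebras. (We work with $A$ commutative, which is the relevant case for cohomological field theories; note that a semisimple basis automatically consists of pairwise commuting vectors, so commutativity is built into the statement on the semisimple side.) For $x\in A$ write $L_x\colon A\to A$, $L_x(y)=x\cdot y$; then $x\mapsto L_x$ is injective, since $L_x(\mathbf{1})=x$, and multiplicative, $L_{xy}=L_xL_y$, so $\alpha$ is invertible in $A$ if and only if the operator $L_\alpha$ is invertible. The key computation is the identity
$$\eta(\alpha\cdot v,w)\;=\;\eta(\alpha,v\cdot w)\;=\;\operatorname{tr}(L_{vw})\;=\;\operatorname{tr}(L_vL_w),\qquad v,w\in A,$$
in which the first equality is the Frobenius property and the second follows, for $u=v\cdot w$, from the dual-basis contraction
$$\eta(\alpha,u)=\eta^{\mu\nu}\eta(e_\mu e_\nu,u)=\eta^{\mu\nu}\eta(e_\mu,u\,e_\nu)=\eta^{\mu\nu}(L_u)^{\rho}_{\nu}\,\eta_{\mu\rho}=(L_u)^{\nu}_{\nu}=\operatorname{tr}(L_u),$$
where $(L_u)^{\rho}_{\nu}$ denotes the matrix of $L_u$ in the basis $(e_\rho)$, summation over repeated indices is understood as throughout, and $\sum_\mu\eta^{\mu\nu}\eta_{\mu\rho}=\delta^{\nu}_{\rho}$ is used. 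Hence the symmetric bilinear form $\langle v,w\rangle:=\operatorname{tr}(L_vL_w)$ equals $\eta(L_\alpha v,w)$, and since $\eta$ is nondegenerate, $\langle\,\cdot\,,\,\cdot\,\rangle$ is nondegenerate if and only if $\alpha$ is invertible.

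Next, suppose $A$ is semisimple. In a semisimple basis $(e_\mu)_\mu$, which is $\eta$-orthonormal (so $\eta^{\mu\nu}=\delta^{\mu\nu}$), the elements $\theta_\mu e_\mu$ are orthogonal idempotents summing to $\mathbf{1}$, so $A\cong\mathbb{C}^k$ as an algebra; since $\alpha=\eta^{\mu\nu}e_\mu\cdot e_\nu=\sum_\mu\theta_\mu^{-1}e_\mu=\sum_\mu\theta_\mu^{-2}(\theta_\mu e_\mu)$ has all components nonzero in this decomposition, it is invertible.

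Conversely, suppose $\alpha$ is invertible, so that $\langle\,\cdot\,,\,\cdot\,\rangle$ is nondegenerate. Let $n\in A$ be nilpotent; since $A$ is commutative, $L_n$ is a nilpotent operator commuting with every $L_v$, hence $(L_vL_n)^N=L_v^N L_n^N=0$ for $N\gg 0$, so $L_vL_n=L_{vn}$ is traceless. Thus $\langle v,n\rangle=0$ for every $v$, and nondegeneracy forces $n=0$. So $A$ is a finite-dimensional commutative $\mathbb{C}$-algebra with no nonzero nilpotents, hence isomorphic as an algebra to $\mathbb{C}^k$ (for instance because the commuting operators $L_x$, $x\in A$, are all diagonalizable --- their minimal polynomials are squarefree, as $A$ has no nonzero nilpotents --- and so simultaneously diagonalizable). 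Let $\epsilon_1,\dots,\epsilon_k$ be the primitive idempotents, so $\epsilon_i\epsilon_j=\delta_{ij}\epsilon_i$ and $\sum_i\epsilon_i=\mathbf{1}$. The Frobenius property makes them automatically $\eta$-orthogonal, $\eta(\epsilon_i,\epsilon_j)=\eta(\epsilon_i\epsilon_i,\epsilon_j)=\eta(\epsilon_i,\epsilon_i\epsilon_j)=0$ for $i\neq j$, and nondegeneracy of $\eta$ then forces $\theta_i:=\eta(\epsilon_i,\epsilon_i)\neq 0$. Choosing square roots and setting $e_i:=\theta_i^{-1/2}\epsilon_i$ produces an $\eta$-orthonormal basis with $e_ie_j=\delta_{ij}(\theta_i^{1/2})^{-1}e_i$, i.e. a semisimple basis; so $A$ is semisimple.

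The first direction is immediate; the content is in the converse, and its crux is the identity $\eta(L_\alpha v,w)=\operatorname{tr}(L_vL_w)$ together with the classical fact that a nondegenerate trace form forbids nonzero nilpotents. Once $A$ is known to split as a product of copies of $\mathbb{C}$, the Frobenius pairing delivers the orthonormal idempotent basis with no further work, so it is really only this step where an idea is needed.
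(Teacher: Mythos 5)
Your proof is correct, and it actually establishes more than the paper does: the paper proves only the easy implication (semisimple $\Rightarrow$ $\alpha$ invertible), by exhibiting the explicit inverse $\alpha^{-1}=\sum_\mu\theta_\mu^3 e_\mu$ in a semisimple basis, and explicitly defers the converse to Abrams's paper, noting it will not be needed in the sequel. On the easy direction your argument is essentially the paper's (a direct computation in the semisimple basis, phrased via the orthogonal idempotents $\theta_\mu e_\mu$ rather than via an explicit formula for $\alpha^{-1}$). The converse is the genuinely new content: your identity $\eta(L_\alpha v,w)=\operatorname{tr}(L_vL_w)$, reducing invertibility of $\alpha$ to nondegeneracy of the trace form and hence to the absence of nilpotents, is the standard self-contained route (and is in substance the argument behind the cited reference), so it makes the exposition independent of Abrams. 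One point worth keeping visible: your parenthetical restriction to \emph{commutative} $A$ is not cosmetic but necessary for the converse --- the paper's notion of semisimplicity (an orthonormal basis of mutually annihilating projectors) forces commutativity, while e.g.\ $M_2(\mathbb{C})$ with the trace pairing is a symmetric Frobenius algebra whose Euler class $2\,\mathrm{Id}$ is invertible yet which admits no such basis; so the proposition as literally stated needs the commutativity hypothesis, which is harmless here since Proposition \ref{thereisfrobstructure} guarantees it for the Frobenius algebra of a CohFT.
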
 

\begin{proof}
We will prove just the easy part: if the Frobenius algebra is semisimple, then $\alpha$ is invertible.
The other part is proved in \cite{abrams:frobenius2} and will not be needed in the sequel.

Let $A$ be semisimple and let $(e_{\mu})_{\mu}$ be a semisimple basis. Then by Definition \ref{Eulerclass}
and Remark \ref{welldef} we can write $\alpha = \sum_{\mu} \theta_{\mu}^{-1}e_{\mu}$
in this basis. It is then easy to show that 
$\alpha^{-1} = \sum_{\mu} \theta_{\mu}^3e_{\mu}$ is an element such that $\alpha \cdot \alpha^{-1} = \mathbf{1}$,
that is, $\alpha$ is invertible.
\end{proof}

\begin{definition} \label{Frob}
Let $(\Omega_{g,n})_{g,n}$ be a CohFT of any type with base $(A, \eta, \mathbf{1})$ and let
$i : A \rightarrow A^*$ be the isomorphism of Remark \ref{i}. We define a product $\cdot$
on $A$ by writing, for every $v_1, v_2 \in A$,
$$ v_1 \cdot v_2 = i^{-1}\Omega_{0,3}(v_1 \otimes v_2 \otimes \cdot)$$
or in other words, $\eta(v_1 \cdot v_2, v_3) = \Omega_{0,3}(v_1 \otimes v_2 \otimes v_3)$.
\end{definition}

\begin{remark}
\label{noneedtospecify}
We notice that $\Omega_{0,3}$ and $\overline{\Omega}_{0,3}$ both have values in the cohomology ring of
$\mathcal{M}_{0,3} = \overline{\mathcal{M}}_{0,3}$, which is just $\mathbb{C}$, and by Proposition
\ref{cohernodal} they must be the same homomorphism. The same is true for $\widetilde{\Omega}_{0,3}$,
by Proposition \ref{coherfree}; thus we can avoid specifying the type of the CohFT, because every type 
leads to the same algebra structure.
\end{remark}

\begin{proposition}
\label{thereisfrobstructure}
Let $(\Omega_{g,n})_{g,n}$ be a CohFT of any type with base $(A, \eta, \mathbf{1})$. Then 
there exists a unique Frobenius algebra structure on $A$ such that, for every $v_i \in A$, we have
$$ \Omega_{0,3}(v_1 \otimes v_2 \otimes v_3) = \eta(v_1 \cdot v_2, v_3) $$
This Frobenius algebra is symmetric and commutative, and has $\mathbf{1}$ as neutral element.
\end{proposition}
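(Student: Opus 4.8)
The product is in fact already prescribed: Definition \ref{Frob} declares $v_1 \cdot v_2$ to be the unique vector satisfying $\eta(v_1 \cdot v_2, v_3) = \Omega_{0,3}(v_1 \otimes v_2 \otimes v_3)$ for every $v_3$. This makes sense because $\Omega_{0,3}$ is $\mathbb{C}$-valued — for nodal and free boundary theories since $\mathcal{M}_{0,3} = \overline{\mathcal{M}}_{0,3}$ is a point, and in general by Remark \ref{noneedtospecify} — and because $\eta$ is nondegenerate; nondegeneracy moreover gives \emph{uniqueness} immediately, since any product obeying the displayed identity must equal this one. So the plan is to verify that $\cdot$ is bilinear, admits $\mathbf{1}$ as a unit, is commutative, satisfies the Frobenius identity $\eta(a \cdot b, c) = \eta(a, b \cdot c)$, and is associative. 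The first four are formal; associativity is the only substantial point, and it will come from the four-pointed genus-zero moduli space.

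First I would record that $\Omega_{0,3}$ is symmetric in all three of its arguments: by $S_3$-equivariance (the first axiom), $\Omega_{0,3}(v_{\rho(1)} \otimes v_{\rho(2)} \otimes v_{\rho(3)}) = \rho^* \Omega_{0,3}(v_1 \otimes v_2 \otimes v_3)$, and $\rho^*$ acts as the identity on $H^0$ of a (connected) point. Since $\eta$ is symmetric, $\eta(a \cdot b, c) = \Omega_{0,3}(a \otimes b \otimes c)$ is then symmetric in $a, b, c$: transposing $c$ with one of $a, b$ gives the Frobenius identity, while transposing $a$ with $b$ gives $a \cdot b = b \cdot a$ after invoking nondegeneracy. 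For the unit, the second axiom gives $\eta(\mathbf{1} \cdot u, v) = \Omega_{0,3}(\mathbf{1} \otimes u \otimes v) = \eta(u, v)$ for every $v$, hence $\mathbf{1} \cdot u = u$, and together with commutativity this makes $\mathbf{1}$ a two-sided unit. Bilinearity of $\cdot$ is inherited from linearity of $\Omega_{0,3}$ and nondegeneracy of $\eta$.

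For associativity I would work on $\overline{\mathcal{M}}_{0,4} \cong \mathbb{P}^1$, which is connected (in the free and fixed boundary cases, $\mathcal{M}_{0,4}$, respectively $\widetilde{\mathcal{M}}_{0,4}$, which are connected and fibred over it). Apply the separating sewing axiom to the sewing map with source $\overline{\mathcal{M}}_{0,3} \times \overline{\mathcal{M}}_{0,3}$, a single point, that places the marked points $1, 2$ on one component and $3, 4$ on the other. Pulling back along this map amounts to extracting the degree-$0$ part of a class on $\overline{\mathcal{M}}_{0,4}$; since each $\Omega_{0,3}(a \otimes b \otimes e_\mu)$ is the scalar $\eta(a \cdot b, e_\mu)$ and $\eta^{\mu\nu} \eta(w, e_\mu) e_\nu = w$, the axiom yields
$$\bigl[ \Omega_{0,4}(v_1 \otimes v_2 \otimes v_3 \otimes v_4) \bigr]_0 = \eta(v_1 \cdot v_2,\ v_3 \cdot v_4).$$
By $S_4$-equivariance the degree-$0$ part of $\Omega_{0,4}(v_1 \otimes v_2 \otimes v_3 \otimes v_4)$ is unchanged under permuting the four inputs — once more $\rho^*$ is trivial on $H^0$ — so reading the entries in the order $v_2, v_3, v_1, v_4$ gives
$$\eta(v_2 \cdot v_3,\ v_1 \cdot v_4) = \eta(v_1 \cdot v_2,\ v_3 \cdot v_4)$$
for all $v_1, \ldots, v_4$. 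Rewriting both sides by means of the Frobenius identity and commutativity turns this into $\eta\bigl( (v_1 \cdot v_2) \cdot v_3,\ v_4 \bigr) = \eta\bigl( v_1 \cdot (v_2 \cdot v_3),\ v_4 \bigr)$ for every $v_4$, whence $(v_1 \cdot v_2) \cdot v_3 = v_1 \cdot (v_2 \cdot v_3)$ by nondegeneracy of $\eta$.

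The step that deserves the most care — the \emph{main obstacle}, modest though it is — is the bookkeeping in the free and fixed boundary forms of the sewing axiom, where the right-hand side is presented as a pullback along a circular tubular neighbourhood rather than as a literal restriction to a boundary point. One must check that in cohomological degree $0$ this still collapses to the identity displayed above; this holds because the stratum $\mathcal{M}_{0,3} \times \mathcal{M}_{0,3}$ is a single point and the circle and torus bundles, as well as the circular neighbourhoods, appearing in the axioms are all connected, so every restriction map and every bundle pullback acts as the identity on $H^0$. Granted this reduction, the computation above carries over unchanged to all three types.
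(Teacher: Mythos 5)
Your proof is correct and follows essentially the same route as the paper: both reduce associativity to the $S_4$-equivariance of $\Omega_{0,4}$ pulled back along the sewing map $\overline{\mathcal{M}}_{0,3}\times\overline{\mathcal{M}}_{0,3}\to\overline{\mathcal{M}}_{0,4}$, which only sees the degree-zero part. The only cosmetic difference is the order of steps — you extract commutativity and the Frobenius identity from the $S_3$-symmetry of $\Omega_{0,3}$ first and then deduce associativity, whereas the paper proves associativity first (via its Lemma \ref{multiplication}, which is the same sewing-axiom computation) and obtains the Frobenius property as a consequence.
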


We will need the following lemma.

\begin{lemma}
\label{multiplication}
Let
$$s: \overline{\mathcal{M}}_{g,n} \times \overline{\mathcal{M}}_{0,3} \rightarrow \overline{\mathcal{M}}_{g,n+1}$$
be the map that sews together the points marked $n$ and $3$ respectively.
Then we have
$$s^* \overline{\Omega}_{g,n+1}(v_1 \otimes v_2 \otimes \ldots \otimes v_{n+1}) =
\overline{\Omega}_{g,n}(v_1 \otimes \ldots \otimes v_{n-1} \otimes (v_n \cdot v_{n+1}))$$

\end{lemma}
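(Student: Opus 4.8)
The plan is to recognize the map $s$ as a special case of the separating sewing map appearing in the fourth axiom of Definition \ref{nodaltheories}, and then to unwind that axiom using the definition of the Frobenius product. In the separating sewing axiom one takes $s : \overline{\mathcal{M}}_{g_1,n_1+1}\times\overline{\mathcal{M}}_{g_2,n_2+1}\to\overline{\mathcal{M}}_{g,n}$ with $g=g_1+g_2$ and $n=n_1+n_2$; choosing $g_1=g$, $n_1=n-1$, $g_2=0$, $n_2=2$ gives a map $\overline{\mathcal{M}}_{g,n}\times\overline{\mathcal{M}}_{0,3}\to\overline{\mathcal{M}}_{g,n+1}$ that glues the point marked $n_1+1=n$ on the first factor to the point marked $n_2+1=3$ on the second factor — precisely the map in the statement. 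In particular the marked-point labels already line up, so no use of $S_n$-equivariance is needed, and the stability inequalities are automatic (for the genus-zero factor, $2\cdot 0-2+3=1>0$).

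First I would apply the separating sewing axiom with these choices, obtaining
\[
s^*\overline{\Omega}_{g,n+1}(v_1\otimes\cdots\otimes v_{n+1})
=\eta^{\mu\nu}\,\overline{\Omega}_{g,n}(v_1\otimes\cdots\otimes v_{n-1}\otimes e_\mu)\times\overline{\Omega}_{0,3}(v_n\otimes v_{n+1}\otimes e_\nu).
\]
Next I would observe that $\overline{\mathcal{M}}_{0,3}$ is a single point, so $H^\bullet(\overline{\mathcal{M}}_{0,3})=\mathbb{C}$ is concentrated in degree $0$; under the identification $\overline{\mathcal{M}}_{g,n}\times\overline{\mathcal{M}}_{0,3}\cong\overline{\mathcal{M}}_{g,n}$ the cross product of a class $\beta\in H^\bullet(\overline{\mathcal{M}}_{g,n})$ with a scalar $c\in H^0(\overline{\mathcal{M}}_{0,3})$ is just the scalar multiple $c\beta$. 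Hence the right-hand side becomes
\[
\eta^{\mu\nu}\,\overline{\Omega}_{0,3}(v_n\otimes v_{n+1}\otimes e_\nu)\;\overline{\Omega}_{g,n}(v_1\otimes\cdots\otimes v_{n-1}\otimes e_\mu).
\]

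Then I would invoke Proposition \ref{thereisfrobstructure} (equivalently Definition \ref{Frob}), which gives $\overline{\Omega}_{0,3}(v_n\otimes v_{n+1}\otimes e_\nu)=\eta(v_n\cdot v_{n+1},e_\nu)$, so the expression equals $\eta^{\mu\nu}\,\eta(v_n\cdot v_{n+1},e_\nu)\;\overline{\Omega}_{g,n}(v_1\otimes\cdots\otimes v_{n-1}\otimes e_\mu)$. Finally I would use the index-raising identity $\eta^{\mu\nu}\eta(w,e_\nu)\,e_\mu=w$, valid for every $w\in A$ (expand $w$ in the basis $(e_\mu)$ and contract with $\eta^{\mu\nu}\eta_{\rho\nu}=\delta^\mu_\rho$), together with the linearity of $\overline{\Omega}_{g,n}$ in its last slot, to collapse the sum and conclude
\[
s^*\overline{\Omega}_{g,n+1}(v_1\otimes\cdots\otimes v_{n+1})=\overline{\Omega}_{g,n}(v_1\otimes\cdots\otimes v_{n-1}\otimes (v_n\cdot v_{n+1})).
\]
The only genuinely delicate point is the bookkeeping: checking that the marked-point labels in the chosen instance of the sewing axiom match those in the statement, and that the cross product against $H^\bullet(\overline{\mathcal{M}}_{0,3})=\mathbb{C}$ is treated correctly as plain scalar multiplication; everything else is the routine index contraction above.
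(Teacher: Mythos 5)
Your proposal is correct and follows the same route as the paper's own proof: apply the separating sewing axiom with $(g_1,n_1,g_2,n_2)=(g,n-1,0,2)$, identify $\overline{\Omega}_{0,3}(v_n\otimes v_{n+1}\otimes e_\nu)=\eta(v_n\cdot v_{n+1},e_\nu)$ via the definition of the Frobenius product, and contract with $\eta^{\mu\nu}$. The extra bookkeeping you supply (matching of labels, the cross product against $H^\bullet(\overline{\mathcal{M}}_{0,3})=\mathbb{C}$ being scalar multiplication) is implicit in the paper's argument and is handled correctly.
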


\begin{proof}
By the sewing axiom of Definition \ref{nodaltheories}, we have
$$s^*\overline{\Omega}_{g,n+1}(v_1 \otimes v_2 \otimes \ldots \otimes v_{n+1}) =
\eta^{\mu \nu}\overline{\Omega}_{g,n}(v_1 \otimes \ldots \otimes v_{n-1} \otimes e_{\nu})
\times \overline{\Omega}_{0,3}(v_n \otimes v_{n+1} \otimes e_{\mu}) $$
and by Definition \ref{Frob}, the last quantity is
$$\eta^{\mu\nu}\eta(v_n \cdot v_{n+1}, e_{\mu}) 
\overline{\Omega}_{g,n}(v_1 \otimes \ldots \otimes v_{n-1} \otimes e_{\nu}) = 
\overline{\Omega}_{g,n}( v_1 \otimes \ldots \otimes v_{n-1} 
\otimes \eta^{\mu\nu}\eta(v_n \cdot v_{n+1}, e_{\mu})e_{\nu})$$
since $\eta^{\mu\nu}\eta(v_n \cdot v_{n+1}, e_{\mu}) e_{\nu} = v_n \cdot v_{n+1}$, we get the required formula.
\end{proof}

\begin{remark}
\label{smooththeoriesaswell}
The same lemma holds for smooth theories as well, provided we use the right sewing axioms.
This means that if
$\sigma:  \widetilde{\mathcal{M}}_{g,n}\times \widetilde{\mathcal{M}}_{0,3} 
\rightarrow \widetilde{\mathcal{M}}_{g,n+1}$ is the sewing-smoothing map of Proposition \ref{smoothing},
then we have
$$ \sigma^*\widetilde{\Omega}_{g,n+1}(v_1 \otimes v_2 \otimes \ldots \otimes v_{n+1}) =
\widetilde{\Omega}_{g,n}(v_1 \otimes \ldots \otimes v_{n-1} \otimes (v_n \cdot v_{n+1}))$$
Analogously, if $\nu_s : \partial N_s \rightarrow S$ are as in Remark \ref{tubeneighfree}, then we have
$$ \Omega_{g,n+1}(v_1 \otimes v_2 \otimes \ldots \otimes v_{n+1})|_{\partial N_s} =
\nu_s^* \Omega_{g,n}(v_1  \otimes \ldots \otimes v_{n-1}\otimes (v_n \cdot v_{n+1}))$$
The proof is literally the same as in the previous lemma, since the right-hand side is the same in the
three sewing axioms.
\end{remark}

\begin{proof}[Proof of Proposition \ref{thereisfrobstructure}]
First of all,
we see that for every $v \in A$, $v \cdot \mathbf{1} = v$. Indeed, we have
$$ v \cdot \mathbf{1} = i^{-1}\Omega_{0,3}(v \otimes \mathbf{1} \otimes \cdot) = 
i^{-1}\Omega_{0,3}(\mathbf{1} \otimes v \otimes \cdot) = i^{-1}\eta(v, \cdot) = v$$
where we have used the $S_3$-equivariance and the second axiom of CohFTs.

Now associativity, which is equivalent to 
$$ \Omega_{0,3}((v_1 \cdot v_2) \otimes v_3 \otimes v_4) = \Omega_{0,3}(v_1 \otimes (v_2 \cdot v_3) \otimes v_4)$$
for every $v_i \in A$.
By Lemma \ref{multiplication} and by $S_3$-equivariance, these two quantities are
$s^*\Omega_{0,4}(v_3 \otimes v_4 \otimes v_1 \otimes v_2)$ and 
$s^*\Omega_{0,4}(v_1 \otimes v_4 \otimes v_2 \otimes v_3)$ respectively,
where
$$s: \overline{\mathcal{M}}_{0,3} \times \overline{\mathcal{M}}_{0,3} \rightarrow \overline{\mathcal{M}}_{0,4}$$
is the sewing map as in Lemma \ref{multiplication}.
If $\rho = (1, 2, 3) \in S_4$ then by $S_4$-equivariance we have
$\rho^*\Omega_{0,4}(v_1 \otimes v_2 \otimes v_3 \otimes v_4) =
\Omega_{0,4}(v_2 \otimes v_3 \otimes v_1 \otimes v_4)$,
but $\rho^*$ is the identity in degree $0$, and
$s^*$ kills every positive-degree cohomology class, since $\overline{\mathcal{M}}_{0,3}$ has dimension $0$. Therefore we have
$$s^*\Omega_{0,4}(v_3 \otimes v_4 \otimes v_1 \otimes v_2) =
s^*\Omega_{0,4}(v_1 \otimes v_4 \otimes v_2 \otimes v_3) $$
and by Lemma \ref{multiplication} we have associativity.

Finally, we have $\eta(v_1 \cdot v_2, v_3) = \eta(v_1, v_2 \cdot v_3)$ for every $v_i \in A$ because
this equality means
$$\Omega_{0,3}((v_1 \cdot v_2) \otimes v_3 \otimes \mathbf{1}) =
\Omega_{0,3}(v_1 \otimes (v_2 \cdot v_3) \otimes \mathbf{1})$$
which is true thanks to associativity.
\end{proof}

 \begin{lemma}
  Let
 $(\Omega_{g,n})_{g,n}$ be a CohFT of any type with base $(A, \eta, \mathbf{1})$, let
 $\alpha$ be its Euler class, and let 
 $q:\overline{\mathcal{M}}_{g-1,n+2} \rightarrow \overline{\mathcal{M}}_{g,n}$ be the self-sewing map.
 Then
 $$ q^* \Omega_{1,1}(v) = \eta(v,\alpha)$$
 In particular, the degree zero part of $\Omega_{1,1}(v)$ is $\eta(v, \alpha)$.
\end{lemma}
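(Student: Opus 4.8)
The plan is to compute the pullback $q^{*}\Omega_{1,1}(v)$ directly from the non-separating sewing axiom and then identify the resulting expression with $\eta(v,\alpha)$ using the definition of the Euler class. The map $q:\overline{\mathcal{M}}_{0,3}\to\overline{\mathcal{M}}_{1,1}$ sews together the marked points labelled $2$ and $3$ of a genus-$0$ three-pointed curve (note $g-1=0$, $n+2=3$ when $g=1$, $n=1$), and $\overline{\mathcal{M}}_{0,3}$ is a single point, so its cohomology is just $\mathbb{C}$ concentrated in degree $0$. Consequently $q^{*}$ lands in $H^{0}$, which already proves the ``in particular'' clause once the main identity is established, and it also means all higher-degree contributions are automatically killed.

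First I would apply axiom 3 (the non-separating sewing axiom) of Definition \ref{nodaltheories} with $g=1$, $n=1$: this gives
$$q^{*}\Omega_{1,1}(v)=\eta^{\mu\nu}\,\Omega_{0,3}(v\otimes e_{\mu}\otimes e_{\nu}).$$
Next I would invoke Definition \ref{Frob}, which says $\Omega_{0,3}(v\otimes e_{\mu}\otimes e_{\nu})=\eta(v\cdot e_{\mu},e_{\nu})$. Substituting this in, I get
$$q^{*}\Omega_{1,1}(v)=\eta^{\mu\nu}\,\eta(v\cdot e_{\mu},e_{\nu}).$$

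Now I would recognize the right-hand side. Since $\eta^{\mu\nu}\eta(w,e_{\nu})=w$ for any $w\in A$ (this is the standard raising-of-indices identity, i.e.\ $\eta^{\mu\nu}e_{\nu}$ form the dual basis), applying it with $w=v\cdot e_{\mu}$ and then contracting against the remaining $e_{\mu}$ — or more directly, using symmetry of $\eta$ and the Frobenius property $\eta(v\cdot e_{\mu},e_{\nu})=\eta(v,e_{\mu}\cdot e_{\nu})$ — I rewrite the expression as $\eta(v,\eta^{\mu\nu}e_{\mu}\cdot e_{\nu})$, which by Definition \ref{Eulerclass} is exactly $\eta(v,\alpha)$. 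One small point to handle carefully is the index bookkeeping: $\eta^{\mu\nu}\eta(v\cdot e_\mu, e_\nu)=\eta(v\cdot(\eta^{\mu\nu}\eta(e_\mu,e_\nu-)... )$ — cleanest is to first use the Frobenius identity to move the multiplication, getting $\eta(v, \eta^{\mu\nu} e_\mu\cdot e_\nu)=\eta(v,\alpha)$.

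I do not anticipate a serious obstacle here; the only thing requiring a moment's care is making sure the labelling convention in the self-sewing map matches the one in axiom 3 (which marked points get glued and in what order), and confirming that $S_n$-equivariance is not needed because $\Omega_{0,3}$ is already symmetric by Proposition \ref{thereisfrobstructure}. The genus drop is what makes this clean: the only nontrivial input is a single instance of the sewing axiom plus the bare definitions of the product and of $\alpha$.
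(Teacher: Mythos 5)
Your proposal is correct and follows essentially the same route as the paper: apply the non-separating sewing axiom to reduce to $\eta^{\mu\nu}\Omega_{0,3}(v\otimes e_{\mu}\otimes e_{\nu})$, then use the definition of the product together with the Frobenius property to identify this with $\eta(v,\eta^{\mu\nu}e_{\mu}\cdot e_{\nu})=\eta(v,\alpha)$. The extra care you take with the index bookkeeping and with the observation that $\overline{\mathcal{M}}_{0,3}$ is a point (hence the pullback is the degree-zero part) is exactly what the paper's argument relies on.
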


 \begin{proof}
  By the self-sewing axiom, we have 
  $$q^* \Omega_{1,1}(v) = \eta^{\mu\nu} \Omega_{0,3}(v \otimes e_{\mu} \otimes e_{\nu}) 
  =\eta^{\mu\nu}\eta(v, e_{\mu}\cdot e_{\nu})=\eta(v,\alpha)$$
  which is the statement.
 \end{proof}

\begin{lemma}
\label{degzero}
Let $(\Omega_{g,n})_{g,n}$ be a semisimple CohFT (of any type) with Euler class $\alpha$. Then the degree-zero
part of $\Omega_{1,2}(v \otimes w)$ is
$$\Omega_{1,2}(v \otimes w)_0  = \eta(v\cdot w, \alpha) $$

\end{lemma}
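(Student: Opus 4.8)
The strategy is to compute the degree-zero part of $\Omega_{1,2}(v\otimes w)$ by pulling it back along a suitable map to a space where degree-zero classes can be read off directly, and then using the previous lemma together with the multiplication lemma (Lemma \ref{multiplication}). First I would observe that there is a sewing map $s:\overline{\mathcal{M}}_{1,2}\times\overline{\mathcal{M}}_{0,3}\to\overline{\mathcal{M}}_{1,3}$ (sewing the point marked $2$ on the first factor with a point on the second) which by Lemma \ref{multiplication} satisfies $s^*\overline{\Omega}_{1,3}(v\otimes w\otimes u)=\overline{\Omega}_{1,2}(v\otimes(w\cdot u))$, but this raises the number of marked points; instead the more useful direction is to use a self-sewing map $q:\overline{\mathcal{M}}_{0,4}\to\overline{\mathcal{M}}_{1,2}$ that identifies the last two of the four marked points, turning the genus-$0$, $4$-pointed curve into a genus-$1$, $2$-pointed one.

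By the non-separating sewing axiom applied to $q$ we get
$$
q^*\Omega_{1,2}(v\otimes w)=\eta^{\mu\nu}\,\Omega_{0,4}(v\otimes w\otimes e_\mu\otimes e_\nu).
$$
Now $\overline{\mathcal{M}}_{0,4}$ is one-dimensional, so to extract the degree-zero part I would further restrict: choose any point of $\overline{\mathcal{M}}_{0,4}$, equivalently compose with a sewing map $\overline{\mathcal{M}}_{0,3}\times\overline{\mathcal{M}}_{0,3}\to\overline{\mathcal{M}}_{0,4}$ as in the proof of Proposition \ref{thereisfrobstructure}; such a pullback kills all positive-degree classes and is the identity on $H^0$, and on the other hand Lemma \ref{multiplication} rewrites $\Omega_{0,4}(v\otimes w\otimes e_\mu\otimes e_\nu)$ restricted to such a boundary point as $\Omega_{0,3}(v\otimes w\otimes(e_\mu\cdot e_\nu))=\eta(v\cdot w,\,e_\mu\cdot e_\nu)$. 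Summing against $\eta^{\mu\nu}$ gives $\eta(v\cdot w,\eta^{\mu\nu}e_\mu\cdot e_\nu)=\eta(v\cdot w,\alpha)$ by Definition \ref{Eulerclass}. Finally, since $q$ is surjective (indeed a $2$-fold covering onto its image, which is a boundary divisor, but the point here is only that $q^*$ is injective on $H^0$), the degree-zero part of $\Omega_{1,2}(v\otimes w)$ is determined by $q^*$ of it, yielding $\Omega_{1,2}(v\otimes w)_0=\eta(v\cdot w,\alpha)$.

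The one point requiring care — and the main obstacle — is the bookkeeping of which marked points are sewed at each stage and checking that the relevant compositions of sewing maps land in the correct strata so that Lemma \ref{multiplication} and the non-separating axiom apply verbatim; in particular one must make sure the genus-$0$ four-pointed configuration obtained from sewing two three-pointed spheres is the same (up to the $S_n$-action, which acts trivially in degree $0$) as the one appearing after applying $q$, exactly as in the associativity argument of Proposition \ref{thereisfrobstructure}. Alternatively, and perhaps more cleanly, one can avoid $\overline{\mathcal{M}}_{0,4}$ altogether: use the separating sewing map $\overline{\mathcal{M}}_{1,1}\times\overline{\mathcal{M}}_{0,3}\to\overline{\mathcal{M}}_{1,2}$ together with the previous lemma (which gives the degree-zero part of $\Omega_{1,1}$ as $\eta(\,\cdot\,,\alpha)$) and Lemma \ref{multiplication}; this expresses $s^*\Omega_{1,2}(v\otimes w)=\Omega_{1,1}(v\cdot w)$ in degree zero as $\eta(v\cdot w,\alpha)$, and since $s^*$ is again injective on $H^0$ we conclude. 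I expect the author's proof to take essentially this second route.
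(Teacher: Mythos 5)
Your proposal is correct, and the ``alternative'' route you sketch at the end is exactly the paper's proof: it applies the separating sewing-smoothing map $\widetilde{\mathcal{M}}_{0,3}\times\widetilde{\mathcal{M}}_{1,1}\to\widetilde{\mathcal{M}}_{1,2}$ together with Remark \ref{smooththeoriesaswell} to get $\sigma^*\Omega_{1,2}(v\otimes w)=\Omega_{1,1}(v\cdot w)$, and then invokes the preceding lemma for the degree-zero part $\eta(v\cdot w,\alpha)$. Your primary route through $q:\overline{\mathcal{M}}_{0,4}\to\overline{\mathcal{M}}_{1,2}$ followed by a further degeneration to a point of $\overline{\mathcal{M}}_{0,4}$ also works (degree-zero classes on a connected space are detected by restriction to any nonempty subspace), but it is longer and, for the smooth theories, requires replacing $q$ and $s$ by their sewing-smoothing versions, so the separating route is the cleaner one.
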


\begin{proof}
We do the proof for a fixed boundary theory $(\widetilde{\Omega}_{g,n})_{g,n}$, but the proof is the same for
other theories if we take care of using the right sewing axioms.
We consider  the sewing-smoothing map
  $$\widetilde{\mathcal{M}}_{0,3} \times \widetilde{\mathcal{M}}_{1,1} 
  \xrightarrow{\sigma} \widetilde{\mathcal{M}}_{1,2} $$
 
  By Remark \ref{smooththeoriesaswell} we get 
  $$\sigma^*\widetilde{\Omega}_{1,2}(v \otimes w) = 
  \widetilde{\Omega}_{1,1}(v\cdot w )$$
  By the previous lemma, the degree zero part of the last quantity is $\eta(v\cdot w, \alpha)$
  and the formula is proved.
\end{proof}
 
 \subsection{Sewing smooth surfaces}

 The next three propositions show
 the effect of sewing a fixed smooth $g''$-genus surface $\Sigma$ to the curves in 
 $\overline{\mathcal{M}}_{g,n}$. Using fixed boundaries, free boundaries, or nodal theories changes
 the way this result is stated, even though the proofs are just the same in the three cases.
 For the sake of clarity and completeness, we state three different propositions for the three types
 of theories we have discussed.

 \begin{proposition}
 \label{sewfixedtorus}
  Let $g = g' + g''$ and $\Sigma \in \widetilde{\mathcal{M}}_{g'',2}$
  be a smooth genus $g''$ surface with two \emph{framed} points, and let
  $$ \sigma_{{\Sigma}} :\widetilde{\mathcal{M}}_{g',n} \times \Sigma \rightarrow \widetilde{\mathcal{M}}_{g,n}$$
  be the restriction of the sewing-smoothing map $\sigma$ of Proposition \ref{smoothing}
  that sews together the points marked $n$ and $2$ respectively.
  Then for any $v_1, \ldots, v_n \in A$ we have
  $$ {\sigma_{{\Sigma}}}^*\widetilde{\Omega}_{g,n}(v_1 \otimes \ldots \otimes v_n)
  = \widetilde{\Omega}_{g',n}( v_1 \otimes \ldots \otimes v_{n-1} \otimes (\alpha^{g''} \cdot v_n))$$
 \end{proposition}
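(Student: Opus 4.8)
The plan is to pull $\widetilde{\Omega}_{g,n}$ back along the sewing-smoothing map, restrict to the fibre over the fixed surface $\Sigma$, and thereby reduce the statement to a computation of the degree-zero part of $\widetilde{\Omega}_{g'',2}$, which I will handle by a short induction on $g''$ built on Lemma \ref{degzero}.

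First I would observe that $\sigma_\Sigma$ is the restriction to $\widetilde{\mathcal{M}}_{g',n}\times\{\Sigma\}$ of the map $\sigma$ of Proposition \ref{smoothing} taken with $(g_1,n_1,g_2,n_2)=(g',\,n-1,\,g'',\,1)$, so the separating sewing axiom of Definition \ref{fixedboundary} gives
$$\sigma^{*}\widetilde{\Omega}_{g,n}(v_1\otimes\cdots\otimes v_n)=\eta^{\mu\nu}\;\widetilde{\Omega}_{g',n}(v_1\otimes\cdots\otimes v_{n-1}\otimes e_{\mu})\times\widetilde{\Omega}_{g'',2}(v_n\otimes e_{\nu}).$$
Restricting a cross product to $\widetilde{\mathcal{M}}_{g',n}\times\{\Sigma\}$ kills the positive-degree part of the second factor, and since $\widetilde{\mathcal{M}}_{g'',2}$ is connected the restriction $\widetilde{\Omega}_{g'',2}(v_n\otimes e_{\nu})|_{\Sigma}$ is just the scalar $\bigl(\widetilde{\Omega}_{g'',2}(v_n\otimes e_{\nu})\bigr)_{0}$; hence
$$\sigma_\Sigma^{*}\widetilde{\Omega}_{g,n}(v_1\otimes\cdots\otimes v_n)=\eta^{\mu\nu}\;\bigl(\widetilde{\Omega}_{g'',2}(v_n\otimes e_{\nu})\bigr)_{0}\;\widetilde{\Omega}_{g',n}(v_1\otimes\cdots\otimes v_{n-1}\otimes e_{\mu}).$$

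Next I would prove that $\bigl(\widetilde{\Omega}_{h,2}(v\otimes w)\bigr)_{0}=\eta(\alpha^{h}\cdot v,\,w)$ for all $h\ge1$ and $v,w\in A$, by induction on $h$. For $h=1$ this is Lemma \ref{degzero}, after rewriting $\eta(v\cdot w,\alpha)=\eta(\alpha\cdot v,w)$ by the Frobenius property and commutativity. For $h\ge2$ I would apply the separating sewing axiom to the sewing-smoothing map $\widetilde{\mathcal{M}}_{h-1,2}\times\widetilde{\mathcal{M}}_{1,2}\to\widetilde{\mathcal{M}}_{h,2}$, take degree-zero parts of both sides (which, all spaces involved being connected, commutes with the pullback and with the cross product), and substitute the inductive hypothesis together with Lemma \ref{degzero}; the standard contraction $\eta^{\mu\nu}\eta(a,e_{\mu})\eta(b,e_{\nu})=\eta(a,b)$ and the Frobenius property then collapse the right-hand side to $\eta(\alpha^{h-1}\cdot v,\,\alpha\cdot w)=\eta(\alpha^{h}\cdot v,\,w)$, which closes the induction.

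Finally I would set $h=g''$, substitute into the last displayed formula of the preceding paragraph, and use $\eta^{\mu\nu}\eta(\alpha^{g''}\cdot v_n,e_{\nu})\,e_{\mu}=\alpha^{g''}\cdot v_n$ together with the linearity of $\widetilde{\Omega}_{g',n}$ in its last argument to obtain the claimed identity. I do not expect a genuine obstacle here: the only delicate points are the identification of the restriction to the one-point space $\{\Sigma\}$ with the (constant, hence scalar) degree-zero part of a cohomology class --- which relies on the connectedness of the moduli spaces --- and the routine index bookkeeping with $\eta$, its inverse co-form and the Frobenius product. The actual content is concentrated in the degree-zero induction, and ultimately reduces to Lemma \ref{degzero}.
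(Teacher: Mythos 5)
Your proposal is correct and follows essentially the same route as the paper: apply the separating sewing axiom, observe that restricting to the fixed surface $\Sigma$ extracts the degree-zero part of the $\widetilde{\Omega}_{g'',2}$ factor, and compute that degree-zero part from Lemma \ref{degzero} by induction on the genus of the sewn-on surface. The only (cosmetic) difference is that you isolate the identity $\bigl(\widetilde{\Omega}_{h,2}(v\otimes w)\bigr)_0=\eta(\alpha^h\cdot v,w)$ as a separate inductive lemma, whereas the paper proves the $g''=1$ case in full and then inducts on the whole statement by writing a genus-$(g''+1)$ surface as a genus-$g''$ surface sewn to a torus.
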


 \begin{proof}
  Let us first consider the case $g'' = 1$, so $\Sigma$ is a torus with two fixed marked points.
  By the sewing axiom, we have
  $$\sigma^*\widetilde{\Omega}_{g,n}(v_1 \otimes \ldots \otimes v_n) = 
  \eta^{\mu\nu}\widetilde{\Omega}_{g',n}(v_1 \otimes \ldots \otimes v_{n-1} \otimes e_{\mu}) 
  \times \widetilde{\Omega}_{1,2}(v_n \otimes e_{\nu})$$
  Now we must take the restriction of this class (which is in 
  $H^{\bullet}( \widetilde{\mathcal{M}}_{g',n}\times \widetilde{\mathcal{M}}_{1,1} )$) to the closed subset
  $\widetilde{\mathcal{M}}_{g',n} \times \Sigma$, and the result of this operation is just taking the degree-zero
  part in each right factor and summing up; thus the class we are looking for is
  $${\sigma_{{\Sigma}}}^*\widetilde{\Omega}_{g,n}(v_1 \otimes \ldots \otimes v_n) = 
  \eta^{\mu\nu} \widetilde{\Omega}_{g',n}( v_1 \otimes \ldots \otimes v_{n-1} \otimes e_{\mu})
  \widetilde{\Omega}_{1,2}(v_n \otimes e_{\nu})_0$$
  where the subscript $0$ stands for the degree-zero part.
  
  If we put $w = e_{\nu}$ in Lemma \ref{degzero}, we get
  $\widetilde{\Omega}_{1,2}(v_n \otimes e_{\nu})_0 = \eta(v_n\cdot e_{\nu}, \alpha)$, thus finally the quantity
  we are looking for is
  $$ \widetilde{\Omega}_{g',n}(  v_1 \otimes \ldots \otimes v_{n-1} \otimes 
  \eta^{\mu\nu}\eta(\alpha \cdot v_n, e_{\mu})e_{\nu} ) 
   = \widetilde{\Omega}_{g',n}( v_1 \otimes \ldots \otimes v_{n-1}\otimes (\alpha \cdot v_n))$$
   
   The general case comes simply by induction noticing that sewing on a genus $g''+1$ surface is the same as
   sewing in a row a genus $g''$ surface and a torus.
 \end{proof}
 
 \begin{proposition}
 \label{sewfreetorus}
 Let $g = g' + g''$ and $\Sigma \in \mathcal{M}_{g'',2}$ be a smooth genus $g''$ surface with two \emph{free} points, 
 and let $$s_{|{\Sigma}} :  {\mathcal{M}}_{g',n}\times\Sigma  \rightarrow \overline{\mathcal{M}}_{g,n}$$
 be the restriction of the sewing map $s$ that sews together
 the points marked $n$ and $2$ respectively, and let $S_{\Sigma}$ be its image.
 With notations as in
 Notation \ref{tubes}, let $\partial N_{\Sigma} = \nu_s^{-1}(S_{\Sigma})$
 and let
 $$\nu_{\Sigma} : \partial N_{\Sigma} \rightarrow S_{\Sigma}$$
 be the restriction of $\nu_s$.
 Then for every $v_i \in A$, we have
 $$\Omega_{g,n}(v_1 \otimes \ldots \otimes v_n) |_{\partial N_{\Sigma}} =
 \nu_{\Sigma}^*\Omega_{g',n}(v_1 \otimes \ldots \otimes v_{n-1} \otimes (\alpha^{g''} \cdot v_n))$$
  
 \end{proposition}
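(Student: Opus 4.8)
The plan is to imitate, almost verbatim, the proof of Proposition \ref{sewfixedtorus}, replacing the sewing axiom of fixed boundary theories by the one for free boundary theories (axiom 4 of Definition \ref{freeboundary}), and carrying along the extra bookkeeping of the circular neighbourhood $\partial N_s$ and the pullback $\nu_s^*$. One proceeds by induction on $g''$, the base case being $g'' = 1$ (note $\Sigma \in \mathcal{M}_{g'',2}$ already forces $g'' \ge 1$ by stability).

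For the base case, apply axiom 4 of Definition \ref{freeboundary} to the sewing map $s : \mathcal{M}_{g',n} \times \mathcal{M}_{1,2} \to \overline{\mathcal{M}}_{g,n}$ that sews the point marked $n$ to the point marked $2$ (so $g_1 = g'$, $n_1 = n-1$, $g_2 = 1$, $n_2 = 1$); this gives that $\Omega_{g,n}(v_1 \otimes \cdots \otimes v_n)|_{\partial N_s} = \nu_s^*\big(\eta^{\mu\nu}\,\Omega_{g',n}(v_1 \otimes \cdots \otimes v_{n-1} \otimes e_\mu) \times \Omega_{1,2}(v_n \otimes e_\nu)\big)$, where by Remark \ref{nos} the factor class lives on $S \cong \mathcal{M}_{g',n} \times \mathcal{M}_{1,2}$. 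Now restrict to $\partial N_\Sigma = \nu_s^{-1}(S_\Sigma)$: since $\nu_\Sigma$ is the restriction of $\nu_s$ and $S_\Sigma$ corresponds to $\mathcal{M}_{g',n} \times \{\Sigma\}$, the restriction of a $\nu_s$-pullback to $\partial N_\Sigma$ is the $\nu_\Sigma$-pullback of the restriction of the class to $S_\Sigma$. By the Künneth formula (evaluating the $\mathcal{M}_{1,2}$ factor at the point $\Sigma$) this restriction is $\eta^{\mu\nu}\,\Omega_{g',n}(v_1 \otimes \cdots \otimes v_{n-1} \otimes e_\mu)\cdot \Omega_{1,2}(v_n \otimes e_\nu)_0$, with the subscript denoting the degree-zero part. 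By Lemma \ref{degzero} one has $\Omega_{1,2}(v_n \otimes e_\nu)_0 = \eta(v_n \cdot e_\nu, \alpha)$, and then, exactly as in the proof of Proposition \ref{sewfixedtorus}, using symmetry of $\eta$, the Frobenius identity, and $\eta^{\mu\nu}\eta(\alpha \cdot v_n, e_\nu)e_\mu = \alpha \cdot v_n$, this collapses to $\Omega_{g',n}(v_1 \otimes \cdots \otimes v_{n-1} \otimes (\alpha \cdot v_n))$. Combining, $\Omega_{g,n}(v_1 \otimes \cdots \otimes v_n)|_{\partial N_\Sigma} = \nu_\Sigma^*\,\Omega_{g',n}(v_1 \otimes \cdots \otimes v_{n-1} \otimes (\alpha \cdot v_n))$, which is the case $g'' = 1$.

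For the inductive step, use that a genus-$g''$ surface $\Sigma$ with two marked points is itself obtained by sewing a genus-$(g''-1)$ surface $\Sigma_0 \in \mathcal{M}_{g''-1,2}$ to a torus $\Sigma_1 \in \mathcal{M}_{1,2}$, so that $s_{|\Sigma}$ factors as two successive sewings. Applying the inductive hypothesis for $g'' - 1$ followed by the base case, and reorganizing the factors by associativity and commutativity of the Frobenius product (Proposition \ref{thereisfrobstructure}), turns $\alpha^{g''-1}\cdot v_n$ into $\alpha^{g''-1}\cdot \alpha \cdot v_n = \alpha^{g''}\cdot v_n$, completing the induction. (Care is needed only to check that the circular neighbourhoods are chosen compatibly under the factorization, which is where the tubular neighbourhood theorem and its uniqueness up to isotopy are used.)

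The genuinely new point compared with the fixed boundary case is precisely this tubular-neighbourhood bookkeeping: one must verify that the restriction of $\Omega_{g,n}$, which on $\partial N_s$ is a $\nu_s$-pullback, restricts compatibly over the sub-stratum $S_\Sigma$ to a $\nu_\Sigma$-pullback, i.e.\ that $\partial N_\Sigma := \nu_s^{-1}(S_\Sigma)$ is a circular neighbourhood of $S_\Sigma$ and $\nu_s^*\beta|_{\partial N_\Sigma} = \nu_\Sigma^*(\beta|_{S_\Sigma})$. This is a naturality statement for tubular neighbourhoods under restriction to a closed sub-submanifold and is essentially formal; everything else is word-for-word the argument of Proposition \ref{sewfixedtorus}, so the only real obstacle is stating this restriction compatibility cleanly rather than proving anything substantively new.
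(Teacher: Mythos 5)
Your proposal is correct and follows essentially the same route as the paper: the paper's own proof simply says that the right-hand side of the free-boundary sewing axiom coincides with the fixed-boundary one up to the extra $\nu_s^*$, invokes the computation of Proposition \ref{sewfixedtorus} verbatim, and uses the fact that $\nu_s^*$ restricted over $S_{\Sigma}$ is $\nu_{\Sigma}^*$ --- exactly the restriction-compatibility point you isolate at the end. Your version merely spells out the base case, the degree-zero Künneth argument, and the induction on $g''$ in more detail than the paper does.
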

 
 \begin{proof}
 Since the term on the right of the sewing axiom in Definition \ref{freeboundary} is the same as
 the respective one in Definition \ref{fixedboundary} (except for a $\nu_s^*$ that appears), the 
 same calculation of the proof of the previous proposition, together with
 the obvious fact $(\nu_s^*)|_{H^{\bullet}(S_{\Sigma})} = \nu_{\Sigma}^*$, leads to the desired result.
 \end{proof}
 
 \begin{proposition}
 \label{sewnodaltorus}
 Let $g = g' + g''$ and $\Sigma \in \mathcal{M}_{g'',2}$ be a smooth genus $g''$ surface with two \emph{free} points, 
 and let $$s_{|{\Sigma}} : \overline{\mathcal{M}}_{g',n}\times \Sigma  \rightarrow \overline{\mathcal{M}}_{g,n}$$
 be the restriction of the sewing map $s$ that sews together
 the points marked $n$ and $2$ respectively. Then for every $v_i \in A$, we have
 $$ {s_{|{\Sigma}}}^*\overline{\Omega}_{g,n}(v_1 \otimes \ldots \otimes v_n)
  = \overline{\Omega}_{g',n}( v_1 \otimes \ldots \otimes v_{n-1} \otimes (\alpha^{g''} \cdot v_n))$$
 \end{proposition}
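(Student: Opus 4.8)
The plan is to transcribe, into the Deligne--Mumford setting, the argument already used for Propositions \ref{sewfixedtorus} and \ref{sewfreetorus}. The point is that $s_{|\Sigma}$ is merely the restriction of the honest sewing map $s\colon\overline{\mathcal{M}}_{g',n}\times\overline{\mathcal{M}}_{g'',2}\to\overline{\mathcal{M}}_{g,n}$ to the slice $\overline{\mathcal{M}}_{g',n}\times\{\Sigma\}$, so no tubular neighbourhoods or circular subbundles intervene; this is the cleanest of the three cases, and the separating sewing axiom of Definition \ref{nodaltheories} applies verbatim to $s$.

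First I would reduce to the torus case $g''=1$. For $g''\ge 2$, decompose a fixed genus-$g''$ surface with two marked points as a fixed genus-$(g''-1)$ surface $\Sigma'$ and a fixed torus $T$ sewn along one marked point of each, with the two remaining marked points being the two marked points of $\Sigma$. Then sewing $\Sigma$ onto $\overline{\mathcal{M}}_{g',n}$ at the point marked $n$ factors as sewing $\Sigma'$ first (landing in $\overline{\mathcal{M}}_{g'+g''-1,n}$) followed by sewing $T$, so the corresponding pullback factors through these two steps. By the base case below the $T$-step multiplies the insertion at the sewn marked point by $\alpha$, and by the inductive hypothesis the $\Sigma'$-step multiplies it by $\alpha^{g''-1}$; associativity of the Frobenius algebra of Proposition \ref{thereisfrobstructure} then gives the factor $\alpha^{g''-1}\cdot\alpha=\alpha^{g''}$. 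The only care needed is to keep the labelling straight so that the unique free marked point of $\Sigma$, which carries $v_n$, is the insertion that gets modified throughout.

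For the base case $g''=1$, apply axiom 4 of Definition \ref{nodaltheories} to $s\colon\overline{\mathcal{M}}_{g',n}\times\overline{\mathcal{M}}_{1,2}\to\overline{\mathcal{M}}_{g,n}$, which sews the point marked $n$ to the point marked $2$:
$$s^*\overline{\Omega}_{g,n}(v_1\otimes\cdots\otimes v_n)=\eta^{\mu\nu}\,\overline{\Omega}_{g',n}(v_1\otimes\cdots\otimes v_{n-1}\otimes e_\mu)\times\overline{\Omega}_{1,2}(v_n\otimes e_\nu).$$
Restricting to $\overline{\mathcal{M}}_{g',n}\times\{\Sigma\}$ and using the Künneth formula, restriction to a single point of $\overline{\mathcal{M}}_{1,2}$ keeps only the degree-zero component of the second tensor factor, so the right-hand side becomes $\eta^{\mu\nu}\,\overline{\Omega}_{g',n}(v_1\otimes\cdots\otimes v_{n-1}\otimes e_\mu)\cdot\overline{\Omega}_{1,2}(v_n\otimes e_\nu)_0$. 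By Lemma \ref{degzero}, $\overline{\Omega}_{1,2}(v_n\otimes e_\nu)_0=\eta(v_n\cdot e_\nu,\alpha)$; pulling the scalars inside $\overline{\Omega}_{g',n}$ and using commutativity together with the Frobenius identity $\eta(a\cdot b,c)=\eta(a,b\cdot c)$ gives
$$\eta^{\mu\nu}\eta(v_n\cdot e_\nu,\alpha)\,e_\mu=\eta^{\mu\nu}\eta(\alpha\cdot v_n,e_\nu)\,e_\mu=\alpha\cdot v_n,$$
which is precisely the asserted formula for $g''=1$.

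The main obstacle is mild and is bookkeeping rather than substance: one must justify that pulling back along $\overline{\mathcal{M}}_{g',n}\times\{\Sigma\}\hookrightarrow\overline{\mathcal{M}}_{g',n}\times\overline{\mathcal{M}}_{1,2}$ really truncates the $\overline{\mathcal{M}}_{1,2}$-factor to degree zero (Künneth plus the fact that $\Sigma$ is a single point), and one must keep track of the marked-point labelling through the inductive chain of tori so that the modification always lands on the insertion $v_n$. Everything else is a repetition of the computation carried out in the proof of Proposition \ref{sewfixedtorus}.
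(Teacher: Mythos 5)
Your proposal is correct and follows essentially the same route as the paper: the paper proves Proposition \ref{sewnodaltorus} by observing that the right-hand side of the nodal sewing axiom coincides with that of the fixed boundary case, and then repeats the computation of Proposition \ref{sewfixedtorus} — restriction to the slice $\overline{\mathcal{M}}_{g',n}\times\{\Sigma\}$ picks out the degree-zero part of the second Künneth factor, Lemma \ref{degzero} identifies it as $\eta(v_n\cdot e_\nu,\alpha)$, and induction on $g''$ handles higher genus. Your bookkeeping of the contraction $\eta^{\mu\nu}\eta(\alpha\cdot v_n,e_\nu)e_\mu=\alpha\cdot v_n$ matches the paper's.
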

 
 \begin{proof}
 The proof is the same as that of Proposition \ref{sewfreetorus}, thanks to the fact that 
 the term in the right-hand side of the sewing axiom in Definition \ref{nodaltheories} is the same as
 in Definition \ref{fixedboundary}.
 \end{proof}
 
 \begin{remark}
 Let us suppose that we want to construct, say, a nodal CohFT $(\overline{\Omega}_{g,n})_{g,n}$,
 and let us suppose we know the theory on $\overline{\mathcal{M}}_{g',n} \times \overline{\mathcal{M}}_{g'',2}$. 
 If the base $(A, \cdot, \mathbf{1}, \eta)$ is \emph{semisimple}, then we can
 recover the theory on $\overline{\mathcal{M}}_{g',n}$. In fact, thanks to Proposition
 \ref{sewnodaltorus} and to Proposition \ref{invertible} that assures that $\alpha$ is invertible
 in a semisimple theory, we have 
 $$ \overline{\Omega}_{g',n}(v_1 \otimes \ldots \otimes v_n) =
 s_{\Sigma}^*\overline{\Omega}_{g,n}(v_1  \otimes \ldots \otimes v_{n-1} \otimes  (\alpha^{-g''} \cdot v_n))$$
 It is important to notice that to calculate the right-hand side of this equality
 we don't need to know the whole homomorphism 
 $\overline{\Omega}_{g,n}: A^{\otimes n} \rightarrow H^{\bullet}(\overline{\mathcal{M}}_{g,n})$,
 but just its restriction to $ \overline{\mathcal{M}}_{g',n} \times \overline{\mathcal{M}}_{g'',2}$,
 because the image of $s_{\Sigma}$ is contained in this subspace.
 
 This will play an important role in the classification of nodal cohomological field theories.
 \end{remark}
 
 \section{Classification}
 
 From now on, all the theories are supposed to be semisimple.
 
 \subsection{Fixed boundary theories}

 Let $\widetilde{\Omega}$ be a fixed boundary CohFT with semisimple base $(A, \mathbf{1}, \eta)$
 and with semisimple basis $(e_{\mu})_{\mu}$.
 We consider $\widetilde{\mathcal{M}}_{g,1}$ embedded in $\widetilde{\mathcal{M}}_{g+1,1}$ by
 the map
 $$(C,[v]) \mapsto \sigma((C,[v]) , (T, [v_1],[v_2])) $$
 where $(T, [v_1], [v_2]) \in \widetilde{\mathcal{M}}_{1,2}$
 is any fixed smooth genus $1$ curve with two marked points,
 $[v]$ and $[v_i]$ are tangent directions at the marked points, and
 $\sigma$ is the sewing-smoothing map of Proposition \ref{smoothing}.
 We call $\sigma_{g,T} : \widetilde{\mathcal{M}}_{g,1} \rightarrow \widetilde{\mathcal{M}}_{g+1,1}$ this embedding.
 
 Therefore, we have a projective system $(H^{\bullet}(\widetilde{\mathcal{M}}_{g,1}))_{g \ge 1}$ with
 projection maps $\sigma_{g,T}^*$. The limit is naturally isomorphic to $\mathbb{C}[\kappa_j]_{j \ge 1}$ by
 Madsen-Weiss theorem:
 $$\underleftarrow{\text{lim}}H^{\bullet}(\widetilde{\mathcal{M}}_{g,1}) \cong \mathbb{C}[\kappa_j]_{j \ge 1}.$$
 
 \begin{proposition}
 The limit $\widetilde{\Omega}^+ = \underleftarrow{\text{lim}} \ \widetilde{\Omega}_{g,1}(\alpha^{-g} \cdot)$ is a 
 well-defined element of $A^* \otimes \mathbb{C}[\kappa_j]_{j \ge 1}$.
 \end{proposition}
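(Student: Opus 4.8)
The plan is to reduce everything to the compatibility of the family $\bigl(\widetilde{\Omega}_{g,1}(\alpha^{-g}\cdot)\bigr)_{g\ge 1}$ with the projection maps $\sigma_{g,T}^*$ of the projective system. First I would observe that, the theory being semisimple, Proposition \ref{invertible} guarantees that the Euler class $\alpha$ is invertible, so $\alpha^{-g}\cdot v$ is a well-defined element of $A$ for every $v\in A$ and every $g\ge 1$, and hence $v\mapsto \widetilde{\Omega}_{g,1}(\alpha^{-g}\cdot v)$ is a linear map $A\to H^{\bullet}(\widetilde{\mathcal{M}}_{g,1})$, i.e.\ an element of $A^*\otimes H^{\bullet}(\widetilde{\mathcal{M}}_{g,1})$. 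Since $A^*$ is finite-dimensional, tensoring with $A^*$ is exact and commutes with the (degreewise) inverse limit, so
$$\underleftarrow{\text{lim}}\,\bigl(A^*\otimes H^{\bullet}(\widetilde{\mathcal{M}}_{g,1})\bigr)\cong A^*\otimes \underleftarrow{\text{lim}}\,H^{\bullet}(\widetilde{\mathcal{M}}_{g,1})\cong A^*\otimes\mathbb{C}[\kappa_j]_{j\ge 1},$$
the last isomorphism being the one furnished by the Madsen-Weiss theorem recalled just above. Thus it suffices to check that the elements $\widetilde{\Omega}_{g,1}(\alpha^{-g}\cdot)$ are sent to one another by the maps $\sigma_{g,T}^*$.

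The second step is this compatibility check, which is where the earlier structural result is used. I would apply Proposition \ref{sewfixedtorus} with $g'=g$, $g''=1$, $n=1$ and $\Sigma=(T,[v_1],[v_2])\in\widetilde{\mathcal{M}}_{1,2}$: the restricted sewing-smoothing map $\sigma_{\Sigma}$ is then precisely the embedding $\sigma_{g,T}:\widetilde{\mathcal{M}}_{g,1}\to\widetilde{\mathcal{M}}_{g+1,1}$, and the proposition yields, for every $w\in A$,
$$\sigma_{g,T}^*\,\widetilde{\Omega}_{g+1,1}(w)=\widetilde{\Omega}_{g,1}(\alpha\cdot w).$$
Substituting $w=\alpha^{-(g+1)}\cdot v$ and using $\alpha\cdot\alpha^{-(g+1)}=\alpha^{-g}$ in the commutative algebra $A$, this becomes
$$\sigma_{g,T}^*\,\widetilde{\Omega}_{g+1,1}(\alpha^{-(g+1)}\cdot v)=\widetilde{\Omega}_{g,1}(\alpha^{-g}\cdot v),$$
which says exactly that $\bigl(\widetilde{\Omega}_{g,1}(\alpha^{-g}\cdot)\bigr)_{g\ge1}$ is a compatible sequence for the projective system. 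Combined with the first step, this shows that the inverse limit $\widetilde{\Omega}^+$ is a well-defined element of $A^*\otimes\mathbb{C}[\kappa_j]_{j\ge1}$.

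Finally I would note that nothing depends on the auxiliary choice of $T$: since $\widetilde{\mathcal{M}}_{1,2}$ is connected, any two choices of $(T,[v_1],[v_2])$ give homotopic embeddings $\sigma_{g,T}$ and hence the same map in cohomology, so both the projective system and the relation above are canonical. I do not expect any genuine obstacle here; the only points requiring care are the bookkeeping of the powers of $\alpha$ and the correct identification of the embedding $\sigma_{g,T}$ with the restricted sewing-smoothing map appearing in Proposition \ref{sewfixedtorus}, after which the claim is immediate from that proposition and the Madsen-Weiss identification of the limit.
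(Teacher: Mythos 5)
Your proof is correct and takes essentially the same route as the paper's: both establish the compatibility $\sigma_{g,T}^*\,\widetilde{\Omega}_{g+1,1}(\alpha^{-g-1}\cdot v)=\widetilde{\Omega}_{g,1}(\alpha^{-g}\cdot v)$ by applying Proposition \ref{sewfixedtorus} with $g''=1$ and $\Sigma=T$, and then pass to the projective limit using the Madsen--Weiss identification. Your additional remarks (invertibility of $\alpha$ via Proposition \ref{invertible}, commutation of $A^*\otimes(-)$ with the limit, and independence of the choice of $T$) are harmless elaborations of points the paper leaves implicit.
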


 \begin{proof}
  Letting $\Sigma = T$ in Proposition \ref{sewfixedtorus} we find that
  $$\sigma_{g,T}^* \widetilde{\Omega}_{g+1,1}(\alpha^{-g-1}\cdot v) =
  \widetilde{\Omega}_{g,1}(\alpha^{-g-1} \cdot \alpha \cdot v) = \widetilde{\Omega}_{g,1}(\alpha^{-g}\cdot v).$$
  But $\sigma_{g,T}^*$ are the projections in the system $(H^{\bullet}(\widetilde{\mathcal{M}}_{g,1}))_{g \ge 1}$,
  thus the homomorphisms $\widetilde{\Omega}_{g,1}(\alpha^{-g} \cdot)$ are compatible with the projective system.
  Thus, they pass to the limit to get a well-defined homomorphism 
  $\widetilde{\Omega}^+ : A \rightarrow \underleftarrow{\text{lim}}H^{\bullet}(\widetilde{\mathcal{M}}_{g,1})$
  which is an element of $A^* \otimes \mathbb{C}[\kappa_j]_{j \ge 1}$ by Madsen-Weiss theorem.
 \end{proof}
 
 \begin{remark}
 \label{dualproduct}
 Let $i : A \rightarrow A^*$ be the isomorphism of remark \ref{i}. Since $(e_{\mu})_{\mu}$ is an orthonormal basis
 for $\eta$, we have $i(e_\nu) = e^{\nu}$ for each $\nu$, 
 where $(e^{\mu})_{\mu}$ is the dual basis to $(e_{\mu})_{\mu}$. By imposing
 $i$ to be an isomorphism of algebras, we immediately see that $(A^*, \eta^{-1}, i(\mathbf{1}))$ is 
 a \emph{semisimple} Frobenius algebra, with semisimple basis $(e^\mu)_{\mu}$ and same weights as $A$:
 $$ e^\mu \cdot e^\nu = \delta_{\mu,\nu}\theta_\mu^{-1}e^\mu$$
 Thus we can write an explicit formula for the effect of applying a vector $t^\rho e_\rho \in A$ to the
 product of the covectors $v = v_{\mu}e^\mu$ and $w = w_\nu e^\nu$:
 $$((v_{\mu}e^\mu)\cdot (w_\nu e^\nu))(t^\rho e_\rho) =
 v_{\mu}w_{\mu}\theta_{\mu}^{-1}t^\mu = \theta_{\mu}^{-1}t^\mu v(e_{\mu})w(e_{\mu}) $$
 \end{remark}
 
 \begin{remark}
 \label{Groupstructure}
 Since $\widetilde{\Omega}_{g,1} \in A^* \otimes H^{\bullet}(\widetilde{\mathcal{M}}_{g,1})$,
 we can use the Frobenius algebra structure in $A^*$ and the cross product in cohomology to define 
 the product of two homomorphisms $\widetilde{\Omega}_{g,1} \cdot \widetilde{\Omega}_{h,1}$.
 More precisely, thanks to Remark \ref{dualproduct}, we have
 $$\widetilde{\Omega}_{g,1} \cdot \widetilde{\Omega}_{h,1}(v^\mu e_{\mu}) =
 \theta_{\mu}^{-1}v^\mu \widetilde{\Omega}_{g,1}(e_{\mu})\times\widetilde{\Omega}_{h,1}(e_{\mu})$$
 where $\times$ is the usual cross product in the cohomology of the product
 $\widetilde{\mathcal{M}}_{g,1} \times \widetilde{\mathcal{M}}_{h,1}$.
 \end{remark}
 
 \begin{proposition}
 \label{sewingpants}
 Let $\widetilde{C} \in \widetilde{\mathcal{M}}_{0,3}$ be any curve with framed points, and let
 $m_{g,h}: \widetilde{\mathcal{M}}_{g,1} \times \widetilde{\mathcal{M}}_{h,1}
 \rightarrow \widetilde{\mathcal{M}}_{g+h,1}$ be the composition 
 $$ \widetilde{\mathcal{M}}_{g,1} \times \widetilde{\mathcal{M}}_{h,1} \xrightarrow{\sim}
 \widetilde{\mathcal{M}}_{g,1} \times \{ \widetilde{C} \} \times \widetilde{\mathcal{M}}_{h,1}
 \xrightarrow{\sigma_1 \times \text{\emph{Id}}} \widetilde{\mathcal{M}}_{g,2}\times \widetilde{\mathcal{M}}_{h,1}
 \xrightarrow{\sigma_2} \widetilde{\mathcal{M}}_{g+h,1}$$
 where $\sigma_1$ and $\sigma_2$ are the obvious sewing-smoothing maps. Then, in the notation of
 Remark \ref{Groupstructure}, we have
 $$m_{g,h}^*\widetilde{\Omega}_{g+h,1} = \widetilde{\Omega}_{g,1} \cdot \widetilde{\Omega}_{h,1}$$
 
 \end{proposition}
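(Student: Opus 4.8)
The plan is to compute $m_{g,h}^*\widetilde{\Omega}_{g+h,1}$ by peeling off the two sewing--smoothing maps $\sigma_2$ and $\sigma_1$ one at a time, using the sewing axioms of Definition \ref{fixedboundary} together with Remark \ref{smooththeoriesaswell} (the fixed--boundary analogue of Lemma \ref{multiplication}), and then to rewrite the outcome in the semisimple basis $(e_\mu)_\mu$ so that it matches the product formula of Remark \ref{Groupstructure}.

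First I would apply the separating sewing axiom (axiom $4$ of Definition \ref{fixedboundary}) to $\sigma_2$. Since $\sigma_2$ sews the marked point of $\widetilde{\mathcal{M}}_{h,1}$ to one of the two marked points of $\widetilde{\mathcal{M}}_{g,2}$, leaving the other one as the single marked point of $\widetilde{\mathcal{M}}_{g+h,1}$, this gives, for every $v\in A$,
$$\sigma_2^*\widetilde{\Omega}_{g+h,1}(v)=\eta^{\mu\nu}\,\widetilde{\Omega}_{g,2}(v\otimes e_\mu)\times\widetilde{\Omega}_{h,1}(e_\nu)\ \in\ H^{\bullet}(\widetilde{\mathcal{M}}_{g,2}\times\widetilde{\mathcal{M}}_{h,1}),$$
with $v$ placed in the slot corresponding to the surviving marked point and $e_\mu$ in the slot that is sewn to the genus--$h$ curve.

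Next I would pull back along $\sigma_1\times\text{Id}$. The map $\sigma_1$ is the restriction to $\widetilde{\mathcal{M}}_{g,1}\times\{\widetilde{C}\}$ of the sewing--smoothing map $\widetilde{\mathcal{M}}_{g,1}\times\widetilde{\mathcal{M}}_{0,3}\to\widetilde{\mathcal{M}}_{g,2}$ of Proposition \ref{smoothing}, so by the $n=1$ case of Remark \ref{smooththeoriesaswell} one has $\sigma_1^*\widetilde{\Omega}_{g,2}(v\otimes e_\mu)=\widetilde{\Omega}_{g,1}(v\cdot e_\mu)$. The class on the right is pulled back from the $\widetilde{\mathcal{M}}_{g,1}$ factor, hence restricting the pair of pants to the fixed curve $\widetilde{C}$ changes nothing, and the question of which marked point of $\widetilde{C}$ is sewn where is irrelevant by $S_3$--equivariance and commutativity of the Frobenius product. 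Since $(\sigma_1\times\text{Id})^*$ acts only on the first cross--product factor, combining this with the previous display yields
$$m_{g,h}^*\widetilde{\Omega}_{g+h,1}(v)=\eta^{\mu\nu}\,\widetilde{\Omega}_{g,1}(v\cdot e_\mu)\times\widetilde{\Omega}_{h,1}(e_\nu)\ \in\ H^{\bullet}(\widetilde{\mathcal{M}}_{g,1}\times\widetilde{\mathcal{M}}_{h,1}).$$

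Finally I would specialize to the semisimple basis. Writing $v=v^\rho e_\rho$, orthonormality gives $\eta^{\mu\nu}=\delta^{\mu\nu}$ and the relation $e_\rho\cdot e_\mu=\delta_{\rho\mu}\theta_\mu^{-1}e_\mu$ gives $v\cdot e_\mu=v^\mu\theta_\mu^{-1}e_\mu$ (with no summation over $\mu$ on the right). Substituting,
$$m_{g,h}^*\widetilde{\Omega}_{g+h,1}(v)=\sum_\mu \theta_\mu^{-1}v^\mu\,\widetilde{\Omega}_{g,1}(e_\mu)\times\widetilde{\Omega}_{h,1}(e_\mu),$$
which is precisely $(\widetilde{\Omega}_{g,1}\cdot\widetilde{\Omega}_{h,1})(v)$ by Remark \ref{Groupstructure} (equivalently, by the explicit formula of Remark \ref{dualproduct}). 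This establishes the proposition; the only point requiring attention is the bookkeeping of which marked points are sewn at each of the two steps, and I expect no genuine obstacle there.
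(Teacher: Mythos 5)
Your proof is correct and follows essentially the same route as the paper: pull back along $\sigma_2$ via the separating sewing axiom, then along $\sigma_1$ via the fixed-boundary multiplication formula (Remark \ref{smooththeoriesaswell}), and finish with the semisimple relation $e_\mu\cdot e_\mu=\theta_\mu^{-1}e_\mu$ to match Remark \ref{Groupstructure}. You also correctly handle the one subtlety the paper flags, namely that restricting the pair of pants to the single curve $\widetilde{C}$ loses nothing because the relevant class lives in degree zero on the $\widetilde{\mathcal{M}}_{0,3}$ factor.
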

 
 \begin{proof}
 By applying the sewing axiom for fixed boundary theories we get
 $$\sigma_2^*\widetilde{\Omega}_{g+h,1}(v) = 
 \sum_{\mu} \widetilde{\Omega}_{g,2}(v \otimes e_{\mu}) \times \widetilde{\Omega}_{h,1}(e_{\mu}) $$
 and
 $$\sigma_1^*\widetilde{\Omega}_{g,2}(v\otimes e_{\mu}) =
 \sum_\nu \eta(v \cdot e_{\mu}, e_\nu) \widetilde{\Omega}_{g,1}(e_\nu)
 =\eta(v,e_{\mu}\cdot e_{\mu})\widetilde{\Omega}_{g,1}(e_{\mu})$$
 This formula is correct since $\widetilde{\Omega}_{0,3}$, by Definition \ref{fixedboundary},
 has values in the degree zero part of $H^{\bullet}(\widetilde{\mathcal{M}}_{0,3})$, therefore in this case
 we have $\sigma_1^* = (\sigma_1|_{\widetilde{\mathcal{M}}_{g,1} \times \{ C \} })^*$.
 Using the fact that $e_{\mu} \cdot e_{\mu} = \theta_{\mu}^{-1}e_{\mu}$, we get the result thanks to Remark \ref{dualproduct}.
 \end{proof}

 \begin{definition}
  \label{evaluation}
  We write $$i_{g,n}: \mathbb{C}[\kappa_j]_{j \ge 1} \rightarrow H^{\bullet}(\widetilde{\mathcal{M}}_{g,n})$$
  for the $\mathbb{C}$-algebra homomorphism that sends each $\kappa_j$ to the respective $\kappa$-class
  in $\widetilde{\mathcal{M}}_{g,n}$. Notice that $i_{g,1}$ is the projection to the $g$-th factor
  under the identification 
  $\mathbb{C}[\kappa_j]_{j \ge 1} \cong \underleftarrow{\text{lim}} H^{\bullet}(\widetilde{\mathcal{M}}_{g,1})$.
 \end{definition}
  
 \begin{remark}
 \label{infinitypants}
 The maps 
 $$m_{g,h}^*: H^{\bullet}(\widetilde{\mathcal{M}}_{g+h,1}) \rightarrow
 H^{\bullet}(\widetilde{\mathcal{M}}_{g,1}) \otimes H^{\bullet}(\widetilde{\mathcal{M}}_{h,1}) $$
 induced in cohomology by the maps $m_{g,h}$ of Proposition \ref{sewingpants}
 pass to the projective limit giving a map
 $$ m^*:\underleftarrow{\text{lim}} H^{\bullet}(\widetilde{\mathcal{M}}_{g,1}) \rightarrow
 (\underleftarrow{\text{lim}} H^{\bullet}(\widetilde{\mathcal{M}}_{g,1})) \otimes
 (\underleftarrow{\text{lim}} H^{\bullet}(\widetilde{\mathcal{M}}_{g,1}))$$
 that is, by Madsen-Weiss's theorem, a map
 $$m^*: \mathbb{C}[\kappa_j]_{j\ge 1} \rightarrow (\mathbb{C}[\kappa_j]_{j\ge 1}) \otimes
 (\mathbb{C}[\kappa_j]_{j\ge 1})$$
 From now on, by $m^*$ we will mean this map if not otherwise specified.
 
 Of course we have $ m_{g,h}^* \circ i_{g+h,1} = (i_g \otimes i_h) \circ m^*$ by Definition \ref{evaluation}.
 \end{remark}

 \begin{definition}
 \label{infinityproduct}
 Let $X, Y \in A^* \otimes \mathbb{C}[\kappa_j]_{j\ge1}$; we define
 $$X \cdot Y(v^\mu e_{\mu}) = \theta_{\mu}^{-1}v^\mu X(e_{\mu})\otimes Y(e_{\mu})$$
 which is an element of $A^* \otimes (\mathbb{C}[\kappa_j]_{j\ge 1} \otimes \mathbb{C}[\kappa_j]_{j\ge 1})$.
 \end{definition}

With this definition, if $X = (X_g)_{g \ge 1}$ and $Y = (Y_h)_{h \ge 1}$ are projective
systems of homomorphisms $X_g : A \rightarrow H^{\bullet}(\widetilde{\mathcal{M}}_{g,1})$,
then
$$ (i_g \otimes i_h) X \cdot Y = X_g\cdot Y_h$$
 where at the right-hand side we have used the product of Remark \ref{Groupstructure}.

With the identification $\mathbb{C} \otimes \mathbb{C}[\kappa_j]_{j\ge 1} \cong \mathbb{C}[\kappa_j]_{j\ge 1}$, 
we see by an easy computation that the neutral element
of the product is $\theta$, the Frobenius trace of $A$ as in
Definition \ref{Frobeniustrace},
which is indeed a homomorphism of $A$ into $\mathbb{C}\subseteq \mathbb{C}[\kappa_j]_{j \ge 1}$.

 \begin{corollary}
 \label{itisgrouplike}
With the notations of Remark \ref{infinitypants} and Definition \ref{infinityproduct}, 
the limit homomorphism $\widetilde{\Omega}^+$ satisfies 
 $$m^* \widetilde{\Omega}^+ = \widetilde{\Omega}^+ \cdot \widetilde{\Omega}^+$$
 \end{corollary}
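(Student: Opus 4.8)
The plan is to check the asserted identity of elements of $A^*\otimes(\mathbb{C}[\kappa_j]_{j\ge1}\otimes\mathbb{C}[\kappa_j]_{j\ge1})$ by testing it against all the projections of the relevant projective limit, and then, at each finite stage, to reduce it to Proposition \ref{sewingpants} plus a short computation in the semisimple basis. By Madsen--Weiss's theorem, exactly as used in Remark \ref{infinitypants}, the target $\mathbb{C}[\kappa_j]_{j\ge1}\otimes\mathbb{C}[\kappa_j]_{j\ge1}$ is the projective limit of the groups $H^\bullet(\widetilde{\mathcal{M}}_{g,1})\otimes H^\bullet(\widetilde{\mathcal{M}}_{h,1})$ along the projections $i_{g,1}\otimes i_{h,1}$; so it suffices to prove $(i_{g,1}\otimes i_{h,1})(m^*\widetilde{\Omega}^+)=(i_{g,1}\otimes i_{h,1})(\widetilde{\Omega}^+\cdot\widetilde{\Omega}^+)$ for every pair $g,h\ge1$.

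For the left-hand side I would invoke the compatibility $m_{g,h}^*\circ i_{g+h,1}=(i_{g,1}\otimes i_{h,1})\circ m^*$ recorded at the end of Remark \ref{infinitypants}, together with $i_{g+h,1}\widetilde{\Omega}^+=\widetilde{\Omega}_{g+h,1}(\alpha^{-(g+h)}\cdot)$, to rewrite it as $m_{g,h}^*\widetilde{\Omega}_{g+h,1}(\alpha^{-(g+h)}\cdot v)$, and then apply Proposition \ref{sewingpants} to replace $m_{g,h}^*\widetilde{\Omega}_{g+h,1}$ by the product $\widetilde{\Omega}_{g,1}\cdot\widetilde{\Omega}_{h,1}$ of Remark \ref{Groupstructure}. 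For the right-hand side I would use the identity $(i_{g,1}\otimes i_{h,1})(\widetilde{\Omega}^+\cdot\widetilde{\Omega}^+)=\widetilde{\Omega}_{g,1}(\alpha^{-g}\cdot)\cdot\widetilde{\Omega}_{h,1}(\alpha^{-h}\cdot)$ stated just after Definition \ref{infinityproduct}. Thus everything comes down to the equality of homomorphisms
$$(\widetilde{\Omega}_{g,1}\cdot\widetilde{\Omega}_{h,1})(\alpha^{-(g+h)}\cdot v)=\bigl(\widetilde{\Omega}_{g,1}(\alpha^{-g}\cdot)\cdot\widetilde{\Omega}_{h,1}(\alpha^{-h}\cdot)\bigr)(v),$$
where both products are the one of Remark \ref{Groupstructure}.

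To see this I would expand $v=v^\mu e_\mu$ in a semisimple basis and use that $\alpha=\sum_\rho\theta_\rho^{-1}e_\rho$, so that $\alpha\cdot e_\mu=\theta_\mu^{-2}e_\mu$ and hence $\alpha^{-k}\cdot e_\mu=\theta_\mu^{2k}e_\mu$ for every $k$. Feeding this into the explicit diagonal formula of Remark \ref{Groupstructure}, both sides collapse to $\sum_\mu\theta_\mu^{2g+2h-1}v^\mu\,\widetilde{\Omega}_{g,1}(e_\mu)\times\widetilde{\Omega}_{h,1}(e_\mu)$, which finishes the argument. The reason the twists match is precisely that the semisimple product only ``sees'' the diagonal $\mu$-component, so inserting $\alpha^{-(g+h)}$ before a single product has the same effect as inserting $\alpha^{-g}$ in the first factor and $\alpha^{-h}$ in the second.

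I do not anticipate a genuine obstacle: the statement is the ``grouplike'' shadow of Proposition \ref{sewingpants}, and the only care needed is bookkeeping --- applying the product of Definition \ref{infinityproduct} consistently on both sides and counting the $\alpha^{-g}$-twist built into $\widetilde{\Omega}^+$ exactly once. One could avoid coordinates by arguing abstractly that the comultiplication $m^*$ is compatible with the diagonal twist by $\alpha^{-1}$, but the basis computation above is quicker and self-contained.
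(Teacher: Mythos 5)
Your proposal is correct and follows essentially the same route as the paper: reduce to each finite stage via the projections $i_{g,1}\otimes i_{h,1}$ and the compatibility of Remark \ref{infinitypants}, apply Proposition \ref{sewingpants}, and match the $\theta_\mu$-powers coming from $\alpha^{-(g+h)}$ versus $\alpha^{-g}$ and $\alpha^{-h}$ in the semisimple basis. The paper's proof is exactly this computation, so no further comment is needed.
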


 \begin{proof}
 By Proposition \ref{sewingpants}, by Remark \ref{dualproduct}
 and by the identity $\alpha^k = \sum_{\mu}\theta_{\mu}^{-2k+1}e_{\mu}$, we have
 $$m^* \widetilde{\Omega}_{g+h,1}(\alpha^{-g-h}\cdot v) =
 \theta_{\mu}^{2(g+h)-1}v^\mu \widetilde{\Omega}_{g,1}(e_{\mu}) \times
 \widetilde{\Omega}_{h,1}(e_{\mu})= 
 (\widetilde{\Omega}_{g,1}(\alpha^{-g} \cdot) \cdot \widetilde{\Omega}_{h,1}(\alpha^{-h} \cdot))(v)$$
 
 Recalling that $\widetilde{\Omega}^+ = \underleftarrow{\text{lim}} \ \widetilde{\Omega}_{g,1}(\alpha^{-g} \cdot)$
 we  get the result by Remark \ref{infinitypants} and Definition \ref{infinityproduct}.
 \end{proof}
 
 \subsubsection{Properties of $\mathbb{C}[\kappa_j]_{j \ge 1}$}
 
 We want to study more deeply the properties of $\mathbb{C}[\kappa_j]_{j \ge 1}$, in particular
 the map $m^*$ and the product structure of Definition \ref{infinityproduct}.
 To this purpose, we give a definition and prove
 a series of lemmas which will be useful in the sequel.
 
  \begin{lemma}
  \label{firstprimitivekappa}
   With notation as in Remark \ref{infinitypants}, we have 
   $$m^* \kappa_j = \kappa_j \otimes 1 + 1 \otimes \kappa_j$$
   for each $j \ge 1$.
  \end{lemma}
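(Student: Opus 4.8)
The plan is to unravel the definition of $m^*$ from Remark~\ref{infinitypants} and reduce the claim to an identity of $\kappa$-classes at finite level $g$, namely on each $\widetilde{\mathcal{M}}_{g,1}$. Concretely, since $m^*$ is defined as the projective limit of the maps $m_{g,h}^*$ induced by the sewing--smoothing maps $m_{g,h}\colon \widetilde{\mathcal{M}}_{g,1}\times\widetilde{\mathcal{M}}_{h,1}\to\widetilde{\mathcal{M}}_{g+h,1}$, and since $\kappa_j \in \mathbb{C}[\kappa_j]_{j\ge1}$ corresponds under Madsen--Weiss to the compatible family $(\kappa_j^{(g)})_g$ of $\kappa$-classes on the spaces $\widetilde{\mathcal{M}}_{g,1}$, it suffices to show
$$
m_{g,h}^*\bigl(\kappa_j^{(g+h)}\bigr) = \kappa_j^{(g)}\otimes 1 + 1\otimes\kappa_j^{(h)}
$$
in $H^{\bullet}(\widetilde{\mathcal{M}}_{g,1})\otimes H^{\bullet}(\widetilde{\mathcal{M}}_{h,1})$ for all $g,h$ with $g+h$ large enough that we are in the stable range (which is all that matters for the limit). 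So the whole statement is really a geometric fact about how $\kappa$-classes behave under sewing two surfaces along a fixed genus-zero pair of pants.

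The key geometric input I would use is the standard behaviour of $\kappa$-classes under gluing, together with their definition $\kappa_m = p_*(\psi_{n+1}^{m+1})$ via the forgetful map. The point is that if $C = C' \cup C''$ is obtained by gluing two smooth surfaces at a node, then the line bundle whose power is pushed forward to define $\kappa_j$ on $C$ is, away from the node, simply the relative cotangent bundle of the universal curve, which decomposes according to the two components. More precisely, I would set up the commutative square relating the forgetful map $p\colon \widetilde{\mathcal{M}}_{g+h,2}\to\widetilde{\mathcal{M}}_{g+h,1}$ and the corresponding forgetful maps on the two factors, and use base change / the projection formula to compute $m_{g,h}^* p_*(\psi^{j+1})$. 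The extra marked point being forgotten lies on one of the two components, so the fibre of $p$ over a glued curve splits as (fibre over the $g$-part) $\sqcup$ (fibre over the $h$-part) — this is exactly what produces the two summands $\kappa_j\otimes 1$ and $1\otimes\kappa_j$, with no cross terms because $\psi^{j+1}$ restricted to the single extra point cannot ``see'' both components simultaneously. Alternatively, and perhaps more cleanly, one can invoke the well-known additivity of $\kappa$-classes under the boundary gluing maps of $\overline{\mathcal{M}}$, restricted here to the smooth/framed setting via the sewing--smoothing maps of Proposition~\ref{smoothing}.

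The main obstacle I anticipate is bookkeeping the compatibility of all the identifications: the torus-bundle structure on $\widetilde{\mathcal{M}}_{g,n}$, the precise form of $m_{g,h}$ as a composition $\sigma_2\circ(\sigma_1\times\mathrm{Id})$ through $\widetilde{\mathcal{M}}_{0,3}$, and the fact that the fixed pair of pants $\widetilde{C}\in\widetilde{\mathcal{M}}_{0,3}$ is a point, so that cohomology classes on it beyond degree zero vanish — this last fact is what kills potential contributions localized near the gluing node. I would handle this by first proving the additivity of $\kappa_j$ under a single sewing--smoothing map $\sigma\colon\widetilde{\mathcal{M}}_{g,n}\times\widetilde{\mathcal{M}}_{h,m}\to\widetilde{\mathcal{M}}_{g+h,\,n+m-2}$ (using the forgetful-map square and the splitting of the fibre of $p$), and then specialize to the two-step composition defining $m_{g,h}$, noting that the intermediate genus-zero factor contributes nothing to the $\kappa$-classes since it is a point. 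Passing to the limit is then immediate from Definition~\ref{evaluation} and Remark~\ref{infinitypants}, since the finite-level identities are manifestly compatible with the projection maps $\sigma_{g,T}^*$.
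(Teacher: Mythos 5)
Your proposal is correct and follows essentially the same route as the paper: reduce to the finite-level identity $m_{g,h}^*\kappa_j = \kappa_j\otimes 1 + 1\otimes\kappa_j$, form the commutative square with the forgetful maps, and observe that the space upstairs splits as $\widetilde{\mathcal{M}}_{g,2}\times\widetilde{\mathcal{M}}_{h,1}\sqcup\widetilde{\mathcal{M}}_{g,1}\times\widetilde{\mathcal{M}}_{h,2}$ (your ``the fibre of $p$ splits''), which kills the cross terms in $(\psi_2\otimes 1 + 1\otimes\psi_2)^{j+1}$ before pushing forward. The passage to the projective limit is handled exactly as you describe.
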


  \begin{proof}
   Let $g$ and $h$ be integers and let us consider the following commutative diagram.
   
   \[
    \begin{CD}
     \widetilde{\mathcal{M}}_{g,2} \times \widetilde{\mathcal{M}}_{h,1} \sqcup \widetilde{\mathcal{M}}_{g,1} 
     \times \widetilde{\mathcal{M}}_{h,2} @>m'_{g,h}>> \widetilde{\mathcal{M}}_{g+h,2} \\
     @V{p'}VV @V{p}VV \\
     \widetilde{\mathcal{M}}_{g,1} \times \widetilde{\mathcal{M}}_{h,1} @>m_{g,h}>>
     \widetilde{\mathcal{M}}_{g+h,1} \\
    \end{CD}
   \]
where $p'$ is the map that forgets the second marked point at each curve, and $m'_{g,h}$ is the map that sews
a fixed element of $\widetilde{\mathcal{M}}_{0,3}$ to the first marked points of each factor.
Then $\kappa_j = p_*(\psi_2^{j+1})$ and $m^*p_*(\psi_2^{j+1}) = p'_*m'^*(\psi_2^{j+1})$
But $m'^*(\psi_2) = \psi_2 \otimes 1 + 1 \otimes \psi_2$ and 
$\psi_2 \otimes \psi_2 = 0 \in H^{\bullet}(\widetilde{\mathcal{M}}_{g,2} 
\times \widetilde{\mathcal{M}}_{h,1} \sqcup \widetilde{\mathcal{M}}_{g,1} \times \widetilde{\mathcal{M}}_{h,2})$.
Therefore $m'(\psi_2^{j+1}) = \psi_2^{j+1} \otimes 1 + 1 \otimes \psi_2^{j+1}$ and 
$p'_*m'^*(\psi_2^{j+1}) = \kappa_j \otimes 1 + 1 \otimes \kappa_j$. Taking the limit in $g$ and $h$ we get the
statement, thanks to Remark \ref{infinitypants}.
  \end{proof}

 \begin{remark}
  \label{Hopfalgebra}
  The last lemma says that the polynomial ring $\mathbb{C}[\kappa_j]_{j \ge 1}$ is a Hopf algebra,
  the co-product being $m^*$, and the antipode map being
  $S: \kappa_j \mapsto -\kappa_j$ for each $j$ and extended to a $\mathbb{C}$-linear homomorphism
  $\mathbb{C}[\kappa_j]_{j \ge 1} \rightarrow \mathbb{C}[\kappa_j]_{j \ge 1}$.

  If $A$ is a Frobenius algebra, it is in fact a bialgebra (but not necessarily a Hopf algebra) via the co-product
  $$\Delta(v) = \eta(v, e_{\mu}\cdot e_{\nu})e_{\mu}\otimes e_{\nu}$$
  which is such that, for every $w,t \in A$,
  $$\eta \otimes \eta (\Delta(v), e_{\mu}\otimes e_{\nu}) = \eta(v, e_\mu\cdot e_\mu).$$
  Notice that, in the notation of Definition \ref{infinityproduct}, we have
  $$X \cdot Y = (X \otimes Y)\circ \Delta.$$
  In the semisimple case, the co-product simplifies to $\Delta(v) = \theta_\mu^{-1}v^\mu e_\mu \otimes e_\mu$.
  Let us stick to the semisimple case.
  Taking $P(v) = v^\mu \theta_\mu^2 e_\mu$, we see that $\Delta$ has a left inverse given by 
  $\cdot \circ (\text{Id} \otimes P)$, where we have denoted by $\cdot : A\otimes A \rightarrow A$ the product
  of the Frobenius algebra $A$.
  The endomorphism $P$ of $A$ defines a co-product $\Delta^*$ on $A^*$
  that makes it a bialgebra via the formula
  $$\Delta^*(\phi)(v \otimes w) = \phi(P(v)\cdot P(w)).$$
  Notice that with this definition the isomorphism $i: A \rightarrow A^*$
  of Remark \ref{i} is \emph{not} an isomorphism
  of bialgebras.
  
  Thus, $A^* \otimes \mathbb{C}[\kappa_j]_{j \ge 1}$ is a bialgebra: the
  co-product $\widetilde{\Delta}$ being
  the tensor product of the two co-products on each factor. By the identification
  $A^* \otimes \mathbb{C}[\kappa_j]_{j\ge 1} \cong \text{Hom}_{\mathbb{C}}(A, \mathbb{C}[\kappa_j]_{j \ge 1})$,
  we see that the following diagram commutes.
\[
\begin{CD}
A  @>f>> \mathbb{C}[\kappa_j]_{j \ge 1} \\
@V{\Delta}VV @Vm^* VV \\
A \otimes A @>>\widetilde{\Delta} f> \mathbb{C}[\kappa_j]_{j \ge 1} \otimes \mathbb{C}[\kappa_j]_{j \ge 1} \\
\end{CD}
\]

 \end{remark}

 \begin{definition}
 Let $y \in \mathbb{C}[\kappa_j]_{j \ge 1}$. Then $y$ is \emph{primitive} if 
 $$ m^*y = y \otimes 1 + 1 \otimes y$$
 with the notation of Remark \ref{infinitypants}.
 \end{definition}

  \begin{lemma}
 \label{kappaprimitive}
 A class $y \in \mathbb{C}[\kappa_j]_{j \ge 1}$ is primitive if and only if it is a linear combination
 of $\kappa$ classes.
 \end{lemma}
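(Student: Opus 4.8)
The plan is to prove the two implications separately; the ``if'' direction is immediate and the ``only if'' direction is the substantive one. For the ``if'' direction, if $y$ is a finite linear combination $y=\sum_j a_j\kappa_j$, then since $m^*$ is $\mathbb{C}$-linear, Lemma \ref{firstprimitivekappa} gives $m^*y=\sum_j a_j(\kappa_j\otimes 1+1\otimes\kappa_j)=y\otimes 1+1\otimes y$, so $y$ is primitive.

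For the ``only if'' direction I would first equip $\mathbb{C}[\kappa_j]_{j\ge 1}$ with the grading in which $\kappa_j$ has degree $j$ (half its cohomological degree). Since $m^*$ is assembled from the pullbacks $m_{g,h}^*$, which preserve cohomological degree, $m^*$ is a homomorphism of graded algebras (indeed it is the coproduct of the Hopf algebra structure of Remark \ref{Hopfalgebra}), and $y\otimes 1+1\otimes y$ is homogeneous of the same degree as $y$ whenever $y$ is; hence primitivity can be checked one homogeneous component at a time, and it suffices to treat a primitive $y$ that is homogeneous of some degree $d$. The case $d=0$ is trivial, since $m^*1=1\otimes 1\neq 2\,(1\otimes 1)$ shows there are no nonzero primitives of degree $0$; so assume $d\ge 1$.

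Now expand $y=\sum_\lambda c_\lambda\kappa_\lambda$, the sum over multisets $\lambda=\{\lambda_1\ge\cdots\ge\lambda_k\}$ of positive integers with $\lambda_1+\cdots+\lambda_k=d$ and $\kappa_\lambda:=\kappa_{\lambda_1}\cdots\kappa_{\lambda_k}$. Because $m^*$ is a ring homomorphism, Lemma \ref{firstprimitivekappa} yields $m^*\kappa_\lambda=\prod_{i=1}^k(\kappa_{\lambda_i}\otimes 1+1\otimes\kappa_{\lambda_i})=\sum_{I\subseteq\{1,\dots,k\}}\kappa_{\lambda_I}\otimes\kappa_{\lambda_{I^c}}$, where $\kappa_{\lambda_I}:=\prod_{i\in I}\kappa_{\lambda_i}$. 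Suppose, for contradiction, that $c_\lambda\neq 0$ for some $\lambda$ of length $k\ge 2$; fix the largest part $\lambda_1$ of such a $\lambda$ and let $m_1\ge 1$ be its multiplicity in $\lambda$. I would then read off the coefficient of the basis monomial $\kappa_{\lambda_1}\otimes(\kappa_{\lambda_2}\cdots\kappa_{\lambda_k})$ in $m^*y$: in each $m^*\kappa_\mu$ this monomial can only arise from a singleton $I$, which forces $\mu=\lambda$ and occurs with multiplicity exactly $m_1$ (the number of copies of the value $\lambda_1$ one can extract), so the coefficient equals $m_1 c_\lambda$. On the other hand, both tensor factors of $\kappa_{\lambda_1}\otimes(\kappa_{\lambda_2}\cdots\kappa_{\lambda_k})$ have positive degree (this is where $k\ge 2$ enters), so this monomial does not appear in $y\otimes 1+1\otimes y$, whose coefficient is therefore $0$. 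Primitivity forces $m_1 c_\lambda=0$, and since $\mathbb{C}$ has characteristic zero, $c_\lambda=0$, a contradiction. Hence every monomial of length $\ge 2$ has vanishing coefficient, so $y$ is a scalar multiple of $\kappa_d$; recombining homogeneous components, an arbitrary primitive element is a linear combination of $\kappa$-classes.

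The only mildly delicate point, and the one I expect to require care in the write-up, is the combinatorial bookkeeping behind the coefficient $m_1 c_\lambda$: it uses that the $\kappa_\lambda$ are linearly independent in the polynomial ring, and that a splitting of the multiset $\lambda$ into a one-element part $\{\lambda_1\}$ and its complement leaves no freedom beyond the choice of which copy of the maximal value to remove (forcing in particular $\mu=\lambda$). An alternative route would be to quote the Milnor--Moore structure theorem for graded connected cocommutative Hopf algebras over a field of characteristic zero, applied to the Hopf algebra of Remark \ref{Hopfalgebra}, which identifies the primitives with the indecomposables; but the direct argument above is elementary and keeps the exposition self-contained.
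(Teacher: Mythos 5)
Your proof is correct, and the ``if'' direction coincides with the paper's; but for the substantive ``only if'' direction you take a genuinely different route. The paper settles it in two lines by appealing to the Milnor--Moore structure theorem: $\mathbb{C}[\kappa_j]_{j\ge 1}$ is free as a commutative algebra on the $\kappa_j$, which are primitive by Lemma \ref{firstprimitivekappa}, so the primitives are exactly their span. You instead give a direct, elementary computation: you reduce to a homogeneous primitive $y$ using that $m^*$ preserves the grading, expand $y$ in the monomial basis $\kappa_\lambda$, use that $m^*$ is a ring homomorphism (as in Remark \ref{Hopfalgebra}) to write $m^*\kappa_\lambda=\sum_{I}\kappa_{\lambda_I}\otimes\kappa_{\lambda_{I^c}}$, and extract the coefficient of $\kappa_{\lambda_1}\otimes\kappa_{\lambda_2}\cdots\kappa_{\lambda_k}$. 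Your bookkeeping is sound: since the left tensor factor is a length-one monomial, only singleton subsets $I$ contribute, which forces $\mu=\lambda$ and gives the coefficient $m_1c_\lambda$, whereas this monomial cannot occur in $y\otimes 1+1\otimes y$ when $k\ge 2$; hence $c_\lambda=0$ and $y$ is a multiple of $\kappa_d$ in each degree. What your approach buys is self-containedness: it avoids quoting Milnor--Moore, whose precise hypotheses (graded, connected, cocommutative, characteristic zero) the paper invokes somewhat loosely. What the paper's approach buys is brevity. You correctly identify the Milnor--Moore argument as the alternative; that alternative is exactly the paper's proof.
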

 
 \begin{proof}
 Each $\kappa_j$ is primitive by Lemma \ref{firstprimitivekappa}. Since the primitive classes clearly form
 a $\mathbb{C}$-vector space, we deduce that every linear combination of $\kappa$ classes is primitive.
 
 Conversely, Milnor-Moore's theorem says that a Hopf algebra is a free algebra on its primitive classes.
 Since $\mathbb{C}[\kappa_j]_{j \ge 1}$ is a free algebra on the $\kappa_j$'s, and since they are primitive,
 there cannot be any other primitive class. This completes the proof.
 \end{proof}

 \begin{definition}
 Let $X: A \rightarrow \mathbb{C}[\kappa_j]_{j \ge 1}$ be a homomorphism. $X$ is \emph{group-like}
 if $X_0 = \theta$, the Frobenius trace of $A$, and it satisfies the equality
 $$m^*X = X \cdot X$$
 $X$ is \emph{primitive} if it satisfies the equality
 $$m^*X = X \otimes 1 + 1 \otimes X$$
 \end{definition}
 
 \begin{remark}
 \label{kappaprimitive1}
 Lemma \ref{kappaprimitive} says that a homomorphism $x: A \rightarrow \mathbb{C}[\kappa_j]_{j \ge 1}$ is primitive
 if and only if there are covectors $\phi_j \in A^*$ such that $x = \sum_j \phi_j\kappa_j$.
 \end{remark}
 
 \begin{lemma}
 \label{exponential}
 Let $x: A \rightarrow \mathbb{C}[\kappa_j]_{j \ge 1}$ be a homomorphism, and let
 $\text{\emph{exp}}(x)$ be the homomorphism defined by the infinite series
 $$\text{\emph{exp}}(x) = \sum_{n=0}^{\infty} \frac{x^{\cdot n}}{n!}$$
 where $x^{\cdot n} :A \rightarrow \mathbb{C}[\kappa_j]_{j \ge 1}$ is the $n$-th power for the product
 of Definition \ref{infinityproduct} composed with the product of polynomials
 $\mathbb{C}[\kappa_j]_{j \ge 1}^{\otimes n} \rightarrow \mathbb{C}[\kappa_j]_{j \ge 1}$,
 with the convention $x^{0} = \theta$ the Frobenius trace of $A$ as in Definition \ref{Frobeniustrace}.
 Then we have 
 $$m^*\text{\emph{exp}}(x) = \text{\emph{exp}}(m^*x)$$
 \end{lemma}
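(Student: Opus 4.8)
The plan is to reduce the identity $m^*\exp(x) = \exp(m^*x)$ to the fact that $m^*$ is a $\mathbb{C}$-algebra homomorphism for the convolution product of Definition \ref{infinityproduct}, together with the fact that $m^*$ preserves the unit $\theta$. Recall from Remark \ref{Hopfalgebra} that $m^*$ is the coproduct of the Hopf algebra $\mathbb{C}[\kappa_j]_{j \ge 1}$, and in particular it is a ring homomorphism $\mathbb{C}[\kappa_j]_{j \ge 1} \to \mathbb{C}[\kappa_j]_{j \ge 1} \otimes \mathbb{C}[\kappa_j]_{j \ge 1}$ for the ordinary polynomial product. The first step is to make precise the target of $m^*$ when applied to a homomorphism $X : A \to \mathbb{C}[\kappa_j]_{j \ge 1}$: by the discussion after Remark \ref{Hopfalgebra}, $m^*X = \widetilde{\Delta}X$, i.e. $m^*X = (X \otimes X) \circ \Delta$ where $\Delta$ is the semisimple coproduct $\Delta(v) = \theta_\mu^{-1} v^\mu e_\mu \otimes e_\mu$ on $A$, and this coincides with the product $X \cdot X$ of Definition \ref{infinityproduct} only when we further multiply the two $\mathbb{C}[\kappa_j]$-factors together; here we keep them formally separate, landing in $A^* \otimes (\mathbb{C}[\kappa_j]_{j\ge 1} \otimes \mathbb{C}[\kappa_j]_{j \ge 1})$.

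Next I would establish the key intermediate claim: for any two homomorphisms $X, Y : A \to \mathbb{C}[\kappa_j]_{j \ge 1}$ one has
$$m^*(X \cdot Y) = (m^*X) \cdot (m^*Y),$$
where on the right the product is taken in $A^* \otimes (\mathbb{C}[\kappa_j] \otimes \mathbb{C}[\kappa_j])$, that is, componentwise in each tensor slot using the polynomial multiplication and the $A$-coproduct $\Delta$. This follows by unwinding Definition \ref{infinityproduct}: evaluating both sides on a basis vector $e_\mu$, the left side is $m^*(\theta_\mu^{-1} X(e_\mu) Y(e_\mu))$, and since $m^*$ is a ring homomorphism for the polynomial product this equals $\theta_\mu^{-1} (m^*X(e_\mu))(m^*Y(e_\mu))$; but $m^*X(e_\mu) = \theta_\mu^{-1} X(e_\mu) \otimes X(e_\mu)$ and similarly for $Y$, so comparing with the right side is a matter of bookkeeping with the $\theta_\mu$ factors, using that $\Delta$ sends $e_\mu$ to a multiple of $e_\mu \otimes e_\mu$ so there is no cross-term. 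I would also record that $m^*\theta = \theta \otimes \theta$, which is the unit-preservation statement (indeed $\theta$ is $\mathbb{C}$-valued and $m^*$ restricted to constants is the diagonal).

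From these two facts the lemma is immediate by induction: $m^*(x^{\cdot n}) = (m^*x)^{\cdot n}$ for every $n \ge 0$ (the base case $n = 0$ being $m^*\theta = \theta \otimes \theta$, which is exactly $(m^*x)^{\cdot 0}$), and since $m^*$ is $\mathbb{C}$-linear and continuous for the filtration by degree in the $\kappa$-classes — the series $\exp(x) = \sum_{n \ge 0} x^{\cdot n}/n!$ converges in the $\kappa$-adic sense because $x$ has no degree-zero part beyond $\theta$, so each degree receives contributions from only finitely many $n$ — we may apply $m^*$ termwise to get $m^*\exp(x) = \sum_{n \ge 0} (m^*x)^{\cdot n}/n! = \exp(m^*x)$. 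The main obstacle, and the only place that requires genuine care rather than formal manipulation, is the identification of the two products in the intermediate claim, i.e.\ checking that the convolution product of Definition \ref{infinityproduct} is compatible with $m^*$ in the correct tensor slots; once the diagram of Remark \ref{Hopfalgebra} is taken as given this is purely a normalization check with the weights $\theta_\mu$, but it must be done carefully to avoid an off-by-$\theta_\mu$ error.
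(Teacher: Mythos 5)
Your proposal is correct and follows essentially the same route as the paper: both arguments reduce the claim to $m^*(x^{\cdot n}) = (m^*x)^{\cdot n}$ by evaluating the convolution power on a semisimple basis vector (where it becomes $\theta_\mu^{-n+1}v^\mu\,x(e_\mu)^n$, an ordinary polynomial power) and then using that $m^*$ is a ring homomorphism for the polynomial product. The only cosmetic difference is that you package this as a general multiplicativity statement $m^*(X\cdot Y)=(m^*X)\cdot(m^*Y)$ plus induction, whereas the paper writes the closed formula for $x^{\cdot n}$ directly; the content is identical.
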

 
 \begin{proof}
 By linearity we have
 $$m^*\text{exp}(x) = \sum_n \frac{m^*(x^{\cdot n})}{n!}$$
 therefore we have to calculate $m^*(x^{\cdot n})$. By applying Definition \ref{infinityproduct}
  with $X = x$, $Y = x^{n-1}$, we see that
 $$x^{\cdot n}(v^\mu e_{\mu}) = \theta_{\mu}^{-n+1}v^\mu (x(e_{\mu}))^{ n}$$
 Where now $(x(e_{\mu}))^{ n}$ is the usual power of polynomials.
 
 Now we have $m^*[x(e_{\mu})^n] = [m^*(x(e_{\mu}))]^n$, thus
$$m^*(x^{\cdot n}) = (m^*x)^{\cdot n} $$ which immediately yields our result.
 \end{proof}

 \begin{lemma}
 \label{grouplikeprimitive}
 Let $X: A \rightarrow \mathbb{C}[\kappa_j]_{j \ge 1}$ be a homomorphism. Then $X$ is group-like
 if and only if there exists a primitive homomorphism $x$ such that $X = \text{\emph{exp}}(x)$.
 \end{lemma}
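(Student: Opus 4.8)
The plan is to translate everything into a componentwise statement about the Hopf algebra $\mathbb{C}[\kappa_j]_{j\ge 1}$ and then invoke the classical dictionary between primitive and group-like elements. As in the proof of Lemma \ref{exponential}, for any homomorphism $x: A \to \mathbb{C}[\kappa_j]_{j\ge 1}$ one has $\text{exp}(x)(e_\mu) = \theta_\mu\,\text{exp}(\theta_\mu^{-1}x(e_\mu))$, the $\text{exp}$ on the right being the ordinary exponential power series in the polynomial ring; I would begin by recording this, and then, given a homomorphism $X$, setting $g_\mu := \theta_\mu^{-1}X(e_\mu)$. Unwinding Definition \ref{infinityproduct} at $e_\mu$ gives $(X\cdot X)(e_\mu) = \theta_\mu^{-1}X(e_\mu)\otimes X(e_\mu)$, so that $m^*X = X\cdot X$ is equivalent to $m^*g_\mu = g_\mu\otimes g_\mu$ for every $\mu$, while $X_0 = \theta$ says precisely that every $g_\mu$ has constant term $1$; in other words $X$ is group-like if and only if each $g_\mu$ is a group-like element of the Hopf algebra $\mathbb{C}[\kappa_j]_{j\ge 1}$ in the usual sense. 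Likewise, a homomorphism $x$ is primitive if and only if each $\theta_\mu^{-1}x(e_\mu)$ is a primitive element of $\mathbb{C}[\kappa_j]_{j\ge 1}$, hence (by Lemma \ref{kappaprimitive}) a linear combination of $\kappa$-classes, which in passing re-proves Remark \ref{kappaprimitive1}.

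Next I would record the elementary fact about the graded connected commutative Hopf algebra $\mathbb{C}[\kappa_j]_{j\ge 1}$ (with $\deg\kappa_j = j$ and $m^*$ a graded algebra homomorphism by Lemma \ref{firstprimitivekappa}): the series $\text{exp}$ and $\text{log}$ restrict to mutually inverse bijections between the set of primitive elements of positive degree and the set of group-like elements with constant term $1$. This will be immediate from the grading — $m^*$, being a ring homomorphism, commutes with $\text{exp}$ and $\text{log}$ on the augmentation ideal, and for a positive-degree $y$ one has $\text{exp}(y\otimes 1 + 1\otimes y) = \text{exp}(y)\otimes\text{exp}(y)$ and, dually, $\text{log}\big((g\otimes 1)(1\otimes g)\big) = \text{log}(g)\otimes 1 + 1\otimes\text{log}(g)$, since $y\otimes 1$ and $1\otimes y$ (resp. $g\otimes 1$ and $1\otimes g$) commute in $\mathbb{C}[\kappa_j]_{j\ge 1}^{\otimes 2}$.

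With these preliminaries, both implications should fall out. For the ``if'' direction I would take $x$ primitive, put $y_\mu := \theta_\mu^{-1}x(e_\mu)$ (primitive, of positive degree), note that $g_\mu := \text{exp}(y_\mu)$ is then group-like with constant term $1$, and conclude from $\text{exp}(x)(e_\mu) = \theta_\mu g_\mu$ and the first paragraph that $\text{exp}(x)$ is group-like. For the ``only if'' direction I would take $X$ group-like, so that each $g_\mu := \theta_\mu^{-1}X(e_\mu)$ is group-like with constant term $1$, set $y_\mu := \text{log}(g_\mu)$ — primitive, hence a linear combination of $\kappa$-classes — and define $x: A\to\mathbb{C}[\kappa_j]_{j\ge 1}$ by $x(e_\mu) = \theta_\mu y_\mu$, extended $\mathbb{C}$-linearly; then $x$ is primitive by the first paragraph, and $\text{exp}(x)(e_\mu) = \theta_\mu\,\text{exp}(y_\mu) = \theta_\mu g_\mu = X(e_\mu)$, so $X = \text{exp}(x)$.

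The one genuine subtlety, and essentially the only step that is more than bookkeeping, is that the product of Definition \ref{infinityproduct} is twisted by the weights $\theta_\mu$: the objects one must feed into the classical primitive/group-like dictionary are the rescaled classes $\theta_\mu^{-1}X(e_\mu)$, not the $X(e_\mu)$ themselves. Once this rescaling is correctly in place, the rest of the argument uses only Lemma \ref{exponential}, Lemma \ref{kappaprimitive}, and the graded-Hopf-algebra fact above.
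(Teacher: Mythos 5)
Your proof is correct and follows essentially the same route as the paper's: both arguments reduce the statement to the identity $\exp(a+b)=\exp(a)\exp(b)$ (and its logarithmic inverse) for commuting elements of $\mathbb{C}[\kappa_j]_{j\ge 1}\otimes\mathbb{C}[\kappa_j]_{j\ge 1}$, combined with the compatibility of $\exp$ with $m^*$ from Lemma \ref{exponential}. The only cosmetic difference is that you diagonalize in the semisimple basis and apply the classical primitive/group-like dictionary to the rescaled components $g_\mu=\theta_\mu^{-1}X(e_\mu)$, whereas the paper manipulates the $\theta$-twisted exponential and logarithm directly; the underlying computation is identical.
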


 \begin{proof}
 Let $x$ be primitive, and let $X = \text{exp}(x)$. Then by Lemma \ref{exponential} we have
 $m^*X = \text{exp}(m^*x) = \text{exp}(x\otimes 1 + 1 \otimes x)$. Moreover, expanding out the powers, we get
 $$ \text{exp}(x \otimes 1 + 1 \otimes x) = \sum_{n=0}^{\infty}\sum_{k_1 + k_2=n} \frac{1}{k_1!k_2!}
 (x\otimes 1)^{\cdot k_1} \cdot (1 \otimes x)^{\cdot k_2}= \text{exp}(x \otimes 1) \cdot \text{exp}(1 \otimes x)$$
 Now we clearly have $(x \otimes 1)^{\cdot n} = x^{\cdot n} \otimes 1$, therefore the quantity in the
 above expression is $(X \otimes 1) \cdot (1 \otimes X)$, and this is in turn $X \cdot X$ since by Remark
 \ref{infinityproduct} we have
 $$(X \otimes 1) \cdot (1 \otimes X)(v) = \theta_{\mu}^{-1}v^\mu (X(e_\mu)\otimes 1)(1 \otimes X(e_{\mu})) =
 \theta_{\mu}^{-1}v^\mu X(e_{\mu})X(e_{\mu}) = X\cdot X (v)$$
 Thus we see that $X$ is group-like.
 
 Conversely, let $X$ be group-like and
  let us define
 $$ x = \text{log}(X) := \sum_{n=1}^{\infty}(-1)^{n-1}\frac{(X-\theta)^{\cdot n}}{ n}$$
 then $x$ is well-defined since
 the sum at the right-hand side is finite for each degree, and
 formally $\text{exp}(x) = X$. Moreover, the technique used in the proof of Lemma \ref{exponential} yields
 $m^*x = \text{log}(m^*X)$, therefore using the fact that $X$ is group-like by hypothesis we get
 $$m^*X = \text{log}(X \cdot X)$$
 Writing $X \cdot X = (X \otimes 1)\cdot(1\otimes X)$ and $(X-\theta)^{\cdot n} \otimes 1
 = ((X-\theta)\otimes 1)^{\cdot n}$
 in the expression for $\text{log}(X)$, we get the expression
 $$\text{log}(X\cdot X) = \text{log}(X) \otimes 1 + 1 \otimes \text{log}(X)$$
 from which we see that $x$ is primitive. This completes the proof.
 \end{proof}
 
 \subsubsection{Classification of smooth theories}
 
 We are now ready to classify fixed boundary theories.
 
 \begin{proposition}
 \label{howisomegalike}
  There exist covectors $\phi_j \in A^*$ such that
  $ \widetilde{\Omega}^+ = \text{\emph{exp}}(\sum_{j>0} \phi_j \kappa_j)$.
 \end{proposition}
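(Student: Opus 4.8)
The plan is to combine the two structural results established just above: Corollary \ref{itisgrouplike}, which says that $\widetilde{\Omega}^+$ is group-like (its degree-zero part is $\theta$ by construction, since $\widetilde{\Omega}_{0,3}(\mathbf{1}\otimes u\otimes v)=\eta(u,v)$ forces $\widetilde{\Omega}^+_0(v)=\eta(v,\mathbf{1})=\theta(v)$, and it satisfies $m^*\widetilde{\Omega}^+=\widetilde{\Omega}^+\cdot\widetilde{\Omega}^+$), and Lemma \ref{grouplikeprimitive}, which says that every group-like homomorphism is the exponential of a primitive one. So first I would invoke Corollary \ref{itisgrouplike} to conclude that $\widetilde{\Omega}^+$ is group-like; I should double-check the degree-zero hypothesis, namely that $(\widetilde{\Omega}^+)_0 = \theta$, which holds because $i_{0,1}$-type reasoning (or directly Lemma \ref{degzero} together with the definition of $\widetilde{\Omega}^+$ as $\varprojlim\widetilde{\Omega}_{g,1}(\alpha^{-g}\cdot)$) pins down the constant term as the Frobenius trace.

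**Next I would** apply Lemma \ref{grouplikeprimitive} to obtain a primitive homomorphism $x: A\to\mathbb{C}[\kappa_j]_{j\ge1}$ with $\widetilde{\Omega}^+=\mathrm{exp}(x)$. Then I would use Remark \ref{kappaprimitive1} (equivalently Lemma \ref{kappaprimitive}, via Milnor--Moore), which characterizes primitive homomorphisms precisely as those of the form $x=\sum_{j\ge1}\phi_j\kappa_j$ for covectors $\phi_j\in A^*$. Substituting this into $\widetilde{\Omega}^+=\mathrm{exp}(x)$ gives exactly $\widetilde{\Omega}^+=\mathrm{exp}\bigl(\sum_{j>0}\phi_j\kappa_j\bigr)$, which is the claim. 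The sum $\sum_{j>0}\phi_j\kappa_j$ makes sense as a homomorphism because in each fixed cohomological degree only finitely many terms contribute, and the exponential is likewise well-defined degree by degree.

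**The main obstacle** here is essentially bookkeeping rather than any deep difficulty: all the hard work has already been done in the preceding subsection (the Hopf-algebra structure of $\mathbb{C}[\kappa_j]_{j\ge1}$, the identification of primitives, and the exp/log correspondence between group-like and primitive homomorphisms). The one point that genuinely requires care is checking the hypothesis that the degree-zero part of $\widetilde{\Omega}^+$ equals $\theta$, since Lemma \ref{grouplikeprimitive} builds $\log(X)$ out of $(X-\theta)^{\cdot n}$ and would not even be stated for $X$ with a different constant term; this follows from the normalization axiom $\widetilde{\Omega}_{0,3}(\mathbf{1}\otimes u\otimes v)=\eta(u,v)$ propagated through the projective limit. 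So the proof is short: cite Corollary \ref{itisgrouplike} to get group-likeness, apply Lemma \ref{grouplikeprimitive} to write $\widetilde{\Omega}^+=\mathrm{exp}(x)$ with $x$ primitive, and invoke Remark \ref{kappaprimitive1} to express $x=\sum_{j>0}\phi_j\kappa_j$.
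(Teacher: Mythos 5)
Your proposal is correct and follows exactly the same route as the paper: cite Corollary \ref{itisgrouplike} for group-likeness, apply Lemma \ref{grouplikeprimitive} to write $\widetilde{\Omega}^+=\mathrm{exp}(x)$ with $x$ primitive, and use Remark \ref{kappaprimitive1} to identify $x$ as $\sum_{j\ge 1}\phi_j\kappa_j$. Your extra verification that the degree-zero part of $\widetilde{\Omega}^+$ equals the Frobenius trace $\theta$ is a welcome bit of care that the paper's own proof leaves implicit.
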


 \begin{proof}
 By Corollary \ref{itisgrouplike} we know that $\widetilde{\Omega}^+$ is group-like. By Lemma
 \ref{grouplikeprimitive}, there exists a primitive homomorphism $x: A \rightarrow \mathbb{C}[\kappa_j]_{j \ge 1}$
 such that $\widetilde{\Omega}^+ = \text{exp}(x)$. By Remark \ref{kappaprimitive1} this $x$ is an $A^*$-linear
 combination of $\kappa_j$ classes and this yields our result.
 \end{proof}
 
 Let us now see how to recover the homomorphisms $\widetilde{\Omega}_{g,n}$ from $\widetilde{\Omega}^+$.
 For this, we have the first classification theorem.
 
  \begin{theorem}
  \label{classificationfixed}
  Let $\widetilde{\Omega}_{g,n} : A^{\otimes n} \rightarrow H^{\bullet}(\widetilde{\mathcal{M}}_{g,n})$ be a
  fixed boundary CohFT. Then there exist covectors $\phi_j \in A^*$ for $j \ge 1$ such that, setting 
  $\widetilde{\Omega}^+ = \text{\emph{exp}}(\sum_{j\ge 1} \phi_j \kappa_j) :
  A \rightarrow \mathbb{C}[\kappa_j]_{j \ge 1} $, we have
  \begin{equation}
  \label{strangeformula}
   \widetilde{\Omega}_{g,n}(v_1 \otimes \cdots \otimes v_n) = 
  i_{g,n}\widetilde{\Omega}^+(\alpha^g \cdot v_1 \cdots v_n)
  \end{equation}
  for $g \ge 1$.
  Conversely, if a semisimple, symmetric, commutative Frobenius algebra structure is defined on
  $(A, \cdot, \mathbf{1}, \eta )$, then for every choice of covectors $\phi_j \in A^*$ for $j \ge 1$, we get a 
  fixed boundary CohFT using the above formulas for $g \ge 1$, and
  $$\widetilde{\Omega}_{0,n}(v_1 \otimes \ldots \otimes v_n) =
  s^*\widetilde{\Omega}_{1,n}(v_1 \otimes \ldots \otimes (\alpha^{-1} \cdot v_n))$$
  where $s: \widetilde{\mathcal{M}}_{0,n} \times \Sigma \rightarrow \widetilde{\mathcal{M}}_{1,n}$
  for any $\Sigma \in \widetilde{\mathcal{M}}_{1,2}$.
  
 \end{theorem}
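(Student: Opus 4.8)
The plan is to handle the two implications separately; the first one (``every fixed boundary CohFT has the stated shape'') is the substantive one, the converse being a long but essentially mechanical verification.

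\emph{Forward direction.} The covectors are already produced by Proposition \ref{howisomegalike}, which gives $\widetilde{\Omega}^+=\text{exp}(\sum_{j\ge 1}\phi_j\kappa_j)$; only formula (\ref{strangeformula}) remains. For $n=1$ this is almost a tautology: by construction $\widetilde{\Omega}^+$ is the projective limit of the $\widetilde{\Omega}_{g,1}(\alpha^{-g}\cdot)$, and by Definition \ref{evaluation} the map $i_{g,1}$ is the projection onto the $g$-th stage of that limit, so $i_{g,1}\widetilde{\Omega}^+(\alpha^g\cdot v)=\widetilde{\Omega}_{g,1}(\alpha^{-g}\cdot\alpha^g\cdot v)=\widetilde{\Omega}_{g,1}(v)$ for $g\ge 1$. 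For general $n$ I would proceed in three steps. \textbf{(i)} Show that $\widetilde{\Omega}_{g,n}$ takes values in the subring $i_{g,n}(\mathbb{C}[\kappa_j])$ of $H^{\bullet}(\widetilde{\mathcal{M}}_{g,n})$: by Proposition \ref{sewfixedtorus} and invertibility of $\alpha$ (Proposition \ref{invertible}), the class $\widetilde{\Omega}_{g,n}(v_1\otimes\cdots\otimes v_n)$ is the pullback, along the map $\widetilde{\mathcal{M}}_{g,n}\to\widetilde{\mathcal{M}}_{G,n}$ that sews on a fixed genus $G-g$ surface, of the corresponding higher-genus class; fixing the degree $d$ and taking $G>3d$, one has $H^d(\widetilde{\mathcal{M}}_{G,n})=(\mathbb{C}[\kappa_j])_d$ by Harer stability, Looijenga and Madsen--Weiss together with the vanishing of the $\psi$-classes on the torus bundle (the fact already used in the identification $\underleftarrow{\text{lim}}\,H^{\bullet}(\widetilde{\mathcal{M}}_{g,1})\cong\mathbb{C}[\kappa_j]$), and since the sewing map sends $\kappa_j\mapsto\kappa_j$ the pullback lies in $i_{g,n}(\mathbb{C}[\kappa_j])$. \textbf{(ii)} Sewing a chain of $n-1$ fixed three-pointed spheres gives a map $\sigma_n:\widetilde{\mathcal{M}}_{g,1}\to\widetilde{\mathcal{M}}_{g,n}$ with $\sigma_n^{*}\widetilde{\Omega}_{g,n}(v_1\otimes\cdots\otimes v_n)=\widetilde{\Omega}_{g,1}(v_1\cdots v_n)$, by iterating the multiplication lemma (Remark \ref{smooththeoriesaswell}); since $\sigma_n^{*}\kappa_j=\kappa_j$, we get $\sigma_n^{*}\circ i_{g,n}=i_{g,1}$ on $\mathbb{C}[\kappa_j]$. \textbf{(iii)} Finally $\ker i_{g,1}\subseteq\ker i_{g,n}$, because the forgetful map $p:\widetilde{\mathcal{M}}_{g,n}\to\widetilde{\mathcal{M}}_{g,1}$ erasing the points $2,\dots,n$ and their framings satisfies $p^{*}\kappa_j=\kappa_j$ (the usual correction terms are divisible by the $\psi$-classes, which vanish on the torus bundle), so $p^{*}\circ i_{g,1}=i_{g,n}$. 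Combining: writing $\widetilde{\Omega}_{g,n}(v_1\otimes\cdots\otimes v_n)=i_{g,n}(P)$ by (i), applying $\sigma_n^{*}$ and using (ii) and the $n=1$ case gives $i_{g,1}(P)=i_{g,1}\widetilde{\Omega}^+(\alpha^g v_1\cdots v_n)$, and then (iii) upgrades this to $i_{g,n}(P)=i_{g,n}\widetilde{\Omega}^+(\alpha^g v_1\cdots v_n)$, which is (\ref{strangeformula}).

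\emph{Converse.} Given a semisimple, symmetric, commutative Frobenius structure on $(A,\cdot,\mathbf{1},\eta)$ and arbitrary $\phi_j\in A^{*}$, put $x=\sum\phi_j\kappa_j$ and $\widetilde{\Omega}^+=\text{exp}(x)$; this lies in $A^{*}\otimes\mathbb{C}[\kappa_j]$ and, $x$ being primitive (Remark \ref{kappaprimitive1}), is group-like by Lemma \ref{grouplikeprimitive}, so $m^{*}\widetilde{\Omega}^+=\widetilde{\Omega}^+\cdot\widetilde{\Omega}^+$ as in Corollary \ref{itisgrouplike}. Define $\widetilde{\Omega}_{g,n}$ by (\ref{strangeformula}) for $g\ge 1$ and by the displayed genus-$0$ formula; one first observes that the latter equals $i_{0,n}\widetilde{\Omega}^+(v_1\cdots v_n)$ (the factors $\alpha^{\pm1}$ cancel, and $s^{*}\kappa_j=\kappa_j$ since $\Sigma$ is fixed), hence is independent of $\Sigma$, so the whole family has the uniform shape $i_{g,n}\widetilde{\Omega}^+(\alpha^g v_1\cdots v_n)$. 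Then check the five axioms of Definition \ref{fixedboundary}: $S_n$-equivariance because the right-hand side only involves $S_n$-invariant $\kappa$-classes and the commutative product $v_1\cdots v_n$; axiom $2$ because all $\kappa_j$ vanish on $\widetilde{\mathcal{M}}_{0,3}$, so $\widetilde{\Omega}_{0,3}(\mathbf{1}\otimes u\otimes v)$ is the constant term $\theta(u\cdot v)=\eta(u\cdot v,\mathbf{1})=\eta(u,v)$ of $\widetilde{\Omega}^+(u\cdot v)$; the forgetful axiom because $p^{*}\kappa_j=\kappa_j$ and $\mathbf{1}$ is a unit; and the two sewing axioms by a direct computation using that self-sewing acts on $\kappa$-classes by $\kappa_j\mapsto\kappa_j$ and separating sewing by $\kappa_j\mapsto\kappa_j\otimes 1+1\otimes\kappa_j$ (Lemma \ref{firstprimitivekappa} and its variants), the relation $m^{*}\widetilde{\Omega}^+=\widetilde{\Omega}^+\cdot\widetilde{\Omega}^+$, and the semisimple identities $\alpha^k=\sum_\mu\theta_\mu^{-2k+1}e_\mu$, $\sum_\mu e_\mu\cdot e_\mu=\alpha$, $e_\mu\cdot e_\mu=\theta_\mu^{-1}e_\mu$, which make both sides of each sewing axiom collapse to the same explicit sum over $\mu$; the cases $g_1=0$ or $g-1=0$ are covered by the simplified genus-$0$ formula.

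\emph{Main obstacle.} The delicate point is entirely in steps (i)--(iii) of the forward direction: the sewing-pants map $\sigma_n$ is very far from injective on cohomology, so knowing $\sigma_n^{*}\widetilde{\Omega}_{g,n}$ does not by itself pin down $\widetilde{\Omega}_{g,n}$. The way around is the a priori $\psi$-freeness of fixed boundary theories in step (i), which is precisely where Harer stability and the Madsen--Weiss/Looijenga description of the stable cohomology enter, after which the kernel comparison $\ker i_{g,1}\subseteq\ker i_{g,n}$ of step (iii) is what actually closes the argument. Everything else --- the $n=1$ case and the whole converse --- is bookkeeping, the only mild care being the separate treatment of the genus-$0$ sewings.
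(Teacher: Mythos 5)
Your proof is correct and shares the paper's overall skeleton: reduce to $n=1$ by pulling back along the sphere-chain map (iterating the multiplication lemma), push to high genus by sewing on a fixed surface via Proposition \ref{sewfixedtorus} and the invertibility of $\alpha$, and use stability to bring the answer back down. Where you genuinely diverge is the mechanism for upgrading the $n=1$ identity to general $n$. The paper notes that by Harer stability the sphere-chain map $\varphi_{g+g',n}:\widetilde{\mathcal{M}}_{g+g',1}\to\widetilde{\mathcal{M}}_{g+g',n}$ is a cohomology isomorphism in the stable range with inverse $p^*$, and simply inverts it in Equation (\ref{quasifinale}); you instead first prove in step (i) that $\widetilde{\Omega}_{g,n}$ lands in the image of $i_{g,n}$ --- which needs the explicit Looijenga--Madsen--Weiss description of $H^d(\widetilde{\mathcal{M}}_{G,n})$ plus the Gysin argument killing the $\psi$-classes, a somewhat heavier input than Harer stability alone --- and then close with the unconditional factorization $i_{g,n}=p^*\circ i_{g,1}$, i.e.\ $\ker i_{g,1}\subseteq\ker i_{g,n}$. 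Both routes work and rest on the same underlying facts; the paper's is marginally more economical at this step, while yours isolates the a priori $\psi$-freeness of fixed boundary theories as an explicit structural statement, which is clarifying. Your converse is essentially the paper's verification, with one small improvement: you observe directly that the genus-$0$ formula collapses to $i_{0,n}\widetilde{\Omega}^+(v_1\cdots v_n)$ and is therefore independent of the choice of $\Sigma$, a point the paper leaves implicit.
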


 \begin{proof}
 
 From the definition 
 $\widetilde{\Omega}^+ = \underleftarrow{\text{lim}}\widetilde{\Omega}_{g,1}(\alpha^{-g} \cdot)$ we 
 immediately find $\widetilde{\Omega}_{g,1}(v) = i_{g,1}\widetilde{\Omega}^+(\alpha^g \cdot v)$.
 Let
 $\varphi_{g,n} : \widetilde{\mathcal{M}}_{g,1} \rightarrow \widetilde{\mathcal{M}}_{g,n}$
 for $n \ge 2$ be the sewing-smoothing map applied to
 a fixed element of $\widetilde{\mathcal{M}}_{0,n+2}$, and let 
 $s_{g,n}: \widetilde{\mathcal{M}}_{g,n} \rightarrow \widetilde{\mathcal{M}}_{g+g',n}$
 be the sewing-smoothing map applied to a fixed element of $\widetilde{\mathcal{M}}_{g',2}$
 sewed to $\widetilde{\mathcal{M}}_{g,n}$.
 It is easy to see by induction on $n$ that
 \begin{equation}
 \label{multiproduct}
 \varphi_{g,n}^* \widetilde{\Omega}_{g,n}(v_1 \otimes \cdots \otimes v_n) =
 \widetilde{\Omega}_{g,1}(v_1 \cdots v_n)
 = i_{g,1}\widetilde{\Omega}^+(\alpha^g \cdot v_1 \cdots v_n)
 \end{equation}
 and by Proposition \ref{sewfixedtorus} 
 we have
 \begin{equation}
 \label{multiproduct2}
  \widetilde{\Omega}_{g,n}( v_1 \otimes \cdots \otimes  v_n) =
 s_{g,n}^*\widetilde{\Omega}_{g+g',n}(v_1 \otimes \cdots \otimes (\alpha^{-g'} \cdot v_n)) 
 \end{equation}
 Harer's stability theorem says that the map $\varphi_{g+g',n}$
 is a homology equivalence in degree less than $(g+g')/3$,
 thus if this quantity is greater than $3g - 3 + n$, we
 deduce by (\ref{multiproduct}) and (\ref{multiproduct2}) the following formula
  \begin{equation}
  \label{quasifinale}
  \widetilde{\Omega}_{g,n}(v_1 \otimes \cdots \otimes v_n) = 
  s_{g,n}^*(\varphi_{g+g',n}^*)^{-1}i_{g+g',1}\widetilde{\Omega}^+(\alpha^g \cdot v_1 \cdots v_n)
  \end{equation}
 which is well-defined since the degree of the left-hand side
 is in the range in which $\varphi_{g+g',n}$ is an isomorphism. Now, in the stable range, we know by
 Harer's theorem that the inverse of $\varphi_{g+g',n}$ is exactly $p$, the forgetful map. Moreover
 it is clear that
 $p \circ s_{g,n} = s_{g,1} \circ p$
 and that 
$s_{g,1}^*i_{g+g',1} = i_{g,1}$. Substituting in (\ref{quasifinale}) we get the desired formula.
 Thus we have shown the first part.
 
  For the second part, we need to verify the axioms. Let then $(e_{\mu})_{\mu}$ be a semisimple basis
  for the Frobenius algebra $(A, \cdot, \mathbf{1} ,\eta)$.
  
  Since $(A, \cdot)$ is commutative, $\widetilde{\Omega}_{g,n}$ is $S_n$-invariant, so the first axiom is verified.
  
  For the second axiom, we have
  $$\widetilde{\Omega}_{0,3}(v_1 \otimes v_2 \otimes \mathbf{1}) =
  s^*\widetilde{\Omega}_{1,3}(v_1 \otimes v_2 \otimes (\alpha^{-1} \cdot \mathbf{1})) =
  s^*p^*i_1^*\widetilde{\Omega}^+(v_1 \cdot v_2)$$
  Now from the definition of $\widetilde{\Omega}^+$ we see that $i_1^*\widetilde{\Omega}^+(v_1 \cdot v_2) =
  \theta(v_1 \cdot v_2) + \phi_1(v_1 \cdot v_2)\kappa_1$,
  where $\theta$ is the Frobenius trace of $A$ (the classes $\kappa_j$ for $j \ge 2$ are zero in the ring
  $H^{\bullet}(\widetilde{\mathcal{M}}_{1,1})$ to which $i_1^*\widetilde{\Omega}^+(v_1 \cdot v_2)$ belongs);
  moreover, we have $p^*\kappa_1 = \kappa_1 - \psi_2 - \psi_3$, thus $s^*p^*\kappa_1 = 0$ and finally
  $$s^*p^*i_1^*\widetilde{\Omega}^+(v_1 \cdot v_2) = \theta(v_1 \cdot v_2) = \eta(v_1, v_2) $$
  and the axiom is verified.
  
  For the third one, let $\tau: \widetilde{\mathcal{M}}_{g-1,n+2} \rightarrow \widetilde{\mathcal{M}}_{g,n}$
be the non-separating sewing-smoothing map. Then
$$\tau^*\widetilde{\Omega}_{g,n}(v_1 \otimes \ldots \otimes v_n) = 
\tau^*p^*i_g^*\widetilde{\Omega}^+(\alpha^g \cdot v_1 \cdots v_n)$$
Now $p \circ \tau = \tau \circ p_{1,n+1,n+2}$ 
where $p_{1,n+1,n+2}:\widetilde{\mathcal{M}}_{g-1,n+2} \rightarrow \widetilde{\mathcal{M}}_{g-1,3}$
is the map forgetting all the points except
the ones marked $1$, $n+1$ and $n+2$, therefore we must compute
$\tau^*i_g^*\widetilde{\Omega}^+(\alpha^g \cdot v_1 \cdots v_n)$
where $\tau: \widetilde{\mathcal{M}}_{g-1,3} \rightarrow \widetilde{\mathcal{M}}_{g,1}$.
Now $\tau^*\psi_i = \psi_i$ for every $i$, therefore 
$\tau^*\kappa_i = \kappa_i$ for every $i$ and we simply have 
$$\tau^*\iota_g^*\widetilde{\Omega}^+(\alpha^g \cdot v_1 \cdots v_n) = 
p_{2,3}^*i_{g-1}^*\widetilde{\Omega}^+(\alpha^g \cdot v_1 \cdots v_n) $$
Putting all this together, we finally have
$$\tau^*\widetilde{\Omega}_{g,n}(v_1 \otimes \ldots \otimes v_n) =
p^*i_{g-1}^*\widetilde{\Omega}^+(\alpha^g \cdot v_1 \cdots v_n)=$$
$$=p^*i_{g-1}^*\sum_{\mu}\widetilde{\Omega}^+(\alpha^{g-1} \cdot v_1 \cdots \cdot v_n \cdot e_{\mu} \cdot e_{\mu}) =
\sum_{\mu}\widetilde{\Omega}_{g,n}(v_1 \otimes \ldots \otimes v_n \otimes e_{\mu} \otimes e_{\mu})$$
taking into account that $e_{\mu}$ is a semisimple basis, we have verified the third axiom.
The fourth axiom is proved similarly, while the fifth axiom is obvious.
 \end{proof}

 \subsection{Free boundary theories}
 \label{freeboundarytheories}
 
 Now that we have proved the classification theorem for fixed boundary theories, we proceed to study
  free boundary theories. Thus, let $\Omega$ be a semisimple
 free boundary CohFT with semisimple base $(A, \cdot, \mathbf{1}, \eta)$ with semisimple basis $(e_{\mu})_{\mu}$.
 We recall that if $\pi: \widetilde{\mathcal{M}}_{g,n} \rightarrow \mathcal{M}_{g,n}$ is the map
 that forgets the tangent directions, then Proposition \ref{coherfree} says that 
 $\widetilde{\Omega} = \pi^*\Omega$ is 
 a fixed boundary CohFT, with the same base. Therefore, by Theorem \ref{classificationfixed}, there
 exists a homomorphism $\widetilde{\Omega}^+: A \rightarrow \mathbb{C}[\kappa_j]_{j \ge 1}$ such that
 $$ \pi^* \Omega_{g,n}(v_1 \otimes \ldots \otimes v_n) =
 i_{g,n} \widetilde{\Omega}^+(\alpha^g \cdot v_1 \cdots v_n)$$
 
 Let $\pi^{\{ 2 \}} : \widetilde{\mathcal{M}}_{g,2}^{\{ 2 \} } \rightarrow \widetilde{\mathcal{M}}_{g,2}$
 be the bundle map like in Notation \ref{torusbundle}, let 
 $s|_{\Sigma}^{\{ 2 \} }=\pi^{\{ 2 \}*}s|_{\Sigma}$ with notation as in Proposition \ref{sewfixedtorus},
 and let $\Omega_{g,2}^{\{ 2 \}} = \pi^{\{ 2 \}*}\Omega_{g,2}$.
 Then, by Proposition \ref{sewfixedtorus}, we have
 $$s|_{\Sigma}^{\{ 2 \} *}\Omega_{g,2}^{\{ 2 \}}(v_1 \otimes v_2) =
 \Omega_{g',2}^{\{ 2 \}}(v_1 \otimes (\alpha^{g'} \cdot v_2)) $$
 therefore we have a projective system $\Omega_{g,2}^{\{ 2 \}}(\cdot \otimes (\alpha^{-g} \cdot))$.
 We denote the limit object
 $$\Omega^+ = \underleftarrow{\text{lim}}\Omega_{g,2}^{\{ 2 \}}(\cdot \otimes (\alpha^{-g} \cdot)):
 A \otimes A \rightarrow \mathbb{C}[\psi,\kappa_j]_{j \ge 1}$$
 where $\psi$ is the $\psi$-class of the only free point of $\widetilde{\mathcal{M}}_{g,2}^{\{ 2 \}}$.
 We can see $\Omega^+$ as an element of $(A^*\otimes A^*)[\kappa, \psi_j]_{j\ge 1}$, therefore composing with
 the isomorphism $i^{-1}: A^* \rightarrow A$ of Remark \ref{i} we get an element
 $Z(\kappa, \psi_j)_{j\ge1} \in (A^* \otimes A)[[\kappa, \psi_j]]_{j\ge 1}$. 
 Recalling that there is a natural isomorphism $A^* \otimes A \simeq \text{End}(A)$, we define
 $$R(\psi) = Z(0,-\psi)^*  \in \text{End}(A)[[\psi]]$$
 where the star indicates the adjoint endomorphism with respect to $\eta$.
 
 \begin{remark}
 \label{tounderstand}
 
 This definition simply means that for every $v, w \in A$, we have
 \begin{equation}
 \label{Zmorphism}
 \eta(Z(\kappa, \psi)v, w) = \Omega^+(v\otimes w)(\kappa, \psi)
 \end{equation}
where $\Omega^+(v\otimes w)(\kappa, \psi_j)_{j \ge 1} \in \mathbb{C}[\kappa, \psi_j]_{j \ge 1}$ and $\eta$ at the
left-hand side is applied coefficient-wise, that is, $\eta((\sum_if_i\psi^i)v, w) = \sum_i\eta(f_i(v), w) \psi^i$.
By Lemma \ref{degzero}, the degree-zero part 
$\Omega_{1,2}^{\{ 2 \}}(v \otimes \alpha^{-1}\cdot w)_0$ is $\eta(v \cdot \alpha^{-1} \cdot w, \alpha) = \eta(v,w)$;
therefore it is the same for $\Omega_{g,2}^{\{ 2 \}}(v \otimes \alpha^{-g}\cdot v)_0$ for every $g \ge 1$,
thus we see 
$$\eta(v, R(-\psi)_0w) = \Omega^+(v\otimes w)(\psi, 0)_0 = \eta(v,w)$$
for every $v, w \in A$, and we deduce $R(\psi)_0 = \text{Id}$. From now on, we will write
$$ R(\psi) = \text{Id} + R_1 \psi + R_2 \psi^2 + \ldots$$
for $R_i \in \text{End}(A)$. In particular, $R(\psi)$ is invertible.
 \end{remark}
 
 \begin{lemma}
 \label{firstformula}
 We have
 $$\widetilde{\Omega}^+(v_1 \cdot v_2)(\kappa) = \eta( Z(\kappa, \psi)v_1,R(-\psi)^{-1}v_2) = 
 \Omega^+(v_1 \otimes R(-\psi)^{-1}v_2)$$
 \end{lemma}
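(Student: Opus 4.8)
The plan is to reduce the identity, via the classification of fixed boundary theories, to a factorisation property of the genus-stable two-point function $Z$, and then to prove that factorisation by a sewing-smoothing argument localised near the free point.

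\emph{Reduction to the $\psi=0$ part of $\Omega^+$.} Recall that $\widetilde{\Omega}_{g,2}=\pi^*\Omega_{g,2}$ (Proposition \ref{coherfree}) and that $\Omega^{\{2\}}_{g,2}$ is the pullback of $\Omega_{g,2}$ from $\mathcal M_{g,2}$; since the bundle $\widetilde{\mathcal M}_{g,2}\to\mathcal M_{g,2}$ factors through $\widetilde{\mathcal M}^{\{2\}}_{g,2}$, we get $\widetilde{\Omega}_{g,2}=\varrho^*\Omega^{\{2\}}_{g,2}$, where $\varrho:\widetilde{\mathcal M}_{g,2}\to\widetilde{\mathcal M}^{\{2\}}_{g,2}$ is the circle bundle forgetting the framing at the first marked point. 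The Euler class of $\varrho$ is $\pm\psi_1$, while stably (by Looijenga's and the Madsen--Weiss theorems) $\Omega^{\{2\}}_{g,2}(v_1\otimes w)$ is a polynomial in $\psi_1$ and the $\kappa_j$; hence $\varrho^*$ just sets $\psi_1=0$ and fixes the $\kappa_j$, so $\Omega^{\{2\}}_{g,2}(v_1\otimes w)\big|_{\psi_1=0}=\widetilde{\Omega}_{g,2}(v_1\otimes w)$. By Theorem \ref{classificationfixed} the latter equals $i_{g,2}\widetilde{\Omega}^+(\alpha^g v_1\cdot w)$ for $g\ge1$. Taking $w=\alpha^{-g}v_2$, so that $\alpha^g v_1\cdot w=v_1\cdot v_2$, and passing to the projective limit over $g$ (Harer's stability), we obtain
\[
\widetilde{\Omega}^+(v_1\cdot v_2)=\Omega^+(v_1\otimes v_2)\big|_{\psi=0}=\eta\bigl(Z(\kappa,0)v_1,\,v_2\bigr),
\]
the last equality being the defining relation (\ref{Zmorphism}).

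\emph{Translating the target.} From $R(\psi)=Z(0,-\psi)^*$ and the involutivity of the $\eta$-adjoint we get $R(-\psi)^*=Z(0,\psi)$, hence $R(-\psi)^{-1}=\bigl(Z(0,\psi)^{-1}\bigr)^*$ and
\[
\eta\bigl(Z(\kappa,\psi)v_1,\,R(-\psi)^{-1}v_2\bigr)=\eta\bigl(Z(0,\psi)^{-1}Z(\kappa,\psi)v_1,\,v_2\bigr);
\]
the second equality in the statement of the lemma is merely (\ref{Zmorphism}) extended $\mathbb C[\psi]$-linearly. Comparing with the previous display, the lemma is thus \emph{equivalent} to the factorisation
\[
Z(\kappa,\psi)=Z(0,\psi)\,Z(\kappa,0),
\]
that is, to the assertion that in the stable two-point function the $\psi$-dependence (the $\psi$-class at the free point) and the $\kappa_j$-dependence separate, the pure-$\psi$ factor being $Z(0,\psi)=R(-\psi)^*$ and the pure-$\kappa$ factor $Z(\kappa,0)$.

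\emph{The factorisation and the main obstacle.} To prove it I would degenerate the two-pointed surface underlying $\widetilde{\mathcal M}^{\{2\}}_{g,2}$, through a sewing-smoothing map as in Proposition \ref{smoothing}, so that the free point $x_1$ is pushed onto a component joined to the rest by a single node: then all the $\psi_1$-dependence is localised at that node, whose normal direction carries the class $-\psi'-\psi''$, while the genus and the $\kappa_j$ survive on the complementary component and recombine, in the limit, into $Z(\kappa,0)$. The gluing then contributes an $\text{End}(A)[[\psi_1]]$-valued operator whose specialisation at $\kappa=0$ is, by the very construction of $Z(0,\psi)$ and hence of $R$, equal to $Z(0,\psi)=R(-\psi)^*$. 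The step I expect to be the genuine difficulty is exactly this identification: tracking the $\psi$-class at the free point correctly through the sewing-smoothing map and checking that its net effect is multiplication by $R(-\psi)^{-1}$ at the framed point, and not by some other power series in $\psi$. Everything else --- the circle-bundle computation, the Looijenga, Harer and Madsen--Weiss theorems, and Theorem \ref{classificationfixed} --- is routine bookkeeping with results already in place.
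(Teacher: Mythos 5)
Your first two steps are sound and consistent with the paper: identifying $\widetilde{\Omega}^+(v_1\cdot v_2)=\eta(Z(\kappa,0)v_1,v_2)$ via the circle bundle forgetting the framing at the free point (whose Euler class $\pm\psi_1$ kills exactly the ideal $(\psi_1)$ in the stable range), and observing that the lemma is then equivalent to the operator factorisation $Z(\kappa,\psi)=Z(0,\psi)\,Z(\kappa,0)$. But that factorisation \emph{is} the entire content of the lemma, and you have not proved it --- you flag it yourself as ``the genuine difficulty''. Worse, the degeneration you sketch to attack it is pointed in the wrong direction: if you push the free point $x_1$ onto a component joined to the rest by a node, the class $\psi_1$ restricted to that boundary stratum is simply the $\psi$-class of $x_1$ on its own component; it does not ``localise at the node'', whose normal class $-\psi'-\psi''$ involves the cotangent lines at the two branches of the node and has nothing a priori to do with $\psi_1$. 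So there is no mechanism in your sketch that produces the operator $Z(0,\psi)=R(-\psi)^*$.

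The paper's proof supplies exactly the missing mechanism: it makes the free point itself the sewn point. One sews $\mathcal{M}_{g_1,2}$ (carrying $v_1$) to $\mathcal{M}_{g_2,2}$ (carrying $v_2$) along the points marked $1$, applies the separating sewing axiom on the boundary circle bundle $\partial N_s$, and uses the relation $\nu^*\psi'=-\nu^*\psi''$ there (the sum $\psi'+\psi''$ is minus the Euler class of the normal bundle, hence vanishes on the sphere bundle). Passing to the limit $g_1,g_2\to\infty$ and using your own identity $\widetilde{\Omega}^+(v_1\cdot v_2)=\eta(Z(\kappa,0)v_1,v_2)$ on the left gives
\begin{equation*}
\eta(Z(\kappa'+\kappa'',0)v_1,v_2)=\sum_{\mu}\Omega^+(e_{\mu}\otimes v_1)(\kappa',-\psi)\,\Omega^+(e_{\mu}\otimes v_2)(\kappa'',\psi),
\end{equation*}
where $\kappa'$ and $\kappa''$, coming from the two factors, are algebraically independent. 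Specialising $\kappa'=\kappa''=0$ yields the symplectic condition $Z(0,\psi)Z(0,-\psi)^*=\mathrm{Id}$, and specialising $\kappa'=0$, $\kappa''=\kappa$ yields precisely $Z(\kappa,\psi)=Z(\kappa,0)Z(0,\psi)$, i.e.\ your factorisation. Without this sewing-at-the-free-point step and the exploitation of the two independent sets of $\kappa$-classes, your argument does not close.
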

 
 \begin{proof}
 Throughout this proof, we refer to the notation used in chapter 3, in particular
 to Notation \ref{tubeneighfree}. The sewing axiom for free boundary CohFTs says that
 $$\Omega_{g,2}(\alpha^{-g_1} \cdot v_1 \otimes \alpha^{-g_2} \cdot v_2)|_{\partial N_s} = 
 \nu_{g_1,g_2}^*\sum_{\mu}\Omega_{g_1,2}(e_{\mu} \otimes \alpha^{-g_1} \cdot v_1) 
 \times \Omega_{g_2, 2}(e_{\mu} \otimes \alpha^{-g_2} \cdot v_2)$$
 where 
 $$\nu_{g_1,g_2}: \partial N_s \simeq \widetilde{\mathcal{M}}_{g_1,2}^{\{ 2 \}} \times_{S^1}
 \widetilde{\mathcal{M}}_{g_2,2}^{\{ 2 \}} \rightarrow
 {\mathcal{M}}_{g_1,2} \times
 {\mathcal{M}}_{g_2,2}$$
 (just for this time, we have sewed the points marked $1$ on each curve, the sewing axiom is therefore
 slightly modified). If $\pi: \widetilde{\mathcal{M}}_{g,n} \rightarrow \mathcal{M}_{g,n}$ is the bundle map
 forgetting the tangent directions, we get
 $$\widetilde{\Omega}_{g,2}(\alpha^{-g_1}\cdot v_1 \otimes \alpha^{-g_2} \cdot v_2)|_{\pi^{-1}(\partial N_s)} =
 {\nu}_{g_1,g_2}^*\sum_{\mu} \Omega_{g_1,2}^{\{2\}}(e_{\mu} \otimes \alpha^{-g_1} \cdot v_1) \times
 \Omega_{g_2,2}^{\{2\}}(e_{\mu} \otimes \alpha^{-g_2} \cdot v_2)$$
 By passing to the limit in $g_1$ and $g_2$ we get
 $$\widetilde{\Omega}^+(v_1\cdot v_2)(\kappa) = \nu^*\sum_{\mu}\Omega^+(e_{\mu} \otimes v_1)(\kappa',\psi')
 \Omega^+(e_{\mu}\otimes v_2)(\kappa'',\psi'')$$
 where the left-hand side is computed with Theorem \ref{classificationfixed}.
 Here $\kappa'$ and $\psi'$ refer to the respective classes in $\widetilde{\mathcal{M}}_{g_1,2}^{\{2\}}$,
 $\kappa''$ and $\psi''$ in $\widetilde{\mathcal{M}}_{g_2,2}^{\{2\}}$. Then the $\kappa_i'$'s and the
 $\kappa_i''$'s are algebraically independent. We set
 $\kappa = \kappa' + \kappa''$
 and $\nu^*$ to be the limit of the maps ${\nu}_{g_1,g_2}^*$. Since for every $g_1$, $g_2$ we have
 $\nu_{g_1, g_2}^* \psi' = -\nu_{g_1,g_2}^*\psi''$, and $\nu_{g_1,g_2}^*$ sends the other classes
 to the respective classes on $\mathcal{M}_{g,2}$, we can write
 \begin{equation}
 \label{auxiliaryequation}
 \widetilde{\Omega}^+(v_1\cdot v_2)(\kappa) = \sum_{\mu}\Omega^+(e_{\mu} \otimes v_1)(\kappa',-\psi)
 \Omega^+(e_{\mu}\otimes v_2)(\kappa'',\psi)
 \end{equation}
 therefore, setting $\kappa'=0$, $\kappa'' = \kappa$ in (\ref{auxiliaryequation})
 and using Equation (\ref{Zmorphism}), we get
 \begin{equation}
 \label{almostlast}
 \widetilde{\Omega}^+(v_1\cdot v_2)(\kappa) = \sum_{\mu}\eta(R(\psi)v_1, e_{\mu})\eta(v_2, Z(\kappa,\psi)e_{\mu}) =
 \eta(R(\psi)v_1, Z(\kappa,\psi)^*v_2).
 \end{equation}
 Lemma \ref{symplecticcondition} below implies that $Z(\kappa, \psi)$
 commutes with its adjoint, therefore the last term
 in Equation (\ref{almostlast}) is $\eta( Z(\kappa,\psi)v_1,R(\psi)^*v_2)$.
 This is the first equality again by Lemma \ref{symplecticcondition} below,
 and the second one is simply Equation (\ref{Zmorphism}).
 \end{proof}

 \begin{lemma}
 \label{symplecticcondition}
 The endomorphism $R(\psi)$ satisfies the \emph{symplectic condition}, that is
 $$R(\psi)^* = R(-\psi)^{-1}$$
 Where $R(\psi)^*$ is the adjoint of $R(\psi)$ with respect to the bilinear form $\eta$.
 \end{lemma}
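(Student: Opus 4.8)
The plan is to extract the symplectic condition from Equation (\ref{almostlast}) in the proof of Lemma \ref{firstformula}, namely
$$\widetilde{\Omega}^+(v_1\cdot v_2)(\kappa) = \eta(R(\psi)v_1, Z(\kappa,\psi)^*v_2),$$
which was derived there purely from the free-boundary sewing axiom, Equation (\ref{Zmorphism}), Theorem \ref{classificationfixed}, and the geometric identity $\nu_{g_1,g_2}^*\psi' = -\nu_{g_1,g_2}^*\psi''$ on $\partial N_s$. Crucially, that derivation does not invoke the present lemma: the use of Lemma \ref{symplecticcondition} in the proof of Lemma \ref{firstformula} occurs only afterwards, when passing from (\ref{almostlast}) to its two concluding equalities, so there is no circularity in leaning on (\ref{almostlast}) here.

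First I would set $\kappa = 0$. By the definition $R(\psi) = Z(0,-\psi)^*$ of Remark \ref{tounderstand}, together with the fact that a double $\eta$-adjoint is the identity, one has $Z(0,\psi) = R(-\psi)^*$, hence $Z(0,\psi)^* = R(-\psi)$, and the equation becomes
$$\widetilde{\Omega}^+(v_1\cdot v_2)(0) = \eta(R(\psi)v_1, R(-\psi)v_2).$$
The left-hand side lies in $\mathbb{C}[\kappa_j]_{j\ge1}$ evaluated at $\kappa=0$, so it is a constant, carrying no $\psi$-dependence; therefore the right-hand side, a priori a power series in $\psi$, must be constant and equal to its value at $\psi=0$, which is $\eta(v_1,v_2)$ since $R(0)=\text{Id}$. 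This yields the power-series identity $\eta(v_1,v_2) = \eta(R(\psi)v_1, R(-\psi)v_2) = \eta(v_1, R(\psi)^*R(-\psi)v_2)$, valid for all $v_1,v_2\in A$.

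By nondegeneracy of $\eta$, applied coefficient by coefficient in $\psi$, this forces $R(\psi)^*R(-\psi) = \text{Id}$ in $\text{End}(A)[[\psi]]$. Since $R(0)=\text{Id}$, the series $R(-\psi)$ is invertible, and we conclude $R(\psi)^* = R(-\psi)^{-1}$, which is the symplectic condition.

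I do not anticipate a genuine obstacle. The single point demanding care is to resist over-specializing: one must set $\kappa = 0$ but keep $\psi$ generic, since setting $\psi = 0$ as well would collapse the identity to the vacuous $\eta(v_1,v_2)=\eta(v_1,v_2)$ (because $R(0)=\text{Id}$). The other point, already addressed above, is to confirm that the equation (\ref{almostlast}) one relies on is logically prior to the present lemma, which it is.
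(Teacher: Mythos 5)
Your proof is correct and follows essentially the same route as the paper: the paper likewise sets $\kappa=0$ in Equation (\ref{almostlast}), obtains $\eta(R(\psi)v_1,R(-\psi)v_2)=\eta(v_1,v_2)$, and concludes by invertibility of $R(-\psi)$. Your extra care in noting the non-circularity of citing (\ref{almostlast}) and in deducing the constant value of the right-hand side from its $\psi$-independence (rather than from the constant term $\theta$ of $\widetilde{\Omega}^+$) is a welcome tightening but not a different argument.
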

 
 \begin{proof}
  Setting $\kappa = 0$ in Equation (\ref{almostlast})
 we have 
 $$\eta(R(\psi)v_1, R(-\psi)v_2) = \eta(v_1, v_2)$$
 that is, $R(\psi)^* = R(-\psi)^{-1}$ since $R(-\psi)$ is invertible by Remark \ref{tounderstand}.
 \end{proof}

 \begin{corollary}
 \label{betterformula}
 We have
 $$\widetilde{\Omega}^+(v_1 \cdot v_2) = \Omega^+(R(\psi)v_1 \otimes v_2)$$
 \end{corollary}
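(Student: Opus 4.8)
The plan is to read the result off from the computations already carried out inside the proof of Lemma \ref{firstformula}. The crucial ingredient is Equation (\ref{almostlast}) obtained there, namely
$$\widetilde{\Omega}^+(v_1\cdot v_2)(\kappa) = \eta\bigl(R(\psi)v_1,\, Z(\kappa,\psi)^{*}v_2\bigr).$$
This is essentially the right-hand side we want, only disguised: the adjoint simply needs to be transported to the other argument.

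Concretely, I would first apply the adjointness relation $\eta(a,B^{*}b)=\eta(Ba,b)$ with $B=Z(\kappa,\psi)$, $a=R(\psi)v_1$, $b=v_2$ (read coefficient-wise in the formal variables $\kappa$ and $\psi$, as stipulated in Remark \ref{tounderstand}), to rewrite the displayed identity as
$$\widetilde{\Omega}^+(v_1\cdot v_2) = \eta\bigl(Z(\kappa,\psi)R(\psi)v_1,\, v_2\bigr).$$
Then I would invoke the defining relation (\ref{Zmorphism}), i.e. $\eta\bigl(Z(\kappa,\psi)w,\,v_2\bigr)=\Omega^+(w\otimes v_2)(\kappa,\psi)$, applied with $w=R(\psi)v_1$ and $\Omega^+$ extended $\mathbb{C}[[\psi]]$-linearly in its first entry; this gives exactly $\widetilde{\Omega}^+(v_1\cdot v_2)=\Omega^+(R(\psi)v_1\otimes v_2)$, which is the assertion. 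One could instead try to start from the statement of Lemma \ref{firstformula} and the symplectic condition $R(-\psi)^{-1}=R(\psi)^{*}$ of Lemma \ref{symplecticcondition}, but that route forces one to know that $Z(\kappa,\psi)$ and $R(\psi)$ commute, so passing through Equation (\ref{almostlast}) is cleaner.

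Since every ingredient is already in place, I do not expect a genuine obstacle; the only point demanding care is the bookkeeping of the formal variable $\psi$. Both sides of the claimed equality live in $\mathbb{C}[\kappa_j]_{j\ge1}$, with no $\psi$ occurring, so one should check that the apparent $\psi$-dependence of $\Omega^+(R(\psi)v_1\otimes v_2)$ cancels — which it must, since $\widetilde{\Omega}^+$ involves no $\psi$ — and that the adjoint manipulations are performed degree by degree in $\psi$ and $\kappa$, consistently with the conventions fixed in Remark \ref{tounderstand}.
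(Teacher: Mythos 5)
Your argument is correct and essentially the paper's own: the paper likewise deduces the corollary from Lemma \ref{firstformula} by moving the adjoint across $\eta$ and applying Equation (\ref{Zmorphism}). The only (harmless) difference is that by starting from the intermediate Equation (\ref{almostlast}) rather than from the final statement of Lemma \ref{firstformula}, you avoid re-invoking the commutativity of $Z(\kappa,\psi)$ and $R(\psi)$, which the paper's one-line proof does use.
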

 
 \begin{proof}
 By Lemma \ref{firstformula} we have
 $$\widetilde{\Omega}^+(v_1 \cdot v_2)(\kappa) = \eta(R(\psi)Z(\kappa, \psi)v_1, v_2) $$
 we now get the corollary by the fact that
 $Z(\kappa, \psi)$ and $R(\psi)$ commute, and by Equation \ref{Zmorphism}. 
 \end{proof}

 Similarly to the previous subsection, we want to recover $\Omega_{g,n}$ from the limit object $\Omega^+$.
 
 \begin{theorem}
 \label{classificationfree}
 Let $i_{g,n} : \mathbb{C}[\kappa_j]_{j\ge 1} \rightarrow H^{\bullet}(\mathcal{M}_{g,n})$ be the $\mathbb{C}$-linear
 map that sends each $\kappa$-class to the respective class in $\mathcal{M}_{g,n}$. Then we have
 $$\Omega_{g,n}(v_1 \otimes \ldots \otimes v_n) = 
 i_{g,n}\widetilde{\Omega}^+(\alpha^g \cdot  R(\psi_1)^{-1}v_1 \cdot \ldots \cdot R(\psi_n)^{-1}v_n)(\kappa)$$
 \end{theorem}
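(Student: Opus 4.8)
The plan is to recover each $\Omega_{g,n}$ from the fixed-boundary data $\widetilde\Omega^+$ produced by Theorem \ref{classificationfixed}, the endomorphism $R(\psi)$, and the two-point relation of Corollary \ref{betterformula}. The guiding remark is that the pullback $\widetilde\Omega_{g,n}=\pi^*\Omega_{g,n}$ to the fully framed space already determines $\Omega_{g,n}(v_1\otimes\dots\otimes v_n)$ modulo its dependence on the cotangent classes $\psi_1,\dots,\psi_n$ at the marked points, and this remaining dependence is exactly what the factors $R(\psi_i)^{-1}v_i$ are meant to encode; so the proof is a bookkeeping of $\psi$-classes built on top of Theorem \ref{classificationfixed}.

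I would first reduce to the stable range. If the degree of $\Omega_{g,n}(v_1\otimes\dots\otimes v_n)$ exceeds $g/3$, sew a fixed surface of large genus $g''$ to the last marked point and invoke Proposition \ref{sewfreetorus}: once $g+g''$ is large enough, $\Omega_{g+g'',n}$ is known in that degree as soon as it is known in the stable range, and, because $\alpha$ is invertible in the semisimple case (Proposition \ref{invertible}), one solves $\Omega_{g,n}$ back out of $\nu_\Sigma^*\,\Omega_{g,n}(v_1\otimes\dots\otimes(\alpha^{g''}v_n))=\Omega_{g+g'',n}(v_1\otimes\dots\otimes v_n)|_{\partial N_\Sigma}$, up to the Gysin kernel $\psi_n\cdot H^{\bullet-2}(\mathcal M_{g,n})$ of the restriction to the circular neighbourhood; letting the sewn point vary over all marked points (and using both the separating and the non-separating sewing loci) this leaves only the restriction of $\Omega_{g,n}$ to the stable range, where $H^\bullet(\mathcal M_{g,n})\cong\mathbb C[\kappa_j,\psi_1,\dots,\psi_n]$ by Looijenga's theorem together with the Madsen--Weiss theorem. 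There $\Omega_{g,n}(v_1\otimes\dots\otimes v_n)$ is an $A^*$-valued polynomial in the $\psi_i$ and $\kappa_j$, multilinear in the $v_i$, and it suffices to compute each of its coefficients.

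Next I would identify that polynomial. Repeated Gysin sequences show that in the stable range $\pi^*$ kills precisely the ideal $(\psi_1,\dots,\psi_n)$, so Theorem \ref{classificationfixed} pins $\Omega_{g,n}(v_1\otimes\dots\otimes v_n)$ down modulo $(\psi_1,\dots,\psi_n)$ and yields its constant term $\widetilde\Omega^+(\alpha^g v_1\cdots v_n)$. The $\psi_i$-dependence is then extracted one marked point at a time from the separating sewing axiom of Definition \ref{freeboundary} applied to the locus obtained by attaching a \emph{varying} two-pointed surface along the $i$-th point: in the projective limit over the genus of that surface the attached factor is governed by $\Omega^+$, hence (after transport through $i\colon A\xrightarrow{\sim}A^*$) by $R(\psi_i)$, and Corollary \ref{betterformula}, in the form $\widetilde\Omega^+(R(\psi)^{-1}v_1\cdot v_2)=\Omega^+(v_1\otimes v_2)$, rewrites the outcome as $\widetilde\Omega^+\bigl(\alpha^g\cdot R(\psi_1)^{-1}v_1\cdots R(\psi_n)^{-1}v_n\bigr)$; Harer's stability theorem is again what makes the limit meaningful and identifies the relevant inverse with a forgetful push-forward, exactly as in the proof of Theorem \ref{classificationfixed}. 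One then checks that this prescription is consistent — it respects the $S_n$-action and the forgetful-map axiom, and the various ways of reading off a given $\psi_i^k$-coefficient agree — and hence that $\Omega_{g,n}$ is completely determined.

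The main obstacle is this last identification of the $\psi_i$-coefficients: one has to show that the ambiguities left by the Gysin kernels and by the sewing axioms at each stage are filled in exactly by the correction $R(\psi_i)^{-1}$ and by nothing more, which forces one to use the stable-cohomology structure of $\mathcal M_{g,n}$ in tandem with the sewing axioms rather than purely formally.
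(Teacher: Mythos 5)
Your overall strategy is the right one and is essentially the paper's: attach two-pointed surfaces of large genus at the marked points, pass to the projective limit in the genera of the attached pieces, and identify the attached factors with $Z(\kappa,\psi)$ and hence $R(\psi)$ via Lemma \ref{firstformula} and Corollary \ref{betterformula}. The paper does this with a single multi-sewing map
$s\colon \mathcal{M}_{g,n}\times(\widetilde{\mathcal{M}}^{\{2\}}_{g_1,2}\times\cdots\times\widetilde{\mathcal{M}}^{\{2\}}_{g_n,2})\to\widetilde{\overline{\mathcal{M}}}_{g+G,n}$, applies the separating sewing axiom $n$ times, lets all $g_i\to\infty$, and then sets the auxiliary $\kappa^{(i)}$'s to zero; you propose to treat one marked point at a time, which is the same computation organized differently.

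However, your preliminary ``reduction to the stable range'' contains a genuine gap. After sewing a \emph{fixed} surface $\Sigma\in\mathcal{M}_{g'',2}$ at the $n$-th point, Proposition \ref{sewfreetorus} only determines $\nu_\Sigma^*\,\Omega_{g,n}(v_1\otimes\cdots\otimes(\alpha^{g''}v_n))$, i.e.\ $\Omega_{g,n}$ modulo $\ker\nu_\Sigma^*=\psi_n\cdot H^{\bullet-2}(\mathcal{M}_{g,n})$, as you note. But ``letting the sewn point vary over all marked points'' does not remove this ambiguity: the intersection $\bigcap_i\psi_i\,H^{\bullet-2}(\mathcal{M}_{g,n})$ contains the ideal generated by $\psi_1\cdots\psi_n$ and is nonzero as soon as $g\ge 1$, and in any case the residual ambiguity is an ideal, not ``the non-stable part of the cohomology,'' so the conclusion that only the stable range remains to be determined does not follow. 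The correct fix --- which is exactly what the paper's proof implements --- is \emph{not} to restrict to a fixed $\Sigma$: one keeps the entire factor $\widetilde{\mathcal{M}}^{\{2\}}_{g_i,2}$ (with the sewn point framed), so that the Euler class of the circle bundle $\partial N_s\to S$ is $-(\psi_i+\psi_i')$ where $\psi_i'$ is a \emph{free} polynomial generator of the stable cohomology of the attached factor by Looijenga's theorem; the Gysin sequence then shows the restriction to $\partial N_s$ is injective on $H^{\bullet}(\mathcal{M}_{g,n})\otimes(\cdots)$ in degrees below $G/3$, and choosing $G=g_1+\cdots+g_n$ large enough covers all of $H^{\bullet}(\mathcal{M}_{g,n})$ at once, with no case distinction between stable and unstable degrees. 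You flag this identification of the $\psi_i$-coefficients as ``the main obstacle,'' but the argument you give does not overcome it, and the step where the ambiguity is claimed to disappear is where your proof would fail as written.
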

 
 \begin{proof}
 Let us consider a multi-sewing map
 $$s: \mathcal{M}_{g,n} \times 
 (\widetilde{\mathcal{M}}^{\{2\}}_{g_1,2} \times \cdots \times \widetilde{\mathcal{M}}^{\{2\}}_{g_n,2}) \rightarrow
 \widetilde{\overline{\mathcal{M}}}_{g+G, n}$$
 that sews the $k$-th marked point of $\mathcal{M}_{g,n}$ to the first marked point of 
 $\mathcal{M}^{\{2\}}_{g_k,2}$ for 
 each $k$. Here $G = g_1 + \ldots +g_n$. This map is nothing but the composition, for $1 \le i \le k$, of the maps
 $s_i: \widetilde{\overline{\mathcal{M}}}^{\{1,\ldots, i-1\}}_{g+g_1 +\ldots + g_{i-1},n}\times \mathcal{M}_{g_i, 2}
 \rightarrow \widetilde{\overline{\mathcal{M}}}^{\{1,\ldots,i\}}_{g+g_1 + \ldots + g_i,2}$ 
 that sew the $i$-th marked point of the first curve to the first point of the second curve.
 Now let us apply the sewing axiom for free boundary CohFTs $n$ times, and
 multiply each $i$-th entry by $\alpha^{-g_i}$; we get
 $$\Omega_{g+G,n}((\alpha^{-g_1}\cdot v_1) \otimes \ldots \otimes (\alpha^{-g_n} \cdot v_n))|_{\partial N_s} = $$
 $$ =\nu^*\sum_{\mu_1, \ldots, \mu_n}\Omega_{g,n}(e_{\mu_1} \otimes  \ldots \otimes e_{\mu_n}) 
 \times \Omega_{g_1, 2}^{\{2\}}(e_{\mu_1} \otimes  (\alpha^{-g_1} \cdot v_1)) \times 
 \cdots \times \Omega_{g_n, 2}^{\{2\}}(e_{\mu_n} \otimes  (\alpha^{-g_n} \cdot v_n))$$
 where now $\partial N_s$ is the circular neighbourhood, given by the circular neighbourhood theorem,
 of the image $S$ of $s$, and $\nu: \partial N_s \rightarrow S$ is the respective circle bundle.
 
 Now let $g_1, \ldots, g_n$ go to infinity, and use Lemma \ref{firstformula}
 to compute the factors at the right-hand side of the equation. Then we get
 $$ \widetilde{\Omega}^+(\alpha^g \cdot v_1 \cdots v_n) =  
 \sum_{\mu_1, \ldots, \mu_n}\Omega_{g,n}(e_{\mu_1} \otimes  \ldots \otimes e_{\mu_n}) 
 \eta(Z(\kappa, -\psi_1)e_{\mu_1},v_1 ) \cdots
 \eta(Z(\kappa, -\psi_n)e_{\mu_n},v_n )$$
 where the $\psi_i$'s are the $\psi$-classes attached to the marked points of $\mathcal{M}_{g,n}$
 (therefore $\nu^*\psi'_i = -\psi_i$ if $\psi'_i$ is attached to the free point of
 $\widetilde{\mathcal{M}}_{g_1,2}^{\{2\}}$). Let  $\kappa^{(g,n)}$ denote the $\kappa$-classes on
 $\mathcal{M}_{g,n}$, then we have
 $$\widetilde{\Omega}^+(\alpha^g \cdot v_1 \cdots v_n)(\kappa^{(g,n)} + \kappa) = 
 \Omega_{g,n}(Z(\kappa^{(1)}, -\psi_1)^*v_1 \otimes
 \ldots \otimes Z(\kappa^{(n)}, -\psi_n)^*v_n)$$
 where $\kappa = \kappa^{(1)} + \ldots + \kappa^{(n)}$. Setting one by one each $\kappa^{(i)} = 0$  we get
  $$\Omega_{g,n}(v_1 \otimes \ldots \otimes v_n) = 
  \widetilde{\Omega}^+(\alpha^g \cdot R(\psi_1)^{-1}v_1 \cdot \ldots \cdot R(\psi_n)^{-1}v_n)(\kappa^{(g,n)}) = $$
  $$ =
  i_{g,n}\widetilde{\Omega}^+(\alpha^g \cdot R(\psi_1)^{-1}v_1 \cdot \ldots \cdot R(\psi_n)^{-1}v_n)(\kappa)$$
  which is the statement of the theorem.
  \end{proof} 
 
 This theorem, in turn, allows us to prove a relation that must exist between $\widetilde{\Omega}^+$ and
 $R(\psi)$.
 
 \begin{proposition}
 \label{hereisthelogarithm}
 Let $A$ be a symmetric and commutative Frobenius algebra.
  Let $\widetilde{\Omega}^+: A \rightarrow \mathbb{C}[\kappa_j]_{j \ge 1}$ be group-like, and let
  $R(\psi) \in \text{\emph{End}}(A)[[\psi]]$ be such that $R(0) = \text{\emph{Id}}$
  and $R(\psi)^* = R(-\psi)^{-1}$.
  Then the formulas of Theorem \ref{classificationfree} 
  define a free boundary CohFT if and only if for every $v \in A$, 
  $$ \text{\emph{log}}\widetilde{\Omega}^+(v) = - \eta(\beta\text{\emph{log}}(R(\psi)^{-1}\mathbf{1}),v) $$
  where the logarithm is defined as in Proposition \ref{grouplikeprimitive} and
  $\beta: A[[\psi]]\rightarrow A[[\kappa_j]]_{j \ge 1}$ is the $A$-linear map that sends each $\psi^j$ to $\kappa_j$. 
 \end{proposition}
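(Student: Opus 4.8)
The plan is to prove the two implications separately, leaning on the explicit form of $\widetilde{\Omega}^+$ from Proposition \ref{howisomegalike} and, for the harder ``if'' direction, on Givental's $R$-matrix action. Fix a semisimple basis $(e_\mu)_\mu$ with weights $\theta_\mu = \eta(e_\mu,\mathbf 1)$, so that $e_\mu e_\nu = \delta_{\mu\nu}\theta_\mu^{-1}e_\mu$, $\mathbf 1 = \sum_\mu\theta_\mu e_\mu$, and the idempotents are $P_\mu = \theta_\mu e_\mu$. Since $\widetilde{\Omega}^+$ is group-like it equals $\text{exp}(\sum_j\phi_j\kappa_j)$, and the identity $x^{\cdot n}(v^\mu e_\mu)=\theta_\mu^{-n+1}v^\mu (x(e_\mu))^n$ from the proof of Lemma \ref{exponential} gives $\widetilde{\Omega}^+(e_\mu) = \theta_\mu\,\text{exp}\big(\sum_j a_j^{(\mu)}\kappa_j\big)$ with $a_j^{(\mu)} := \theta_\mu^{-1}\phi_j(e_\mu)$. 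Write also $R(\psi)^{-1}\mathbf 1 = \sum_\mu c^\mu(\psi)e_\mu$, so $c^\mu(0) = \theta_\mu$. Since $\log(R(\psi)^{-1}\mathbf 1)=\sum_\mu\theta_\mu\log(\theta_\mu^{-1}c^\mu(\psi))e_\mu$, pairing with $e_\nu$ by $\eta$ and applying $\beta$ shows that the asserted identity $\text{log}\,\widetilde{\Omega}^+(v) = -\eta(\beta\,\text{log}(R(\psi)^{-1}\mathbf 1),v)$ is equivalent to the family of scalar identities
$$\theta_\mu^{-1}c^\mu(\psi) = \text{exp}\Big(-\sum_{j\ge 1} a_j^{(\mu)}\psi^j\Big),\qquad \mu = 1,\dots,\dim A .$$
Everything below is organized around this reformulation.

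For the ``only if'' direction, assume the classes $\Omega_{g,n}$ of Theorem \ref{classificationfree} form a free boundary CohFT and apply the forgetful axiom $p^*\Omega_{g,n} = \Omega_{g,n+1}(\,\cdot\,\otimes\mathbf 1)$. On the smooth part one has $p^*\psi_i = \psi_i$ for $i\le n$ and $p^*\kappa_j = \kappa_j - \psi_{n+1}^j$, so, taking the remaining $\psi_i$ to zero and absorbing $\alpha^g$, the axiom reads, for every $u\in A$,
$$\widetilde{\Omega}^+(u)\big(\kappa_j - \psi^j\big) = \widetilde{\Omega}^+\big(u\cdot R(\psi)^{-1}\mathbf 1\big)(\kappa_j)$$
inside $H^\bullet(\mathcal M_{g,n+1})$. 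Passing to the limit $g\to\infty$ — the formulas are compatible with the stabilization maps by Harer's theorem, exactly as in the proof of Theorem \ref{classificationfixed}, and the stable cohomology is a free polynomial ring in the $\kappa$- and $\psi$-classes by Looijenga's and Madsen--Weiss's theorems — this becomes a formal identity. Expanding both sides in the semisimple basis via $\widetilde{\Omega}^+(e_\mu) = \theta_\mu\,\text{exp}(\sum_j a_j^{(\mu)}\kappa_j)$ and $e_\nu\cdot R(\psi)^{-1}\mathbf 1=\theta_\nu^{-1}c^\nu(\psi)e_\nu$ turns it into $\theta_\mu\,\text{exp}(-\sum_j a_j^{(\mu)}\psi^j) = c^\mu(\psi)$, which is precisely the scalar identities above, hence the relation.

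For the ``if'' direction, assume the relation, equivalently the scalar identities. Consider the trivial nodal CohFT $\overline{\Omega}^0_{g,n}(v_1\otimes\cdots\otimes v_n) = \eta(\alpha^g\,v_1\cdots v_n,\mathbf 1)\in H^0(\overline{\mathcal M}_{g,n})$; its axioms in Definition \ref{nodaltheories} are immediate from $\alpha = \eta^{\mu\nu}e_\mu e_\nu$ and $g = g_1+g_2$, and it induces the given Frobenius structure. Apply the $R$-matrix action (which is defined here because $R(\psi)^*=R(-\psi)^{-1}$ is the symplectic condition): $R\overline{\Omega}^0$ is a nodal CohFT by Proposition \ref{staysfieldtheory}, and by Proposition \ref{cohernodal} its restriction to $\mathcal M_{g,n}$ is a free boundary CohFT. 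On $\mathcal M_{g,n}$ every contribution $\text{Cont}_{\Gamma_S}$ with $\Gamma_S$ having an edge vanishes, being pushed forward onto a closed boundary stratum $S\subsetneq\overline{\mathcal M}_{g,n}$; only the one-vertex, $n$-leg, edgeless graph survives (Remark \ref{connected}), so
$$\big(R\overline{\Omega}^0\big)_{g,n}\big|_{\mathcal M_{g,n}}(v_1\otimes\cdots\otimes v_n) = \sum_{m\ge 0}\frac{1}{m!}(p_m)_*\,\overline{\Omega}^0_{g,n+m}\big(R^{-1}(\psi_1)v_1\otimes\cdots\otimes R^{-1}(\psi_n)v_n\otimes T(\psi_{n+1})\otimes\cdots\otimes T(\psi_{n+m})\big)$$
with $T(z) = z(\mathbf 1 - R^{-1}(z)\mathbf 1)$ as in Definition \ref{contribution}. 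In the $\mu$-th idempotent line, $T(z)$ equals $z(1 - \theta_\mu^{-1}c^\mu(z))$, which by the scalar identities is exactly the series $M(z) = z(1 - \text{exp}(-\sum_j a_j^{(\mu)}z^j))$ of Lemma \ref{tremendousexponential}; since $\overline{\Omega}^0$ contributes only the scalar $\eta(\alpha^g\cdots,\mathbf 1)$ and the $\kappa$-class computation of Lemma \ref{tremendousexponential} is valid verbatim on $\overline{\mathcal M}_{g,n}$, the $m$-sum collapses line by line to $\text{exp}(\sum_j a_j^{(\mu)}\kappa_j)$, and reassembling gives $i_{g,n}\widetilde{\Omega}^+(\alpha^g\,R(\psi_1)^{-1}v_1\cdots R(\psi_n)^{-1}v_n)$ — precisely the $\Omega_{g,n}$ of Theorem \ref{classificationfree}. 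Hence those $\Omega_{g,n}$ form a free boundary CohFT.

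The main obstacle is the bookkeeping in the ``if'' step: isolating the surviving edgeless contribution, diagonalizing the $R$-matrix action in the semisimple basis, and recognizing that the relation is exactly what turns the translation series $T$ into the series $M$ of Lemma \ref{tremendousexponential}, so that the $m$-sum collapses to an exponential of $\kappa$-classes. A secondary point needing care is the passage, in the ``only if'' step, from the cohomological identity on $\mathcal M_{g,n+1}$ to a formal identity of power series, which is precisely where Harer, Looijenga and Madsen--Weiss enter; and, throughout, the compatibility of the various push-forwards on $\overline{\mathcal M}$ and on the smooth part that appear in Definition \ref{contribution} and Lemma \ref{tremendousexponential}.
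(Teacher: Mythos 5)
Your proof is correct and follows essentially the same route as the paper: the ``only if'' direction via the forgetful axiom and $p^*\kappa_j=\kappa_j-\psi^j$, and the ``if'' direction by applying the $R$-matrix action to the degree-zero theory $\omega_{g,n}=\theta(\alpha^g v_1\cdots v_n)$, restricting to the smooth part, and collapsing the $m$-sum with Lemma \ref{tremendousexponential}. The only real difference is that you diagonalize everything in the semisimple idempotent basis, whereas the paper keeps the ``if'' direction basis-free (with $T(z)=z(\mathbf 1-\exp(-a_1z-\cdots))$ in $A[[z]]$), which is what lets it observe afterwards that semisimplicity is not needed for that implication.
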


\begin{proof}

 Let us first suppose that $\Omega_{g,n}$ is a free boundary CohFT.
 Since $\Omega_{g,n}$ satisfies the axiom involving the forgetful map, we must have in particular, for every $g\ge 1$,
 $$p^*i_{g,1}\widetilde{\Omega}^+(v) = i_{g,2}\widetilde{\Omega}^+(v \cdot R(\psi_2)^{-1}\mathbf{1})$$
 for every $v \in A$. Using the fact that $p^*\kappa_j = \kappa_j - \psi_2^j$ for every $j$, we get 
 $$p^*\text{exp}(\sum_j\phi_j\kappa_j) = \text{exp}(\sum_j\phi_j\kappa_j)\cdot \text{exp}(-\sum_j\phi_j \psi_2^j)$$
 thus for every $v \in A$, we must have
 $$(\widetilde{\Omega}^+\cdot \text{exp}(-\sum_j\phi_j \psi_2^j))(v) =
 \widetilde{\Omega}^+(v \cdot R(\psi_2)^{-1}\mathbf{1})$$
 which immediately implies $R(\psi_2)^{-1}\mathbf{1} = (\text{exp}(-\sum_j\phi_j \psi_2^j))^*$ where the star
 indicates the dual with respect to $\eta$. This is precisely the stated formula.

 Let us define, for the rest of the proof,
 $$\omega_{g,n}(v_1 \otimes \ldots \otimes v_n) = \theta(\alpha^g \cdot v_1 \cdot \ldots \cdot v_n)$$
 for every $v_1, \ldots, v_n \in A$, where $\theta$ is the Frobenius trace of $A$.
 It is easily proved that $\omega_{g,n}$ is a \emph{nodal} CohFT. Notice
 that it is nothing but the degree-zero part of any cohomological field theory, of any type, with base $A$.
 
 Let us now suppose that the formula in the statement of the proposition is satisfied. This formula simply
 says the following: if $\widetilde{\Omega}^+ = \text{exp}(\phi_1 \kappa_1 + \phi_2 \kappa_2 + \ldots)$ 
 (compare with Proposition \ref{howisomegalike}), then 
 $R(\psi)^{-1}\mathbf{1} = \text{exp}(-a_1 \psi - a_2 \psi^2 - \ldots)$
 where the $a_i$ are defined in such a way that,
 for every $w \in A$ and every $i$, $\eta(a_i, w) = \theta(a_i \cdot w) = \phi_i(w)$. Then
 setting $T(z) = z(\mathbf{1} - R(\psi)^{-1}\mathbf{1}) \in z^2 A[[z]]$, Lemma \ref{tremendousexponential} says that
 for every $v_1, \ldots, v_n \in A$, we have
 $$ \theta( \alpha^g \cdot R(\psi_1)^{-1}v_1 \cdot \ldots \cdot R(\psi_n)^{-1}v_n
 \cdot \sum_{m \ge 0} \frac{1}{m!}(p_m)_*(T(\psi_{n+1})\cdots T(\psi_{n+m})))=$$
 $$=i_{g,n}\widetilde{\Omega}^+(\alpha^g \cdot R(\psi_1)^{-1}v_n\cdot \ldots \cdot R(\psi_n)^{-1}v_n).$$
 The left-hand side of the last equality is the restriction of $R\omega$ of Definition \ref{matrixaction} to 
 the smooth parts $\mathcal{M}_{g,n}$ of the moduli spaces. Now, $R\omega$ is a nodal CohFT by Proposition
 \ref{staysfieldtheory} and its restriction to $\mathcal{M}_{g,n}$ is a free boundary CohFT by Proposition 
 \ref{cohernodal}. Thus, the formulas of Theorem \ref{classificationfree} define a free boundary CohFT.
 \end{proof}

 Looking carefully at the last proof, we see that we do not need semisimplicity: the formulas of Theorem
 \ref{classificationfree} will give a free boundary CohFT even if $A$ is not semisimple. However, it is not true
 anymore that every free boundary CohFT is given by those formulas, since the very construction of
 $\widetilde{\Omega}^+$ requires the invertibility of $\alpha$, which is equivalent to semisimplicity by Proposition
 \ref{invertible}.
 
 \section{Classification of nodal theories}

\subsection{Existence of nodal theories}

Up to now, we have classified smooth theories. In particular, fixing a symmetric, commutative, semisimple
Frobenius algebra structure $(A, \cdot, \mathbf{1}, \eta)$, we have found what follows.

\begin{itemize}
\item A fixed boundaries theory $(\widetilde{\Omega}_{g,n})_{g,n}$ is uniquely determined by a sequence
$(\phi_j)_{j \ge 1}$ of elements of $A^*$. 
Setting $\widetilde{\Omega}^+ = \text{exp}(\sum_{j \ge 1}\phi_j\kappa_j):
A \rightarrow \mathbb{C}[\kappa_j]_{j\ge 1}$,
we have $$\widetilde{\Omega}_{g,n}(v_1 \otimes \ldots \otimes v_n) = 
p^*i_g^*\widetilde{\Omega}^+(\alpha^g \cdot v_1 \cdot \ldots \cdot v_n)$$
with notation as in Theorem \ref{classificationfixed}, for $g \ge 1$, while the part $g = 0$ can be obtained from
the part $g = 1$.
\item A free boundaries theory $(\Omega_{g,n})_{g,n}$ is uniquely determined by a fixed boundaries theory
$(\widetilde{\Omega}_{g,n})_{g,n}$, which is its pull-back under the bundle map 
$\pi: \widetilde{\mathcal{M}}_{g,n} \rightarrow \mathcal{M}_{g,n}$, and by and $\text{End}(A)$-valued power series
$R(z)$ that satisfies the symplectic condition and $R(0) = \text{Id}$. In this way we have
$$\Omega_{g,n}(v_1 \otimes \ldots \otimes v_n) =
\widetilde{\Omega}_{g,n}(R(\psi_1)^{-1}v_1 \otimes \ldots \otimes R(\psi_n)^{-1}v_n).$$

The endomorphism $R(z)$ cannot be freely chosen: it must satisfy the condition
$$ \text{log}\widetilde{\Omega}^+(v) = - \eta(\beta\text{log}(R(\psi)^{-1}\mathbf{1}),v) $$
for every $v \in A$, as stated in Proposition \ref{hereisthelogarithm}.
\end{itemize}

We will now deal with nodal theories. When talking about dual graphs, contributions and so on,
we refer to the notations of Section \ref{Givental's group action}.

\begin{proposition}
\label{existence}
Let $(\Omega_{g,n})_{g,n}$ be a free boundaries theory with base $(A,\cdot, \mathbf{1}, \eta)$.
Then there exists a nodal theory $(\overline{\Omega}_{g,n})_{g,n}$ whose restriction to the smooth part
$\mathcal{M}_{g,n}$ of the moduli space $\overline{\mathcal{M}}_{g,n}$ is $(\Omega_{g,n})_{g,n}$.
\end{proposition}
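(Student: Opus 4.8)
The plan is to build the nodal theory directly out of the data classifying the free boundary theory, using Givental's group action. By the classification of free boundary theories (and by Proposition \ref{hereisthelogarithm}), the theory $(\Omega_{g,n})_{g,n}$ is encoded by a semisimple Frobenius algebra $(A, \cdot, \mathbf{1}, \eta)$, a group-like homomorphism $\widetilde{\Omega}^+ = \text{exp}(\sum_{j \ge 1}\phi_j \kappa_j)$, and an $R$-matrix $R(z) \in \text{End}(A)[[z]]$ with $R(0) = \text{Id}$ satisfying the symplectic condition, subject to the compatibility relation $\text{log}\,\widetilde{\Omega}^+(v) = -\eta(\beta\,\text{log}(R(\psi)^{-1}\mathbf{1}), v)$. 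First I would introduce the ``trivial'' nodal CohFT $\omega_{g,n}(v_1 \otimes \ldots \otimes v_n) = \theta(\alpha^g \cdot v_1 \cdot \ldots \cdot v_n)$, which lands in $H^0(\overline{\mathcal{M}}_{g,n}) = \mathbb{C}$; that this is a genuine nodal CohFT is an immediate check of the axioms of Definition \ref{nodaltheories}, using only that $A$ is a symmetric commutative Frobenius algebra and the behaviour of $\alpha$ under the self-sewing and separating sewing lemmas (Lemma for $q^*\Omega_{1,1}$, Propositions \ref{sewnodaltorus}).

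Next I would set $\overline{\Omega}_{g,n} := (R\omega)_{g,n}$, the image of $\omega$ under the $R$-matrix action of Definition \ref{matrixaction}, with the very same $R(z)$ and the same translation $T(z) = z(\mathbf{1} - R(z)^{-1}\mathbf{1})$ as above. By Proposition \ref{staysfieldtheory}, $R\omega$ is again a nodal CohFT, so $(\overline{\Omega}_{g,n})_{g,n}$ satisfies all the nodal axioms. It remains to identify its restriction to the smooth part. The key point is that in the formula for $\text{Cont}_{\Gamma_S}$, every contribution coming from a stratum $S$ with at least one edge is supported on the corresponding boundary stratum $\overline{S} \subseteq \partial\overline{\mathcal{M}}_{g,n}$ (its image under the proper pushforward $\overline{\xi}_{\Gamma_S}$), hence restricts to zero on $\mathcal{M}_{g,n}$. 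Therefore $\iota^*\overline{\Omega}_{g,n}$ equals the contribution of the single edgeless graph, namely $\iota^* \text{Cont}_{\Gamma_{\mathcal{M}_{g,n}}}$, which by rule $1$ of Definition \ref{contribution} together with the leg decorations $R^{-1}(\psi_i)$ is exactly
\[
\sum_{m \ge 0}\frac{1}{m!}(p_m)_*\,\omega_{g,n+m}\big(R^{-1}(\psi_1)v_1 \otimes \ldots \otimes R^{-1}(\psi_n)v_n \otimes T(\psi_{n+1}) \otimes \ldots \otimes T(\psi_{n+m})\big)
\]
restricted to $\mathcal{M}_{g,n}$. Now I would invoke Lemma \ref{tremendousexponential} (with $a_j$ determined by $\eta(a_j, w) = \phi_j(w)$, so that $R(\psi)^{-1}\mathbf{1} = \text{exp}(-\sum a_j\psi^j)$ by the compatibility relation, hence $M(z) = T(z)$ after applying $\eta(\alpha^g\cdot \ldots \cdot -, \cdot)$): the pushforward sum collapses to $i_{g,n}\,\widetilde{\Omega}^+(\alpha^g \cdot R(\psi_1)^{-1}v_1 \cdot \ldots \cdot R(\psi_n)^{-1}v_n)$ on $\mathcal{M}_{g,n}$, which is precisely $\Omega_{g,n}(v_1 \otimes \ldots \otimes v_n)$ by Theorem \ref{classificationfree}.

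The main obstacle is the bookkeeping in the last step: one must verify carefully that applying $\theta(\alpha^g \cdot (\,\cdot\,))$ to the edgeless-graph contribution of $\omega$ genuinely reproduces the generating-function shape required by Lemma \ref{tremendousexponential}, i.e. that the translation insertions $T(\psi_{n+j})$ match the $M(\psi_{n+j})$ of that lemma once the Frobenius trace absorbs the $\alpha^g$ and the product structure. This is where the hypothesis that $\widetilde{\Omega}^+$ and $R$ satisfy the compatibility relation of Proposition \ref{hereisthelogarithm} is used in an essential way, and it is essentially the same computation as the ``if'' direction of that proposition's proof — so in fact the cleanest writeup is to note that Proposition \ref{hereisthelogarithm}'s proof already exhibits $\Omega_{g,n}$ as the smooth restriction of the nodal CohFT $R\omega$, and to take $\overline{\Omega} = R\omega$. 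The remaining routine points (properness of $p_m$ and $\overline{\xi}_{\Gamma_S}$, so that all pushforwards are defined; vanishing of boundary-supported classes on $\mathcal{M}_{g,n}$) require no new ideas.
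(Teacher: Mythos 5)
Your proposal is correct and follows essentially the same route as the paper: define the degree-zero nodal theory $\omega_{g,n}(v_1\otimes\ldots\otimes v_n)=\theta(\alpha^g\cdot v_1\cdots v_n)$, set $\overline{\Omega}=R\omega$ with the $R$-matrix coming from the classification of $\Omega$, invoke Proposition \ref{staysfieldtheory} for the nodal axioms, and identify the smooth restriction via the computation already carried out in the proof of Proposition \ref{hereisthelogarithm} (through Lemma \ref{tremendousexponential}). Your extra remarks on the vanishing of edge-contributions on $\mathcal{M}_{g,n}$ only make explicit what the paper delegates to that earlier proof.
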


\begin{proof}
Let $R(z)$ be the $\text{End}(A)$-valued power series
determined by the theory $\Omega$. By Corollary \ref{symplecticcondition} and Remark
\ref{tounderstand}, $R(z)$ satisfies the symplectic condition and $R(0) = \text{Id}$; it also satisfy the 
compatibility condition of Proposition \ref{hereisthelogarithm}. Let
$\omega_{g,n} : A^{\otimes n} \rightarrow H^{\bullet}(\overline{\mathcal{M}}_{g,n})$ be defined as
$$\omega_{g,n}(v_1 \otimes \ldots \otimes v_n) = \theta(\alpha^g \cdot v_1 \cdot \ldots \cdot v_n)$$
where $\theta$ is the Frobenius trace of $A$.
Then it is easy to see that $\omega$ is a CohFT. 
Notice that this is simply the degree zero part of $\Omega$. We define, as in the proof of Proposition
\ref{hereisthelogarithm},
$$ \overline{\Omega} = R\omega$$
using the $R$-matrix action of Definition \ref{matrixaction}. By Proposition \ref{staysfieldtheory}, 
$\overline{\Omega}$ is a nodal field theory, and the fact that its restriction to $\mathcal{M}_{g,n}$ is precisely
$\Omega$ is proven in Proposition \ref{hereisthelogarithm}.
\end{proof}

\subsection{Uniqueness for nodal theories}

Now that we have seen how to construct a nodal theory from a smooth one, we
proceed to show uniqueness. Notice that we did not mention
semisimplicity in Proposition \ref{existence}; however, in the following theorem, semisimplicity
is essential.

\begin{theorem}
Let $\Omega$ be a free boundaries CohFT with semisimple base $(A, \cdot, \mathbf{1}, \eta)$.
Then there exists a unique nodal CohFT $\overline{\Omega}$ that has $\Omega$ as its restriction to the smooth
stratum $\mathcal{M}_{g,n}$.
\end{theorem}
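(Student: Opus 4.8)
The plan is to show that a nodal theory $\overline{\Omega}$ extending $\Omega$ is determined, stratum by stratum, by its values on the open dense stratum $\mathcal{M}_{g,n}$, using downward induction on the number of nodes (equivalently, on $\dim S$ among the closed boundary strata). Existence being already handled by Proposition \ref{existence}, the content is uniqueness. Fix $g,n$ and suppose $\overline{\Omega}, \overline{\Omega}'$ are two nodal CohFTs with $\overline{\Omega}|_{\mathcal{M}_{g',n'}} = \overline{\Omega}'|_{\mathcal{M}_{g',n'}} = \Omega_{g',n'}$ for all $(g',n')$. I want to prove $\overline{\Omega}_{g,n} = \overline{\Omega}'_{g,n}$ by showing their restrictions to every closed boundary stratum $S \subseteq \overline{\mathcal{M}}_{g,n}$ agree; since $\overline{\mathcal{M}}_{g,n} = \bigcup_S S$ and cohomology classes are determined by their restrictions to the (finitely many) closed strata together with the open dense stratum — more precisely, one argues by induction on codimension, the top case being $S = \overline{\mathcal{M}}_{g,n}$ with the difference class supported on the boundary — this suffices.

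The inductive step is where semisimplicity enters, via the "sewing off a smooth surface" trick of Propositions \ref{sewnodaltorus} and \ref{sewfreetorus} together with invertibility of the Euler class $\alpha$ (Proposition \ref{invertible}). Concretely: let $S$ be a proper closed boundary stratum with dual graph $\Gamma_S$, so $S$ is the image of $\overline{\xi}_{\Gamma_S}:\prod_{v\in V}\overline{\mathcal{M}}_{\mathrm{g}(v),\mathrm{n}(v)} \to \overline{\mathcal{M}}_{g,n}$. Restricting $\overline{\Omega}_{g,n}$ to $S$ and pulling back along $\overline{\xi}_{\Gamma_S}$, the sewing axioms (nodal axioms 3 and 4) express this pullback in terms of the $\overline{\Omega}_{\mathrm{g}(v),\mathrm{n}(v)+(\text{valence}\ \text{extra})}$ evaluated with $\eta^{-1}$ inserted at the half-edges. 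For each vertex $v$, the genus $\mathrm{g}(v)$ is strictly smaller than $g$ whenever $\Gamma_S$ has more than one vertex or a loop — but a vertex could still carry the full genus $g$ if $\Gamma_S$ is a single vertex with legs, which is the open stratum, excluded. So for every proper stratum each vertex factor involves either strictly smaller genus or strictly more nodes after we open it up; by the inductive hypothesis (applied in lower genus, or to strata of larger codimension) the vertex factors coincide for $\overline{\Omega}$ and $\overline{\Omega}'$, hence $\overline{\xi}_{\Gamma_S}^*(\overline{\Omega}_{g,n} - \overline{\Omega}'_{g,n}) = 0$ on all of $\prod_v \overline{\mathcal{M}}_{\mathrm{g}(v),\mathrm{n}(v)}$.

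The genuinely delicate point — and the one I expect to be the main obstacle — is to pass from "agrees on $\mathcal{M}_{g,n}$ and on every boundary stratum after pullback by $\overline{\xi}_{\Gamma_S}$" to "agrees as a class on $\overline{\mathcal{M}}_{g,n}$." Pullback by $\overline{\xi}_{\Gamma_S}$ need not be injective on $H^\bullet$, and the restriction maps $H^\bullet(\overline{\mathcal{M}}_{g,n}) \to H^\bullet(S)$ do not jointly separate classes in general — this is exactly why the paper announces that "a new stratification of $\overline{\mathcal{M}}_{g,n}$ is introduced." So the honest plan is: (i) set $\Delta_{g,n} = \overline{\Omega}_{g,n} - \overline{\Omega}'_{g,n}$, show $\Delta_{g,n}|_{\mathcal{M}_{g,n}} = 0$ by hypothesis, hence $\Delta_{g,n}$ is the image of a class supported on $\partial\overline{\mathcal{M}}_{g,n}$; (ii) use the genus-reduction identity $\overline{\Omega}_{g',n}(v_1\otimes\cdots\otimes v_{n-1}\otimes v_n) = s_\Sigma^*\overline{\Omega}_{g'+g'',n}(v_1\otimes\cdots\otimes(\alpha^{-g''}\cdot v_n))$ from Proposition \ref{sewnodaltorus} — legitimate precisely because $\alpha$ is invertible — to bootstrap from high genus, where Harer stability makes the cohomology of $\overline{\mathcal{M}}_{g,n}$ "essentially polynomial in $\psi$'s and $\kappa$'s" and hence controlled by the smooth part, down to arbitrary genus; (iii) handle the residual boundary-supported discrepancy by the refined stratification argument, expressing any boundary class through iterated Gysin pushforwards from products of lower-dimensional moduli spaces and invoking the inductive hypothesis there. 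The combination of semisimplicity (to divide by $\alpha^{g''}$) and Harer–Madsen–Weiss stability (to reduce high-genus nodal cohomology to its smooth, polynomial shadow) is what ultimately forces uniqueness; making step (iii) precise — i.e. checking that the new stratification actually detects the vanishing of a boundary-supported class built out of CohFT data — is the crux and the part I would expect to occupy the bulk of the remaining write-up.
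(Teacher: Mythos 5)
Your proposal assembles the right ingredients (sewing off a high-genus surface, invertibility of $\alpha$, Harer--Looijenga--Madsen-Weiss stability, a refined stratification) and correctly diagnoses the central obstruction: knowing $\overline{\Omega}_{g,n}$ on $\mathcal{M}_{g,n}$ and on every boundary stratum does not determine it as a class on $\overline{\mathcal{M}}_{g,n}$, because the difference of two candidates lies in $j_*\mathrm{Ann}(\mathrm{c}(\nu_S))$ and the Euler classes of the normal bundles of boundary strata are zero divisors in the relevant degrees. But you stop exactly there: your step (iii), which you yourself flag as the crux, is not carried out, and your step (ii) misstates what stability gives you --- Harer/Madsen-Weiss control $H^\bullet(\mathcal{M}_{g,n})$, not $H^\bullet(\overline{\mathcal{M}}_{g+G,n})$, whose cohomology is not ``essentially polynomial in $\psi$'s and $\kappa$'s'' in any range. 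So as written the argument has a genuine gap.

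The missing idea is the precise mechanism by which the stratification kills the patching ambiguity. The paper inducts on $d = 3g-3+n$ and, to determine $\overline{\Omega}_{g,n}$, works inside $\overline{\mathcal{M}}_{g+G,n}$ with $G > 3\dim\overline{\mathcal{M}}_{g,n}$: it restricts attention to the open set $\mathcal{U}$ of curves whose \emph{special component} (the one carrying the $n$-th marked point) has genus at least $G$. On each special stratum $\mathcal{M}_\tau^{g+G,n}$ the theory is determined by the sewing axioms and the inductive hypothesis (the non-special factors have $3g'-3+n' < d$), and --- this is the point you are missing --- the normal bundle of $\mathcal{M}_\tau$ in $\mathcal{U}_\tau$ has Euler class $-\sum_i(\psi_i' + \psi_i'')$ involving $\psi$-classes on the genus-$\gamma\geq G$ special component, which by Looijenga's theorem are \emph{free} generators below degree $\gamma/3$; hence the Euler class is a non-zero-divisor there, the annihilator in Lemma \ref{patchingambiguity} vanishes, and the successive patchings along the partial order of special types are unique below degree $G/3$. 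Since $G/3 > 3g-3+n$, the Gysin sequence for $\partial N \to S$ then recovers $\overline{\Omega}_{g,n}(v_1\otimes\cdots\otimes(\alpha^G\cdot v_n))$ in all degrees, and invertibility of $\alpha$ finishes. Without an argument that the relevant Euler classes become non-zero-divisors in a range exceeding $\dim\overline{\mathcal{M}}_{g,n}$ --- which is exactly what passing to the special component of genus $\geq G$ buys --- your plan cannot close.
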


We first prove a general lemma which explains how an ambiguity can arise when patching two cohomology classes.

\begin{lemma}
\label{patchingambiguity}
Let $M$ be a smooth complex manifold and
let $j: S \rightarrow M$ 
be the immersion of a closed smooth submanifold of complex codimension $c$. Let
$\nu_S: N \rightarrow S$ be its normal bundle with $N$ the tubular neighbourhood of $S$ given by
the tubular neighbourhood theorem. Let $[\alpha_1], [\alpha_2] \in H^k(M)$
such that $[\alpha_1]|_S = [\alpha_2]|_S$ and $[\alpha_1]|_{S^c} = [\alpha_2]|_{S^c}$. Then
$$ [\alpha_1] - [\alpha_2] \in j_*\text{\emph{Ann}}_{H^{k -2c}(S)}(\text{\emph{c}}(\nu_S)) $$
where $\text{\emph{c}}(\nu_S)$ is the Chern class of the vector bundle $\nu_S$.
\end{lemma}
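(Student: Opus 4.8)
The plan is to use the Mayer--Vietoris sequence for the open cover of $M$ by the punctured tubular neighbourhood $N\setminus S$ and the complement $M\setminus S$, together with the Thom isomorphism. More precisely, set $U = N$ (the tubular neighbourhood, which deformation retracts onto $S$) and $V = M\setminus S$; then $U\cap V = N\setminus S$, which is homotopy equivalent to the sphere bundle $\partial N$ of $\nu_S$. The hypothesis says that $[\alpha_1]-[\alpha_2]$ restricts to zero on $V$ and on $S\simeq U$; the first fact tells us that $[\alpha_1]-[\alpha_2]$ lies in the image of the map $H^k(M) \to H^k(M)$ coming from ``extension by zero'' off $V$, i.e.\ in the image of $H^k(M,V) \to H^k(M)$. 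By excision, $H^k(M,V) = H^k(M, M\setminus S) \cong H^k(N, N\setminus S)$, and the Thom isomorphism identifies this with $H^{k-2c}(S)$ (complex codimension $c$ means real codimension $2c$). Composing, we get that $[\alpha_1]-[\alpha_2] = j_*(\beta)$ for some $\beta \in H^{k-2c}(S)$, where $j_*$ is precisely the Gysin push-forward of the closed embedding $j\colon S\hookrightarrow M$.

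The second step is to pin down which classes $\beta$ can occur, using the remaining hypothesis that $[\alpha_1]|_S = [\alpha_2]|_S$. Here I would invoke the standard self-intersection formula: for a closed embedding $j\colon S\hookrightarrow M$ with normal bundle $\nu_S$, one has $j^* j_*(\beta) = \beta \cup \mathrm{c}(\nu_S)$ (the top Chern class, i.e.\ the Euler class of the normal bundle). Therefore the condition $j^*([\alpha_1]-[\alpha_2]) = 0$ translates into $\beta \cup \mathrm{c}(\nu_S) = 0$, that is, $\beta \in \mathrm{Ann}_{H^{k-2c}(S)}(\mathrm{c}(\nu_S))$. Combining the two steps gives exactly
$$[\alpha_1]-[\alpha_2] \in j_*\,\mathrm{Ann}_{H^{k-2c}(S)}(\mathrm{c}(\nu_S)),$$
which is the claim. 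One should be slightly careful about the orbifold setting in the intended application, but with $\mathbb{C}$-coefficients the Thom isomorphism and self-intersection formula hold for orbifolds as well, so the argument carries over.

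The main obstacle, or rather the point requiring the most care, is the identification of ``$[\alpha_1]-[\alpha_2]$ restricts to zero on $M\setminus S$'' with ``$[\alpha_1]-[\alpha_2]$ comes from $H^k(M,M\setminus S)$''. The long exact sequence of the pair $(M, M\setminus S)$ gives $H^k(M,M\setminus S)\to H^k(M)\to H^k(M\setminus S)$, and exactness is exactly what we need; the subtlety is only that $[\alpha_1]-[\alpha_2]$ being in the kernel of restriction to $M\setminus S$ uses that the restriction map $H^k(M)\to H^k(M\setminus S)$ agrees with $H^k(M)\to H^k(V)$, which is immediate since $V = M\setminus S$. Everything else -- excision, the Thom isomorphism, and the self-intersection formula -- is standard algebraic topology once the tubular neighbourhood theorem (already quoted in the excerpt) is in hand. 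I would therefore expect the write-up to be short, essentially a diagram chase glued to two named theorems.
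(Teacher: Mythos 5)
Your argument is correct and is essentially identical to the paper's proof: both use the long exact sequence of the pair $(M, M\setminus S)$, excision to replace it by $(N, N\setminus S)$, the Thom isomorphism to identify $H^k(M,M\setminus S)$ with $H^{k-2c}(S)$, and the self-intersection formula $j^*j_*(\beta)=\beta\cup\mathrm{c}(\nu_S)$ to conclude that $\beta$ annihilates the Euler class. The opening mention of Mayer--Vietoris is a red herring since you immediately switch to the pair sequence, but that is exactly the route the paper takes.
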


\begin{proof} The Thom isomorphism theorem says that, for every integer $k$,
$j_*: H^{k-2c}(S) \xrightarrow{\sim} H^k(N, N \setminus S)$,
and the excision property of cohomology implies that $H^k(N,N\setminus S) \simeq 
H^k(M, M \setminus S)$. Notice that excision can be
applied as long as $S$ is a \emph{closed} submanifold.
This allows us to write the cohomology long exact seqence of the immersion 
$M \setminus S \subseteq M$ as
$$ \cdots \rightarrow H^{k-2c}(S) \rightarrow H^k(M) \rightarrow
H^k(M \setminus S) \rightarrow H^{k-2c+1}(S) \rightarrow \cdots $$
 Now, $[\alpha_1] - [\alpha_2] = j_*(b)$ for some 
$b \in H^{k-2c}(S)$ since $[\alpha_1]$ and $[\alpha_2]$ go to same class when pushed to 
$H^k(M \setminus S)$. Similarly, $([\alpha_1] - [\alpha_2])|_S = 0$.
But $j_*(b)|_S = \text{eul}(\nu_S)\cdot b$, therefore we get our result.
\end{proof}

Thus to prove the uniqueness of patchings we have to show that we can always reduce ourselves to the
case in which $\text{c}(\nu_S)$ is not a zero divisor. To do so, we will construct a special stratification
of the moduli space $\overline{\mathcal{M}}_{g,n}$.

\subsubsection{Stratification of $\overline{\mathcal{M}}_{g,n}$}

\begin{definition}
\label{specialcomponents}
Suppose $n > 0$ and let $C$ be a curve in $\overline{\mathcal{M}}_{g,n}$. The \emph{special component}
of $C$ is its irreducible component containing the $n$-th marked point. The connected components
of the curve obtained from $C$ by cancelling out the special component are called \emph{non-special}.
Fixing a curve $C$, the datum of
\begin{itemize}
\item the topological type of the special component of $C$,
\item the number of marked points of the special component $C$,
\item the number of nodes that link the special component of $C$ with the non-special ones,
\end{itemize}
is called the \emph{special type} of $C$.
In other words, two curves in $\overline{\mathcal{M}}_{g,n}$
are said to be of the same special type if their special components
are homeomorphic, have the same number of marked points, and the same number of nodes that link them
to the non-special components.

We denote by $\mathcal{M}_{\tau}^{g,n} \subseteq \overline{\mathcal{M}}_{g,n}$ 
the set of curves with special type $\tau$.
\end{definition}

\begin{remark}
\label{unionofstrata}
Let $C \in \overline{\mathcal{M}}_{g,n}$ be a curve and let $\Gamma_C$ be its dual graph.
Let $v_{sp} \in \Gamma_C$ be the vertex to which the leg labeled $n$ is attached.
Then the special type of $C$ is determined by
the subgraph $\Gamma_C'$ obtained from $C$ by cancelling out all the vertices except $v_{sp}$, and all
the edges except those that have $v_{sp}$ at one end.

Thus, if $C \in S$ where $S$ is an \emph{open} boundary stratum of $\overline{\mathcal{M}}_{g,n}$,
and if $\tau$ is the special type
determined by $\Gamma_C'$, then $S \subseteq \mathcal{M}_{\tau}^{g,n}$.
Indeed, all the curves of an open boundary stratum have the same dual graph.

This shows that each $\mathcal{M}_{\tau}^{g,n}$ is a union of open boundary strata. Since two different open
boundary strata are disjoint, we see at once that if $\tau \neq \sigma$, then 
$\mathcal{M}_{\tau}^{g,n} \cap \mathcal{M}_{\sigma}^{g,n} = \emptyset$. This is just another way to say that
the special type of a curve is well defined.
\end{remark}

\begin{remark}
\label{remarkutile}
Let $\mathcal{M}_{\tau}^{g,n} \subseteq \overline{\mathcal{M}}_{g,n}$ with
$\tau$ a special type represented by a smooth
special component $C_{sp}$ of genus $\gamma$ with $\nu+k$ marked points (the nodes linking $C_{sp}$ to
the non-special components are counted as marked points as well, and they account for the $k$ term
in the previous expression). Let $C \in \mathcal{M}_{\tau}^{g,n}$ and let
$C'$ be the curve obtained from $C$ by eliminating the special component.
Let $C' = C'_1 \sqcup \ldots \sqcup C'_l$ be its decomposition in connected components, with
$C'_i \in \overline{\mathcal{M}}_{g_i, n_i+\mu_i}$ for each $i$, $\mu_i$ being the number of nodes that
$C'_i$ shares with $C_{sp}$. Then we have the relations
\begin{equation}
\label{relations}
\gamma + \sum_{i=1}^l (g_i + \mu_i - 1) =  g \ \ \ , \ \ \ \nu + \sum_{i=1}^ln_i = n \ \ \ \text{and}
\ \ \ \sum_{i=1}^l \mu_i = k
\end{equation}
For $A$ a finite set made of triples $(g',n',\mu')$ of non-negative
integers such that $2g'-2+n'+\mu' > 0$, $g' \le g$, $n' \le n$ and $\mu' \le g + 1$, let us write 
$\mathcal{M}_A = \prod_{(g_i, n_i, \mu_i) \in A} \mathcal{M}_{g_i,n_i+\mu_i}$. Let $X$ be
the family of the $A$'s whose elements satisfy Relations (\ref{relations}). Then we have
$$\mathcal{M}_{\tau}^{g,n} = \bigsqcup_{A \in X} (\mathcal{M}_A \times \mathcal{M}_{\gamma, \nu+k})/F_A$$
where $F_A$ is the (finite) group of automorphisms of some dual graph that depends on $A$ and that
is not of great interest here. Notice that
the sets $\mathcal{M}_A$ are open strata which form, taking the union, a bunch of spaces $\overline{M}_i$,
$i\in I$, that are products of closed moduli spaces.
The $A$'s that contribute to form a certain $\overline{M}_i$ must give the same automorphism group $F_A=F_i$, since
this group is determined by the nodes that the special component shares with the non-special ones, and it does not
depend on the specific topological type of the non-special components.
Thus we can rewrite the previous decomposition as
\begin{equation}
\label{decomposition}
\mathcal{M}_{\tau}^{g,n} = \bigsqcup_{i \in I} (\overline{M}_i \times \mathcal{M}_{\gamma, \nu+k})/F_i 
\end{equation}
Using Relations (\ref{relations}) we see that 
the codimension in $\overline{\mathcal{M}}_{g,n}$ of each component
$(\overline{M}_i \times \mathcal{M}_{\gamma, \nu+k})/F_A$ is $k$, that depends only on
$\tau$. Therefore, in (\ref{decomposition}) we have decomposed $\mathcal{M}_{\tau}^{g,n}$ in connected
components, all with the same dimension.

Let $\tau$ be a special type represented by a non-smooth special component 
$C_{sp} \in \overline{\mathcal{M}}_{\gamma', \nu'+k}$, which must therefore have only non-separating nodes.
If $\mu$ is the number of non-separating nodes of
$C_{sp}$, if $\gamma = \gamma' - \mu$ and $\nu = \nu' + 2\mu$, then the normalization
of the special component lies in $\mathcal{M}_{\gamma,\nu+k}$. The decomposition
(\ref{decomposition}) is still valid, with $\gamma'$, $\nu'$ instead of $\gamma$, $\nu$ in Relations
(\ref{relations}) (but \emph{not} in decomposition (\ref{decomposition})!).
The decomposition is still equidimensional, and the codimension of $\mathcal{M}_{\tau}^{g,n}$ in 
$\overline{\mathcal{M}}_{g,n}$ is now $\mu + k$.
\end{remark}

\subsubsection{Order of the strata}

In all this section we will fix, once and for all, an ambient space $\overline{\mathcal{M}}_{g,n}$ where all the
strata lie. We will omit the indices $g$ and $n$ when it causes no confusion.

\begin{definition}
\label{specialordering}
We say that two special types are such that $\tau > \tau'$ 
(the special type $\tau$ is {\em greater} then $\tau'$) if $\tau \not= \tau'$ 
and at least one point of $\mathcal{M}_{\tau'}$ lies in the closure of~$\mathcal{M}_\tau$,
that is,
$$
\overline{\mathcal{M}}_\tau \cap \mathcal{M}_{\tau'} \not= \emptyset.
$$
\end{definition}
This relation is transitive and therefore determines a partial order on the special types. 

\begin{notation}
Denote by $\mathcal{U}_\tau$ the union $\bigcup_{\tau' \geq \tau} \mathcal{M}_{\tau'}$. 
\end{notation}

\begin{proposition}
The set $\mathcal{U}_\tau$ is open in $\overline{\mathcal{M}}_{g,n}$.
\end{proposition}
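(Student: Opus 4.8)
I want to show $\mathcal{U}_\tau = \bigcup_{\tau'\ge\tau}\mathcal{M}_{\tau'}$ is open, equivalently that its complement $\bigcup_{\tau'\not\ge\tau}\mathcal{M}_{\tau'}$ is closed in $\overline{\mathcal{M}}_{g,n}$. Since $\overline{\mathcal{M}}_{g,n}$ is the disjoint union of all the strata $\mathcal{M}_{\tau'}$, the complement of $\mathcal{U}_\tau$ is exactly $\bigcup_{\tau'\colon\tau'\not\ge\tau,\ \tau'\ne\tau}\mathcal{M}_{\tau'}$ together with possibly nothing from $\tau$ itself (note $\tau\ge\tau$ in the weak sense, so $\mathcal{M}_\tau\subseteq\mathcal{U}_\tau$). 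So I must check that this union is closed, i.e. that its topological closure contains no point of any $\mathcal{M}_{\tau''}$ with $\tau''\ge\tau$.

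First I would reduce to a statement about single strata: a set is closed iff it contains the closure of each of its pieces, so it suffices to show that for every special type $\tau'$ with $\tau'\not\ge\tau$, the closure $\overline{\mathcal{M}}_{\tau'}$ is contained in $\bigcup_{\sigma\not\ge\tau}\mathcal{M}_\sigma$. Equivalently: if $\sigma\ge\tau$ (so some point of $\mathcal{M}_\sigma$, or rather of the target, is a limit of $\mathcal{M}_\tau$) — more precisely, if a stratum $\mathcal{M}_\sigma$ meets $\overline{\mathcal{M}}_{\tau'}$, then $\sigma > \tau'$ or $\sigma=\tau'$, and then transitivity of the order (already noted in the paper after Definition \ref{specialordering}) forces $\sigma\ge\tau$ whenever $\tau'\ge\tau$ — contrapositive gives what I want. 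So the real content is: \emph{if $\mathcal{M}_\sigma\cap\overline{\mathcal{M}}_{\tau'}\ne\emptyset$ then $\sigma\ge\tau'$}, which is essentially the assertion that the relation "$\ge$" captures the full specialization/closure order, i.e. $\overline{\mathcal{M}}_{\tau'}=\bigcup_{\sigma\ge\tau'}\mathcal{M}_\sigma$.

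To prove that, I would argue at the level of curves and their special types. Take $C_0\in\mathcal{M}_\sigma$ lying in the closure of $\mathcal{M}_{\tau'}$, so there is a sequence (or analytic family) $C_t\to C_0$ with $C_t\in\mathcal{M}_{\tau'}$ for $t\ne 0$. A degeneration of stable curves can only make the special component more singular or sprout further nodes: under specialization the special component of $C_t$ degenerates to a (possibly reducible, possibly more nodal) curve inside $C_0$, and the $n$-th marked point stays on the piece that becomes the special component of $C_0$; nodes linking the special component to the rest can only be created, never destroyed, in a limit. Concretely this is visible from the dual-graph picture of Remark \ref{unionofstrata}: passing to a limit corresponds to contracting edges of the dual graph (equivalently, to a morphism of dual graphs in the standard sense for $\overline{\mathcal{M}}_{g,n}$), and contracting edges of $\Gamma_{C_t}$ produces $\Gamma_{C_0}$; restricting to the subgraph $\Gamma'$ around the special vertex $v_{sp}$, the subgraph for $C_t$ is obtained from that of $C_0$ by an allowed edge-contraction, which is exactly the statement that the special type of $C_0$ is $\ge$ that of $C_t$, i.e. $\sigma\ge\tau'$. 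I would make this precise by invoking the fact, recalled before Definition \ref{dualgrafcurve} and in Remark \ref{assocclostratum}, that $\overline{\mathcal{M}}_{\tau'}=\operatorname{Im}(\overline{\xi}_{\Gamma_{\tau'}})$ is the image of a product of compactified moduli spaces, together with the description of how the boundary strata of those factors sit inside, which tracks exactly which special types occur in $\overline{\mathcal{M}}_{\tau'}$.

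\textbf{Main obstacle.} The subtle point is honestly justifying that "taking a limit of curves" corresponds to "contracting edges of the dual graph" in a way that is compatible with the decomposition into special components — i.e. that the special vertex of the limit is the contraction-image of the special vertex of the nearby curve, and that no node touching the special component ever disappears in the limit. This is standard for $\overline{\mathcal{M}}_{g,n}$ (it is the content of the stratification by dual graphs and its closure order) but needs to be stated cleanly; once it is in place, the rest is the purely combinatorial bookkeeping of Remark \ref{remarkutile} plus transitivity of the order. I would therefore structure the write-up as: (1) reduce openness of $\mathcal{U}_\tau$ to closedness of each $\overline{\mathcal{M}}_{\tau'}$ inside the union of strata $\not\ge\tau$; (2) reduce that to $\overline{\mathcal{M}}_{\tau'}\subseteq\bigcup_{\sigma\ge\tau'}\mathcal{M}_\sigma$; (3) prove the latter via the dual-graph/edge-contraction description of closures of boundary strata, reading off the special types; (4) conclude by transitivity.
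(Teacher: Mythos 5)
There is a genuine gap: you have inverted the order of Definition \ref{specialordering}. The paper declares $\tau > \tau'$ precisely when $\overline{\mathcal{M}}_\tau \cap \mathcal{M}_{\tau'} \neq \emptyset$, so ``larger'' means ``more generic'': the closure of a stratum meets only \emph{smaller} strata. Hence the fact you isolate as the real content --- ``if $\mathcal{M}_\sigma \cap \overline{\mathcal{M}}_{\tau'} \neq \emptyset$ then $\sigma \ge \tau'$'' --- is backwards; by the very definition of the order, together with the fact (Remark \ref{unionofstrata}) that the special strata partition $\overline{\mathcal{M}}_{g,n}$, the correct conclusion is $\sigma \le \tau'$. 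This is not a harmless relabelling, because with your direction the reduction does not close: from $\sigma \ge \tau'$ and $\tau' \not\ge \tau$ you cannot conclude $\sigma \not\ge \tau$ (take for $\sigma$ the type of a smooth curve, which dominates everything), whereas from the correct $\sigma \le \tau'$ the assumption $\sigma \ge \tau$ gives $\tau \le \sigma \le \tau'$ and hence $\tau' \ge \tau$ by transitivity, which is the contradiction you need. The same inversion reappears in your degeneration argument, where you assert that the special type of the limit curve $C_0$ is $\ge$ that of the nearby curves $C_t$; it is $\le$.

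Once the direction is corrected, the statement you planned to establish via dual graphs and edge contractions becomes a tautology: any point of $\overline{\mathcal{M}}_{\tau'}$ lies in some stratum $\mathcal{M}_\sigma$, and $\overline{\mathcal{M}}_{\tau'} \cap \mathcal{M}_\sigma \neq \emptyset$ forces $\sigma \le \tau'$ \emph{by definition}. So your step (3), which you flag as the main obstacle, is not needed for this proposition at all. This is essentially what the paper does, phrased directly rather than via complements: for $x \in \mathcal{M}_{\tau'}$ with $\tau' \ge \tau$, finiteness of the set of special types yields a neighbourhood of $x$ meeting only strata $\mathcal{M}_{\tau''}$ whose closure contains $x$, hence with $\tau'' \ge \tau'$ by definition of the order, hence $\tau'' \ge \tau$ by transitivity. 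The only unproved ingredient that either argument genuinely relies on is the transitivity of the relation, which the paper asserts without proof.
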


\begin{proof} This follows from the definition of the order:
for any $x \in \mathcal{M}_{\tau'}$ and $y$ close enough to $x$,
we have $y \in \mathcal{M}_{\tau''}$ for some $\tau'' \ge \tau'$. \end{proof}

\begin{example}\label{exstrat}

Here is a graphic representation of the stratification we have defined in the case of $\overline{\mathcal{M}}_{1,2}$.
The special strata are four, labeled in the following figure by the letters $A$, $B$, $C$ and $D$. Notice that they
are disjoint (see Remark \ref{unionofstrata}).

\begin{center}
\includegraphics[width=25em]{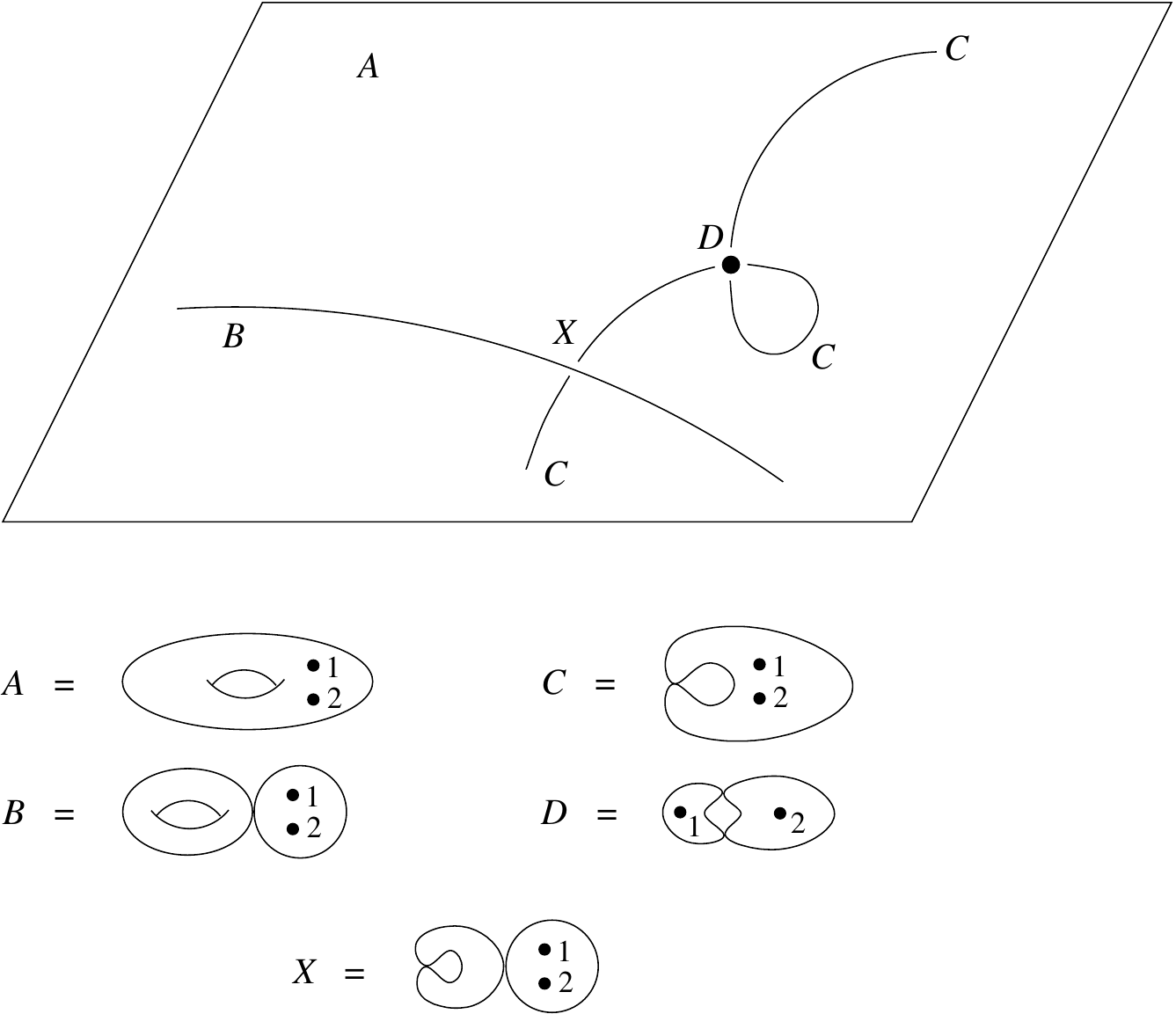}
\end{center}

The partial order of these strata is shown in the diagram, the smaller strata being on the left.
\begin{center}
\includegraphics[width=7em]{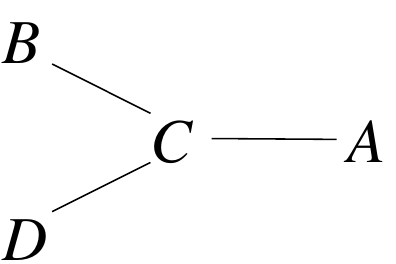}
\end{center}
Note that the order is not compatible with the dimensions of the strata:
for instance, we have $B<C$ even though these cells have the same dimension.
This is due to the fact that $B$ does not lie in the closure of $C$ entirely, but contains one point~$X$ that does.

In the example above we can observe that $A$, $A \cup C$, $A \cup C \cup B$, $A \cup C \cup D$,
and $A \cup B \cup C \cup D$ are open. Moreover
\begin{itemize}
\item $C$ is closed and smooth in $C \cup A$ with a normal bundle of rank~1;
\item $B$ is closed and smooth in $B \cup C \cup A$ with a normal bundle of rank~1;
\item $D$ is closed and smooth in $D \cup C \cup A$ with a normal bundle of rank~2.
\end{itemize}

We will now see that these properties hold in general.

\end{example}

\begin{lemma}
\label{closedunion}
Every $\mathcal{M}_{\tau}$ is a smooth closed sub-orbifold of $\mathcal{U}_\tau$.
\end{lemma}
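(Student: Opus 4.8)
The plan is to show two things: that $\mathcal{M}_\tau$ is closed in $\mathcal{U}_\tau$, and that it is a smooth sub-orbifold of $\overline{\mathcal{M}}_{g,n}$ (hence of any open subset containing it). The second point is essentially already in hand: by Remark \ref{unionofstrata} each $\mathcal{M}_\tau$ is a disjoint union of open boundary strata, and open boundary strata were noted (in the ``Boundary strata'' subsection) to be smooth sub-orbifolds of $\overline{\mathcal{M}}_{g,n}$ with no self-intersection. The decomposition \eqref{decomposition} of Remark \ref{remarkutile} exhibits $\mathcal{M}_\tau$ explicitly as a disjoint union of connected pieces $(\overline{M}_i \times \mathcal{M}_{\gamma,\nu+k})/F_i$, all of the same dimension, and each of these is manifestly a smooth orbifold; since they are pairwise disjoint and of equal codimension, their union is a smooth sub-orbifold. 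So the substance of the lemma is closedness in $\mathcal{U}_\tau$.

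For closedness, I would argue directly from the definition of the order. Suppose $x \in \mathcal{U}_\tau \setminus \mathcal{M}_\tau$; then $x \in \mathcal{M}_{\tau'}$ for some special type $\tau' > \tau$ (strictly, since $x \notin \mathcal{M}_\tau$ and the $\mathcal{M}_\sigma$ partition $\overline{\mathcal{M}}_{g,n}$). I claim a whole neighbourhood of $x$ inside $\mathcal{U}_\tau$ avoids $\mathcal{M}_\tau$. Indeed, by the local structure of $\overline{\mathcal{M}}_{g,n}$ near $x$ (a curve can only degenerate further, never ``un-degenerate,'' under small deformation), every point $y$ close enough to $x$ lies in some $\mathcal{M}_{\tau''}$ with $\tau'' \ge \tau'$: this is exactly the statement used to prove that $\mathcal{U}_{\tau'}$ is open. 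By transitivity of the order, $\tau'' \ge \tau' > \tau$, so in particular $\tau'' \ne \tau$, hence $y \notin \mathcal{M}_\tau$. Thus the complement of $\mathcal{M}_\tau$ in $\mathcal{U}_\tau$ is open, i.e. $\mathcal{M}_\tau$ is closed in $\mathcal{U}_\tau$.

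The one point that needs care — and which I expect to be the main obstacle — is the assertion that under a small deformation a curve's special type can only ``increase.'' This requires relating the local deformation theory of a nodal curve $C \in \mathcal{M}_{\tau'}$ to the behaviour of the \emph{special} component specifically: nearby curves are obtained by smoothing some subset of the nodes of $C$, and one must check that such a smoothing, together with the possible contraction/stabilization bookkeeping, produces a curve whose special type $\tau''$ satisfies $\tau'' \ge \tau'$ in the sense of Definition \ref{specialordering}, i.e. that $C$ (a point of $\mathcal{M}_{\tau'}$) lies in $\overline{\mathcal{M}}_{\tau''}$. Concretely: smoothing nodes of the special component, or nodes joining it to non-special components, or nodes internal to the non-special part, all yield a curve in whose closure $C$ sits; and the special type is a closed condition refined by such degenerations. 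This is the geometric heart of the matter, but it is the same fact already invoked (without detailed proof) in the proof that $\mathcal{U}_\tau$ is open, so for the purposes of this lemma I would cite that, spell out the transitivity step, and combine it with Remark \ref{remarkutile} for smoothness to conclude.

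Finally, I should remark that once closedness in $\mathcal{U}_\tau$ is established, smoothness ``in $\mathcal{U}_\tau$'' follows from smoothness in $\overline{\mathcal{M}}_{g,n}$ together with the fact that $\mathcal{U}_\tau$ is an open suborbifold: being a smooth closed sub-orbifold is a local condition, unaffected by passing to an open neighbourhood. So the write-up reduces to: (i) recall $\mathcal{M}_\tau = \bigsqcup(\overline{M}_i\times\mathcal{M}_{\gamma,\nu+k})/F_i$ is a smooth equidimensional sub-orbifold of $\overline{\mathcal{M}}_{g,n}$ by Remark \ref{remarkutile}; (ii) show $\mathcal{M}_\tau$ is closed in $\mathcal{U}_\tau$ via the order and transitivity; (iii) conclude.
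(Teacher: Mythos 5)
Your closedness argument is fine: it is essentially the paper's argument run in the contrapositive, routed through the openness of $\mathcal{U}_{\tau'}$ plus transitivity rather than through the definition of the order plus antisymmetry (the paper deduces $\tau>\tau'$ from $\overline{\mathcal{M}_\tau}\cap\mathcal{M}_{\tau'}\neq\emptyset$ and contradicts $\tau'>\tau$ directly). Either route works.

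The gap is in the smoothness part, which you dismiss as ``essentially already in hand.'' Being abstractly a smooth orbifold and being a disjoint union of non-self-intersecting \emph{open} boundary strata do not together imply that $\mathcal{M}_\tau$ is an embedded sub-orbifold: each piece $(\overline{M}_i\times\mathcal{M}_{\gamma,\nu+k})/F_i$ of the decomposition (\ref{decomposition}) is a \emph{semi-open} stratum, i.e.\ a union of open boundary strata of varying codimension glued along their closures, and such unions do in general self-intersect. The standard example is the closure of the irreducible one-nodal locus in $\overline{\mathcal{M}}_g$, which crosses itself along the two-nodal locus because a curve there lies on two branches, one for each node; each open stratum involved is embedded, yet the union is not a smooth sub-orbifold. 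So ``pairwise disjoint and of equal codimension, hence the union is a smooth sub-orbifold'' does not follow, and the assertion that each piece is ``manifestly'' embedded is exactly what needs proof. The paper's proof spends its first half on precisely this point: if two branches of $\mathcal{M}_\tau$ crossed at a curve $\Sigma$, they would be interchanged by an automorphism $\sigma$ of $\Sigma$; but every automorphism of a stable marked curve fixes the marked points, hence preserves the special component, and a tangent vector $V$ along one branch deforms $\Sigma$ without altering its special component while $\sigma_*V$ would have to alter the special component of $\sigma(\Sigma)=\Sigma$ --- a contradiction. Some argument of this kind, using that the special component is canonically attached to the $n$-th marked point, is indispensable; without it the lemma (and with it the tubular-neighbourhood and patching machinery of Lemmas \ref{patchingambiguity}--\ref{patchingstrata}) is not established.
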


\begin{proof}
Suppose that $\mathcal{M}_\tau$ has self-intersections, let $\Sigma$ be a point in the 
self-intersection, and let $\sigma\in\text{Aut}(\Sigma)$ be an automorphism that switches two intersecting branches
of $\mathcal{M}_\tau$.
Now, let $V\in T_\Sigma\mathcal{M}_\tau$ be tangent to one of the intersecting branches (therefore it is not
tangent to the other branch, since all the strata have normal crossings). Then $V$ represents a 
deformation of $\Sigma$ which does not change the special component, and a deformation of $\sigma(\Sigma)$ which changes
the special component. Since however any automorphism of $\Sigma$ must preserve its special component, we 
have a contradiction.

Let $x \in \mathcal{U}_\tau \setminus \mathcal{M}_\tau$, so that $x\in \mathcal{M}_{\tau'}$ for some $\tau'>\tau$.
If there were no neighbourhoods $V$ of $x$ with $V \cap \mathcal{M}_\tau = \emptyset$, then 
$x \in \overline{\mathcal{M}_\tau}$, so that $\mathcal{M}_{\tau'}\cap\overline{\mathcal{M}_{\tau}}\neq\emptyset$
and $\tau > \tau'$ by definition of the order. This contradiction shows that 
$\mathcal{M}_\tau \subseteq \mathcal{U}_\tau$ is closed.
\end{proof}

\begin{lemma} 
\label{whatisthenormbundle}
The normal bundle of $\mathcal{M}_{\tau} \subseteq \mathcal{U}_\tau$
is the direct sum of the line bundles $L' \otimes L''$ for $(L',L'')$ tangent pairs at the nodes that link 
the special component to the non-special ones, and at the non-separating nodes of the special component.
\end{lemma}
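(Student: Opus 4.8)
The plan is to compute the normal bundle fibrewise, using the local structure of $\overline{\mathcal{M}}_{g,n}$ around a nodal curve together with the fact — already recorded in the discussion of boundary strata — that the smoothing parameter of a node $y$ lives canonically in the line $L'_y\otimes L''_y$ spanned by the tangent lines of the two branches at $y$, and that the divisor of curves retaining $y$ has this line as its normal bundle.

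First I would fix a point $x\in\mathcal{M}_\tau$ with underlying curve $C$, write $E$ for the set of nodes of $C$, and split $E=E_\tau\sqcup E_0$, where $E_\tau$ consists of the nodes joining the special component of $C$ to a non-special one together with the non-separating self-nodes of the special component, and $E_0$ is the set of the remaining nodes (those both of whose branches lie on non-special components). Deformation theory of nodal curves provides, on an orbifold chart around $x$ on which $\mathrm{Aut}(C)$ acts, a product decomposition: one factor parametrising the moduli of the smooth pieces of the normalisation $C'$, and a polydisc $\prod_{y\in E}\Delta_y$ with coordinate $z_y$ on $\Delta_y$, where $\Delta_y$ is canonically a neighbourhood of $0$ in the fibre of $L'_y\otimes L''_y$ and $\{z_y=0\}$ is the locus of curves still carrying the node $y$, with normal bundle $L'_y\otimes L''_y$ along it (the $l+m=1$ case of the formula $c(N)=-\sum(\psi'_i+\psi''_i)$ quoted earlier). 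Moreover every curve near $C$ is obtained by deforming the smooth pieces and smoothing a subset of the nodes of $C$; no new node can appear.

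The heart of the matter is to identify $\mathcal{M}_\tau$, in this chart, with the locus $\{z_y=0:\ y\in E_\tau\}$. For one inclusion: if a curve $C_t$ near $C$ has special type $\tau$, it cannot have smoothed any node of $E_\tau$, since smoothing a node that joins the special component to a non-special one merges the latter into the former — altering the topological type of the special component and/or its number of exterior nodes — while smoothing a non-separating self-node of the special component raises its genus; in either case the special type would no longer be $\tau$. For the reverse inclusion: if $C_t$ is obtained from $C$ by varying the moduli of the smooth pieces and smoothing only nodes of $E_0$, then the special component, its marked points and its nodes to the non-special part are unchanged, so $C_t$ again has special type $\tau$. (Nearby curves that smooth at least one node of $E_\tau$ acquire a strictly larger special type, hence lie in $\mathcal{U}_\tau\setminus\mathcal{M}_\tau$; this is compatible with $\mathcal{U}_\tau$ being, by openness, a full neighbourhood of $x$.) Thus $\mathcal{M}_\tau$ is locally the transverse intersection of the normal-crossing divisors $\{z_y=0\}$, $y\in E_\tau$, and its normal bundle in $\mathcal{U}_\tau$ is $\bigoplus_{y\in E_\tau}L'_y\otimes L''_y$. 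These local descriptions glue, since $E_\tau$ and the lines $L'_y\otimes L''_y$ are canonically attached to the curve; that $\mathcal{M}_\tau\subseteq\mathcal{U}_\tau$ is a smooth closed sub-orbifold is Lemma \ref{closedunion}, and its codimension $|E_\tau|$ matches the count in Remark \ref{remarkutile}.

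I expect the main obstacle to be precisely the bookkeeping of the previous paragraph: checking carefully that smoothing a node inside the non-special part never perturbs the special type, so those directions remain tangent to $\mathcal{M}_\tau$, whereas smoothing an exterior node or a self-node of the special component always changes it and — by the order on special types — points into a strictly higher stratum. The remaining ingredients (the local product form of $\overline{\mathcal{M}}_{g,n}$, the identification of $\Delta_y$ with $L'_y\otimes L''_y$, and the behaviour under the orbifold chart) are standard and already implicit in the material recalled above.
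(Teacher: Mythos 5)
Your proof is correct, and at bottom it rests on the same canonical fact as the paper's: the smoothing parameter of a node $y$ lives in $L'_y\otimes L''_y$, so the normal directions to $\mathcal{M}_\tau$ are exactly the smoothing parameters of the nodes that must persist for the special type to be preserved. The packaging, however, is genuinely different. The paper argues globally: it invokes the decomposition of Remark \ref{remarkutile}, writing $\mathcal{M}_\tau$ as a disjoint union of semi-open boundary strata $(\mathcal{M}_A\times\mathcal{M}_{\gamma,\nu+k})/F_A$, each the image of a separating sewing map (followed, when the special component is nodal, by a non-separating one), and then simply reads off the normal bundle of such an image as the sum of $L'\otimes L''$ over the sewn pairs; because the non-special factors are allowed to range over compactified moduli spaces, the nodes internal to the non-special part contribute tangent rather than normal directions automatically. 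You instead work in a local deformation-theoretic chart and identify $\mathcal{M}_\tau$ with the coordinate subspace $\{z_y=0:\ y\in E_\tau\}$, which obliges you to verify by hand the dichotomy the paper gets for free from its decomposition: smoothing a node of $E_0$ preserves the special type (a tangent direction), while smoothing a node of $E_\tau$ changes it and points into a strictly larger stratum (a normal direction). Your version is more elementary and makes the essential bookkeeping explicit --- and your codimension check against Remark \ref{remarkutile} is a useful sanity test --- while the paper's is shorter because that bookkeeping is already built into the decomposition. Both treat the orbifold quotient by the finite group $F_A$ in the same way and arrive at the same answer.
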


\begin{proof}
Let us keep the notations of Remark \ref{remarkutile}.
First, let $\tau$ be a special type represented by a smooth special component.
From decomposition (\ref{decomposition}) of Remark \ref{remarkutile},
it suffices to show the statement for each
$(\mathcal{M}_A \times \mathcal{M}_{\gamma, \nu+k})/F_A$. This last object is a
semi-open boundary stratum, which is the image of a sewing map 
$s: \mathcal{M}_A \times \mathcal{M}_{\gamma, \nu+k} \rightarrow \overline{\mathcal{M}}_{g,n}$.
Thus its normal bundle is the sum of the line bundles
$L' \otimes L''$ for tangent pairs $(L', L'')$ at the points sewed together by $s$. Since the special component
is smooth, the lemma is proved for this case.

Let now $\tau$ be a special type represented by a non-smooth special component 
$C_{sp} \in \overline{\mathcal{M}}_{\gamma', \nu'+k}$ whose
normalization lies in $\mathcal{M}_{\gamma,\nu+k}$.
Then by Remark \ref{remarkutile}, each component of the decomposition (\ref{decomposition}) is the image
of a sewing map $s:\mathcal{M}_A \times \mathcal{M}_{\gamma, \nu+k} 
\rightarrow \overline{\mathcal{M}}_{g-\mu,n+2\mu}$ followed by a non-separating sewing map
$q: \overline{\mathcal{M}}_{g-\mu, n+2\mu} \rightarrow \overline{\mathcal{M}}_{g,n}$.
The normal bundle to the image of $q$ is the sum of the line bundles $L' \otimes L''$ for tangent pairs
$(L',L'')$ at the points sewed together by $q$. These account for the non-separating nodes of the special
components, while the other nodes have been dealt with in the smooth case, and the lemma is proved.
\end{proof}

\begin{lemma}
\label{chernnozerodivisor}
Let us consider the decomposition (\ref{decomposition}) of
Remark \ref{remarkutile}:
$$ \mathcal{M}_{\tau} = \bigsqcup_{A \in X} (\mathcal{M}_A \times \mathcal{M}_{\gamma, \nu+k})/F_A.$$
Then, below degree $\gamma/3$, the Chern class of the normal bundle $\nu_{\tau}$ of $\mathcal{M}_{\tau}$ in
$\mathcal{U}_\tau$ is not a zero divisor. In other words, every element of
$\text{\emph{Ann}}_{H^{\bullet - 2k}(\mathcal{M}_{\tau})}(\text{\emph{c}}(\nu_{\tau}))$ has degree strictly
higher than $\gamma/3$.
\end{lemma}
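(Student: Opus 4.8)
The plan is to analyze each connected component $(\mathcal{M}_A \times \mathcal{M}_{\gamma,\nu+k})/F_A$ separately, since the cohomology of $\mathcal{M}_\tau$ splits as a direct sum over these components, and the Chern class $\mathrm{c}(\nu_\tau)$ restricts to the Chern class of the normal bundle of each piece. By Lemma \ref{whatisthenormbundle}, on such a component the normal bundle is a direct sum of line bundles $L'\otimes L''$ over a fixed finite set of nodes, so its total Chern class is a product $\prod_e(1-\psi'_e-\psi''_e)$ of factors, each of the form $1$ minus a sum of two $\psi$-classes pulled back from the factors $\mathcal{M}_{g_i,n_i+\mu_i}$ and $\mathcal{M}_{\gamma,\nu+k}$. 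So it suffices to show that such a product is not a zero divisor below degree $\gamma/3$ in $H^\bullet(\mathcal{M}_A\times\mathcal{M}_{\gamma,\nu+k})$ (taking $F_A$-invariants only shrinks the relevant annihilator, so the statement for the cover implies it for the quotient).

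The key structural input is Looijenga's theorem together with the Künneth formula. First I would record that each node $e$ attached to the special component contributes a $\psi$-class $\psi'_e$ that is pulled back from the special factor $\mathcal{M}_{\gamma,\nu+k}$. By Looijenga's theorem, in degrees $\le\gamma/3$ we have $H^\bullet(\mathcal{M}_{\gamma,\nu+k})\cong (H^\bullet(\mathcal{M}_\gamma)[\psi_1,\dots,\psi_{\nu+k}])$, i.e. the $\psi$-classes at the marked points of the special factor are algebraically independent over $H^\bullet(\mathcal{M}_\gamma)$ in that range. Combined with Künneth, $H^\bullet(\mathcal{M}_A\times\mathcal{M}_{\gamma,\nu+k})$ is, below degree $\gamma/3$, a polynomial ring in the variables $\psi'_e$ over the coefficient ring $H^\bullet(\mathcal{M}_A)\otimes H^\bullet(\mathcal{M}_\gamma)[\text{remaining }\psi\text{'s}]$. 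In such a polynomial ring, a product $\prod_e(1-\psi'_e-\psi''_e)$ of elements each with invertible constant term — note each factor is a unit, being $1$ plus nilpotent-degree terms, but more to the point each factor is of the form (unit in degree $0$) and hence a non-zero-divisor — is itself a non-zero-divisor in the stable range: an element of positive degree times the product can only vanish if the element itself vanishes, because we can formally invert each factor as a power series and the degree truncation is consistent below $\gamma/3$. I would make this precise by induction on the number of edges, observing that multiplication by $(1-\psi'_e-\psi''_e)$ on $H^{\le\gamma/3}$ is injective since in the polynomial presentation it sends a nonzero element of top $\psi'_e$-degree $d$ to one of top $\psi'_e$-degree $d+1$ (the leading coefficient being negated but not killed).

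The main obstacle I expect is the bookkeeping of exactly which $\psi$-classes are algebraically independent and in which degree range, especially because the bound $\gamma/3$ refers only to the genus of the special component, while the other factors $\mathcal{M}_{g_i,n_i+\mu_i}$ carry no stability hypothesis and may have small genus — so Looijenga's theorem is being applied only to the special factor, and one must check that the Künneth decomposition still gives a polynomial-ring structure in the $\psi'_e$ over the (possibly complicated, but fixed) ring $H^\bullet(\mathcal{M}_A)$. The point is that the degree bound needs only to control the special factor: below total degree $\gamma/3$, any class has its $\mathcal{M}_{\gamma,\nu+k}$-component in degree $\le\gamma/3$, so Looijenga applies there, and the $\mathcal{M}_A$-part is just along for the ride as an arbitrary coefficient ring. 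For the non-smooth special component case, Lemma \ref{whatisthenormbundle} adds the $\psi$-classes at the non-separating nodes of the special component; these too are pulled back from the normalization $\mathcal{M}_{\gamma,\nu+k}$ of the special component and hence are among the polynomial generators supplied by Looijenga, so the same argument applies verbatim, with codimension $\mu+k$ replacing $k$. Finally I would conclude that any element annihilating $\mathrm{c}(\nu_\tau)$ must vanish in all degrees $\le\gamma/3$, i.e. lie in degree $>\gamma/3$, which is the assertion.
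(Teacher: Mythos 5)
Your proof is correct and follows essentially the same route as the paper's: reduce to a single component $(\mathcal{M}_A\times\mathcal{M}_{\gamma,\nu+k})/F_A$, invoke Looijenga's theorem together with K\"unneth to see that below degree $\gamma/3$ the $\psi$-classes attached to the special factor are free polynomial generators over the coefficient ring $H^\bullet(\mathcal{M}_A)\otimes H^\bullet(\mathcal{M}_\gamma)$, and conclude by a leading-term argument that the class in question is a non-zero-divisor in that range (you additionally make explicit the $F_A$-invariance step and the non-smooth special component case, which the paper leaves implicit). One small caution: the class relevant to Lemma \ref{patchingambiguity} is the Euler class $\prod_e\bigl(-(\psi'_e+\psi''_e)\bigr)$, homogeneous of degree $k$, rather than the total Chern class $\prod_e(1-\psi'_e-\psi''_e)$, which is a unit and for which the non-zero-divisor statement would be vacuous --- but your injectivity argument via the top $\psi'_e$-degree applies verbatim to the homogeneous factors, so nothing essential is lost.
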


\begin{proof}
Let us write 
$$ \mathcal{M}_{\tau} = \bigsqcup_{A \in X} (\mathcal{M}_A \times \mathcal{M}_{\gamma, \nu+k})/F_A
:= \bigsqcup_{A \in X} T_A$$
with notation as in Remark \ref{remarkutile}. 
 Looijenga's theorem says that, below degree $\gamma/3$ (the so-called \textquotedblleft stable
range \textquotedblright ),
$$H^{\bullet}(\mathcal{M}_{\gamma,\nu+ k}) \simeq H^{\bullet}(\mathcal{M}_{\gamma})[\psi_1, \ldots, \psi_{\nu +k}] .$$
In particular the $\psi$-classes are \emph{free} generators of the stable range of the cohomology of
$\mathcal{M}_{\gamma,\nu+k}$. Now, for each $A \in X$, we have 
$H^{\bullet}(T_A) \subseteq H^{\bullet}(\mathcal{M}_A) \otimes H^{\bullet}(\mathcal{M}_{\gamma,\nu+k})$,
and
$$\text{c}(\nu_{T_A}) = - \sum_{i=1}^k (\psi'_i \otimes 1 + 1 \otimes \psi_{\nu+i}) \in H^{\bullet}(T_A)$$
where for each $i$, $\psi'_i$ is the $\psi$-class on $\mathcal{M}_A$ at the point sewed to the point
marked $\nu+i$ on $\mathcal{M}_{\gamma, \nu+k}$. This implies that each $\text{c}(\nu_{T_A})$ is not
a zero-divisor below degree $\gamma/3$. By Lemma \ref{whatisthenormbundle}, we have
$\text{c}(\nu_{\tau}) = \sum_{A \in X} \text{c}(\nu_{T_A})$, and this is in fact a direct sum. Therefore
we conclude that $\text{c}(\nu_{\tau})$ is not a zero-divisor below degree $\gamma/3$, which is what we had to prove.
\end{proof}

We will need one more technical lemma.

\begin{lemma}\label{patchingstrata}
 Let $\tau_0$ be a special stratum and suppose that the restriction of $\overline{\Omega}_{g,n}$ on each stratum 
 $\mathcal{M}_\tau \subseteq \overline{\mathcal{M}}_{g,n}$ with $\tau \ge \tau_0$ is known. Then the restriction of
$\overline{\Omega}_{g,n}$ to each $\mathcal{U}_\tau$ with $\tau \ge \tau_0$
is known below degree $\gamma/3$, where 
$\gamma$ is the genus of the special component corresponding to the stratum $\tau$.
\end{lemma}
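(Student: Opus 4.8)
The goal is to reconstruct $\overline{\Omega}_{g,n}$ on the open set $\mathcal{U}_\tau$ (in degrees below $\gamma/3$) from its already-known restrictions to the strata $\mathcal{M}_{\tau'}$ with $\tau' \ge \tau$. The natural strategy is a downward induction on the partial order of special types restricted to those with $\tau' \ge \tau_0$. Since this partial order is finite and $\tau$ itself is one of its minimal elements among $\{\tau' : \tau' \ge \tau_0\}$ relative to the suborder (more precisely, we induct on the order inside $\mathcal{U}_\tau$: the maximal special types are the open dense strata, for which $\mathcal{U}_{\tau'} = \mathcal{M}_{\tau'}$ and there is nothing to prove), we may assume the restriction of $\overline{\Omega}_{g,n}$ to $\mathcal{U}_{\tau'}$ is known below degree $\gamma'/3$ for every $\tau' > \tau$, and we must extend this knowledge to $\mathcal{U}_\tau = \mathcal{M}_\tau \sqcup \bigsqcup_{\tau' > \tau} \mathcal{M}_{\tau'}$.

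\textbf{Key steps.} First, by Lemma \ref{closedunion}, $\mathcal{M}_\tau$ is a closed smooth sub-orbifold of $\mathcal{U}_\tau$, and its complement $\mathcal{U}_\tau \setminus \mathcal{M}_\tau = \bigcup_{\tau' > \tau}\mathcal{M}_{\tau'}$ is exactly $\mathcal{U}_{\tau'}$-type open pieces glued together; by the inductive hypothesis the restriction of $\overline{\Omega}_{g,n}$ to $\mathcal{U}_\tau \setminus \mathcal{M}_\tau$ is known below the relevant degree. (Here one has to be a little careful: the genus of the special component can only increase as one passes to smaller strata in the closure relation, so $\gamma' \ge \gamma$ for $\tau' > \tau$ appearing in the boundary, and the bound $\gamma/3$ we are claiming is the weakest one — so the inductive data suffices in the range we need.) Second, the restriction of $\overline{\Omega}_{g,n}$ to $\mathcal{M}_\tau$ itself is given by hypothesis. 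Third, we invoke the Mayer–Vietoris / excision machinery of Lemma \ref{patchingambiguity}: knowing a class both on the closed submanifold $\mathcal{M}_\tau$ and on the open complement pins it down up to an element of $j_*\mathrm{Ann}_{H^{\bullet - 2k}(\mathcal{M}_\tau)}(\mathrm{c}(\nu_\tau))$, where $k$ (or $\mu + k$ in the non-smooth case) is the codimension. Fourth — and this is where semisimplicity and the stability theorems enter — we apply Lemma \ref{chernnozerodivisor}: below degree $\gamma/3$ the Chern class $\mathrm{c}(\nu_\tau)$ is not a zero divisor, so that annihilator vanishes in the relevant range, and the patching is unique. Hence $\overline{\Omega}_{g,n}|_{\mathcal{U}_\tau}$ is determined below degree $\gamma/3$.

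\textbf{The main obstacle.} The delicate point is bookkeeping of degrees across the induction: the ambiguity from Lemma \ref{patchingambiguity} lives in $H^{\bullet - 2k}(\mathcal{M}_\tau)$ and is killed only below degree $\gamma/3$ in $\mathcal{M}_\tau$, i.e. for classes of degree below $\gamma/3 + 2k$ on the ambient space; one must check this is compatible with the weaker cut-off $\gamma/3$ claimed for $\mathcal{U}_\tau$, and that the inductive knowledge on the $\mathcal{U}_{\tau'}$, valid below $\gamma'/3 \ge \gamma/3$, really does cover the whole punctured set in the range we need. A secondary subtlety is that $\mathcal{M}_\tau$ is a disjoint union of connected components (decomposition \eqref{decomposition}) possibly with different such bounds, and one treats each component separately, taking the minimum of the bounds; since each component has special-component genus $\gamma$ (or $\gamma'$), the uniform bound $\gamma/3$ survives. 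Finally, the orbifold structure and the finite groups $F_A$ require that all cohomology be taken with $\mathbb{C}$-coefficients and that excision and the Thom isomorphism be applied equivariantly, exactly as in the proof of Lemma \ref{patchingambiguity}; this is routine but must be mentioned.
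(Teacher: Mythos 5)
Your proof follows the paper's argument essentially verbatim: a descending induction on the finite partial order of special types, at each step patching the known class on the closed smooth sub-orbifold $\mathcal{M}_\tau \subseteq \mathcal{U}_\tau$ (Lemma \ref{closedunion}) with the inductively known class on $\mathcal{U}_\tau \setminus \mathcal{M}_\tau = \bigcup_{\tau' > \tau}\mathcal{U}_{\tau'}$ via Lemmas \ref{patchingambiguity} and \ref{chernnozerodivisor}, using that $\gamma' \ge \gamma$ for $\tau' \ge \tau$ so that the degree cut-offs are compatible. The only blemish is the verbal slip asserting that the special-component genus \emph{increases} as one passes to smaller strata --- it decreases (degeneration can only split genus off the special component), exactly as your displayed inequality $\gamma' \ge \gamma$ for $\tau' > \tau$, which is the statement actually used, correctly records.
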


\begin{proof}
 We reason by descending 
induction using the order of Definition \ref{specialordering} on the special strata. If $\tau$ is the biggest special
type, then $\mathcal{U}_\tau = \mathcal{M}_\tau$, where $\overline{\Omega}_{g,n}$ is determined by hypothesis.
Now if we have determined the theory on each $\mathcal{U}_\tau$ for $\tau$ in some union of intervals
$I$ containing the biggest special type,
choose a maximal $\tau' \notin I$
and consider $\mathcal{U}_{\tau'}$. Then, by induction, the theory is determined on 
$\mathcal{U}_{\tau'}\setminus\mathcal{M}_{\tau'}$ up to degree $\gamma'/3$ (notice that if $\tau'\le \tau$ then 
$\gamma'\le \gamma$).
By Lemma \ref{closedunion}, $\mathcal{M}_{\tau'}$ is a smooth 
closed sub-orbifold of $\mathcal{U}_{\tau'}$. Then we can patch, uniquely below degree $\gamma'/3$,
the theories on $\mathcal{M}_{\tau'}$ and on $\mathcal{U}_{\tau'}\setminus\mathcal{M}_{\tau'}$
thanks to Lemmas \ref{patchingambiguity} and \ref{chernnozerodivisor}.
\end{proof}

Now we are ready to prove the \textquotedblleft uniqueness\textquotedblright \ 
part of the classification of nodal theories. We keep the
notation of the beginning of this section.

\begin{theorem}
A nodal CohFT $\overline{\Omega}$ is uniquely determined by its restriction $\Omega$ to the smooth part
$\mathcal{M}_{g,n}$. 
\end{theorem}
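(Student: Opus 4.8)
The plan is to combine Lemma \ref{patchingstrata} with the genus-raising trick of Proposition \ref{sewnodaltorus}, which is exactly where the semisimplicity hypothesis becomes essential. Here ``$\overline{\Omega}$ is determined by $\Omega$'' is to be read as: every class $\overline{\Omega}_{g,n}$ is forced by the collection of smooth classes $(\Omega_{g',n'})$; combined with the existence statement of Proposition \ref{existence}, this gives the full classification of semisimple nodal CohFTs.

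First I would note that, in any moduli space $\overline{\mathcal{M}}_{g,n}$ and for \emph{every} special stratum $\mathcal{M}_\sigma$, the restriction $\overline{\Omega}_{g,n}|_{\mathcal{M}_\sigma}$ is already determined by $\Omega$. Indeed, by the decomposition (\ref{decomposition}) of Remark \ref{remarkutile} the stratum $\mathcal{M}_\sigma$ is a disjoint union of pieces $(\mathcal{M}_A\times\mathcal{M}_{\gamma,\nu+k})/F_A$, each of which is the image of a finite-to-one iterated sewing map out of a product of \emph{smooth} moduli spaces; pulling $\overline{\Omega}_{g,n}$ back along that map, using the sewing axioms of Definition \ref{nodaltheories} repeatedly together with $S_n$-equivariance, rewrites it as a cross product of the smooth classes $\Omega_{g_i,n_i+\mu_i}$ and $\Omega_{\gamma,\nu+k}$, while the finite quotient by $F_A$ is undone by a $\tfrac1{|F_A|}$-push-forward exactly as in Remark \ref{2covering}. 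Hence the hypothesis of Lemma \ref{patchingstrata} is satisfied for every choice of $\tau_0$, and we conclude that $\overline{\Omega}_{g,n}$ is determined on each open set $\mathcal{U}_\tau$ below degree $\gamma/3$, where $\gamma$ denotes the genus of the special component of $\tau$.

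This last conclusion is only informative when $\gamma$ is large, so to recover $\overline{\Omega}_{g,n}$ in all degrees I would raise the genus. Fix $(g,n)$, pick a large integer $g''$ with $g''/3 > 3g-3+n=\dim\overline{\mathcal{M}}_{g,n}$, fix a smooth curve $\Sigma\in\mathcal{M}_{g'',2}$, and let $s_{|\Sigma}:\overline{\mathcal{M}}_{g,n}\times\{\Sigma\}\hookrightarrow\overline{\mathcal{M}}_{g+g'',n}$ be the embedding of Proposition \ref{sewnodaltorus}, obtained by sewing the $n$-th marked point to the second marked point of $\Sigma$. Every curve in the image $S_\Sigma$ has the same special type $\tau$: its special component is the smooth genus-$g''$ surface $\Sigma$, carrying one marked point and one node; therefore $S_\Sigma\subseteq\mathcal{M}_\tau\subseteq\mathcal{U}_\tau$ inside $\overline{\mathcal{M}}_{g+g'',n}$. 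By the previous paragraph, $\overline{\Omega}_{g+g'',n}$ is determined on $\mathcal{U}_\tau$ below degree $g''/3$, hence on $S_\Sigma$ in every degree in which $H^{\bullet}(S_\Sigma)\cong H^{\bullet}(\overline{\mathcal{M}}_{g,n})$ can be nonzero. Finally Proposition \ref{sewnodaltorus} gives $s_{|\Sigma}^{*}\overline{\Omega}_{g+g'',n}(v_1\otimes\cdots\otimes v_n)=\overline{\Omega}_{g,n}(v_1\otimes\cdots\otimes(\alpha^{g''}\cdot v_n))$, and since $A$ is semisimple the Euler class $\alpha$ is invertible by Proposition \ref{invertible}; dividing by $\alpha^{g''}$ recovers $\overline{\Omega}_{g,n}$, which is thus determined by $\Omega$.

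The main difficulty I expect is the degree bookkeeping just used. Applied directly to $\overline{\mathcal{M}}_{g,n}$, Lemma \ref{patchingstrata} would only determine $\overline{\Omega}_{g,n}$ below degree $\gamma/3$ for the \emph{minimal} special types, which have $\gamma=0$, so it yields nothing past the degree-zero part; this is why the genus-raising step is indispensable, its purpose being to place the class $\overline{\Omega}_{g+g'',n}|_{S_\Sigma}$ inside a stratum $\mathcal{U}_\tau$ whose stable range $\gamma/3=g''/3$ can be made to exceed $\dim\overline{\mathcal{M}}_{g,n}$. Implicitly one uses the monotonicity of $\gamma$ along the order (if $\sigma\geq\tau$ then $\gamma_\sigma\geq\gamma_\tau$, since degenerating a curve cannot raise the genus of the component bearing the last marked point), which is what keeps the bound in the descending induction of Lemma \ref{patchingstrata} at $g''/3$ all the way down to $\mathcal{M}_\tau$. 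A secondary point needing care is the claim of the second paragraph — that the sewing axioms, together with the $\tfrac1{|F_A|}$-push-forwards handling the finite quotients, really do reduce $\overline{\Omega}_{g,n}|_{\mathcal{M}_\sigma}$ to the given smooth data $\Omega$.
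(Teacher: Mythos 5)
Your overall strategy (raise the genus by sewing on a large smooth surface, determine the theory on the relevant special strata, patch via Lemma \ref{patchingstrata}, and invert $\alpha^{g''}$) is the paper's strategy, but there are two related gaps, both stemming from the same oversight. First, your claim that $\overline{\Omega}_{g,n}|_{\mathcal{M}_\sigma}$ is determined by the smooth data $\Omega$ alone, for \emph{every} special stratum, is not correct. In the decomposition (\ref{decomposition}) the connected components of $\mathcal{M}_\sigma$ are of the form $(\overline{M}_i\times\mathcal{M}_{\gamma,\nu+k})/F_i$ where only the special factor $\mathcal{M}_{\gamma,\nu+k}$ is a \emph{smooth} moduli space: the non-special factors $\overline{M}_i$ are products of \emph{compactified} moduli spaces $\overline{\mathcal{M}}_{g_i,n_i+\mu_i}$. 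The sewing axioms therefore express $\overline{\Omega}_{g,n}|_{\mathcal{M}_\sigma}$ as a cross product of $\Omega_{\gamma,\nu+k}$ with nodal classes $\overline{\Omega}_{g_i,n_i+\mu_i}$ — precisely the objects whose determinacy is being proved. (Each individual \emph{open} boundary stratum inside $\mathcal{M}_\sigma$ is indeed covered by a product of smooth moduli spaces, but knowing a cohomology class on each locally closed piece of a decomposition does not determine it on their union; that is the whole point of the patching lemmas.) The paper fixes this by an induction on $d=3g-3+n$: for the strata that actually matter, the non-special parts satisfy $3g'-3+n'<d$, so the inductive hypothesis supplies the needed nodal classes.

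Second, even granting such an induction, your final step is circular. The stratum $\mathcal{M}_\tau$ containing $S_\Sigma$ has non-special part $\overline{\mathcal{M}}_{g,n}$ itself, with the \emph{same} value of $d$; so its restriction is not covered by any inductive hypothesis, and Lemma \ref{patchingstrata} applied with $\tau_0=\tau$ cannot be invoked to determine the class on $\mathcal{U}_\tau\supseteq S_\Sigma$ — you would be assuming knowledge of $\overline{\Omega}_{g+g'',n}|_{\mathcal{M}_\tau}$, which already contains $\overline{\Omega}_{g,n}$. The paper breaks the circle by working with the circle bundle $\partial N$ over $S$ rather than with $S$ itself: the special strata meeting $\partial N$ all have special component of genus $\ge G$ absorbing $\Sigma$ together with the sewing node, so their non-special parts have $3g'-3+n'<d$ and the inductive hypothesis plus patching determines $\overline{\Omega}_{g+G,n}|_{\partial N}$ below degree $G/3$; then the Gysin sequence shows $\nu^*$ is injective in that range, which forces the class $\eta^{\mu\nu}\overline{\Omega}_{g,n}(\cdots\otimes e_\mu)\times\Omega_{G,2}(v_n\otimes e_\nu)$ on $S$, and only then does one restrict to $\Sigma$ and invert $\alpha^G$. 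You need both the induction on $d$ and this detour through $\partial N$ and the Gysin sequence for the argument to close.
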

 
 \begin{proof}
By definition, 
$$\overline{\Omega}_{0,3}(v_1 \otimes v_2 \otimes v_3) = 
\eta(v_1 \cdot v_2, v_3) = \Omega_{0,3}(v_1 \otimes v_2 \otimes v_3)$$ thus
$\overline{\Omega}_{0,3}$ is uniquely determined by $\Omega_{0,3}$.

Let $d \ge 1$, let us suppose we know
$\overline{\Omega}_{g',n'}$ for every $(g',n')$ such that $3g'-3+n' < d$, and let 
$3g-3+n = d$.  We have to show that we can determine $\overline{\Omega}_{g,n}$.

Let $G > 9g-9+3n$, and let us consider the sewing map
$$s: \overline{\mathcal{M}}_{g,n} \times \mathcal{M}_{G,2} 
\rightarrow \overline{\mathcal{M}}_{g+G,n} $$
that identifies the points labeled $n$ and $2$ respectively (notice the lack of the overline in the second
space: we only want to consider sewings of smooth surfaces). Therefore, the point marked $1$ on
$\mathcal{M}_{G,2}$ goes to the point marked $n$ on $\overline{\mathcal{M}}_{g+G,n}$.
Let $N$ be the usual tubular neighbourhood
of the boundary stratum $S$ in which the image of this map lies. 
Let $I$ be the set of the $\tau$'s such 
that $\mathcal{M}^{g+G,n}_{\tau} \cap \partial N \neq \emptyset$, and let
$$\mathcal{U} = \bigcup_{\tau \in I} \mathcal{M}^{g+G,n}_{\tau}$$
This accounts for a finite stratification of $\mathcal{U}$ in which every stratum has special component 
of genus $G$ or higher. Here is a graphic representation of the strata of $\mathcal{U}$ in the case $g=1$, $n=2$.
See also Example \ref{exstrat} for more comments.

\begin{center}
 \includegraphics[width=25em]{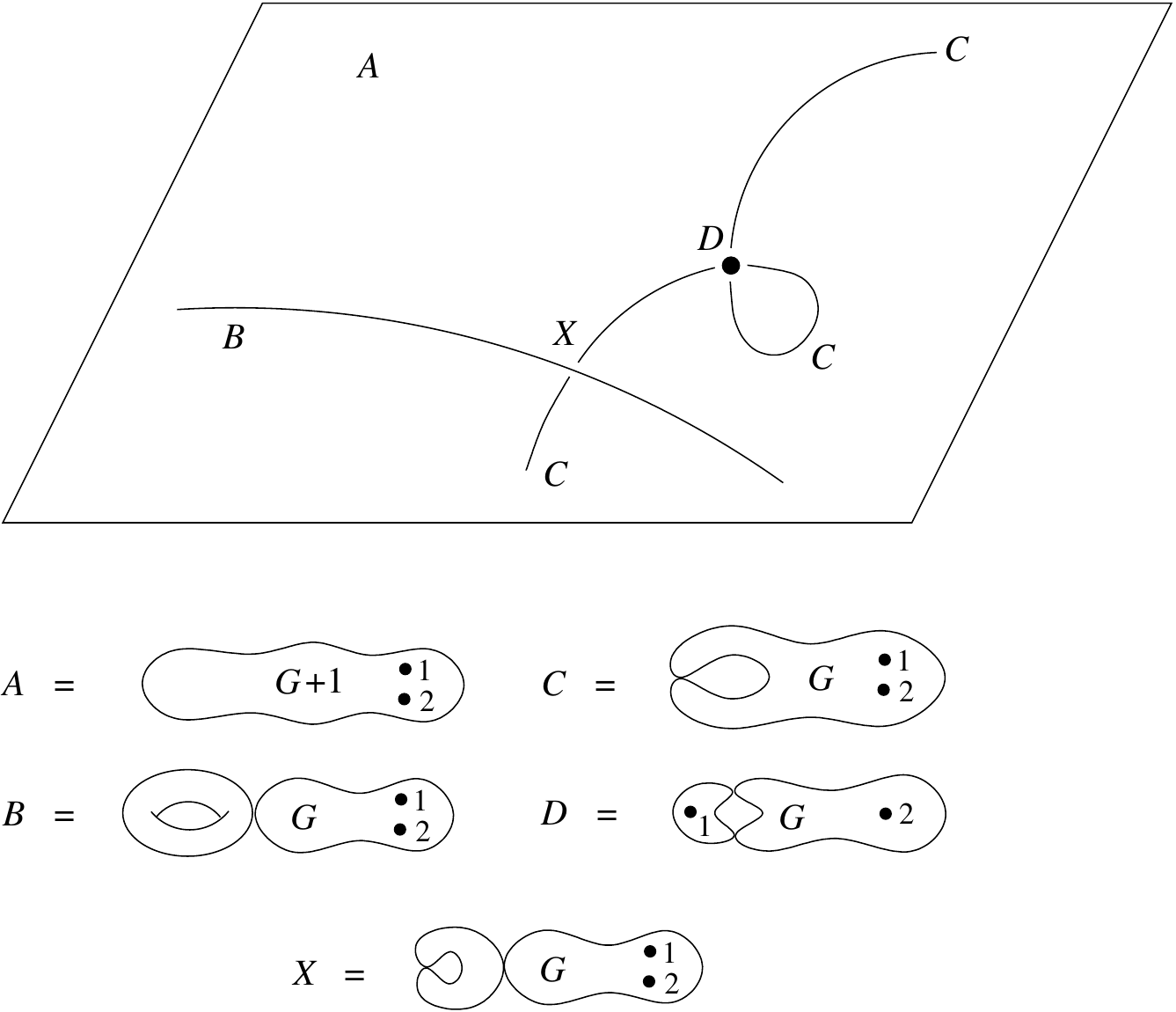}
\end{center}

Now every special stratum $\mathcal{M}^{g+G,n}_\tau$ with $\tau \in I$ is a product of a smooth moduli space
and some $\overline{\mathcal{M}}_{g',n'}$'s with $3g'-3+n'<d$ (see Remark \ref{remarkutile}).
Thus our inductive hypothesis together with the sewing axiom implies that
the restriction of $\overline{\Omega}_{g+G,n}$ to each $\mathcal{M}^{g+G,n}_\tau$ is uniquely determined.

Let $\Omega_{\mathcal{U}}$ be the restriction of $\overline{\Omega}_{g+G,n}$ to $\mathcal{U}$. Notice that 
$\mathcal{U}$ is contained in $\mathcal{U}_{\tau_0}$ where $\tau_0$ is the special stratum 
(in $\overline{\mathcal{M}}_{g+G,n}$) corresponding to curves whose special component is a smooth curve of genus $G$
with one marked point and one node attaching it to the other components.
By Lemma \ref{patchingstrata}, $\Omega_\mathcal{U}$ is uniquely determined below degree $G/3$. Since 
$\partial N \subseteq \mathcal{U}$, we see that the restriction of $\overline{\Omega}_{g+G,n}$ to $\partial N$ is
uniquely determined below degree $G/3$.

Now the sewing axiom yields $$\overline{\Omega}_{g+G,n}(v_1\otimes \ldots \otimes v_n)|_{\partial N} =
\nu^*\eta^{\mu\nu}\overline{\Omega}_{g,n}(v_1\otimes \ldots \otimes e_{\mu})\times \Omega_{G,2}(v_n,e_{\nu})$$ and the Gysin
sequence for the circle bundle $\partial N \rightarrow S$ implies that $\nu^*$ induces an isomorphism below 
degree $G/3$
$$H^\bullet(\partial N) \simeq H^\bullet(\overline{\mathcal{M}}_{g,n})\otimes H^\bullet(\mathcal{M}_G)[\psi_{1}']$$ 
where $\psi_1'$ is the $\psi$-class attached to the first marked point in $\mathcal{M}_{G,2}$. We can conclude that 
$$\eta^{\mu\nu}\overline{\Omega}_{g,n}(v_1\otimes \ldots \otimes e_{\mu})\times \Omega_{G,2}(v_n,e_{\nu})$$
is known below degree $G/3$. Restricting to any fixed smooth surface $\Sigma \in \mathcal{M}_{G,2}$ gives 
$$\overline{\Omega}_{g,n}(v_1\otimes\ldots \otimes (\alpha^G\cdot v_n)),$$ (see the proofs of
Propositions \ref{sewfixedtorus}-\ref{sewnodaltorus}) and since in a semisimple theory 
$\alpha$ is invertible, we conclude that $\overline{\Omega}_{g,n}$ is uniquely determined below degree 
$G/3 > 3g-3+n = \text{dim}\overline{\mathcal{M}}_{g,n}$. Thus $\overline{\Omega}_{g,n}$ is totally 
determined: this completes the inductive step and the proof.

\end{proof}

\end{document}